\documentclass[10pt]{amsart}
\usepackage{mystyle}   
\usepackage{placeins}

\usepackage[utf8]{inputenc}
\usepackage{amsfonts}
\usepackage{graphics}
\usepackage[all, cmtip]{xy}
\usepackage{amsthm,amsfonts,amssymb,amsmath,amsxtra}
\usepackage[all]{xy}
\usepackage{flexisym}
\usepackage{mathrsfs}
\usepackage{enumitem}
\usepackage[dvipsnames]{xcolor}
\usepackage[colorlinks]{hyperref}
\hypersetup{
  citecolor   = blue 
 }

\newenvironment{altenumerate}
   {\begin{list}
      {\textup{(\theenumi)} }
      {\usecounter{enumi}
       \setlength{\labelwidth}{0pt}
       \setlength{\labelsep}{0pt}
       \setlength{\leftmargin}{0pt}
       \setlength{\itemsep}{\the\smallskipamount}
       \renewcommand{\theenumi}{\arabic{enumi}}
      }}
   {\end{list}}
\newenvironment{altitemize}
   {\begin{list}
      {$\bullet$}
      {\setlength{\labelwidth}{0pt}
       \setlength{\itemindent}{5pt}
       \setlength{\labelsep}{5pt}
       \setlength{\leftmargin}{0pt}
       \setlength{\itemsep}{\the\smallskipamount}
      }}
   {\end{list}}

\usepackage{caption}
\usepackage{blkarray}
\newcommand{\xdownarrow}[1]{%
  {\left\downarrow\vbox to #1{}\right.\kern-\nulldelimiterspace}
}
 \newcommand{\quash}[1]{}  
\newcommand{\M}{{\rm M}}
\newcommand{\Mloc}{{\rm M}^{\rm loc}}
\newcommand{\Mspl}{{\rm M}^{\rm spl}}

\newcommand{\Mnaive}{{\rm M}^{\rm naive}}

\newcommand{\ad}{{\rm ad}}
\newcommand{\sgn}{{\rm sgn}}
    \newcommand{\Bl}{{\rm Bl}}

\newcommand{\vphi}{\varphi}

\newcommand{\diag}{{\rm diag}}

\newcommand{\Hom}{{\rm Hom}}

\newcommand{\Res}{{\rm Res}}
\newcommand{\Gal}{{\rm Gal}}

\newcommand{\etale}{\' etale }

\newcommand{\Proj}{{\rm Proj}}

\newcommand{\Tr}{{\rm Tr}}

\newcommand{\Gr}{{\rm Gr}}

\newcommand{\GL}{{\rm GL}}

\newcommand{\GO}{{\rm GO}}
\newcommand{\OGr}{{\rm OGr}}
\newcommand{\GSp}{{\rm GSp}}

\newcommand{\ZZ}{\mathbb{Z}}

\newcommand{\calA}{\mathcal{A}}
\newcommand{\calX}{\mathcal{X}}

\newcommand{\calF}{\mathcal{F}}
\newcommand{\calL}{\mathcal{L}}

\newcommand{\calR}{\mathcal{R}}

\newcommand{\calI}{\mathcal{I}}

\newcommand{\calU}{\mathcal{U}}
\newcommand{\calH}{\mathcal{H}}

\newcommand{\al}{\alpha}

\newcommand{\ga}{\gamma}

\newcommand{\bb }{\langle}
\newcommand{\pp}{\rangle}

\usepackage{multicol}
\numberwithin{equation}{subsection}

\newtheorem{Main Theorem}[Theorem]{Main Theorem}

\usepackage[colorlinks]{hyperref}
\makeatletter
\renewcommand*\env@matrix[1][*\c@MaxMatrixCols c]{%
  \hskip -\arraycolsep
  \let\@ifnextchar\new@ifnextchar
  \array{#1}}
\makeatother

\newif\ifgrading

\gradingfalse

\usepackage[colorlinks]{hyperref}
\hypersetup{linkcolor=black}
\usepackage{xcolor}
\usepackage{ wasysym }

\usepackage{tikz-cd}

\newcommand{\calG}{{\mathcal G}}
\newcommand{\calO}{{\mathcal O}}

\newcommand{\Spec}{{\rm Spec \, } }

\usepackage{xcolor}

\newcommand{\Addresses}{{
		\bigskip
		\footnotesize

        \textsc{Yau Mathematical Sciences Center, Tsinghua University,
			Beijing, 100084, China}\par\nopagebreak
		\textit{E-mail address:} 
        \texttt{jie-yang@mail.tsinghua.edu.cn}\\
        
		\textsc{Department of Mathematics, Universität Münster, Münster, 48149, Germany}\par\nopagebreak
		\textit{E-mail address:} \texttt{io.zachos@uni-muenster.de}\\
		
		\textsc{Department of Mathematics, University of Science and Technology Beijing,
			Beijing, 100083, China}\par\nopagebreak
		\textit{E-mail address:} \texttt{zhihaozhao@ustb.edu.cn}
}}	

\begin{document}

\title{On semi-stable integral models for Shimura varieties} 
	\date{}
	\author{Jie Yang, Ioannis Zachos and Zhihao Zhao}

\begin{abstract}
We construct (potentially) semi-stable integral models for a class of Shimura varieties with maximal parahoric level at an odd prime $p$. The
main input is an explicit construction of semi-stable equivariant modifications of the relevant canonical local models, where the underlying groups are Weil restrictions of unramified unitary similitude groups, symplectic similitude groups, or even orthogonal similitude groups. 
Via the local model diagram, these constructions give (potentially) semi-stable integral models of the corresponding Shimura varieties. In particular, the resulting models are regular and have reduced special fiber with normal crossings. {As an application, we deduce the unipotence of the inertia action on nearby cycles and the $\ell$-adic cohomology of the geometric generic fibers.}
\end{abstract}
	\maketitle
	\tableofcontents

\section{Introduction}\label{Intro}
\subsection{Motivation and main results}
Let $(\bG,X)$ be a Shimura datum. The associated Shimura varieties $$\Sh_\rK(\bG,X)$$ with level $\rK$ are quasi-projective smooth varieties over the reflex field $\bE$. They are central objects in arithmetic geometry and number theory. A fundamental problem is to construct integral models of $\Sh_\rK(\bG,X)$ with ``nice" geometric properties. In particular, one would like to understand when such integral models are regular, semi-stable, or potentially semi-stable. This is a hard problem and fits into a broader program about the resolution of singularities in mixed characteristics.  

 Let $p$ be a prime and $v|p$ be a place of $\bE$. Write $E$ for the completion of $\bE$ at $v$. Assume $\rK=\rK_p\rK^p$ and $\rK_p\sset \bG(\BQ_p)$ is parahoric. Pappas and Rapoport \cite{pappas2024p}, using the theory of $p$-adic shtukas, formulated a conjectural theory of \dfn{canonical integral models} over $\CO_E$ of the pro-system $\cbra{\Sh_{\rK_p\rK^p}(\bG,X)}_{\rK^p}$ which are unique and functorial. This conjecture has been verified, building on \cite{kisin2018integral,kisin2026integral}, when $(\bG,X)$ is of Hodge type, and when $(\bG,X)$ is of abelian type with $p>2$; see \cite{pappas2024p,daniels2025canonical,Wu,MW26}.
When $\rK_p$ is hyperspecial, the canonical integral model $\sS_\rK$ is smooth over the base $p$-adic ring; when $\rK_p$ is a general parahoric, however, singularities of $\sS_\rK$ often appear, and how to resolve these singularities becomes a natural and important problem.

A guiding principle is that the singularities of canonical integral models of Shimura varieties should be controlled by \dfn{canonical local models}. In the PEL setting, this philosophy goes back to Rapoport--Zink \cite{RZbook}. It was later extended in greater generality to abelian type Shimura varieties by \cite{kisin2018integral,kisin2026integral}, at least when $p>2$ (see also some progress in \cite{yang20232} for $p=2$). Let $$\RM^\loc_{\CG,\mu}$$ denote the canonical local model attached to a local model triple $(G,\CG,\mu)$, see Definition \ref{defn-lmt},  with reflex field $E/\BQ_p$. {Assume that $p$ is odd, $G$ is tame and the adjoint group $G^\ad$ is absolutely simple,} He--Pappas--Rapoport \cite{HPR} determined the complete list of $\RM^\loc_{\CG,\mu}$ with good resp. semi-stable reduction. In particular, their work classifies the smooth, resp. semi-stable canonical integral models of $\Sh_\rK(\bG,X)$ of abelian type when $p>2$, $\bG_{\bQ_p}$ is tamely ramified and $\bG_{\BQ_p}^\ad$ is absolutely simple. More generally, one expects (\cite[\S 8.4]{PRS}, \cite[\S 2.12]{P}) the following.
\begin{conj}[{see Conjecture \ref{conj-modif}}]  \label{intro-conj}
   There exists a $\CG_{\CO_E}$-\dfn{equivariant modification} $$N\lra \RM^\loc_{\CG,\mu},$$  which is an isomorphism on the generic fibers, such that $N$ is regular and its special fiber is a (possibly non-reduced) divisor with simple normal crossings. 
   
   Here a $\CG_{\CO_E}$-equivariant modification of $\RM^\loc_{\CG,\mu}$ is a $\CG_{\CO_E}$-equivariant proper birational morphism to $\RM^\loc_{\CG,\mu}$ over $\CO_E$. 
\end{conj}  

Via the local model diagram, an equivariant modification as in Conjecture \ref{intro-conj} gives rise to a regular integral model of the corresponding Shimura variety. For previous results toward this conjecture and related constructions, see \S \ref{section-canonicalloc}.

The purpose of the present paper is to construct (potentially) semi-stable $p$-adic integral models of Shimura varieties, with maximal parahoric level, for a large class of PEL type Shimura varieties. 

We now state the main result more precisely. Let $p>2$ be a prime number throughout the paper. Let $$(\bB, *,\bV, \pair{\ ,\ })$$ be a PEL datum: $\bB$ is a finite dimensional semisimple $\BQ$-algebra with positive involution $*$, and $\bV$ is a non-degenerate skew-hermitian $\bB$-module with respect to the non-degenerate alternating pairing $\pair{\ ,\ }$. Set \begin{flalign*}
	\bG \coloneqq \cbra{g\in \GL_B(V)\ |\ \pair{gx,gy}=c(g)\pair{x,y} \text{\ for $x,y\in V$ and some $c(g)\in \BG_m$}}.
\end{flalign*}
Let $\bh: \Res_{\BC/\BR}\BG_m\ra \bG_\BR$ be a homomorphism defining on $\bV_\BR$ a Hodge structure of type $\cbra{(0,1),(1,0)}$ such that the pairing $\pair{-,\bh(i)-}$ is symmetric and positive definite on $\bV_\BR$. Let $\rK_p\sset \bG(\BQ_p)$ be a maximal parahoric subgroup. For $\rK=\rK_p\rK^p$ with sufficiently small $\rK^p$, denote by $\Sh_\rK(\bG,X)$ the Shimura variety over the reflex field $\bE$ attached to $(\bG,\bh)$. We assume the following: 
    
    
\begin{enumerate}
    \item[($\dagger$)]  \label{dagger} $\bB_{\BQ_p}$ is isomorphic to a product of matrix algebras over finite field extensions of $\BQ_p$, such that factors are either stable by the action of $*$ or exchanged two by two by $*$. 
    
    Moreover, we assume that the adjoint group $\bG_{\BQ_p}^\ad$ has \dfn{no} factors of ramified unitary groups. 
    
\end{enumerate}

Let $\sS_\rK$ be the associated canonical integral model constructed in \cite[\S 8.2.4]{PZ}; see also \cite{kisin2026integral}. Let $\CG$ be the parahoric group scheme corresponding to $\rK_p$, and let $\mu$ be the $\bG(\ol{\bQ}_p)$-conjugacy class of the cocharacter associated with the Hodge cocharacter induced by $\bh$. Write $\RM^\loc_{\CG,\mu}$ for the associated canonical local model. Since $p>2$ and $(\bG,\bh)$ is of PEL type, there exists a local model diagram over $\CO_E$ 
\begin{equation}
\begin{tikzcd}[column sep={5em, between origins}, row sep={2em, between origins}]
&  \arrow[ld, "\alpha"']\wt{\sS}_\rK\arrow[rd, "\beta"]&\\
\sS_\rK   &&   \RM^\loc_{\CG,\mu} \ ,
\end{tikzcd}
\end{equation}
where $\alpha$ is a $\CG$-torsor and $\beta$ is smooth and $\CG$-equivariant; {see Remark \ref{rmk-lmdpara}}.

Our main local result is the following.

\begin{thm}\label{IntroductionMainResultMloc}
	Assume $(\dagger)$. Let $\CG$ be a maximal parahoric group scheme. Then there exists a finite extension $E'$ of $E$ and a $\CG_{\CO_{E'}}$-equivariant modification $\RM^{\rm ss}\ra \RM^\loc_{\CG,\mu}\otimes_{\CO_E}\CO_{E'}$ such that $\RM^{\rm ss}$ has semi-stable reduction over $\CO_{E'}$.
\end{thm}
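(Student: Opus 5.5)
We will reduce the construction to a small number of explicit local models and then build each modification by hand, through successive equivariant blow-ups. First we reduce to absolutely simple pieces. Under $(\dagger)$, the group $G=\bG_{\BQ_p}$ is, up to center and similitude factors, a product of Weil restrictions $\Res_{F/\BQ_p}H$. Here $H$ is an unramified unitary similitude group, $\GSp$, $\GO$ (even orthogonal) or $\GL_n$, the last coming from factors exchanged by $*$. The canonical local model $\RM^\loc_{\CG,\mu}$ is compatible with products and with passing to $z$-extensions and central isogenies. After a finite extension splitting $F$, it also decomposes, up to the standard identification, into a product of local models for $\Res_{F/\BQ_p}H$ attached to the individual embeddings of $F$. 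A product of semi-stable schemes is not semi-stable, so this reduction must be done with care. We will first base change to $E'$ containing the Galois closure of $F$ and a suitable root of the uniformizer. We then use the known description of $\RM^\loc$ for Weil restrictions as the closure of the generic fiber inside a product of Grassmannian-type local models $\prod_{\phi}\RM_\phi$ over $\CO_{E'}$, in the style of Pappas--Rapoport splitting models. When $F/\BQ_p$ is ramified we first pass to a splitting-model-type modification $\RM^{\rm spl}\to\RM^\loc\otimes\CO_{E'}$, which is equivariant and an isomorphism on generic fibers. Its special fiber is an iterated fibration whose singular pieces are local models of the unramified groups $H$ with minuscule $\mu_\phi$ at a maximal parahoric.

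Next we treat the building blocks, namely $H$ unramified unitary, symplectic or even orthogonal, at a maximal parahoric, possibly over a ramified base. Here we write the local model explicitly as a closed subscheme of a Grassmannian of $\CO$-lattices in a self-dual/$\pi$-modular lattice chain. We identify an affine chart around the worst point, the unique closed $\CG$-orbit, as a space of matrices $X$ with conditions such as $X^2=\pi$ (or a symmetric/hermitian variant) and rank bounds. On this chart we blow up successively the $\CG$-stable closed orbit closures in order of increasing dimension, following the strategy of Pappas--Rapoport and He--Pappas--Rapoport for the semi-stable cases. Because the blown-up centers are $\CG$-stable, the result is $\CG_{\CO_{E'}}$-equivariant. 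It is also an isomorphism on generic fibers, since the centers lie in the special fiber. In explicit coordinates we check that each blow-up is regular and that the special fiber becomes $\sum E_i$ with each exceptional divisor of multiplicity one, possibly after a further ramified base change $\pi=\varpi^e$ followed by normalization. This is where the ``potentially'' enters: the extension $E'$ is chosen so that all multiplicities become $1$ after normalized base change, à la semistable reduction for toroidal singularities. The local equations should turn out to be toric, of the form $\prod x_i^{a_i}=\varpi$. In that case we can invoke toroidal methods (Kempf--Knudsen--Mumford--Saint-Donat): after a base change of ramification $\mathrm{lcm}(a_i)$, a suitable equivariant toric subdivision gives semi-stable reduction.

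Finally we assemble the factors. For the product over embeddings and over simple factors, we take the fiber product of the modified models over $\CO_{E'}$. Locally this looks like $\prod x_i=\varpi,\ \prod y_j=\varpi$, which is again toroidal. We therefore apply one more equivariant toric resolution, blowing up the ideals $(x_i,y_j)$ in the standard order, to reach semi-stable reduction; this is the classical resolution of products of semi-stable schemes. The main obstacle, I expect, is the middle step. We must show that the iterated blow-ups of orbit closures on the orthogonal and unitary maximal parahoric local models really yield toroidal singularities with explicitly controllable equations, and in particular that the blow-ups remain regular at points where several strata meet. I would first attempt this on explicit matrix charts, using the $\CG$-action to move any point of the special fiber into a normal-form chart. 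Only then would I verify the toric form coordinate by coordinate.
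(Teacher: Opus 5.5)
Your outline follows the paper's architecture: functoriality of the canonical local model, splitting models that are étale locally products of unramified maximal-parahoric local models, successive blow-ups along the chain $S_0\subset S_1\subset\cdots$ of Schubert varieties in increasing dimension, and Hartl's resolution of products. \textbf{But the step carrying all the content is left unproven.} You only assert that these blow-ups produce semi-stable models (``the local equations should turn out to be toric''). The KKMS fallback presupposes exactly what is unknown: the worst-point charts $X^tHX=XHX^t=-\pi_0H$ (plus spin relations) and $XJX^t=X^tJX=-\pi_0J$ are determinantal, not toric, and nothing in your plan shows the blown-up schemes become toroidal.

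The paper obtains semi-stability directly, with no further ramification, normalization or subdivision, using ideas absent from your proposal.
\begin{enumerate}
\item Base change to $F_0(\sqrt{-\pi_0})$, so that the chart equations become homogeneous quadrics in $(X,\pi)$.
\item Introduce the self-reproducing family $\mathcal{U}(d,\ell)$, defined by $X^tHX=XHX^t=T_0^2H$ and $\pi=T_0T_1\cdots T_\ell$ (plus spin relations in the orthogonal case). Prove it is flat and integral: the special fiber is a union of $\ell+1$ integral components, and G\"ortz's topological flatness criterion applies.
\item Show that the blow-up along $(X,T_0)$ is covered by two kinds of charts:
\begin{itemize}
\item $D_+(\alpha)\simeq C_d\times\operatorname{Spec}\mathcal{O}_F[T_0,\dots,T_\ell]/(\pi-T_0\cdots T_\ell)$, with $C_d$ a (component of a) split orthogonal or symplectic group;
\item $D_+(y_{i_0,j_0})\simeq\mathcal{U}(d-1,\ell+1)\times\mathbb{A}^m$, obtained via the Schur-complement substitution $z_{s,t}=y_{s,t}-y_{s,j_0}y_{i_0,t}$. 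Integrality of $\mathcal{U}(d-1,\ell+1)$ is what upgrades the evident closed immersion to an isomorphism.
\end{itemize}
\item Identify the strict transform of $V(\pi,\wedge^{r+1}X)$ with $V(T_0,\wedge^{r}Z)$, using integrality of $k[X]/(X^tHX,XHX^t,\wedge^{r+1}X)$. In the symplectic case this integrality is new, proved via De Concini's standard monomial theory.
\end{enumerate}
Induction on $d$ then leaves only $\alpha$-charts and charts $\mathbb{A}^{2i^2}\times\operatorname{Spec}\mathcal{O}_F[T_0,\dots,T_i]/(\pi-T_0\cdots T_i)$, all semi-stable.

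There are also secondary problems.
\begin{itemize}
\item \emph{Assembly.} For ramified $F$ this is not a fiber product. The splitting model is related to $\prod_\sigma\mathrm{M}^{\mathrm{loc}}_\sigma$ only through the torsor diagram $\mathrm{M}^{\mathrm{spl}}\leftarrow\widetilde{\mathrm{M}}^{\mathrm{spl}}\to\prod_\sigma\mathrm{M}^{\mathrm{loc}}_\sigma$. You must check that Hartl's centers (products of irreducible components, i.e.\ Schubert varieties) are stable under the torsor group $\mathcal{H}$. You must then descend via Pappas's linear modification while tracking the $\mathcal{G}$-action.
\item \emph{Chart model.} ``$X^2=\pi$'' is the ramified-unitary picture, not the relevant one here.
\item \emph{Split orthogonal case.} The special fiber has two top-dimensional components, so only $S_0,\dots,S_{i-1}$ may be blown up. The spin relations must be carried through the induction, with a sign change depending on $(i_0,j_0)$.
\item \emph{Unramified unitary block.} The paper treats type (AU) factors as needing no modification and gives no blow-up analysis for them. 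At non-hyperspecial maximal vertices these local models are nevertheless singular: after unramified base change they are $\mathrm{GL}_n$ local models for a two-lattice periodic chain. So in your plan this block needs its own argument.
\end{itemize}
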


\begin{remark}
The finite extension $E'$ in the above theorem may be chosen explicitly: $E'$ is the compositum of $E$ with fields obtained from the Galois closures of the centers of the matrix-algebra factors in $(\dagger)$ by at most two successive quadratic extensions; see \S\ref{NcyclesEcohomology}.
\end{remark}

\begin{corollary}
Assume $(\dagger)$ and suppose that $\CG$ is maximal parahoric. Then
Conjecture \ref{intro-conj} holds after a finite extension of the reflex field. 
\end{corollary}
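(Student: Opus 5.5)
The corollary follows from Theorem \ref{IntroductionMainResultMloc}; the only task is to check that its output meets the requirements of Conjecture \ref{intro-conj}, with $\CO_E$ replaced by $\CO_{E'}$.

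First, choose $E'/E$ and the $\CG_{\CO_{E'}}$-equivariant modification
$$\pi\colon N:=\RM^{\rm ss}\lra \RM^\loc_{\CG,\mu}\otimes_{\CO_E}\CO_{E'}$$
as provided by Theorem \ref{IntroductionMainResultMloc}. By definition, an equivariant modification is proper, birational and $\CG_{\CO_{E'}}$-equivariant.

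Second, check that $\pi$ is an isomorphism on generic fibers. The generic fiber of $\RM^\loc_{\CG,\mu}$ is the flag variety $G/P_\mu$ over $E$, which is smooth. In the construction of $\RM^{\rm ss}$, the modification is obtained by successive blow-ups along $\CG$-stable closed subschemes supported in the special fiber. Here I would invoke the explicit construction of \S\ref{NcyclesEcohomology} and the preceding sections. Hence $\pi[1/p]$ is an isomorphism. If one only knew that $\pi$ is proper and birational, one would argue as follows. The homogeneous flag variety has a single $G_{E'}$-orbit, and $\pi$ is equivariant and birational. So the locus over which $\pi$ fails to be an isomorphism is $G$-stable and proper in the generic fiber, hence empty there.

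Third, check regularity and the normal crossings condition. Semi-stable reduction over $\CO_{E'}$ means that $N$ is étale locally isomorphic to $\CO_{E'}[x_1,\dots,x_n]/(x_1\cdots x_r-\varpi')$. Such a scheme is regular, and its special fiber $\{x_1\cdots x_r=0\}$ is a reduced divisor with normal crossings. A reduced divisor is in particular a "possibly non-reduced" divisor, so the conjecture's requirement on the special fiber is met.

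The one subtle point is \emph{simple} normal crossings, meaning that the irreducible components of the special fiber are themselves regular. The semi-stable models in this paper are built as iterated blow-ups whose exceptional divisors, and the strict transforms of the components of the special fiber, are regular. I would extract this from the construction itself. Failing that, one can blow up the self-intersecting components, which is a standard procedure: blowing up a normal crossings model along the intersection loci preserves regularity and equivariance and yields strict normal crossings. This is the main point to verify. Everything else is formal once Theorem \ref{IntroductionMainResultMloc} is granted.
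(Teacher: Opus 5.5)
Your proposal is correct and follows the paper's route. The corollary is read off from Theorem~\ref{IntroductionMainResultMloc}: the semi-stable model is an equivariant modification that is an isomorphism on generic fibers, hence regular with a reduced normal-crossings special fiber. (In the Weil-restriction case the modification factors through the splitting model rather than being a pure blow-up of $\RM^\loc_{\CG,\mu}$, but your homogeneity argument covers this anyway.)

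Your worry about \emph{simple} normal crossings is already settled by the paper's Zariski-local definition of semi-stability, so no extra blow-up is needed. Each closed point has an open neighborhood smooth over $\CO_{E'}[x_1,\ldots,x_s]/(x_1\cdots x_s-\varpi')$. Hence the $x_j$ vanishing at the point extend to a regular system of parameters of the local ring, and each irreducible component is locally the smooth pullback of a single $V(x_j)$.
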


We emphasize that our construction is explicit and uniform (to some extent): the regular integral models $\RM^{\rm ss}$ are obtained by successive blow-ups of the canonical local models or splitting models along (affine) Schubert varieties in the special fiber. Along the way, we also obtain an explicit moduli description of these Schubert varieties, which is of independent interest. 

As a consequence of the local model diagram, we obtain the following global result.

\begin{corollary}\label{introcoro-14}
	Assume $(\dagger)$. Then, after a finite extension $E'$ of $E$, there exists a semi-stable integral model $\sS^{\rm ss}_\rK$ of $\Sh_{\rK}(\bG,X)_{E'}$ with maximal parahoric level at $p$. 
\end{corollary}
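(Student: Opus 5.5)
We deduce Corollary \ref{introcoro-14} from Theorem \ref{IntroductionMainResultMloc} by pulling back the modification through the local model diagram. Base change everything to $\CO_{E'}$, with $E'$ as in Theorem \ref{IntroductionMainResultMloc}. This gives a diagram $\sS_{\rK,\CO_{E'}} \xleftarrow{\alpha} \wt{\sS}_{\rK,\CO_{E'}} \xrightarrow{\beta} \RM^\loc_{\CG,\mu}\otimes\CO_{E'}$. Here $\alpha$ is a $\CG_{\CO_{E'}}$-torsor and $\beta$ is smooth and $\CG_{\CO_{E'}}$-equivariant (Remark \ref{rmk-lmdpara}).

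Let $\RM^{\rm ss}\to \RM^\loc_{\CG,\mu}\otimes\CO_{E'}$ be the $\CG_{\CO_{E'}}$-equivariant modification. We form the fiber product $\wt{\sS}^{\rm ss}:=\wt{\sS}_{\rK,\CO_{E'}}\times_{\RM^\loc}\RM^{\rm ss}$.
\begin{itemize}
\item The projection $\wt{\sS}^{\rm ss}\to\RM^{\rm ss}$ is smooth, being a base change of $\beta$.
\item Semi-stability is \'etale-local and is preserved under smooth morphisms, since smooth maps are \'etale-locally of the form $\BA^n\times$. Hence $\wt{\sS}^{\rm ss}$ is regular with reduced normal crossings special fiber over $\CO_{E'}$.
\item The map $\wt{\sS}^{\rm ss}\to\wt{\sS}_{\rK}$ is proper and is an isomorphism on generic fibers.
\item $\wt{\sS}^{\rm ss}$ carries a diagonal $\CG$-action, because the modification is equivariant.
\end{itemize}

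Next we descend along the torsor $\alpha$. We set $\sS^{\rm ss}_\rK:=[\CG\backslash \wt{\sS}^{\rm ss}]$. This quotient is an algebraic space, and the key point is that it is in fact a scheme. Concretely, $\wt{\sS}^{\rm ss}$ is a twisted form over $\sS_\rK$: \'etale (or smooth) locally on $\sS_\rK$ where $\alpha$ trivializes, $\wt{\sS}^{\rm ss}\cong \CG\times U$, and the quotient is $U\times_{\RM^\loc}\RM^{\rm ss}$.

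To globalize this, consider the associated bundle $\wt{\sS}_\rK\times^{\CG}\RM^{\rm ss}\to \wt{\sS}_\rK\times^{\CG}\RM^\loc$. The section induced by $\beta$ lets us pull it back to $\sS_\rK$. Since $\RM^{\rm ss}\to\RM^\loc$ is projective and equivariant, we can construct it as a blow-up of an equivariant ideal sheaf; iterated blow-ups along $\CG$-stable Schubert varieties have this form. The twisted blow-up is then the blow-up of $\sS_\rK\otimes\CO_{E'}$ along the descended ideal sheaf, which is a scheme and is projective over $\sS_\rK$.

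Finally, we check the required properties of $\sS^{\rm ss}_\rK$. Since $\wt{\sS}^{\rm ss}\to\sS^{\rm ss}_\rK$ is a smooth surjective torsor, regularity and normal crossings of the reduced special fiber descend. So do flatness and semi-stability, since they are smooth-local properties. The generic fiber is $\Sh_\rK(\bG,X)_{E'}$ because the modification is an isomorphism there.

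The main obstacle is representability of the descended space by a scheme. We handle it by expressing $\RM^{\rm ss}$ as a successive blow-up of $\CG$-stable closed subschemes, and blow-ups commute with flat base change. Under that description, $\sS^{\rm ss}_\rK$ is the corresponding successive blow-up of $\sS_\rK\otimes\CO_{E'}$ along the closed subschemes $\alpha(\beta^{-1}(Z))$. These are well defined because $\beta^{-1}(Z)$ is $\CG$-stable.
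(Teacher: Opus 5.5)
Your strategy is the paper's: base change the local model diagram to $\CO_{E'}$, pull the equivariant modification $\RM^{\rm ss}\to\RM^\loc_{\CG,\mu}\otimes_{\CO_E}\CO_{E'}$ back along $\beta$, and descend along the torsor $\alpha$. The paper packages this as the cartesian square over $[\RM^\loc_{\CG,\mu}\otimes_{\CO_E}\CO_{E'}/\CG_{\CO_{E'}}]$ and calls the result a linear modification in the sense of \cite[\S 2]{P}.

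The gap is in the step you yourself single out, namely representability. You reduce it to the claim that $\RM^{\rm ss}\to\RM^\loc_{\CG,\mu}\otimes_{\CO_E}\CO_{E'}$ is an iterated blow-up along $\CG$-stable closed subschemes. That holds for $\GSp_{2g}$ and $\GO$ over $F_0$ (compare Remark \ref{rem:KR-strata-centers}) and for Hartl's blow-ups of products. It does not cover the Weil-restricted factors of types (C) and (D). There the paper's $\RM^{\rm ss}$ is a successive blow-up of the splitting model $\RM^{\rm spl}_\CL$, composed with the forgetful map $\RM^{\rm spl}_\CL\to\RM^\loc(\CL,\mu)\otimes_{\CO_E}\CO_L$, and this forgetful map is not constructed as a blow-up at all. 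Your fallback principle, that a projective equivariant modification is the blow-up of an equivariant ideal sheaf, is not justified. The standard proof that projective birational morphisms are blow-ups involves choices that need not be $\CG$-equivariant.

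The fix is what the paper relies on. You only need $N\to M$ to be $\Proj$ of a $\CG$-equivariant graded quasi-coherent $\CO_M$-algebra, equivalently to carry a $\CG$-linearized relatively ample line bundle. The pull-back of such an algebra by $\beta$ is $\CG$-equivariant on $\wt{\sS}_\rK$, so it descends along $\alpha$. Its $\Proj$ is then a scheme, projective over $\sS_{\rK,\CO_{E'}}$; blow-ups along $\CG$-stable centers are the special case $\bigoplus_n\mathcal{I}^n$. For the splitting step this structure exists because $\RM^{\rm nspl}_\CL$ is a $\CG$-stable closed subscheme of $\RM^{\rm naive}_\CL\otimes_{\CO_E}\CO_L$ times a product of Grassmannians of the $\Lambda_{i,j}$, on which $\CG$ acts linearly (Pl\"ucker). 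With this in place your argument goes through. Your step-by-step descent of the centers $\beta^{-1}(Z)$ is a correct, explicit version of the same descent for the blow-up steps.

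A smaller point: strict semi-stability, the paper's definition, is not a smooth-local property, since a self-intersecting component can split after an \'etale cover. To descend it you should add that the components of the special fiber of $\sS^{\rm ss}_\rK$ are regular. This holds because they descend from the $\CG$-stable regular components of the special fiber of $\RM^{\rm ss}$. The paper passes over this point as well.
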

By construction, we have a projective morphism $$f\colon \sS^{\rm ss}\coloneqq \sS^{\rm ss}_\rK\lra \sS\coloneqq \sS_{\rK}.$$ 
Denote by $R\Psi\coloneqq R\Psi^\sS(\text{resp. } R\Psi^{\rm ss}\coloneqq R\Psi^{\sS^{\rm ss}})$ the nearby cycles sheaves for $\sS$ (resp. $\sS^{\rm ss}$); see \S\ref{NcyclesEcohomology} for more details. 

\begin{corollary}[Theorem \ref{prop:inertia-action}]
    Assume $(\dagger)$. Let $E'/E$ be the finite extension as in Theorem \ref{IntroductionMainResultMloc} (see \eqref{eq-E'} for an explicit description). Write $I_{p}$ for the inertia group of $E'$. Then we have the following. 

\begin{altenumerate}
    \item For every prime
$\ell\neq p$ and any integer $r\geq 0$, the inertia group $I_p$ acts trivially on $R^r\Psi^{\rm ss}(\ol{\BQ}_\ell)$, and $I_p$ acts unipotently on $R^r\Psi(\ol{\BQ}_\ell)$.
\item We have a $\Gal(\ol{\BQ}_p/E')$-equivariant isomorphism $Rf_{\ol{s},*} R\Psi^{{\rm ss}}(\ol{\mathbb Q}_\ell)\simeq R\Psi(\ol{\mathbb Q}_\ell)$. In particular, we have \begin{flalign*}
    &H^r_c(\sS^{\rm ss}_{\ol{s}},R\Psi^{\rm ss}(\ol{\BQ}_\ell))\simeq H^r_c(\sS_{\ol{s}},R\Psi(\ol{\BQ}_\ell)) \text{\ and\ } \\  &H^r(\sS^{\rm ss}_{\ol{s}},R\Psi^{\rm ss}(\ol{\BQ}_\ell))\simeq H^r(\sS_{\ol{s}},R\Psi(\ol{\BQ}_\ell))
\end{flalign*} as $\Gal(\ol{\BQ}_p/E')$-modules. Moreover, $I_p$ acts unipotently on the above \etale cohomologies.
   \item For each integer $r\geq 0$, we have \begin{flalign*}
    &H^r_c(\sS_{\ol{\eta}},\ol{\BQ}_\ell)\simeq H^r_c(\sS_{\ol{s}},R\Psi(\ol{\BQ}_\ell))\ \text{and\ }
    H^r(\sS_{\ol{\eta}},\ol{\BQ}_\ell)\simeq H^r(\sS_{\ol{s}},R\Psi(\ol{\BQ}_\ell))
\end{flalign*} as $\Gal(\ol{\BQ}_p/E')$-modules. In particular, $I_p$ acts unipotently on
\[
   H^r\bigl(
      \operatorname{Sh}_{\rK}(\bG,X)_{\overline E},
      \ol{\mathbb Q}_\ell
   \bigr) \text{\ and\ } H^r_c\bigl(
      \operatorname{Sh}_{\rK}(\bG,X)_{\overline E},
      \ol{\mathbb Q}_\ell
   \bigr)
\]
for every $r\geq0$.
\end{altenumerate}
\end{corollary}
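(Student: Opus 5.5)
We would deduce all three parts from the semi-stable model of Corollary \ref{introcoro-14}, together with standard facts on nearby cycles and proper base change. Set $S=\Spec\CO_{E'}$, with generic point $\eta$ and closed point $s$. Base change $\sS$ to $\CO_{E'}$, so that $f\colon \sS^{\rm ss}\to\sS$ is a projective morphism over $S$ which is an isomorphism on generic fibers.

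For (1), the first step is to use that $\sS^{\rm ss}$ is étale locally isomorphic to a smooth scheme over $\CO_{E'}[x_1,\dots,x_n]/(x_1\cdots x_r-\varpi)$. This follows from the local model diagram: $\beta$ is smooth and $\RM^{\rm ss}$ is semi-stable, so the pullback $\wt{\sS}^{\rm ss}\to\RM^{\rm ss}$ is smooth. Semi-stability descends along the $\CG$-torsor $\alpha$, because $\CG$ is smooth, and semi-stability is an étale- (indeed smooth-) local property. For such strictly semi-stable schemes, the results of Rapoport--Zink and Illusie (SGA 7, Exp.\ I, with the computation of the cohomology sheaves $R^r\Psi$ in terms of exterior powers of the constant sheaves on the branches) show that the tame inertia acts trivially on $R^r\Psi^{\rm ss}$ and the wild inertia acts trivially as well. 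So $I_p$ acts trivially on $R^r\Psi^{\rm ss}(\ol{\BQ}_\ell)$.

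Next comes (2). Since $f$ is proper, nearby cycles commute with proper pushforward: $Rf_{\ol s,*}R\Psi^{\rm ss}\simeq R\Psi(Rf_{\eta,*}\ol{\BQ}_\ell)$. Because $f_\eta$ is an isomorphism, the right-hand side is $R\Psi(\ol{\BQ}_\ell)$, and this identification is Galois equivariant. Taking (compactly supported) cohomology of both sides gives the stated isomorphisms; for $H_c$ we use that $f_{\ol s}$ is proper, so $Rf_!=Rf_*$. From the trivial action on each $R^r\Psi^{\rm ss}$ we get a finite filtration, namely the canonical truncation of length at most $\dim$, whose graded pieces have trivial $I_p$-action. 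The hypercohomology spectral sequence then shows that $I_p$ acts unipotently on $H^r(\sS^{\rm ss}_{\ol s},R\Psi^{\rm ss})$, with the same argument for $H_c$. Applying $R^rf_{\ol s,*}$ with the same filtration argument shows that $I_p$ acts unipotently on $R^r\Psi$, which completes the second half of (1).

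For (3), in the compactly supported case we would argue as follows. Choose a compactification that is not needed: since $f$ is an isomorphism on generic fibers, it suffices to prove the statement for $\sS^{\rm ss}$. For $H^r$ we use that $H^r(\sS_{\ol\eta})\simeq H^r(\sS_{\ol s},R\Psi)$ holds whenever $\sS$ is proper over $S$, by proper base change. In the non-proper case, we would invoke the toroidal compactifications of Lan or Madapusi, or the result of Lan--Stroh that nearby cycles commute with the cohomology of the integral model. This gives $H^r(\sS_{\ol\eta},\ol{\BQ}_\ell)\simeq H^r(\sS_{\ol s},R\Psi)$, and likewise for $H_c$; this input is essentially known for the PEL (and Hodge type) integral models of \cite{PZ}. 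Unipotence then follows from (2).

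The main obstacle is this last comparison for non-proper $\sS$. We would first try to cite Lan--Stroh's "nearby cycles of automorphic étale sheaves" for the models of \cite{PZ}. Failing that, we would construct compactified semi-stable models by extending the blow-ups to minimal or toroidal compactifications, using again the local model diagram at the boundary.
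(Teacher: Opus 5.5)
Your proposal is essentially the paper's proof. For (1) and (2) it uses the same three ingredients: the triviality of inertia on $R^r\Psi^{\rm ss}$ for semi-stable schemes (\cite[Theorem~4.1]{GortzNearby}), the compatibility of nearby cycles with the proper map $f$, which is an isomorphism on generic fibers, and your truncation/spectral-sequence argument, which the paper packages as Lemma~\ref{lem-f*f!} following \cite{GortzHaines}. For (3) the paper, like you, imports the comparison for the non-proper model rather than proving it, namely the (DL) case of \cite[Proposition~5.21]{Wu} (cf.\ \cite[Corollary~4.6]{lan2018nearbyII}); the one point you should add is that $\sS_\rK$ agrees with the model of \cite{Wu} by the proof of \cite[Theorem~D]{MW26}, and no compactified semi-stable model is needed.
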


\begin{remark}
   \begin{altenumerate}
       \item 
        The case in which $\bG^\ad_{\BQ_p}$ has a ramified unitary factor will be treated in forthcoming work.
       \item In Corollary \ref{introcoro-14}, we only discuss Shimura varieties of PEL type. However, similar results should also apply to certain Shimura varieties of abelian type whose underlying reductive group differs from $\bG$ by a central extension; canonical local models are isomorphic under such central extensions by Proposition \ref{introprop1} (3).
   \end{altenumerate} 
\end{remark}


\subsection{Strategy of the proof}
We now outline the proof. There are three main reduction steps. First, we use functorial properties of $\RM^\loc_{\CG,\mu}$ under unramified base change, products, and central extension.

\begin{prop}\label{introprop1}
Let $(G,\CG,\mu)$ be a local model triple ({\rm LM} triple) over a $p$-adic field $F$ with reflex field $E$. 
    \begin{altenumerate}
    	\item Let $F'/F$ be a finite unramified extension, then the {\rm LM} triple $(G\otimes_FF', \CG\otimes_{\CO_F}\CO_{F'},\mu\otimes_FF')$ has reflex field $E'=EF'$, and \begin{flalign*}
		\RM^\loc_{\CG,\mu}\otimes_{\CO_E}\CO_{E'} \simeq \RM^\loc_{\CG\otimes\CO_{F'},\mu\otimes F'}.
	\end{flalign*}
	    \item Let $(G_i,\CG_i,\mu_i)$ be two {\rm LM} triples over $F$ with reflex fields $E_i$ for $i=1,2$.  If $(G,\CG,\mu) = (G_1,\CG_1,\mu_1)\times (G_2,\CG_2,\mu_2)$, then for $E=E_1E_2$, we have \begin{flalign*}
	    	\RM^\loc_{\CG,\mu} \simeq (\RM^\loc_{\CG_1,\mu_1}\otimes_{\CO_{E_1}}\CO_E)\times (\RM^\loc_{\CG_2,\mu_2}\otimes_{\CO_{E_2}}\CO_E).	
	    \end{flalign*}
	    \item If $\phi: (G,\CG,\mu)\ra (G',\CG',\mu')$ is a morphism of {\rm LM} triples such that $\phi: G\ra G'$ is a central extension of $G'$, then $\phi$ induces a $\CG_{\CO_E}$-equivariant isomorphism \begin{flalign*}
	    	\RM^\loc_{\CG,\mu}\simto \RM^\loc_{\CG',\mu'}\otimes_{\CO_{E'}}\CO_E.
	    \end{flalign*}
    \end{altenumerate}
\end{prop}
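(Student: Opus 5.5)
The plan is to use the characterization of the canonical local model $\RM^\loc_{\CG,\mu}$ as the unique flat, normal, projective $\CG_{\CO_E}$-stable closed subscheme of the Beilinson--Drinfeld Grassmannian $\Gr_{\CG,\CO_E}$ with generic fiber the Schubert variety $\Gr_{G,\mu}$ (this is the Scholze--Weinstein uniqueness statement / the Pappas--Zhu construction). This characterization is a $v$-sheaf statement; combined with the fact that normal flat projective models are determined by their associated $v$-sheaves, it reduces each of the three assertions to a corresponding functoriality of the Beilinson--Drinfeld Grassmannian and of the generic Schubert variety. The essential input in every case is \emph{uniqueness}. Once we produce a candidate that is flat, normal, projective and has the right generic fiber inside the right Grassmannian, it must be the local model.

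For (1), I would first observe that the reflex field of $\mu\otimes F'$ is $EF'$, since the Galois stabilizer of the conjugacy class just shrinks by $\Gal(\overline{F}/F')$. Next, $\Gr_{\CG\otimes\CO_{F'}}\simeq \Gr_{\CG}\otimes_{\CO_F}\CO_{F'}$, because the Grassmannian is defined by loop groups over the base and commutes with the étale base change $\CO_F\to\CO_{F'}$. Then $\RM^\loc_{\CG,\mu}\otimes_{\CO_E}\CO_{E'}$ is flat and projective with the correct generic fiber, and it is normal because $\CO_E\to\CO_{E'}$ is étale and normality is preserved under étale base change. Uniqueness then gives the isomorphism.

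For (2), I would use $\Gr_{\CG_1\times\CG_2}=\Gr_{\CG_1}\times\Gr_{\CG_2}$ and $\Gr_{G,\mu}=\Gr_{G_1,\mu_1}\times\Gr_{G_2,\mu_2}$. The product of the base-changed local models is flat, projective and stable under $\CG_{\CO_E}$. Its generic fiber is correct, and its special fiber is reduced, since products of reduced varieties over an algebraically closed field are reduced. Normality of the product therefore follows from a Serre-criterion argument: flat over $\CO_E$, normal generic fiber, reduced special fiber (compare the argument in \cite{HPR}). Base change along the possibly ramified $\CO_{E_i}\to\CO_E$ is the delicate point. Here I would invoke that the special fiber of $\RM^\loc_{\CG_i,\mu_i}$ is reduced, being a union of $\mu$-admissible Schubert varieties, which again yields normality after base change.

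For (3), which I expect to be the main obstacle, a central extension induces a map $\Gr_{\CG}\to\Gr_{\CG'}$ that is not an isomorphism: it changes $\pi_1$ and hence the set of components. However, on the connected components meeting the local models it restricts to an isomorphism on the relevant partial flag varieties. The plan is as follows. Reduce to the adjoint group via $\CG\to\CG^\ad$, since both sides map compatibly and $\mu,\mu'$ have the same image $\mu^\ad$. Show that $\Gr_{G,\mu}\to\Gr_{G^\ad,\mu^\ad}$ is an isomorphism on generic fibers, because both are $G^\ad/P_{\mu^\ad}$. On special fibers, use the identification of the underlying admissible-locus Schubert varieties through the affine flag varieties for simply connected covers, using the affine Weyl group equality $W_{\rm aff}$ of $G$ and $G^\ad$ for $p\nmid|\pi_1(G_{\rm der})|$. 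Since $p>2$ and the paper's setting is tame, I would rely on the Pappas--Zhu / He--Pappas--Rapoport results that local models depend only on the adjoint data. The resulting finite birational morphism of normal flat schemes $\RM^\loc_{\CG,\mu}\to\RM^\loc_{\CG',\mu'}\otimes\CO_E$ is then an isomorphism by Zariski's main theorem, and it is $\CG_{\CO_E}$-equivariant by construction.
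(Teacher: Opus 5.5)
Your parts (1) and (2) essentially follow the paper. You reduce, via the uniqueness in Theorem~\ref{thm-repre}, to compatibility of the v-sheaf local models. You get normality of the base-changed or product schemes from reducedness of their special fibres and \cite[Proposition 9.2]{PZ}, or in (1) from \'etale base change. Two steps are glossed over. First, in mixed characteristic $\Gr_\CG$ is a v-sheaf built from $B_{\rm dR}^+$ formed with $\CO_F$-Witt vectors. So $\Gr_{\CG\otimes\CO_{F'}}\simeq\Gr_\CG\otimes_{\CO_F}\CO_{F'}$ is not a formal loop-group statement; it comes from $W_{\CO_F}(A)\simeq W_{\CO_{F'}}(A)$ for perfect $k_{F'}$-algebras $A$, plus compatibility of v-closures with unramified base change. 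Second, in (2) you must justify that $\CM_{\CG_1,\mu_1}\times\CM_{\CG_2,\mu_2}$, the diamond of your product scheme, equals $\CM_{\CG,\mu}$ and does not merely contain it. The paper does this by writing down the natural map of v-sheaves, checking it is an isomorphism over $\Spd\CO_C$ by \cite[Proposition 4.16]{AGLR22}, and v-descending.

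The genuine gap is in (3).
\begin{itemize}
\item The proposition is stated for arbitrary LM triples over a $p$-adic field, with no tameness hypothesis and no condition $p\nmid|\pi_1(G_{\rm der})|$. The paper says explicitly that \cite[Proposition 2.14]{HPR} covers the tame case and that the point is to prove the general case.
\item Your assertion that the paper's setting is tame is incorrect: the Weil restrictions $\Res_{F_1/F_0}$ treated later involve arbitrary finite separable, possibly wildly ramified, $F_1/F_0$.
\item Consequently, appealing to Pappas--Zhu and He--Pappas--Rapoport for ``local models only depend on adjoint data'', together with the $W_{\rm aff}$-comparison of Schubert varieties under $p\nmid|\pi_1(G_{\rm der})|$, amounts to citing the tame case of the very statement you are proving.
\item You also never construct the morphism of schemes $\RM^\loc_{\CG,\mu}\to\RM^\loc_{\CG',\mu'}\otimes_{\CO_{E'}}\CO_E$ to which you apply Zariski's main theorem. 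In this setting $\phi$ only induces a map of v-sheaves $\Gr_\CG\to\Gr_{\CG'}$, not a map of ind-schemes as in Pappas--Zhu.
\end{itemize}

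The fix is to stay on the v-sheaf side, exactly as for (2). Functoriality gives a map $\CM_{\CG,\mu}\to\CM_{\CG',\mu'}\times_{\Spd\CO_{E'}}\Spd\CO_E$. It becomes an isomorphism over $\Spd\CO_C$ by \cite[Proposition 4.16]{AGLR22}, hence is an isomorphism by v-descent. The uniqueness in Theorem~\ref{thm-repre} then upgrades it to a $\CG_{\CO_E}$-equivariant isomorphism of schemes. The normality of $\RM^\loc_{\CG',\mu'}\otimes_{\CO_{E'}}\CO_E$ needed there again follows from its reduced special fibre and \cite[Proposition 9.2]{PZ}. No tameness assumption enters this argument.
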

The corresponding statements are proved in the tamely ramified case in \cite[Proposition 2.14]{HPR}. Here we prove them in the general setting.

Second, we reduce the case for Weil restrictions to products of local models after passing to a splitting field. Here we use the splitting models introduced in \cite{PR2}; see also \cite{bijakowski2023geometry, shen2025f}.

\begin{prop}\label{introprop2}
	Let $K/F$ be a finite extension of $p$-adic fields. Let $L$ be the Galois closure of $K$ in $\ol{F}$. Let $G=\Res_{K/F}H$, where $H$ is $\GL_g, \GSp_{2g}$ or $\GO_{2g}$ over $K$.  We fix an ordering of the embeddings $\iota_i: K\ra \ol{F}$ for $i=1,\ldots, [K:F]$. Let $\mu$ be a geometric minuscule cocharacter of $G$, equivalently a tuple $(\mu_i)_{1\leq i\leq [K:F]}$. For the parahoric group scheme $\CG\coloneqq \Res_{\CO_K/\CO_F}\CH$, where $\CH$ is a parahoric group scheme, the associated splitting model $$\RM^\spl(\CG,\mu,(\iota_i))$$ is \etale locally isomorphic to $\prod_i\RM^\loc_{\CH_{\CO_L},\mu_i}$ over $\CO_L$. See Proposition \ref{prop-splitting} for a more precise statement. 
\end{prop}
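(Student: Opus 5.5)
Fix a uniformizer $\pi$ of $K$ and let $K_0$ be the maximal unramified subextension of $K/F$, with $e=[K:K_0]$ and $E_\pi(u)\in \CO_{K_0}[u]$ the Eisenstein polynomial of $\pi$. The first step is a reduction to the totally ramified case. By Proposition \ref{introprop1}(1), after base change to $\CO_{L}$ the factor $\CO_{K_0}\otimes_{\CO_F}\CO_L$ becomes a product of copies of $\CO_L$. Correspondingly, the group $\CG\otimes\CO_L$ decomposes, together with the lattice chain defining $\CH$, into a product over the embeddings of $K_0$. The ordering $(\iota_i)$ is chosen compatibly with this decomposition. By Proposition \ref{introprop1}(2) it therefore suffices to treat $K/F$ totally ramified of degree $e$.

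Next, recall the definition of the splitting model. It parametrizes the lattice-chain data of $\Mloc$, namely $\CO_K\otimes\CO_L$-modules $\Lambda_\CO$ for $\Lambda$ in the chain. Each $\Lambda_\CO$ is equipped with a filtration
\[
0=\CF^0_\Lambda\subset \CF^1_\Lambda\subset\cdots\subset \CF^{e}_\Lambda=\CF_\Lambda\subset \Lambda_\CO .
\]
These filtrations are subject to the following conditions:
\begin{itemize}
\item each $\CF^i_\Lambda$ is a local direct summand, of rank fixed by $\mu_1,\dots,\mu_i$;
\item $(\pi\otimes 1-1\otimes\iota_i(\pi))\CF^i_\Lambda\subset \CF^{i-1}_\Lambda$;
\item the filtrations are compatible with the transition maps of the chain;
\item in the symplectic and orthogonal cases, $\CF^i$ and $\CF^{e-i}$ are mutually orthogonal, as in \cite{PR2}.
\end{itemize}
The key computation is the following. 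Set
\[
Q_i(u)=\prod_{j>i}(u-\iota_j(\pi)).
\]
The graded piece $\CF^i/\CF^{i-1}$ then lives inside the quotient
\[
M_i=Q_i(\pi\otimes1)\,\Lambda_\CO/\ (\ldots)\,\CF^{i-1},
\]
which is a module on which $\pi\otimes 1$ acts through $\iota_i(\pi)$. So $M_i$ is a module over $\CO_L$, free of the rank of $\Lambda$ over $\CO_K$. Concretely, $M_i$ identifies with
\[
\Lambda_\CO\otimes_{\CO_K\otimes\CO_L,\iota_i}\CO_L,
\]
and the induced lattice chain there is the chain defining $\CH_{\CO_L}$. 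This gives a tower of morphisms
\[
\Mspl=X_e\to X_{e-1}\to\cdots\to X_0=\Spec\CO_L,
\]
where $X_i$ records $\CF^1,\dots,\CF^i$. Each step $X_i\to X_{i-1}$ is a fibration whose fiber is the naive local model for $(\CH_{\CO_L},\mu_i)$ attached to the chain $M_i$.

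The step I expect to be the main obstacle is showing that each step $X_i\to X_{i-1}$ is Zariski (or étale) locally trivial with fiber $\Mloc_{\CH_{\CO_L},\mu_i}$, rather than merely the naive model. One also needs the modules $M_i$, with their induced forms, to be locally isomorphic to the standard chain. For local triviality I would use the rigidity of self-dual periodic lattice chains over $\CO_L$: over a henselian local ring, or étale locally in general, any such chain with its pairing is isomorphic to the standard one. This is the smoothness of the scheme of isomorphisms, which is a torsor under the smooth parahoric $\CH$, using $p>2$ for the forms. For the pairing, the relevant fact is that the trace form composed with the $\iota_i$-component identifies the dual correctly up to the inverse different; this is where the orthogonality condition in \cite{PR2} enters.

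Given this, the local triviality of the chain gives the étale local isomorphism
\[
\Mspl\cong\prod_i\Mnaive_{\CH_{\CO_L},\mu_i}.
\]
It remains to replace the naive models by local models. Here I would define $\Mspl$ as the flat closure of its generic fiber and check that the product isomorphism is compatible with this closure. Flat closure commutes with étale localization, and the closure of a product of $\CO_L$-schemes with generic fibers $\prod(\text{flag varieties})$ is the product of the closures. This last point uses that each $\Mloc$ is flat with reduced special fiber, so the product is flat. This yields the statement.
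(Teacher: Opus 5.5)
Your architecture matches the one behind Proposition~\ref{prop-splitting}, which the paper takes from \cite{PR2} and \cite[pp.~55--56]{shen2025f}. In both, the graded pieces $\CF^l/\CF^{l-1}$ sit in auxiliary modules that are Zariski-locally isomorphic to the standard chain for $\CH_{\CO_L}$, and trivializing these modules gives the product structure. The paper adjoins trivializations $\varphi^l$ of all of them at once. This produces an $\CH$-torsor $\alpha_1$ and a smooth $\CH$-equivariant $\alpha_2$ to $\prod\RM^{\rm naive}$, both $\CG$-equivariant. You instead climb the tower $X_e\to\cdots\to X_0$ one stage at a time. That gives local product structure directly, but not by itself the global equivariant diagram. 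That diagram is what is used later (the linear modification in Theorem~\ref{thm ss Weil ortho}) to descend $\CH$-stable blow-ups of the product to $\RM^{\rm spl}$.

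The real gap is in the step you call the key computation, which is where all the content lies; as written it is wrong. The module receiving $\CF^i/\CF^{i-1}$ must be $\Upsilon^i=\ker\bigl(\pi\otimes1-1\otimes\iota_i(\pi)\mid\Lambda_S/\CF^{i-1}\bigr)$. It depends on $\CF^{i-1}$ and is in general a non-trivial vector bundle on $X_{i-1}$, not canonically $\Lambda\otimes_{\CO_K,\iota_i}\CO_L$. Otherwise the tower would be globally trivial and the naive splitting model a global product. This already fails for $\Res_{K/F}\GL_2$ with $e=2$ and $\mu_1=\mu_2=(1,0)$: there the generic fiber of $X_2$ is $\mathbb{P}^1\times\mathbb{P}^1$, but its special fiber is the Hirzebruch surface $\mathbb{P}(\mathcal{O}(1)\oplus\mathcal{O}(-1))$ over $X_1=\mathbb{P}^1$.

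What you need is that the $\Upsilon^i_\Lambda$ are locally free of rank $\mathrm{rk}_{\CO_K}\Lambda$, form a chain of the correct type, and inherit a perfect pairing under which the duality axiom becomes ``$\CF^i/\CF^{i-1}$ is Lagrangian''. This is exactly \cite[Prop.~5.2, 9.2]{PR2}, which the paper invokes. Rapoport--Zink rigidity \cite{RZbook} applies only once this is known, so your argument presupposes this input rather than supplying it. Relatedly, the duality axiom is $(\CF^l_\Lambda)^\perp=Q^{>l}(\pi)^{-1}\CF^l_{\Lambda^\vee}$ (Definition~\ref{def-naivesplmod}(5)), not ``$\CF^i\perp\CF^{e-i}$''. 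With your version, for $e=2$ nothing forces $\CF^2$ to be Lagrangian, so the fibers of your tower would not be naive local models.

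Two smaller points. First, Proposition~\ref{introprop1} is about canonical local models and does not apply to splitting models. The reduction to the totally ramified case should be done directly, via $\Lambda\otimes_{\CO_F}\CO_L=\bigoplus_j\Lambda_j$ (the indices $(i,j,l)$ of Definition~\ref{def-naivesplmod}). Second, in the last step flatness of the factors already makes the product of the closures equal to the flat closure. What needs normality and reduced special fiber (e.g.\ \cite{Goertz03}, Theorem~\ref{thm:spin-flat}) is identifying the flat closure of each naive model with $\RM^{\rm loc}_{\CH_{\CO_L},\mu_i}$.
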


Third, we use Hartl’s result on products of semi-stable schemes. 

\begin{prop}[{\cite[Proposition 2.1]{Ha}}] \label{introprop3}
	Let $X_1,\ldots, X_n$ be semi-stable schemes over a discrete valuation ring. Let $(Y_{ij})_{1\leq j\leq d_i}$ denote the irreducible components of the special fiber $\ol{X}_i$ of $X_i$. Here $d_i$ is the number of irreducible components of $\ol{X}_i$.  Then, after successively  blowing up ($(\prod_{i=1}^nd_i)$-times in total) closed subschemes $Y_{1j_1}\times\cdots\times Y_{nj_n}$, the product scheme $X_1\times\cdots\times X_n$ becomes a semi-stable scheme.
\end{prop}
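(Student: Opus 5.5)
This is Hartl's result on products of semi-stable schemes. The plan is to reduce to a local computation and then argue by induction on the number of factors. Semi-stability is étale local, and blowing up commutes with flat base change. So it suffices to treat the case where each $X_i$ is étale locally of the form $\Spec \CO[x_{i,1},\dots,x_{i,m_i}]/(x_{i,1}\cdots x_{i,r_i}-\pi)$, with the special fiber components given by $x_{i,j}=0$.

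We may also assume each $X_i$ is strictly semi-stable; otherwise we pass to an étale cover. We have to check that the global blow-up centers $Y_{1j_1}\times\cdots\times Y_{nj_n}$ restrict, in these charts, to the ideals $(x_{1,j_1},\dots,x_{n,j_n})$. They do, provided each component $Y_{ij}$ is cut out locally by a single coordinate, which strict semi-stability gives.

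\textbf{Two factors.} For the product of two factors the local model is
\[
A=\CO[x_1,\dots,x_r,y_1,\dots,y_s]/(x_1\cdots x_r-\pi,\;y_1\cdots y_s-\pi),
\]
up to smooth extra variables. We blow up successively the ideals $(x_a,y_b)$ in a fixed order, say lexicographic in $(a,b)$, always taking the strict transforms of the remaining centers. Each center $(x_a,y_b)$ is a Cartier divisor times a codimension-two locus. The blow-up has two charts:
\begin{itemize}
\item[(i)] $y_b=t\,x_a$;
\item[(ii)] $x_a=t\,y_b$.
\end{itemize}
In chart (i) the two equations become $x_1\cdots x_r=\pi$ and $t\,x_a\prod_{b'\neq b}y_{b'}=\pi$. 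Dividing one relation by the other removes the dependence, and the number of factors of one product decreases while a new variable $t$ appears.

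The order of the blow-ups ensures that after all $rs$ steps every chart has the form $z_1\cdots z_k=\pi$ with the $z$'s part of a regular system of parameters. Hence the result is regular with reduced normal crossings special fiber. This is the classical computation for products of semi-stable curves, i.e. the resolution of $xy=\pi=zw$ giving $\Spec \CO[x,y,z,w]/(xy-zw,\,xy-\pi)$, blown up along $(x,z)$.

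\textbf{Induction.} For $n$ factors we induct on $n$. We first resolve $X_1\times\cdots\times X_{n-1}$, then take the product with $X_n$ and apply the two-factor case. We then check that the total sequence of centers corresponds to the products $Y_{1j_1}\times\cdots\times Y_{nj_n}$, giving $\prod d_i$ blow-ups in total. Alternatively, we work directly with ideals $(x_{1,j_1},\dots,x_{n,j_n})$ generated by $n$ elements in lexicographic order. This is the cleaner route and matches the count in the statement.

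\textbf{Main obstacle.} The hard part is the bookkeeping of the iterated blow-ups: showing that the strict transform of each later center is still locally cut out by coordinates (so it remains a product of strict transforms), and that in every chart after the final step the equation is a single monomial in parameters equal to $\pi$. I would first verify the two-factor case with $r=s=2$ by hand. Then I would set up an invariant for the induction: after blowing up all centers with index less than a given one in lexicographic order, every chart is described by a system of equations $\prod_{j\in S_i} z_{i,j}=\pi$ in which the sets $S_i$ are controlled combinatorially. One such blow-up step should reduce a lexicographic measure of these sets.
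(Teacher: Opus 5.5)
The paper does not prove this statement; it quotes it from Hartl \cite[Proposition 2.1]{Ha}. Judged on its own, your proposal has a genuine gap: it stops exactly where the proof has to start. The claim that after the lexicographically ordered blow-ups every chart is smooth over $\mathcal{O}[z_1,\dots,z_k]/(z_1\cdots z_k-\pi)$ is asserted, never proved.

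The invariant you propose for the induction is also not the right one, because charts of these blow-ups are not cut out by equations $\prod_{j\in S_i}z_{i,j}=\pi$. Already in your chart (i), the equations $x_1\cdots x_r=\pi$ and $t\,x_a\prod_{b'\neq b}y_{b'}=\pi$ define a scheme containing $V(x_a)$. That set lies in the special fiber and has the full dimension $r+s-1$. The actual chart appears only after removing $x_a$-torsion, which gives $\prod_{a'\neq a}x_{a'}=t\prod_{b'\neq b}y_{b'}$, and you must then show that nothing further has to be removed.

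What is missing is a family of local models closed under the blow-ups, exactly parallel to the paper's $\mathcal{U}(d,\ell)$:
\[
\pi=T_0T_1\cdots T_\ell,\qquad \prod_{a=1}^{r_k}x_{k,a}=T_0\quad(1\le k\le n).
\]
For $\ell=0$ this is the standard product model. A factor with $r_k=1$ only says $x_{k,1}=T_0$, and $T_0$ lies in every center, so such factors can be dropped; assume all $r_k\ge 2$.

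Now blow up $(x_{1,1},\dots,x_{n,1})$ and look at the chart on which $x_{i,1}$ generates. It is again of this form, with $r_i$ replaced by $r_i-1$ and $\ell$ by $\ell+1$. The new coordinates are $t_k=x_{k,1}/x_{i,1}$ for $k\neq i$, $T_0'=\prod_{a\ge 2}x_{i,a}$ and $T'_{\ell+1}=x_{i,1}$. On this chart:
\begin{itemize}
\item the strict transforms of the centers with $a_i=1$ miss the chart;
\item the remaining centers become the coordinate centers of the new model, still in lexicographic order.
\end{itemize}
This gives an induction on $\sum_k(r_k-1)$. It ends when at most one $r_k\ge 2$; then the chart is $\pi=T_1\cdots T_\ell\,x_{k,1}\cdots x_{k,r_k}$ and all further centers are Cartier. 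To make the chart and strict-transform identifications rigorous, you need every member of the family to be $\mathcal{O}$-flat and integral of the expected dimension. You then conclude by a closed-immersion-plus-dimension-count argument, as in Proposition \ref{prop flat R(d,l)} and Lemma \ref{lemma chart y_{i,j}}.

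Three secondary points.
\begin{itemize}
\item Your induction on $n$ should be discarded, not ``checked''. After base change to $X_n$, its first-stage centers are $Y_{1j_1}\times\cdots\times Y_{n-1,j_{n-1}}\times\overline{X}_n$. Each is a union of $d_n$ of the prescribed centers, so this route produces a different sequence of blow-ups from the one in the statement.
\item The center $V(x_a,y_b)$ is not ``a Cartier divisor times a codimension-two locus''. It is an irreducible component of the special fiber of the product, i.e.\ a Weil divisor that fails to be Cartier at the singular points. Its blow-up creates no new components, which is why strict transforms of later centers are again components and why blowing up centers that have become Cartier changes nothing.
\item You may not ``pass to an \'etale cover''. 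The centers are built from global components, and for a self-intersecting component the pulled-back center is a union of branches, whose blow-up is not the branchwise procedure. This is harmless here, since the paper's notion of semi-stability is the strict (Zariski) one. You can shrink Zariski charts $U_i$ until each component through the point is a single branch $V(x_{i,a})$. Then use that blow-ups and strict transforms commute with the smooth base change $\prod_i U_i\to\prod_i M_i$ to the standard models.
\end{itemize}
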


Using Propositions \ref{introprop1} -- \ref{introprop3}, the proof of the main theorem is reduced to the following statement.

\begin{thm}\label{introthm-sstable}
	Let $F_0$ be a $p$-adic field with uniformizer $\pi_0$. Suppose that $$G=\GL_g,\ \GSp_{2g}\  \text{\ or\ }\ \GO(V,(\ ,\ )),$$ where $(V,(\ ,\ ))$ is a non-degenerate symmetric space of $F_0$-dimension $2g$, is a reductive group over $F_0$. Let $(G,\CG,\mu)$ be a {\rm LM} triple of PEL type with reflex field $E$. Note that $E$ equals $F_0$ or $F=F_0(\sqrt{-\pi_0})$. Assume that $\CG$ is maximal parahoric. 
	
	Then there exists a chain of (affine) Schubert varieties $S_0\sset S_1\sset \cdots \sset S_{d-1}$ in the special fiber of $\RM^\loc_{\CG,\mu}\otimes_{\CO_E}\CO_F$ such that $Z_d$ is semi-stable in the following sequence of blow-ups \begin{flalign*}
		\RM^\loc_{\CG,\mu}\otimes_{\CO_E}\CO_F=Z_0\xleftarrow{b_1} Z_1\xleftarrow{b_2} Z_2\xleftarrow{}\cdots \xleftarrow{b_d}Z_d.
	\end{flalign*}
	Here $b_1$ is the blow-up of $Z_0$ along $S_0$, and for $2\leq j\leq d$, the morphism $b_j$ is the blow-up of $Z_{j-1}$ along the strict transform of $S_{j-1}$ in $Z_{j-1}$. 
	
	In particular, there exists a semi-stable $\CG_{\CO_F}$-equivariant modification $$Z_d\ra \RM^\loc_{\CG,\mu}\otimes_{\CO_E}\CO_F.$$ 
\end{thm}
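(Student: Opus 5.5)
Since $G$ is split (or nearly so) over $F_0$ and $\CG$ is maximal parahoric, I will first write $\RM^\loc_{\CG,\mu}$ explicitly as a closed subscheme of a lattice-chain Grassmannian, and then argue case by case. For $\GL_g$ with $\mu=(1^{r},0^{g-r})$, a maximal parahoric is hyperspecial. So $\RM^\loc$ is the Grassmannian $\Gr(r,g)$ over $\CO_{F_0}$, which is smooth, and the statement holds with $d=0$. For $\GSp_{2g}$, a maximal parahoric is the stabilizer of a self-dual pair $\Lambda_k\subset \Lambda_{k}^{\vee}$, and $\RM^\loc=\RM^{\rm naive}$ by G\"ortz' flatness. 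For $\GO$, the paper presumably builds on the Pappas--Rapoport / Zachos description with the spin condition. Here the field $F=F_0(\sqrt{-\pi_0})$ enters, since the reflex field or the resolution may require $\sqrt{\pi_0}$. After extending scalars to $\CO_F$, I will use the $\CG$-action to cover the scheme by translates of a standard affine chart around the worst point, namely the unique closed $\CG$-orbit. Every Schubert variety in the special fiber contains this orbit.

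In each case the special fiber is a union of Schubert varieties indexed by the $\mu$-admissible set, which is totally ordered for maximal parahorics in these minuscule settings. I therefore take $S_0\subset S_1\subset\cdots\subset S_{d-1}$ to be the chain of $\CG$-orbit closures of increasing dimension, starting from the closed orbit $S_0$ and stopping just below the components of the special fiber. Because each $S_j$ is $\CG$-stable, every blow-up $b_j$ is $\CG_{\CO_F}$-equivariant. Each $b_j$ is also an isomorphism on the generic fiber, since its center lies in the special fiber. To verify semi-stability it then suffices, by equivariance, to work in the standard chart. In that chart $\RM^\loc$ is given by explicit matrix equations, for instance $XY=YX=\pi\cdot\mathrm{Id}$ with rank conditions in the symplectic case, and $S_j$ is cut out by rank conditions $\mathrm{rk}\,X\le j$ (or analogues) together with $\pi=0$.

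I expect the core computation to follow the pattern of the blow-ups of determinantal varieties used for $\Gamma_0(p)$-type and Drinfeld-type models, as in Faltings, Genestier, and G\"ortz. After blowing up $S_0$, the exceptional divisor is a projective bundle over $S_0$. The strict transforms of the higher $S_j$ become smaller copies of the same configuration, with one less rank, in coordinates attached to the new factorization of $\pi$. By induction, the local equation of $Z_j$ becomes $\pi = u\, t_1\cdots t_{j}\cdot(\text{the residual model})$, with the $t_i$ the exceptional equations. After $d$ steps the residual model is smooth, and the local equation reads $\pi=t_1\cdots t_m$ in a regular ring with part of a regular system of parameters, which is semi-stable. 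The moduli descriptions of the $S_j$ are needed to identify their strict transforms with the reduced loci defined by the expected rank conditions. The main worry is that these strict transforms might be non-reduced or singular.

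The main obstacle will be the orthogonal case (and the symplectic case with $\Lambda_k$ in the middle). There the naive equations are not flat, the spin or quadratic condition interacts with the rank stratification, and the special fiber may be non-reduced before base change to $\CO_F$. My first approach is to pass to $\CO_F$, where $\pi_0=-\varpi^2$. There I will use an explicit splitting, a Pfaffian-type change of coordinates, to rewrite the chart as a quadric cone times affine space, so that the first blow-up at the vertex, along $S_0$, already separates the two branches. If that fails, I will compare with the known explicit resolutions of orthogonal local models via quadric blow-ups and check each step by a Jacobian computation in charts.
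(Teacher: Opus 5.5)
Your outline matches the paper's architecture: $\GL_g$ is smooth; you reduce by $\CG$-equivariance to the chart at the closed orbit; there $S_j$ is cut out by $(\pi,\wedge^{j+1}X)$; and each blow-up should shrink the matrix and add a factor to $\pi$. But the step that is the actual content of the proof is only ``expected'', and your inductive invariant cannot be iterated as stated. After the first blow-up, the residual model is \emph{not} a smaller copy of the original chart over $\mathcal{O}_F$: the quadratic relations no longer involve $\pi$ but the exceptional coordinate. The missing idea is a family of schemes stable under the blow-up. The paper uses
\[
\calR(d,\ell)=\mathcal{O}_F[X,T_0,\dots,T_\ell]\big/\bigl(XJX^t+T_0^2J,\ X^tJX+T_0^2J,\ \pi-T_0T_1\cdots T_\ell\bigr)
\]
with $J=J_{2d}$. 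In the orthogonal cases the relations are $X^tHX-T_0^2H$ and $XHX^t-T_0^2H$, plus the spin relations rewritten with $T_0$ in place of $\pi$. It then blows up $(X,T_0)$.

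Because $\pi_0=-\pi^2$, these relations are homogeneous of degree two in $(X,T_0)$; this is why one must pass to $\mathcal{O}_F$ even when $E=F_0$. In the chart $y_{i_0,j_0}=1$, the entries in row $i_0^*$ and column $j_0^*$ are determined by the others. The Schur complement $z_{s,t}=y_{s,t}-y_{s,j_0}y_{i_0,t}$ then satisfies the same relations with $T_0$ replaced by $\alpha$. So the chart embeds in $\calU(d-1,\ell+1)\times\mathbb{A}^N$ with $T_{\ell+1}=x_{i_0,j_0}$, with spin signs twisted by the parity of $i_0+j_0$. In the quasi-split case, the charts through the middle index are covered by the others. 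The $\alpha$-chart is a component of the orthogonal or symplectic group times $\Spec\mathcal{O}_F[T_0,\dots,T_\ell]/(\pi-T_0\cdots T_\ell)$, and no later strict transform meets it. The later centers become $V(T_0,\wedge^rZ)$, not $V(\pi,\wedge^rZ)$.

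Even with the right family, this computation gives only two one-sided statements. It gives a closed immersion of each chart into $\calU(d-1,\ell+1)\times\mathbb{A}^N$. It also gives only an inclusion of the strict transform of $V(\pi,\wedge^{r+1}X)$ in $V(T_0,\wedge^rZ)$. Turning both into equalities is exactly your worry that strict transforms might be non-reduced, and it needs an input your proposal lacks. One needs $\calR(d,\ell)$ to be $\mathcal{O}_F$-flat and integral of the expected dimension. One also needs the rank loci $k[X]/(XJX^t,X^tJX,\wedge^{r}X)$ to be integral of known dimension; equality then follows by comparing dimensions of integral schemes. The paper gets flatness by showing that the special fiber of $\calR(d,\ell)$ is the intersection of the $\ell+1$ primes obtained by adding $T_r$, and then applying G\"ortz's topological flatness criterion to lifted points. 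For integrality of the rank loci it uses Conca's doubly symplectic standard tableaux in the symplectic case, and the results of Yang and Yang--Zachos--Zhao in the orthogonal case.

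Your fallback for the orthogonal case would fail. The chart is a quadric cone only for $i=1$; for $i\geq 2$ the first blow-up leaves charts containing the singular $\calU(i-1,1)$, so a single blow-up cannot suffice. Also, the special fiber is never non-reduced here: $\RM^\loc_{\CG,\mu}$ has reduced special fiber, and the totally ramified base change to $\mathcal{O}_F$ does not change it.
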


As a corollary, by local model diagram (see e.g., \cite[Theorem 1.1.2]{kisin2026integral}), 
Theorem \ref{introthm-sstable} produces (potentially) semi-stable integral models of $\Sh_\rK(\bG,X)$ whenever the induced {\rm LM} triple $(G,\CG,\mu)$ satisfies the assumptions in Theorem \ref{introthm-sstable}.

Let us briefly explain the proof of Theorem \ref{introthm-sstable}. When $G=\GL_g$, every maximal parahoric group scheme is hyperspecial, and hence the corresponding local model $\RM^\loc_{\CG,\mu}$ is smooth over $\CO_E$. Then no blow-up is needed. 

The main cases are $G=\GSp_{2g}$ or $\GO(V,(\ ,\ ))$. Using the results of \cite{Goertz03, Yang25,YZZ26}, we show that, for maximal parahoric level, the Schubert varieties of codimension $\geq 1$ in the special fiber of $\RM^\loc_{\CG,\mu}$ form a chain \begin{flalign*}
	S_0\sset S_1\sset\cdots\sset S_{d-1}
\end{flalign*}
for some integer $d$. We also have explicit moduli descriptions of these Schubert varieties; in the symplectic case this description is obtained here (see Proposition \ref{sympschubertmoduli}). We then construct an open affine subscheme $\RU=\Spec A$ of $\RM^\loc_{\CG,\mu}$, whose $\CG$-translates cover $\RM^\loc_{\CG,\mu}$. The ring $A$ admits an explicit presentation as a quotient of a matrix algebra $\CO_E[X]$, where $X$ is a generic square matrix. Moreover, the intersection $\RU\cap S_j$ is cut out by the ideal $(\wedge^{j+1}X)$ generated by $(j+1)\times (j+1)$ minors of $X$. 

Thus, the problem reduces to a concrete question about successive blow-ups of matrix-defined schemes along determinantal ideals $(\wedge^{j+1}X)$. Although direct computations of such successive blow-ups can be complicated, we develop an inductive method which makes the calculations tractable and proves semi-stability. This is one of the main technical inputs of the paper; see
Proposition \ref{prop:qs-affine-semistable} and the corresponding statements (Proposition \ref{prop:split-affine-semistable} and \ref{prop:symp-affine-semistable})
in the split orthogonal and symplectic cases for precise formulations. 
We believe that this inductive method is of independent interest and should have applications to related problems on resolving singularities of matrix-defined schemes; cf. \cite{gaube2024desingularization}.


\subsection{Organization}
The paper is organized as follows. 

In Part \ref{part-lmspl}, we collect the general facts on canonical local models and splitting models that will be used throughout the paper. In \S \ref{section-canonicalloc}, we recall the concept of canonical local models and establish their functorial properties. This allows us to reduce the study of local models attached to a general PEL datum to the local models considered in Part \ref{part-orth} and \ref{part-symp}. In Section \ref{sec-splitting}, we review the (naive) splitting models of Pappas--Rapoport for Weil-restricted groups. We describe the splitting-model diagram and deduce that the splitting model is \etale locally isomorphic to a product of local models, when the PEL datum has no factors of ramified unitary groups. 

In Part \ref{part-orth}, we prove Theorem \ref{IntroductionMainResultMloc} for even orthogonal local models. In \S \ref{orthogonal-local-models}, we recall the orthogonal local models at maximal parahoric level studied in \cite{Yang25, YZZ26}. We describe their Schubert stratifications in the special fiber and the affine charts around the minimal Schubert strata. In \S \ref{sec-qsorth} and \S \ref{s.s. resolution 2}, we obtain semi-stable models of orthogonal local models via successively blowing up Schubert varieties. Using the results of \S \ref{orthogonal-local-models}, the problem reduces to explicitly computing the blow-ups of certain matrix-defined schemes along determinantal ideals. The key ingredient is an inductive procedure on the size of the matrix appearing in the relavant affine chart; see the proof of Proposition \ref{prop:qs-affine-semistable}. In \S \ref{section ss ortho Res}, we consider the case of Weil restriction of orthogonal groups. Using the splitting-model diagram of Proposition \ref{prop-splitting} in \S \ref{sec-splitting} and the semi-stable resolutions constructed in \S \ref{sec-qsorth} and \ref{s.s. resolution 2}, we resolve the singularities of local models for Weil restricted orthogonal groups by applying Hartl’s construction \cite{Ha} for products of semi-stable schemes.

In Part \ref{part-symp}, we prove Theorem \ref{IntroductionMainResultMloc} for symplectic local models. In \S \ref{symplectic-local-models}, we recall the symplectic local models at maximal parahoric level and describe their Schubert stratification. We give an explicit moduli-theoretic description of the Schubert varieties in the special fiber and identify their intersections with the standard affine chart by determinantal equations. In \S \ref{symp-semistable-resolution} and \ref{section ss sym Res}, we construct semi-stable modifications of local models for (Weil restricted) symplectic local models. The constructions and proofs are analogous to those used in Part \ref{part-orth}.

Finally, Part \ref{part-application} discusses the global applications to Shimura varieties. We resolve the singularities of canonical integral models of PEL-type Shimura varieties via the local model diagram and the results in the preceding parts, under the assumption of $(\dagger)$. In particular, we obtain semi-stable integral models of these Shimura varieties after finite extensions of the reflex fields. {We also describe these extensions explicitly and show that the
corresponding inertia groups act unipotently on nearby cycles and $\ell$-adic cohomology of
the geometric generic fibers.}





{\bf Acknowledgements:} 
Part of this work was carried out during the visit of J. Yang and Z. Zhao to the University of Münster, which was supported by Eva Viehmann's Leibniz Prize. The authors would like to thank the University of Münster for its hospitality and excellent working environment, and E. Viehmann and U. Hartl for useful discussions. {We thank G. Pappas, Y. Luo and P. Wu for helpful comments and discussions.} {The authors are grateful to M. Rapoport for suggesting that they
describe explicitly the extension required for semi-stable reduction
and record the resulting consequences for nearby cycles and
$\ell$-adic cohomology.} The author J. Yang was supported by the Shuimu Tsinghua Scholar Program of Tsinghua
University. The author I. Zachos was supported by Germany's Excellence Strategy EXC~2044--390685587 ``Mathematics M\"unster: Dynamics--Geometry--Structure" and by the CRC~1442 ``Geometry: Deformations and Rigidity'' of the DFG. The author Z. Zhao was supported by the Tianyuan Fund for Mathematics of NSFC (Grant No. 12526543),  and the Fundamental Research Funds for the Central Universities (Grant No. FRF-TP-26-121).

\part{Local models and splitting models} \label{part-lmspl}

\section{Canonical local models}\label{section-canonicalloc} 
  Let $F$ be a finite extension of $\BQ_p$ or $\breve \BQ_p$.
\begin{defn} \label{defn-lmt}
    A \dfn{local model triple}, or simply \dfn{{\rm LM} triple}, over $F$ is a triple $$(G,\CG,\mu),$$ where $G$ is a \emph{connected} reductive group over $F$, $\CG$ is a quasi-parahoric group scheme for $G$ in the sense of \cite[\S 2.1.1]{kisin2026integral}, and ${\mu}$ is the $G(\ol{F})$-conjugacy class of a \dfn{minuscule} cocharacter $\mu: \BG_{m,\ol{F}}\ra G_{\ol{F}}$. We will also simply write $(\CG,\mu)$ for $(G,\CG,\mu)$.  
    
    A morphism of local model triples $(\CG,\mu)\ra (\CG',\mu')$ is a group scheme homomorphism $\CG\ra \CG'$ taking ${\mu}$ to ${\mu'}$. 
\end{defn}

Let $(\CG,\mu)$ be a local model triple over $F$. Denote by $E$ the reflex field of ${\mu}$. One can associate the v-sheaf local model \begin{equation*}
	\CM_{\CG,\mu}\sset \Gr_\CG  \label{vsheaf}
\end{equation*} in the Beilinson--Drinfeld Grassmannian $\Gr_\CG$ over $\Spd\CO_E$; see \cite[\S 4.2]{AGLR22}. We have the following result which is conjectured by Scholze--Weinstein \cite[Conjecture 21.4.1]{scholze2020berkeley}.

\begin{thm}[{\cite[Theorem 1.2]{AGLR22}}] \label{thm-repre}
	There exists a unique flat, projective and normal $\CO_E$-scheme $\RM^\loc_{\CG,\mu}$ representing $\CM_{\CG,\mu}$ in the sense that $(\RM^\loc_{\CG,\mu})^\Diamond\simeq \CM_{\CG,\mu}$. Moreover, the geometric special fiber of $\RM^\loc_{\CG,\mu}$ is reduced. 
\end{thm}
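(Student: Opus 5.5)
The statement is the representability theorem of Anschütz--Gleason--Lourenço--Richarz, which is cited rather than proved here. Still, the structure of its proof is as follows. The plan has three parts: construct a candidate scheme, check that it is flat, projective and normal with reduced geometric special fiber, and then identify its associated v-sheaf with $\CM_{\CG,\mu}$.

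\emph{Uniqueness.} Suppose $X$ and $X'$ are two flat, projective, normal $\CO_E$-schemes with $X^\Diamond\simeq X'^\Diamond\simeq\CM_{\CG,\mu}$. The functor $X\mapsto X^\Diamond$ is fully faithful on proper flat normal $\CO_E$-schemes, using the reconstruction of normal schemes from their v-sheaves through specialization maps and seminormality. The isomorphism of v-sheaves therefore comes from a unique isomorphism $X\simeq X'$.

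\emph{Existence.} I would first reduce to the case where $\CG$ is parahoric and $G$ is adjoint, or has simply connected derived group, by the functoriality of $\CM_{\CG,\mu}$ under central extensions and $z$-extensions. In the tame case I would take the candidate to be the Pappas--Zhu local model: the Zariski closure of the generic Schubert variety $G/P_\mu$ inside a Beilinson--Drinfeld Grassmannian over $\CO_E$. The known results then apply: normality, reduced special fiber, and the coherence conjecture identifying the special fiber with the $\mu$-admissible locus. Its diamond embeds in $\Gr_\CG$. Both this diamond and $\CM_{\CG,\mu}$ are the v-closure of the generic fiber, and the special fiber of $\CM_{\CG,\mu}$ is the admissible locus by the Gleason--Lourenço computation. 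So the two agree.

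In the general, wild case no algebraic ambient Grassmannian is available. Here I would use the criterion of AGLR: a proper flat p-adic kimberlite whose reduced special fiber is representable, together with a normal generic fiber, is representable by a normal scheme. Checking the kimberlite axioms requires three inputs:
\begin{itemize}
\item the specialization map,
\item unibranchness, obtained from a tubular-neighbourhood argument,
\item representability of the special fiber by the admissible locus in the Witt-vector affine flag variety, which is a perfect scheme.
\end{itemize}
Normality with reduced special fiber then reduces to showing that the special fiber is seminormal and that Frobenius-splitting-type coherence holds. The latter follows from equicharacteristic comparison: the admissible locus is a union of perfect Schubert varieties identical to those in the equicharacteristic local model, where normality and reducedness are known.

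I expect this last step to be the main obstacle, namely proving that the candidate has reduced special fiber, which requires the coherence/dimension count for the admissible locus in wild or nonsplit cases.
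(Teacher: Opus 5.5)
Your proposal agrees with the paper, which gives no proof of this statement and simply quotes it from \cite[Theorem 1.2]{AGLR22}; appealing to that result is all that is needed here. The outline you add of the argument in \cite{AGLR22} is supplementary and only a loose paraphrase. For instance, the claim that the diamond of the Pappas--Zhu model embeds into $\Gr_{\CG}$ is a genuinely nontrivial comparison between the Pappas--Zhu Grassmannian over $\CO_E$ and the $B_{\mathrm{dR}}^+$-Grassmannian, and you assert it without argument, so the outline should be read as orientation rather than as a proof.
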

We refer to \cite[\S 2.2]{AGLR22} for the definition of the diamond operator $\Diamond$. We will call $\RM^\loc_{\CG,\mu}$ in the above theorem \dfn{canonical local model}.

We record the following functorial properties of canonical local models; see \cite[Proposition 2.1.4]{HPR} for the tamely ramified case.
\begin{prop}\label{prop-mloc}
Let $(\CG,\mu)$ be a local model triple over $F$ with reflex field $E$. 
    \begin{altenumerate}
    	\item Let $F'/F$ be a finite unramified extension, then the {\rm LM} triple $(G\otimes_FF', \CG\otimes_{\CO_F}\CO_{F'},\mu\otimes_FF')$ has reflex field $E'=EF'$, and \begin{flalign*}
		\RM^\loc_{\CG,\mu}\otimes_{\CO_E}\CO_{E'} \simeq \RM^\loc_{\CG\otimes\CO_{F'},\mu\otimes F'}.
	\end{flalign*}
	    \item Let $(G_i,\CG_i,\mu_i)$ be two {\rm LM} triples over $F$ with reflex fields $E_i$ for $i=1,2$.  If $(G,\CG,\mu) = (G_1,\CG_1,\mu_1)\times (G_2,\CG_2,\mu_2)$, then for $E=E_1E_2$, we have \begin{flalign*}
	    	\RM^\loc_{\CG,\mu} \simeq (\RM^\loc_{\CG_1,\mu_1}\otimes_{\CO_{E_1}}\CO_E)\times (\RM^\loc_{\CG_2,\mu_2}\otimes_{\CO_{E_2}}\CO_E).	
	    \end{flalign*}
	    \item If $\phi: (G,\CG,\mu)\ra (G',\CG',\mu')$ is a morphism of {\rm LM} triples such that $\phi: G\ra G'$ is a central extension of $G'$, then $\phi$ induces a $\CG_{\CO_E}$-equivariant isomorphism \begin{flalign*}
	    	\RM^\loc_{\CG,\mu}\simto \RM^\loc_{\CG',\mu'}\otimes_{\CO_{E'}}\CO_E.
	    \end{flalign*}
	     \item $\RM^\loc_{\CG,\mu}\simeq \RM^\loc_{\CG^\circ,\mu}$, where $\CG^\circ$ denotes the neutral component of $\CG$. 
    \end{altenumerate}
\end{prop}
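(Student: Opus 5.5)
The uniqueness statement in Theorem \ref{thm-repre} reduces everything to v-sheaves. $\RM^\loc_{\CG,\mu}$ is characterized as the unique flat, projective, normal $\CO_E$-scheme whose associated v-sheaf is $\CM_{\CG,\mu}$. It therefore suffices, in each case, to exhibit a flat projective normal $\CO_{E'}$-scheme (on the relevant side) and identify its diamond with the relevant v-sheaf local model. Recall that $\CM_{\CG,\mu}$ is defined as the v-closure in $\Gr_\CG\times_{\Spd \CO_F}\Spd\CO_E$ of the generic-fiber Schubert cell $\Gr_{G,\mu}$. Each statement then follows from a compatibility of Beilinson--Drinfeld Grassmannians with the operation in question, together with the fact that taking closures commutes with it.

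For (1), unramified base change gives $\Gr_{\CG\otimes\CO_{F'}}\simeq \Gr_\CG\times_{\Spd\CO_F}\Spd\CO_{F'}$. The generic Schubert cell for $\mu\otimes F'$ is the base change of the one for $\mu$, taken over $E'=EF'$. Since $\Spd\CO_{E'}\to\Spd\CO_E$ is finite étale, closures commute with this base change. Hence $\CM_{\CG\otimes\CO_{F'},\mu}\simeq \CM_{\CG,\mu}\times\Spd\CO_{E'}$. On the scheme side, $\RM^\loc_{\CG,\mu}\otimes_{\CO_E}\CO_{E'}$ is flat and projective, and it is normal because it is étale over a normal scheme. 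Uniqueness gives the claim.

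For (2), we use $\Gr_{\CG_1\times\CG_2}=\Gr_{\CG_1}\times\Gr_{\CG_2}$. The closure of a product of generic Schubert cells is the product of their closures. This needs a short argument: the product of the two local models is a closed v-subsheaf containing the generic cell, and the reverse inclusion follows because both sides are represented by flat schemes, since flatness forces the generic fiber to be dense. Normality of the product $(\RM^\loc_{\CG_1,\mu_1}\otimes\CO_E)\times(\RM^\loc_{\CG_2,\mu_2}\otimes\CO_E)$ is the delicate point. I would deduce it from Serre's criterion: the generic fiber is smooth, and the special fiber is geometrically reduced by Theorem \ref{thm-repre}, so $R_1$ and $S_2$ hold for the product of flat schemes with reduced special fibers. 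Equivalently, a flat $\CO_E$-scheme with normal generic fiber and reduced special fiber is normal.

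Part (3) is the main obstacle. Here $\phi$ induces a map $\Gr_\CG\to\Gr_{\CG'}$. The kernel is central, so $\phi$ induces an isomorphism on adjoint groups and, via the Bruhat--Tits building, a bijection of parahorics. I expect to show that the restriction $\CM_{\CG,\mu}\to\CM_{\CG',\mu'}\times\Spd\CO_E$ is an isomorphism. Over the generic fiber both sides are the flag variety $G^\ad/P_\mu$. Over the special fiber both are unions of Schubert cells indexed by the $\mu$-admissible set, and the admissible sets match under the identification of Iwahori--Weyl groups modulo the central part. The cleanest route is to work on connected components of the Grassmannians, which are governed by $\pi_1$, and use that $\Gr_{\CG^{\rm sc}}$-type comparisons hold component-wise, as in the proof of \cite[Proposition 2.14]{HPR}. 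Alternatively, one can reduce to the statement that $\CM$ depends only on the adjoint triple, via $z$-extensions and \cite{AGLR22}. A proper map of v-sheaves that is bijective on points and an isomorphism on the generic fiber, between the diamonds of normal flat schemes, is an isomorphism by uniqueness. The main care is in the non-tame case, where $\pi_1$ may have torsion and components must be tracked.

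For (4), quasi-parahoric $\CG$ and its neutral component $\CG^\circ$ have the same neutral component of the loop group. Hence $\Gr_{\CG^\circ}\to\Gr_\CG$ is an open and closed immersion on the component containing the generic Schubert cell. The closures therefore agree, and uniqueness concludes.
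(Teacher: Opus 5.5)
Your reduction is the paper's. By the uniqueness in Theorem \ref{thm-repre}, everything comes down to statements about $\CM_{\CG,\mu}$. The normality needed on the scheme side follows from flatness, smooth generic fiber and reduced geometric special fiber; your last sentence in (2) is exactly \cite[Proposition 9.2]{PZ}, which the paper invokes for (1)--(3). You also need it in (3), where $E/E'$ can be ramified.

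Part (1) agrees with the paper, except that you only assert $\Gr_{\CG\otimes\CO_{F'}}\simeq\Gr_\CG\times_{\Spd\CO_F}\Spd\CO_{F'}$. The paper proves this via $W_{\CO_F}(A)\simeq W_{\CO_{F'}}(A)$ for perfect $k_{F'}$-algebras $A$.

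In (2) you argue differently. You get $\CM_1\times\CM_2\subset\CM_{\CG,\mu}$ from density of the generic fiber in the diamond of a flat projective $\CO_E$-scheme. The paper instead writes down the natural map $\CM_{\CG,\mu}\to\CM_1\times\CM_2$, checks it is an isomorphism over $\Spa\CO_C$ by \cite[Proposition 4.16]{AGLR22}, and descends. Your route is fine once the density statement is cited.

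In (4), your ``hence'' hides the real input. It is that $L^+\CG/L^+\CG^\circ$ permutes the connected components of the special fiber freely, i.e.\ injectivity of the Kottwitz map on $\CG(\breve{\CO}_F)/\CG^\circ(\breve{\CO}_F)$. This is what \cite[Proposition 21.5.1]{scholze2020berkeley}, quoted by the paper, supplies.

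The genuine gap is (3). What you give is a menu of strategies, and the decisive claim that $\CM_{\CG,\mu}\to\CM_{\CG',\mu'}\times_{\Spd\CO_{E'}}\Spd\CO_E$ is an isomorphism is only ``expected''. Your main route needs two inputs:
\begin{itemize}
\item the identification of the special fiber of $\CM_{\CG,\mu}$ with the $\mu$-admissible locus, which is not contained in Theorem \ref{thm-repre};
\item bijectivity of the induced maps on the relevant Schubert cells of the partial affine flag varieties.
\end{itemize}
The second is exactly where $p\mid|\pi_1(G_{\rm der})|$ makes scheme-level comparisons delicate, and you leave it at ``components must be tracked''.

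Your closing principle is also misattributed: uniqueness of a representing scheme says nothing about a given morphism. You would need either of the following:
\begin{itemize}
\item the criterion that a qcqs map of small v-sheaves which is bijective on all $\Spa(C,C^+)$-points is an isomorphism;
\item full faithfulness of $X\mapsto X^\Diamond$ on flat projective normal $\CO_E$-schemes with reduced special fiber, together with Zariski's main theorem (finite, birational, normal target).
\end{itemize}

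The paper closes (3) by the route you mention only in passing. It takes the natural morphism of v-sheaves, applies \cite[Proposition 4.16]{AGLR22} to see that it becomes an isomorphism after base change to $\Spa\CO_C$, and concludes by v-descent. No $z$-extensions or bookkeeping of $\pi_1$ are needed.
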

\begin{proof}
	Part (4) is proven in \cite[Proposition 21.5.1]{scholze2020berkeley}. Next we show (1) to (3). It is clear that taking $\Diamond$ is compatible with base change and products. Note that special fibers of all schemes in (1) to (3) are reduced by Theorem \ref{thm-repre}. Thus, these schemes are normal by \cite[Proposition 9.2]{PZ}. Hence, by the uniqueness in  Theorem \ref{thm-repre}, it suffices to prove $\CM_{\CG,\mu}$ is compatible with taking unramified base change, products and central extension of $(\CG,\mu)$. By functoriality, there is a natural morphism \begin{equation*}
		f\colon \CM_{\CG,\mu} \lra (\CM_{\CG_1,\mu_1}\otimes_{\CO_{E_1}}\CO_E) \times (\CM_{\CG_2,\mu_2}\otimes_{\CO_{E_2}}\CO_E).
	\end{equation*} By \cite[Proposition 4.16]{AGLR22}, $f$ becomes an isomorphism after base change to $\Spa\CO_C$ for a complete algebraic closure of $E$. By v-descent, we obtain that $f$ is an isomorphism. This shows (2). Similarly, we prove (3). Since taking v-closures commutes with unramified base change (see e.g., \cite[Corollary 2.9]{AGLR22}), to show (1), it remains to prove that $\Gr_\CG$ commutes with unramified base change. Let $k_F$ be the residue field of $F$. Recall that $\Gr_\CG$ is isomorphic to the quotient v-sheaf which sends an affinoid perfectoid $k_F$-algebra  $(R,R^+)$ with an untilt $(R^\sharp,R^{\sharp+})$ over $\CO_F$ to $G(B_\dR(R^\sharp))/\CG(B_\dR^+(R^\sharp))$, where $B_\dR^+(R^\sharp)$, respectively $B_\dR(R^\#)$, denotes the ring of de Rham periods formed using $\CO_F$-Witt vectors; cf. \cite[p. 138]{scholze2020berkeley} and \cite[\S 4.1]{AGLR22}. More precisely, there exists a natural surjection  \begin{flalign*}
		  \theta\colon W_{\CO_F}(R^+)[[\varpi]\inverse]\lra R^\sharp=R^{\sharp+}[\varpi^{\sharp-1}]
	\end{flalign*}
	with $\ker\theta = (\xi)$,
	where $\varpi$ is a pseudouniformizer such that $\varpi^\sharp\in R^\sharp$ satisfies $\varpi^{\sharp p}|p$; we define $B^+_\dR(R^\sharp)$ as the $\xi$-adic completion of $W_{\CO_F}(R^+)[[\varpi]\inverse]$, and define $B_\dR(R^\sharp)\coloneqq B^+_\dR(R^\sharp)[\xi\inverse]$. Since $F'/F$ is unramified, by \cite[Lemma 5.1]{bertapelle2020greenberg}, we have \begin{equation}
		W_{\CO_F}(A) \simeq W_{\CO_{F'}}(A)  \label{eq-unrW}
	\end{equation}  for a perfect $k_{F'}$-algebra. Here $k_{F'}$ denotes the residue field of $F'$. Note that any affinoid perfectoid $k_{F'}$-algebra $(R,R^+)$ is v-covered by affinoid perfectoid of the form $(B,B^+)$ where  $B^+$ is a product of valuation rings with complete algebraically closed fraction field in characteristic $p$; see \cite[\S 2.1.4]{pappas2024p}. In particular, $B^+$ is a perfect ring. By \eqref{eq-unrW}, we obtain that \begin{equation*}
		\Gr_{\CG\otimes_{\CO_F}\CO_{F'}} \simeq \Gr_\CG\otimes_{\CO_F}\CO_{F'}.
	\end{equation*}
	Hence, we prove (1).
\end{proof}

Note that, by construction, the canonical local model $\RM^\loc_{\CG,\mu}$ carries a natural action of $\CG_{\CO_E}$. Inspired by many concrete examples, it is reasonable to formulate the following conjecture.
\begin{conj}[{\cite[\S 2.12]{P},\cite[\S 8.4]{PRS}}] \label{conj-modif}
    There exists a $\CG_{\CO_E}$-\dfn{equivariant modification} $$N\lra \RM^\loc_{\CG,\mu},$$  which is an isomorphism on the generic fibers, such that $N$ is regular and its special fiber is a (possibly non-reduced) divisor with simple normal crossings.  
\end{conj}

Partial results in this direction have produced many interesting examples of regular models or toroidal resolution of Shimura varieties\footnote{Constructions involving splitting models may require a finite extension of the relevant reflex field.}; see Table \ref{table1}. 
\begin{table}[htbp]
\centering
\scriptsize
\renewcommand{\arraystretch}{1.5}
\begin{tabular}{p{0.14\textwidth}p{0.14\textwidth}p{0.2\textwidth}p{0.38\textwidth}}
\hline
\textbf{Group} 
& \textbf{Minuscule cocharacter} 
& \textbf{Parahoric level} 
& \textbf{Result and construction} \\
\hline

Ramified \(\GU(V,\phi)\) &\(\mu_{n-1,1}\)  &Self-dual lattice
&Pappas \cite{P} and Kr\"amer \cite{Kr}: semi-stable models via blowing up of wedge local models or using splitting models \\

\hline
Ramified \(\GU(V,\phi)\) & \(\mu_{2,n-2}\)
&Self-dual lattice
&Zachos \cite{Zac1}: semi-stable models via blowing up the splitting models \\

\hline

Ramified \(\GU(V,\phi)\)
&\(\mu_{n-s,s}\) \newline \((s=1,2,3)\)
&\(\pi\)-modular lattice,\newline with \(n\) even
&Zachos--Zhao \cite{ZacZhao}: smooth models for \(s=1\) and semi-stable for \(s=2,3\), via splitting models \\

\hline

Ramified \(\GU(V,\phi)\)
&\(\mu_{n-1,1}\)
&Vertex level
&Zachos--Zhao \cite{zachos2025semi} and He--Luo--Shi \cite{he2024regular}: semi-stable models via (variants of) splitting models \\

\hline

$\Res_{F/F_0}\GL_n$
&arbitrary
&Weil restriction of\newline a hyperspecial level
&Pappas--Rapoport \cite{PR2}: smooth models via splitting models \\

\hline

$\Res_{F/F_0}\GSp_{2g}$
&Siegel
&Weil restriction of\newline a hyperspecial level
&Pappas--Rapoport \cite{PR2}: smooth models via splitting models \\

\hline

$\GSpin(V,q)$
&$\mu_1$
&Vertex level
&Pappas--Zachos \cite{PaZa}: regular models via blowing up of the local models \\

\hline

$\GSp_6$
&$\mu_3$
&Iwahori level
&Genestier\cite{genestier2000modele}: semi-stable models via blow-ups
\\
\hline

$\GSp_{2g}$
&\(\mu_g\)
&Siegel parahoric
&Faltings \cite{faltings1997explicit}: semi-stable models via explicit resolution of schemes of matrices\\
\hline

$\GL_n$
&$\mu_2$
&arbitrary
&Faltings \cite{faltings2001toroidal}: regular models via toroidal resolution of local models
\\
\hline
$\GL_5$
&
\(\mu_2\)
&
Iwahori level 
&
Gora \cite{gora2023local}: semi-stable models via blowing up Mustafin varieties. \\

%
%
%

\hline
\end{tabular}

\caption{Previous results}  \label{table1}
\end{table}
\FloatBarrier

We use the following terminology in Conjecture \ref{conj-modif}.
\begin{defn}
    \begin{altenumerate}
        \item A \dfn{$\CG_{\CO_E}$-equivariant modification} $N$ of the local model $\RM^\loc_{\CG,\mu}$ is a $\CG_{\CO_E}$-equivariant proper birational morphism $N\ra \RM^\loc_{\CG,\mu}$ over $\CO_E$.
    \item Let $Y$ be a regular Noetherian scheme of dimension $n$, and let $D$ be an effective Cartier divisor on $Y$ with the corresponding ideal sheaf $\CO_Y(-D)$. We say that (\cite[Definition 1.6, \S 9.1]{liu2006algebraic}) $D$ \dfn{has simple normal crossings}, or $D$ is a \dfn{divisor with simple normal crossings}, at a closed point $y\in Y$ if the maximal ideal $\fm_y$ can be generated by $n$ elements $f_1,\ldots,f_n$ such that $\CO_Y(-D)_y$ is generated by $f_1^{r_1}\cdots f_m^{r_m}$ for some integer $0\leq m\leq n$. We say that $D$ has normal crossings if it has simple normal crossings at every closed point $y\in Y$.

    We say that $D$ has \dfn{normal crossings} if there exists an \etale covering $f\colon Z\ra Y$ such that $f^*D$ has simple normal crossings. 
    \end{altenumerate}
\end{defn}

\section{Splitting models} \label{sec-splitting}
In this section, we review the splitting models introduced in \cite{PR2} (see also \cite{bijakowski2023geometry, shen2025f}) for Weil-restricted groups. We use the following notation:
\begin{altitemize}
    \item  $F_0$ is a complete discretely valued field with ring of integers $\CO_{F_0}$, uniformizer $\pi_0$, and perfect residue field $k_{F_0}$ of characteristic $p>2$;
    \item $B$ is a finite semi-simple algebra with involution $*$; we assume that \begin{flalign}
    	B\simeq \prod_{i\in I_1} M_{m_i}(F_i)  \label{Bprod}
    \end{flalign}  is isomorphic to a (finite) product of matrix algebras over finite field extensions $F_i$ of $F_0$ such that the factors $M_{m_i}(F_i)$ are either stable by $*$ or exchanged two by two by $*$;
    \item $V$ is a finite dimensional $B$-module equipped with a non-degenerate alternating $F_0$-bilinear form $\pair{\ ,\ }$ on $V$ such that $\pair{bv,w}=\pair{v,b^*w}$ for $v,w\in V$ and $b\in B$;
    \item $G$ denotes the reductive group over $F_0$, which sends an $F_0$-algebra $R$ to the group \begin{flalign*}
        G(R)\coloneqq \cbra{g\in \GL_{B\otimes_{F_0}R
    }(V\otimes_{F_0}R)\ |\ \pair{gv,gw}=c(g)\pair{v,w}, c(g)\in R^\times };
    \end{flalign*}
    denote by $c: G\ra \BG_m$ the similitude character;
    \item $\mu: \BG_{m,\ol{F}_0}\ra G_{\ol{F}_0}$ denotes a geometric cocharacter given up to conjugation; we assume that the corresponding eigenspace decomposition of $V\otimes_{F_0}\ol{F}_0$ is given by \begin{flalign}
        V\otimes_{F_0}\ol{F}_0 = V^0\oplus V^1  \label{eigen}
    \end{flalign} (i.e., the only weights are $0$ and $1$), and the composition $c\circ\mu$ is the identity;
    \item $\CO_B$ is a maximal order of $B$ stable under $*$-action.
\end{altitemize}
Note that $*$ induces an action on the index set $I_1$ in \eqref{Bprod}. Denote by $I_1/\sim_* $ the orbit space. Then by our assumption on $B$, we have \begin{flalign}
	B\simeq \prod_{[i]\in I_1/\sim_*}B_{[i]}.   \label{decompBi}
\end{flalign}
Here, if $i=*(i)$, then $B_{[i]}=M_{m_i}(F_i)$; if $i\neq *(i)$, then $B_{[i]}=M_{m_i}(F_i)\times M_{m_{*(i)}}(F_{*(i)})$ (necessarily $F_i= F_{*(i)}$ and $m_i=m_{*(i)}$). Denote by $F^+_i$  the set of $*$-invariants of the center of $B_{[i]}$.  
Following \cite[\S 2.2]{bijakowski2023geometry} and \cite[\S 2.1]{shen2025f}, we have five \dfn{types} of $[i]$ defined by \begin{itemize}
    \item [(AL):] if $i\neq *(i)$ and $*$-action on $B_{[i]}$ exchanges the two factors;
    \item [(AU):] if $i=*(i)$ and $F_i$ is an unramified quadratic extension of $F_i^+$, and $*$-action on $F_i$ corresponds to the unique non-trivial element in $\Gal(F_i/F_i^+)$;
    \item [(AR):] if $i=*(i)$ and $F_i$ is a ramified quadratic extension of $F_i^+$, and $*$-action on $F_i$ corresponds to the unique non-trivial element in $\Gal(F_i/F_i^+)$;
    \item [(C):] if $i=*(i)$ and $F_i=F_i^+$ and $*: B_{[i]}=M_{m_i}(F_i)\ra M_{m_i}(F_i)$ is given by $b^*=b^t$;
    \item [(D):] if $i=*(i)$ and $F_i=F_i^+$ and $*:B_{[i]}=M_{m_i}(F_i)\ra M_{m_i}(F_i)$ is given by $b^*=Jb^tJ\inverse$ for some non-degenerate alternating matrix $J$.
\end{itemize}  

Let $\CL$ denote a self-dual multichain of $\CO_B$-lattices in $V$ in the sense of \cite[Definition 3.4, 3.13]{RZbook}. 
Write $$\CG=\CG_\CL^\circ$$ for the neutral component of the similitude automorphism group of $\CL$. Let $E$ be the reflex field of $\mu$. 
For any $\CO_B$-lattice $\Lambda\sset V$, denote by \begin{flalign*}
    \Lambda^\vee \coloneqq \cbra{x\in V\ |\ \pair{x,\Lambda}\sset \CO_{F_0}}
\end{flalign*}
the dual lattice. Then the pairing $\pair{\ ,\ }$ induces a perfect bilinear pairing \begin{flalign}
    \pair{\ ,\ }: \Lambda\times \Lambda^\vee \lra \CO_{F_0}.  \label{perfpairing}
\end{flalign}
Let $b$ be a unit of $B$ which normalizes $\CO_B$.
If $M$ is an $(\CO_B\otimes_{\CO_{F_0}}\CO_S)$-module, we denote by $M^b$ the $(\CO_B\otimes_{\CO_{F_0}}\CO_S)$-module obtained by restriction of scalars with respect to the induced map $\CO_B\ra \CO_B, x\mapsto b\inverse xb$. Left multiplication by $b$ induces an $(\CO_B\otimes_{\CO_{F_0}}\CO_S)$-linear homomorphism $b: M^b\ra M$. 


\begin{defn}[{\cite[\S 3]{RZbook}}] \label{def-naivelocmod}
    Let $$\RM^\naive_\CL = \RM^\naive(\CL,\mu)$$ denote the \dfn{naive local model} which represents the functor sending an $\CO_E$-scheme $S$ to the set of $(\CO_B\otimes_{\CO_{F_0}}\CO_S)$-modules $(\CF_\Lambda)_{\Lambda\in \CL}$ such that \begin{altenumerate}
        \item for each $\Lambda\in\CL$, there exists an inclusion $$j_\Lambda\colon \CF_\Lambda\hookrightarrow \Lambda_S\coloneqq \Lambda\otimes_{\CO_{F_0}}\CO_S$$ such that $\CF_\Lambda$ is a locally direct summand as an $\CO_S$-module;
        \item for $\Lambda\sset \Lambda'$ in $\CL$, the natural map $\Lambda_S\ra \Lambda'_S$ induced by the inclusion $\Lambda\hookrightarrow \Lambda'$ sends $\CF_\Lambda$ to $\CF_{\Lambda'}$; and if $b\in B\cross$ that normalizes $\CO_B$, there are $(\CO_B\otimes_{\CO_{F_0}}\CO_S)$-module isomorphisms $$\theta_{b,\Lambda}\colon (\CF_\Lambda)^b\simto \CF_{b\Lambda},$$ which make the following commutative diagram:
        \begin{equation*}
    \begin{tikzcd}
        (\CF_\Lambda)^b \ar[r, "j_\Lambda"] \ar[d, "\theta_{b,\Lambda}"'] & 
        (\Lambda\otimes_{\CO_{F_0}}\CO_S)^b \ar[d, "b"] \\
        \CF_{b\Lambda} \ar[r, "j_{b\Lambda}"] & 
        b\Lambda\otimes_{\CO_{F_0}}\CO_S
    \end{tikzcd}
\end{equation*}
        \item for each $\Lambda\in\CL$, the $\CO_B$-action on $\CF_\Lambda$ satisfies the Kottwitz condition \begin{flalign*}
            \mathrm{det}_{\CO_S}(b\ |\ \CF_{\Lambda}) = \mathrm{det}_{\CO_S}(b\ |\ V^1),\ b\in \CO_B,
        \end{flalign*}
        see \eqref{eigen} for the definition of $V^1$;
        \item $\pair{\CF_\Lambda,\CF_{\Lambda^\vee}}=0$, i.e., $\CF_{\Lambda^\vee}$ is the orthogonal complement of $\CF_{\Lambda}$ with respect to the perfect pairing \begin{equation*}
            \pair{\ ,\ }\colon \Lambda_S \times \Lambda^\vee_S \lra \CO_S
        \end{equation*}
        induced by \eqref{perfpairing}.
    \end{altenumerate}
\end{defn}

Suppose from now on that \begin{itemize}
    \item  The PEL datum above has no factors of type (AR).
\end{itemize} 

Let $L$ denote the composite of the Galois closure of $F_i$ in $\ol{F}_0$, for $i\in I_1$. 
Let $F^\unra_i\sset F_i$ denote the maximal unramified extension of $F_0$ in $F_i$. Write $e_i\coloneqq [F_i:F_i^\unra]$ and $f_i\coloneqq [F_i^\unra:F_0]$. We order the embeddings $F_i^\unra\hookrightarrow L$ as $\sigma_{i,1},\ldots,\sigma_{i,f_i}$. For each $1\leq j\leq f_i$, there are $e_i$ extensions of $\sigma_{i,j}$ to embeddings $F_i\hookrightarrow L$, ordered and denoted by $\sigma_{i,j}^l\colon F_i\hookrightarrow L$, for $1\leq l\leq e_i$. Denote \begin{flalign*}
	\Sigma \coloneqq \cbra{\sigma_{i,j}^l\colon F_i\ra L}_{i\in I_1,1\leq j\leq f_i,1\leq l\leq e_i} \text{\ and\ } \Sigma^\unra\coloneqq \cbra{\sigma_{i,j}\colon F_i^\unra\ra L}_{i\in I_1, 1\leq j\leq f_i}.
\end{flalign*}  The $*$-action induces an action on $\Sigma$ and $\Sigma^\unra$. We choose an ordering on $\Sigma$ such that $\sigma_{i,j}^l=(\sigma^*_{i,j})^l$ for $1\leq l\leq e_i$.  
By Morita equivalence, we have that \begin{flalign*}
	{V\simeq \bigoplus_{i\in I_1} V_i^{m_i},}
\end{flalign*}
where $V_i$ is an ${F_i}$-vector space of dimension $d_i$. 
 We also have the decompositions \begin{flalign*}
	V_{i,L}=V_i\otimes_{F_0}L=\bigoplus_{1\leq j\leq f_i} V_{i,j} \text{\ and\ } V_{i,j}=\bigoplus_{1\leq l\leq e_i}V_{i,j}^l.
\end{flalign*}
Here, $V_{i,j}$ is the $F_i$-subspace of $V_{i,L}$ such that $F_i^\unra$ acts via $\sigma_{i,j}$ ($1\leq j\leq f_i$); and $V_{i,j}^l$ is the $F_i$-subspace such that $F_i$ acts via $\sigma_{i,j}^l$ ($1\leq l\leq e_i$). Since the $*$-action permutes $\sigma_{i,j}$, it permutes the indices $(i,j)$. The pairing $\pair{\ ,\ }$ on $V$ induces a non-degenerate $F_0$-bilinear alternating pairing \begin{flalign}
	\pair{\ ,\ }\colon V_{i,j}\times V_{*(i,j)}\ra F_0.  \label{Vipairing}
\end{flalign}   For an $\CO_B$-lattice $\Lambda\sset V$, we have the decompositions \begin{flalign*}
	\Lambda = \bigoplus_{i\in I_1}\Lambda_i^{m_i} \text{\ and\ } \Lambda_i = \bigoplus_{1\leq j\leq f_i}\Lambda_{i,j}.
\end{flalign*}
Then each $\Lambda_{i,j}$ is a lattice in $V_{i,j}$, stable under $\CO_{F_i}$-action.   
 The pairing $\pair{\ ,\ }$ in \eqref{Vipairing}  induces a perfect pairing \begin{flalign}
	\Lambda_{i,j}\times (\Lambda^\vee)_{*(i,j)}\lra \CO_{F_0}. \label{pairingLambda}
\end{flalign} 
Note that the cocharacter \begin{flalign*}
    \mu\colon \BG_{m,\ol{F}_0}\lra G_{\ol{F}_0}\sset \GL_B(V\otimes_{F_0}\ol{F}_0) = \prod_{\sigma_{i,j}^l\in\Sigma}\GL(V_{i,j}^l\otimes_{L}\ol{F}_0) 
\end{flalign*} is given by, up to conjugation, by $(1^{(r_{i,j}^l)},0^{(d_i-r_{i,j}^l)})_{\sigma_{i,j}^l\in\Sigma}$ with $d_i\coloneqq \dim_{F_i} V_i=\dim_LV_{i,j}^l$. Let $\pi_i$ be a uniformizer of $F_i$ for $i\in I_1$.

\begin{defn}[{cf. \cite[Definition 2.1]{shen2025f}, \cite[Definition 2.21]{bijakowski2023geometry}}]  \label{def-naivesplmod}
    Let \begin{flalign*}
        \RM^{\nspl}_\CL=\RM^\nspl({\CL,\mu,(\sigma_{i,j}^l)})
    \end{flalign*}
    denote the \dfn{naive splitting model}, a projective $\CO_L$-scheme, which represents the functor sending an $\CO_L$-scheme $S$ to the set of subspaces $(\CF^l_{\Lambda_{i,j}})$ for $\Lambda\in \CL, i\in I_1$, $1\leq j\leq f_i$ and $1\leq l\leq e_i$ such that the following hold. \begin{altenumerate}
    	\item There exists an injection $$\iota_{\Lambda_{i,j}}^l\colon \CF^l_{\Lambda_{i,j}}\hookrightarrow \Lambda_{i,j,S}=\Lambda_{i,j}\otimes_{\CO_{F_0}}\CO_S$$ of $\CO_{F_i}\otimes_{\CO_{F_0}}\CO_S$-modules, which makes $\CF_{\Lambda_{i,j}}^l$ a locally direct summand. 
    	\item If $b\in B^\times$ which normalizes $\CO_B$, there are $(\CO_{F_i}\otimes_{\CO_{F_0}}\CO_S)$-linear isomorphisms $$\theta_{b,\Lambda,{i,j}}^l\colon \CF_{\Lambda_{i,j}}^l \simto \CF_{(b\Lambda)_{i,j}}^l$$ which fit into the following commutative diagrams \begin{equation*}
    		\begin{tikzcd}
        \CF^l_{\Lambda_{i,j}} \ar[r, "\iota^l_{\Lambda_{i,j}}"] \ar[d, "\theta^l_{b,\Lambda,i,j}"'] & 
        \Lambda_{i,j,S} \ar[d, "b"] \\
        \CF^l_{(b\Lambda)_{i,j}} \ar[r, "\iota^l_{(b\Lambda)_{i,j}}"] & 
        (b\Lambda)_{i,j,S}.
    \end{tikzcd}
    	\end{equation*}
    	\item We have $\CF^{l-1}_{\Lambda_{i,j}}\sset \CF_{\Lambda_{i,j}}^l$ (via inclusions $\iota_{\Lambda_{i,j}}^l$) and the quotient $\CF^l_{\Lambda_{i,j}}/\CF^{l-1}_{\Lambda_{i,j}}$ is $\CO_S$-locally free of rank $r_{i,j}^l$ and is annihilated by \begin{flalign*}
    		 \pi_i\otimes 1 - 1\otimes\sigma_{i,j}^l(\pi_i)\in \CO_{F_i}\otimes_{\CO_{F_0}}\CO_S.
    	\end{flalign*}
    	\item With respect to the perfect pairing \begin{equation*}
            \pair{\ ,\ }\colon \Lambda_{i,j,S} \times (\Lambda^\vee)_{*(i,j),S} \lra \CO_S,
        \end{equation*} induced by \eqref{pairingLambda},
        we have $\pair{\CF^l_{\Lambda_{i,j}},\CF^l_{(\Lambda^\vee)_{*(i,j)}}}=0$, i.e., $\CF^l_{(\Lambda^\vee)_{*(i,j)}}$ lies in the orthogonal complement $(\CF^l_{\Lambda_{i,j}})^\perp$ of $\CF^l_{\Lambda_{i,j}}$.
        \item Set \begin{flalign*}
        	Q^{>l}_{i,j}(T)\coloneqq \prod_{a=l+1}^{e_i}(T-\sigma_{i,j}^a(\pi_i)) \in \CO_L[T].
        \end{flalign*}
        We require that $$(\CF^l_{\Lambda_{i,j}})^\perp = Q^{>l}_{i,j}(\pi_i)\inverse \CF_{(\Lambda^\vee)_{*(i,j)}}^l. $$ 
    \end{altenumerate} 
\end{defn}

For an $\CO_L$-scheme $S$ and a point $(\CF^l_{\Lambda_{i,j}})\in \RM^\nspl_\CL(S)$, the data $(\CF^{e_i}_{\Lambda_{i,j}})$ determine, by Morita equivalence, subspaces $(\CF_\Lambda\sset \Lambda_S)_{\Lambda\in\CL}$, which corresponds to a point in $\RM^\naive_\CL(S)$. Thus, we obtain a natural map \begin{flalign*}
	f\colon \RM^\nspl_\CL \lra \RM^\naive_\CL\otimes_{\CO_E}\CO_L.
\end{flalign*}
Note that $f$ induces an isomorphism over the generic fibers. For $\sigma=\sigma_{i,j}^l\in \Sigma$, let $\mu_\sigma$ denote the $\sigma$-component of $\mu$ corresponding to $(1^{(r_{i,j}^l)},0^{(d_i-r_{i,j}^l)})$. Set \begin{flalign*}
	\Lambda_\sigma\coloneqq \Lambda_{i,j}\otimes_{\CO_{F_i},\sigma_{i,j}^l}\CO_L.
\end{flalign*}
Denote by $\Sigma/\sim_*$ the set of orbits under the $*$-action on $\Sigma$. We write $[\sigma]\coloneqq \cbra{\sigma,\sigma^*}$ for the orbit of $\sigma\in \Sigma$.  
Then \begin{flalign*}
	\CL_{[\sigma]}= \cbra{\Lambda_\sigma,\Lambda_{\sigma^*}}_{\Lambda\in\CL}
\end{flalign*} defines a self-dual $\CO_L$-multichain.
Let $\CG_{[\sigma]}$ denote the associated parahoric group scheme. By \cite[pp. 56]{shen2025f}, we have a diagram of $\CO_L$-schemes 
\begin{equation}
\begin{tikzcd}[column sep={6em, between origins}, row sep={3em, between origins}]
&  \arrow[ld, "\alpha_1"']\wt{\RM}^\nspl_\CL\arrow[rd, "\alpha_2"]&\\
\RM^\nspl_\CL   &&   \prod_{[\sigma]\in\Sigma/\sim_*} \RM^\naive(\CL_{[\sigma]},\mu_{[\sigma]}), \label{diag-torsor}
\end{tikzcd}
\end{equation}
where \begin{itemize}
	\item for an $\CO_L$-scheme $S$, $\wt{\RM}^\nspl_\CL(S)$ is the set of \begin{flalign*}
	((\CF_{\Lambda_{i,j}}^l)_{1\leq l\leq e_i}, (\varphi^l_{\Lambda_{i,j}})_{2\leq l\leq e_i})_{\Lambda\in\CL},
\end{flalign*} 
          where $(\CF_{\Lambda_{i,j}}^l)\in \RM^\nspl_\CL(S)$ and $\varphi_{\Lambda_{i,j}}^l\colon \Upsilon^l_{\Lambda_{i,j}}\simeq \Lambda_{i,j}^l\otimes_{\CO_L}\CO_S$ for $$\Upsilon^l_{\Lambda_{i,j}}\coloneqq \ker(\pi_i\otimes 1-1\otimes \sigma_{i,j}^l(\pi_i)\ |\ (\Lambda_{i,j}\otimes_{\CO_{F_0}}\CO_S)/\CF_{\Lambda_{i,j}}^{l-1});$$
          the trivialization $\varphi_{\Lambda_{i,j}}^l$ exists Zariski locally on $S$ by \cite[Proposition 5.2, 9.2]{PR2} (the reference only covers the case of type (C) and (AL), but the case of (AU) and (D) can be proved by the same argument);
    \item $\alpha_1$ is the natural forgetful map;
    \item $\alpha_2$ sends $(\CF_{\Lambda_{i,j}}^l,\varphi^l_{\Lambda_{i,j}})\in \wt{\RM}^\nspl_\CL$ to $(\CF^1_{\Lambda_{i,j}},\varphi^l_{\Lambda_{i,j}}(\CF_{\Lambda_{i,j}}^l/\CF_{\Lambda_{i,j}}^{l-1}))_{2\leq l\leq e_i}$.      
\end{itemize} 
%
%
Set \begin{flalign*}
	\CH\coloneqq \prod_{[\sigma]\in\Sigma/\sim_*,l\geq 2}\CG_{[\sigma]}.
\end{flalign*}
Then $\CH$ acts on $\wt{\RM}_\CL^\naive$ via \begin{flalign*}
	h\cdot ((\CF_{\Lambda_{i,j}}^l)_{1\leq l\leq e_i}, (\varphi^l_{\Lambda_{i,j}})_{2\leq l\leq e_i}) = ((\CF_{\Lambda_{i,j}}^l)_{1\leq l\leq e_i}, (h\cdot\varphi^l_{\Lambda_{i,j}})_{2\leq l\leq e_i}).
\end{flalign*}
By \cite[pp. 55]{shen2025f}, $\alpha_1$ is an $\CH$-torsor and $\alpha_2$ is a smooth and $\CH$-equivariant morphism. Recall that $\CG$ is the parahoric group scheme attached to $\CL$. Define the following actions of $\CG$: \begin{itemize}
	\item $\CG$ acts on $\RM^\nspl_\CL$ via \begin{flalign*}
		g\cdot(\CF^l_{\Lambda_{i,j}})\coloneqq (g\cdot\CF^l_{\Lambda_{i,j}}).
	\end{flalign*} Note that $\CG$-action on $\Lambda$ induces an action on $\Lambda_{i,j}$. 
	\item $\CG$ acts on $\wt{\RM}^\nspl_\CL$ via \begin{flalign*}
		g\cdot((\CF_{\Lambda_{i,j}}^l)_{1\leq l\leq e_i}, (\varphi^l_{\Lambda_{i,j}})_{2\leq l\leq e_i}) = ((g\cdot \CF_{\Lambda_{i,j}}^l)_{1\leq l\leq e_i}, (g\cdot\varphi^l_{\Lambda_{i,j}}\cdot g\inverse)_{2\leq l\leq e_i}).
	\end{flalign*}
	\item $\CG$ acts on $\prod_{[\sigma]\in\Sigma/\sim_*} \RM^\naive(\CL_{[\sigma]},\mu_{[\sigma]})$ via the natural morphism \begin{flalign*}
		\CG\ra \prod_{[\sigma]\in\Sigma/\sim_*}\CG_{[\sigma]}.
	\end{flalign*}
\end{itemize}
It is straightforward to check that $\alpha_1$ and $\alpha_2$ are $\CG$-equivariant with respect to the above actions. In summary, we obtain the following. 


\begin{prop}\label{prop-splitting}
    There exists a diagram over $\CO_L$ 
\begin{equation}
\begin{tikzcd}[column sep={6em, between origins}, row sep={3em, between origins}]
&  \arrow[ld, "\alpha_1"']\wt{\RM}^\nspl_\CL\arrow[rd, "\alpha_2"]&\\
\RM^\nspl_\CL   &&  \hspace{2em} \prod_{[\sigma]\in\Sigma/\sim_*} \RM^\naive(\CL_{[\sigma]},\mu_{[\sigma]}), 
\end{tikzcd} \label{splitdiag}
\end{equation}
    such that $\alpha_1$ is an $\CH$-torsor and $\alpha_2$ is $\CH$-equivariant and smooth. In particular, $\RM^\nspl_\CL$ is \etale locally isomorphic to a product of naive local models $\RM^\naive(\CL_{[\sigma]},\mu_{[\sigma]})$. Furthermore, $\alpha_1$ and $\alpha_2$ are both $\CG$-equivariant. 
\end{prop}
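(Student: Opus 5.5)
The proposition is essentially a repackaging of the construction in the paragraphs preceding it. I will verify four things in turn: that $\alpha_1$ is an $\CH$-torsor, that $\alpha_2$ is smooth and $\CH$-equivariant, the resulting étale-local product structure, and $\CG$-equivariance.

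\emph{$\alpha_1$ is an $\CH$-torsor.} First I check that each $\Upsilon^l_{\Lambda_{i,j}}$ is a locally free $\CO_S$-module of the same rank as $\Lambda_{i,j}^l$. I also check that it carries the structure (chain maps, the maps $\theta_b$, and the pairing induced from the dual index) making it a ``twisted'' copy of the multichain $\CL_{[\sigma]}$. This is exactly the content of \cite[Proposition 5.2, 9.2]{PR2} for types (C) and (AL). For types (AU) and (D), I will run the same argument, namely a local computation on $\Lambda_{i,j,S}/\CF^{l-1}$ using that $\pi_i$ satisfies the Eisenstein polynomial $\prod_a (T-\sigma^a_{i,j}(\pi_i))$. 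The hermitian or orthogonal duality only changes which index $*(i,j)$ is paired. Granting this, the sheaf of isomorphisms $\varphi^l$ respecting the multichain structure is a torsor under the automorphism group scheme of $\CL_{[\sigma]}$. That group is $\CG_{[\sigma]}$, or its full automorphism group, whose neutral component is $\CG_{[\sigma]}$; I would restrict to the neutral component as in \cite{shen2025f}. Local triviality then gives that $\alpha_1$ is an $\CH$-torsor.

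\emph{$\alpha_2$ is smooth and $\CH$-equivariant.} Equivariance is immediate from the formula, since $h$ acts on $\varphi^l$ and $\alpha_2$ records $\varphi^l(\CF^l/\CF^{l-1})$. For smoothness, I want to show that the data $(\CF^l)_l$ can be rebuilt, up to a smooth choice, from $\CF^1$ and the images $\varphi^l(\CF^l/\CF^{l-1})$. I would show that $\wt{\RM}^\nspl_\CL$ is an iterated fibration: the choice of $\CF^l$ given $\CF^{l-1}$ is a subobject of $\Upsilon^l$, and the conditions of Definition~\ref{def-naivesplmod} (the rank condition, annihilation by $\pi_i-\sigma^l(\pi_i)$, and the orthogonality conditions (4) and (5)) are transported by $\varphi^l$ exactly to the conditions defining $\RM^\naive(\CL_{[\sigma]},\mu_{[\sigma]})$. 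This follows \cite[pp.~55--56]{shen2025f}. I would then check via the infinitesimal lifting criterion that lifting the trivializations $\varphi^l$ is unobstructed, since they form a smooth torsor.

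\emph{Étale-local product structure and $\CG$-equivariance.} The product description follows formally: a torsor under a smooth group scheme is smooth-locally trivial, and a smooth morphism admits étale-local sections. So the local model diagram yields, for each point, a common étale neighborhood of $\RM^\nspl_\CL$ and of $\prod \RM^\naive(\CL_{[\sigma]},\mu_{[\sigma]})$, as in the standard local-model-diagram argument. For $\CG$-equivariance, I will check directly that the formulas for the actions are compatible: $g$ maps $\Upsilon^l$ for $\CF$ to $\Upsilon^l$ for $g\CF$, and $g\varphi g^{-1}$ is again a trivialization. On the target, the action is through $\CG\to\prod\CG_{[\sigma]}$.

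I expect the main obstacle to be the torsor statement for types (AU) and (D). This amounts to checking that $\Upsilon^l$ inherits a self-dual multichain structure isomorphic to $\CL_{[\sigma]}$ Zariski locally, via condition (5) on the perpendiculars. My first approach will be to reduce to the (C)/(AL) computations of \cite{PR2} by decomposing by embeddings.
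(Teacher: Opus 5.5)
Your plan matches the paper's argument: the paper likewise takes the Zariski-local trivializations $\varphi^l$ from \cite[Proposition 5.2, 9.2]{PR2}, extended to types (AU) and (D) ``by the same argument''. It also cites \cite{shen2025f} for the torsor property of $\alpha_1$ and for the smoothness and $\CH$-equivariance of $\alpha_2$, and checks $\CG$-equivariance directly from the explicit formulas for the actions. One small caution: in type (D) the trivializations naturally form a torsor under the full, disconnected similitude automorphism group scheme of $\CL_{[\sigma]}$, so ``restricting to the neutral component'' would need a reduction of structure group that need not exist; you should simply take $\CH$ to be the product of these full (still smooth) group schemes, which leaves every conclusion of the proposition intact and is a point the paper itself passes over in the same way.
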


\begin{defn}
    {Let the {\it splitting model} $\RM^\spl_\CL:=\RM^\spl({\CL,\mu,(\sigma_{i,j}^l)})$ (resp. {local model} $\RM^\loc(\CL_{[\sigma]},\mu_{[\sigma]})$) denote the schematic closure of the generic fiber of $\RM^\nspl_\calL$ (resp. $\RM^\naive(\CL_{[\sigma]},\mu_{[\sigma]})$).}    
\end{defn}
By construction, $\RM^\spl_\CL$ and $\RM^\loc(\CL_{[\sigma]},\mu_{[\sigma]})$ are flat over $\CO_L$. Furthermore, the diagram (\ref{splitdiag}) restricts to a diagram 
\begin{equation}
\begin{tikzcd}[column sep={6em, between origins}, row sep={3em, between origins}]
&  \arrow[ld, "\alpha_1"']\wt{\RM}^\spl_\CL\arrow[rd, "\alpha_2"]&\\
\RM^\spl_\CL   &&  \hspace{2em} \prod_{[\sigma]\in\Sigma/\sim_*} \RM^\loc(\CL_{[\sigma]},\mu_{[\sigma]}), 
\end{tikzcd} 
\end{equation}
where $\alpha_1$ is an $\CH$-torsor and $\alpha_2$ is $\CH$-equivariant.

\part{The Orthogonal case} \label{part-orth}
\section{Orthogonal local models}\label{orthogonal-local-models}

In this section, we recall the definitions and main propositions of the split and quasi-split (but non-split) even orthogonal local models. For the split case  we follow \cite{Yang25}, while for the quasi-split case we follow \cite{YZZ26}.

Let $F_0$ be a complete discretely valued field with ring of integers
$\mathcal{O}_{F_0}$, uniformizer $\pi_0$, and residue field $k:=k_{F_0}$ of characteristic
$p>2$. Let $F/F_0$ be a ramified quadratic extension with a uniformizer
$\pi\in F$ such that $\pi^2=-\pi_0.$ We put
\[
g=
\begin{cases}
n, & \text{in the split case},\\
n+1, & \text{in the quasi-split case}.
\end{cases}
\]

After replacing $F_0$ by a sufficiently large unramified extension, we shall work
with one of the following two symmetric spaces over $F_0$ (see \cite[\S2.1]{YZZ26} and \cite[\S 2.2]{I.Z}). In the split case, let $V=F_0^{2g}$ with ordered basis
$e_1,\dots,e_{2g}$, equipped with the split symmetric $F_0$-bilinear form
$\psi$ given by
\begin{equation}\label{eq split form orthogonal}
\psi(e_i,e_j)=\delta_{i,\,2g+1-j}.  
\end{equation}
In the quasi-split case, let $V=F_0^{2g}$ with ordered basis
\[
e_1,\dots,e_{2g-2},f_1,f_2,
\]
equipped with the symmetric $F_0$-bilinear form $\phi$ whose matrix with respect
to this basis is
\begin{equation}\label{eq quasi-split form}
\phi=
\begin{pmatrix}
H_{2g-2} & 0\\
0 & \begin{pmatrix}\pi_0&0\\0&1\end{pmatrix}
\end{pmatrix}.   
\end{equation}
We write $(\, ,\,)$ for the corresponding symmetric form, namely $\psi$ in the
split case and $\phi$ in the quasi-split case. Let
\[
G:=\GO(V,(\, ,\,))
\]
and denote by $G^\circ$ the identity component of $G$.

For $0\le i\le n$, define the standard lattices $\Lambda_i$ as follows. In the
split case we set
\[
\Lambda_i:=
\mathcal{O}_{F_0}
\langle
\pi_0^{-1}e_1,\dots,\pi_0^{-1}e_i,
e_{i+1},\dots,e_{2g}
\rangle.
\]
In the quasi-split case we set
\[
\Lambda_i:=
\mathcal{O}_{F_0}
\langle
\pi_0^{-1}e_1,\dots,\pi_0^{-1}e_i,
e_{i+1},\dots,e_{2g-2},
\pi_0^{-1}f_1,f_2
\rangle.
\]
For any lattice $\Lambda\subset V$, let $\Lambda^\vee:=\{x\in V\mid (x,\Lambda)\subset\mathcal{O}_{F_0}\}.$ For every non-empty subset $I\subset[0,n]$, define the associated standard
self-dual periodic lattice chain by $\Lambda_I:=\{\Lambda_\ell\}_{\ell\in 2n\mathbb{Z}\pm I}$,
where
\[
\Lambda_{-i}:=\Lambda_i^\vee
\qquad\text{and}\qquad
\Lambda_\ell:=\pi_0^{-d}\Lambda_{\pm i}
\quad\text{for }\ell=2nd\pm i.
\]
Any self-dual periodic lattice chain of $V$ is $G^\circ(F_0)$-conjugate to some
$\Lambda_I$; see \cite[\S1.5, Theorem~2.2]{Yang25} and
\cite[Theorem~2.7 and Corollary~2.9]{YZZ26}.

Let $\calG_I$ be the affine smooth group scheme of similitude automorphisms of the
lattice chain $\calL:=\Lambda_I$. Its neutral component $\calG_I^\circ$ is the corresponding
parahoric group scheme for $G^\circ$ in the sense of Bruhat--Tits; see
\cite[\S1.2, \S2]{Yang25} and \cite[\S 3]{YZZ26}. {Consider the minuscule cocharacter 
$\mu_\pm$ given by
\begin{flalign}
\mu_+\coloneqq (1^{(g)},0^{(g)})
\qquad\text{and}\qquad
\mu_-\coloneqq (1^{(g-1)},0,1,0^{(g-1)}).
\label{cochar}
\end{flalign}
}

\begin{defn}\label{naivelocalmodeldef}
{The \emph{naive local model} $\RM_I^{\naive}=\Mnaive(\Lambda_I, \mu)$} is the projective scheme over
$\mathcal{O}_{F_0}$ representing the functor sending an
$\mathcal{O}_{F_0}$-algebra $R$ to the set of $R$-modules $(\mathcal{F}_\Lambda)_{\Lambda\in\Lambda_I}$ such that:
\begin{itemize}
\item[(LM1)] for every $\Lambda\in\Lambda_I$, Zariski locally on $\Spec R$,
$\mathcal{F}_\Lambda$ is a direct summand of $\Lambda_R:=\Lambda\otimes_{\mathcal{O}_{F_0}}R$ of rank $g$;

\item[(LM2)] for every $\Lambda\in\Lambda_I$, one has $((\, ,\,)\otimes 1)(\mathcal{F}_\Lambda,\mathcal{F}_{\Lambda^\vee})=0$;

\item[(LM3)] for every inclusion $\Lambda\subset\Lambda'$ in $\Lambda_I$, the
natural map $\Lambda_R\to \Lambda'_R$ sends $\mathcal{F}_\Lambda$ to
$\mathcal{F}_{\Lambda'}$, and the isomorphism
$\Lambda_R\xrightarrow{\sim}(\pi_0\Lambda)_R$ identifies $\mathcal{F}_\Lambda$ with $\mathcal{F}_{\pi_0\Lambda}$.
\end{itemize}
\end{defn}

From now on, we work after base change to $\mathcal{O}_F$. Thus we write again
$\RM_I^{\naive}$ for $\RM_I^{\naive}\otimes_{\mathcal{O}_{F_0}}\mathcal{O}_F.$ The generic fiber of $\RM_I^{\naive}$ over $F$ is the union of the two connected
components $\OGr(g,V_F)_+\sqcup \OGr(g,V_F)_-$. These components are identified with the flag varieties
\[
\OGr(g,V_F)_\pm\simeq G_F^\circ/P_{\mu_\pm},
\]
where $P_{\mu_\pm}$ denotes the parabolic subgroup associated to the minuscule
cocharacter $\mu_\pm$.
In particular, in the split case the relative dimension is $n(n-1)/2$, while in
the quasi-split case it is $n(n+1)/2$.

In \cite[\S7]{PR}, Pappas--Rapoport defined
an involution operator $a$ on $\wedge^g_F V_F$ inducing a decomposition
\[
\wedge^g_F V_F=W_+\oplus W_-,
\]
where $W_\pm$ denotes the $\pm1$-eigenspace for $a$. For a lattice $\Lambda\subset V$,
set $\Lambda_{\mathcal{O}_F}:=\Lambda\otimes_{\mathcal{O}_{F_0}}\mathcal{O}_F$
and
\[
\left(\wedge^g_{\mathcal{O}_F}\Lambda_{\mathcal{O}_F}\right)_\pm
:=
\wedge^g_{\mathcal{O}_F}\Lambda_{\mathcal{O}_F}\cap W_\pm.
\]

\begin{defn}\label{defspinlocalmodel}
The \emph{spin local model} $\RM^\pm_I$ is the projective scheme over $\CO_F$ representing the functor sending each $\CO_F$-algebra $R$ to the set of $R$-modules $(\CF_\Lambda)_{\Lambda\in \Lambda_I}$ such that $(\CF_\Lambda)_{\Lambda\in \Lambda_I}\in \RM^\naive_I(R)$ and 
    \begin{itemize}
        \item[LM4$\pm$.] for any $\Lambda\in\Lambda_I$, Zariski locally on $\Spec R$,  the line $\bigwedge_R^g\mathcal{F}_\Lambda$ is contained in
    \[
    \operatorname{Im}\left[
\left(
\wedge^g_{\mathcal{O}_F}\Lambda_{\mathcal{O}_F}
\right)_\pm
\otimes_{\mathcal{O}_F}R
\longrightarrow
\wedge^g_R\Lambda_R
\right].
    \]
    \end{itemize}
\end{defn}

The generic fiber of $\RM_I^\pm$ is isomorphic to $\OGr(g,V_F)_\pm$. For
simplicity, set $\calG:=\calG_I$. The group scheme $\calG$ naturally acts on
$\RM_I^{\naive}$ and $\calG^\circ$ preserves $\RM_I^\pm$; see
\cite[\S3]{Yang25} and \cite[\S 3]{YZZ26}.

\begin{thm}\label{thm:spin-flat}
For every non-empty subset $I\subset[0,n]$, the spin local model $\RM_I^\pm$ is flat over $\mathcal{O}_F$ of relative dimension $\frac{g(g-1)}{2}$ and represents the
corresponding $v$-sheaf local model in the sense of Scholze--Weinstein \cite[\S 21.4]{scholze2020berkeley}. Moreover, $\RM_I^\pm$ is normal and Cohen--Macaulay with reduced special fiber.
\end{thm}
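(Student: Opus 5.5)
The theorem is a recollection from \cite{Yang25} (split case) and \cite{YZZ26} (quasi-split case), so the proof will mainly assemble results proved there. The plan is to establish, in order: flatness, the correct relative dimension, reducedness of the special fiber, normality and Cohen--Macaulayness, and finally the identification with the v-sheaf local model.

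\emph{Generic fiber and dimension.} Over $F$ the lattices in $\Lambda_I$ all become $V_F$, so condition (LM3) forces all $\CF_\Lambda$ to coincide. Condition (LM2) then says that $\CF_\Lambda$ is a Lagrangian, i.e. maximal isotropic, subspace of dimension $g$, and (LM4$\pm$) selects one of the two connected components of $\OGr(g,V_F)$. This component is $G^\circ_F/P_{\mu_\pm}$, which is smooth of dimension $g(g-1)/2$.

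\emph{Local charts.} Using the $\calG^\circ$-action, every point of the special fiber can be translated into a neighbourhood of a worst point. One needs that every $\calG^\circ$-orbit closure in the special fiber meets the closure of the minimal orbit. On an explicit open affine chart $\RU=\Spec A$ around that worst point, one writes the conditions (LM1)–(LM4$\pm$) in matrix coordinates. The spin condition becomes the vanishing of certain Pfaffian-type or minor-type expressions. The goal is to identify $A$ with a known ring, for example a quotient of $\CO_F[X]$ by determinantal, rank, or symmetry conditions on a generic matrix $X$.

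\emph{Flatness, reducedness, Cohen--Macaulayness, normality.} For the ring $A$ obtained above:
\begin{itemize}
\item The special fiber $A\otimes k$ is reduced and Cohen--Macaulay of dimension $g(g-1)/2$, by standard theory of determinantal varieties or Gröbner degeneration arguments.
\item Its irreducible components are the closures of the Schubert cells attached to the $\mu_\pm$-admissible set. Comparing them with the generic fiber, each component lies in the closure of the generic fiber. Together with the dimension count, this gives flatness, for instance by exhibiting a regular sequence or by showing that $\pi$ is a nonzerodivisor.
\item Cohen--Macaulayness of $A$ then follows from that of $A/\pi A$, since $\pi$ is a nonzerodivisor.
\item Normality follows from Serre's criterion: the generic fiber is smooth, the special fiber is reduced (hence generically smooth), and $A$ is Cohen--Macaulay.
\end{itemize}

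\emph{Representability.} By Theorem \ref{thm-repre}, the v-sheaf local model is represented by the unique flat, projective, normal $\CO_F$-scheme with reduced special fiber whose generic fiber and v-sheaf agree. It therefore suffices to show that $(\RM^\pm_I)^\Diamond$ coincides with the v-closure of the generic fiber inside $\Gr_{\calG}$. The main point here is that $\RM^\pm_I$ embeds into the Witt-vector or Beilinson--Drinfeld lattice Grassmannian via the lattice-chain description. Because $\RM^\pm_I$ is flat, it is the closure of its generic fiber, so the two v-sheaves agree.

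\emph{Main obstacle.} I expect the hardest step to be the explicit verification on the worst chart: showing that (LM4$\pm$) cuts out exactly a reduced Cohen--Macaulay ring of the right dimension. This is the main content of \cite{Yang25,YZZ26}. Without the spin condition, the naive model is not flat, so the spin equations must kill precisely the extra components. I would first try to identify the special fiber of the chart with a union of Schubert varieties in a partial affine flag variety. The results of Pappas–Zhu and Görtz on reducedness and Cohen--Macaulayness of such unions would then give the needed properties of the special fiber.
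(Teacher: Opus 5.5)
Your proposal is correct and matches the paper's approach: the paper proves this theorem purely by citation. It invokes \cite[Theorem 1.5]{Yang25} and \cite[Proposition 3.6]{yang25TopFlat} in the split case, and \cite[Theorem~1.3(1), Proposition~3.6]{YZZ26} in the quasi-split case. Your outline is a fair summary of what those references establish: topological flatness plus reducedness of the special fiber on the worst-point chart give flatness, Serre's criterion gives normality, and the uniqueness in Theorem~\ref{thm-repre} identifies $\RM_I^\pm$ with the $v$-sheaf local model; you also correctly single out the chart computation as the real input.
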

\begin{proof}
In the split case the result follows from \cite[Theorem 1.5]{Yang25} and  \cite[Proposition 3.6]{yang25TopFlat}. In the quasi-split case, we obtain the result from \cite[Theorem~1.3(1), Proposition~3.6]{YZZ26}.
\end{proof}

We now restrict to the (pseudo-)maximal parahoric case, i.e. $I=\{i\}$, and write the spin local model 
\[
\RM_i^\pm:=\RM_{\{i\}}^\pm.
\]

\begin{defn}
    Let $\iota: \Lambda_i\ra \pi\inverse\Lambda_{-i}=\Lambda_{2n-i}$ denote the natural inclusion map (and its base change). 
     For an integer $\ell$, denote by $\RM_i^\pm(\ell)\sset \RM^\pm_{i,k}$ the locus where $\iota(\CF_i)$ has rank $\ell$.
\end{defn}

\begin{thm}\label{thm:max-schubert}
Let $I=\{i\}\subset[0,n]$.
\begin{altenumerate}
\item If $i=0$ or $i=n$, then $\RM_i^\pm$ is smooth over $\mathcal{O}_F$. Its
relative dimension is $n(n-1)/2$ in the split case and $n(n+1)/2$ in the
quasi-split case.

\item The spin local model $\RM_i^\pm$ is isomorphic to
$\RM_{n-i}^\pm$.

\item Assume that we are in the split case and that $i\neq 0,n$. Then there exists
a stratification
\begin{equation}\label{SStratification}
\RM^\pm_{i,k}
=
\coprod_{\ell=\max\{0,2i-n\}}^{i}
\RM_i^\pm(\ell),
\end{equation}
where the top stratum $\RM_i^\pm(i)$ decomposes into exactly two Schubert cells
of dimension $n(n-1)/2$, and each lower stratum $\RM_i^\pm(\ell)$, for
$\max\{0,2i-n\}\le \ell<i$, is a single Schubert cell. Moreover,
$\RM_i^\pm(\ell')$ is contained in the Zariski closure
$\overline{\RM_i^\pm(\ell)}$ if and only if $\ell'\leq \ell$.

\item Assume that we are in the quasi-split case. Then there
exists a stratification
\begin{flalign}\label{QStratification}
\RM^\pm_{i,k}
=
\coprod_{\ell=\max\{0,2i-n\}}^i
\RM^\pm_i(\ell),
\end{flalign}
where each stratum $\RM^\pm_i(\ell)$ is a single Schubert cell and
$\RM^\pm_i(\ell')$ is contained in the Zariski closure
$\overline{\RM^\pm_i(\ell)}$ if and only if $\ell'\leq \ell$. Consequently,
$\RM^\pm_{i,k}$ is irreducible and contains precisely $\min\{i,n-i\}+1$ Schubert cells.
\end{altenumerate}
\end{thm}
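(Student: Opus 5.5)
This theorem is essentially a compilation of results from \cite{Yang25, YZZ26} together with the theory of Schubert stratifications. My plan is to deduce each part from the flatness and normality statement of Theorem \ref{thm:spin-flat} and the identification of $\RM^\pm_i$ with the canonical local model. The special fiber of the canonical local model embeds into the (twisted) affine flag variety of $G^\circ$ over $k$ as the union of Schubert cells indexed by the $\mu_\pm$-admissible set restricted to the parahoric $\calG_{\{i\}}^\circ$. Every statement about strata and closure relations can therefore be checked on the admissible set $\mathrm{Adm}^{\{i\}}(\mu_\pm)$ with its Bruhat order. The concrete rank invariant $\ell=\mathrm{rank}\,\iota(\CF_i)$ should then be matched with these cells.

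\textbf{Parts (1) and (2).} For (1), when $i=0$ or $i=n$ the lattice chain $\Lambda_{\{i\}}$ consists of $\pi_0$-multiples of a single self-dual lattice (or a $\pi_0$-modular one). In the split case $\calG$ is hyperspecial, and $\RM^\pm_i$ is the connected component of the orthogonal Grassmannian over $\CO_F$, hence smooth. In the quasi-split case $\{0\}$ and $\{n\}$ are special vertices, and the admissible set has a single element. Normality plus reducedness (Theorem \ref{thm:spin-flat}) then force the special fiber to be a single smooth flag variety. Flatness together with smoothness of both fibers gives smoothness of $\RM^\pm_i$, with relative dimension read off from the generic fiber $\OGr(g,V_F)_\pm$. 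For (2), I will use the similitude $\Lambda_i\mapsto \Lambda_{n-i}$ realized by an element of $G(F_0)$ normalizing the chain structure, such as $e_j\mapsto \pi_0^{-1}e_{2g+1-j}$ suitably combined with scaling and the duality $\Lambda\mapsto\Lambda^\vee$. This element is an isomorphism of the naive models carrying the spin condition to itself, possibly after exchanging $\pm$, which is harmless since the $\pm$ models are isomorphic via an element of $G\setminus G^\circ$.

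\textbf{Parts (3) and (4).} I will first show that $\ell$ ranges exactly over $[\max\{0,2i-n\},i]$. The upper bound is $\ell\le i$ because $\ker(\iota)$ on $\Lambda_{i,k}$ has dimension $2g-i$-type constraints forcing $\dim(\CF_i\cap\ker\iota)\ge g-i$. The lower bound comes from isotropy: $\iota(\CF_i)$ lies in an isotropic space for the induced form on $\Lambda_{2n-i,k}/\iota(\dots)$. Next I will show that $\calG_k$ acts transitively on each locus $\RM^\pm_i(\ell)$, except on the top stratum in the split case, which splits into two orbits. This should follow from a linear-algebra normal form: a pair $(\CF_i,\CF_{-i})$ is determined up to the reductive quotient $\GL\times \mathrm{O}$ action by $\ell$, and in the split case by one extra discrete invariant, namely which of the two isotropic families the relevant maximal isotropic subspace belongs to. Comparing with the count $|\mathrm{Adm}^{\{i\}}(\mu_\pm)|$ then confirms that the orbits are Schubert cells.

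For the closure relations, lower semicontinuity of rank gives $\overline{\RM^\pm_i(\ell)}\subset\bigcup_{\ell'\le\ell}\RM^\pm_i(\ell')$. For the converse I will exhibit explicit one-parameter degenerations over $k[t]$ lowering the rank by one, together with a dimension count showing that the strata have strictly decreasing dimension. Irreducibility in (4) then follows because the top stratum is a single cell whose closure is everything.

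I expect the main obstacle to be the orbit classification, and in particular pinning down the two top-dimensional cells in the split case versus a single cell in the quasi-split case. I will handle this via the Bruhat-order description from \cite{Yang25, YZZ26} rather than by direct linear algebra.
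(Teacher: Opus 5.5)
Your overall route agrees with the paper, which proves this theorem purely by citation: \cite[Proposition~2.8]{Yang25} in the split case and \cite[Theorem~1.7, Remark~1.10, Theorem~2.7]{YZZ26} in the quasi-split case. You also defer the orbit classification and closure relations to those sources. But two arguments you sketch yourself fail as written.

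First, your bounds on $\ell$ are justified backwards.
\begin{itemize}
\item On the special fiber, $\iota\colon\Lambda_{i,k}\to\Lambda_{2n-i,k}$ kills exactly the images of $e_{i+1},\dots,e_{2n-i}$, and also of $f_2$ in the quasi-split case. So $\ker\iota\cong\Lambda_{-i}/\pi_0\Lambda_i$ has dimension $2n-2i$ (resp.\ $2n-2i+1$), nothing like $2g-i$. Intersecting with the rank-$g$ module $\mathcal F_i$ gives the \emph{lower} bound $\ell\ge g-\dim\ker\iota=2i-n$, and from above only $\ell\le 2i$.
\item The bound $\ell\le i$ comes from isotropy, which can never give a lower bound. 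The form $\pi_0(\ ,\ )$ induces a nondegenerate symmetric form on $\iota(\Lambda_{i,k})\cong\Lambda_i/\Lambda_{-i}$, of dimension $2i$ (resp.\ $2i+1$). For $x,y\in\mathcal F_i$ one has $\pi_0(x,y)=(x,\pi_0y)=0$ by (LM2) and (LM3), since $\pi_0\,\iota(\mathcal F_i)\subset\mathcal F_{-i}$. Hence $\iota(\mathcal F_i)$ is totally isotropic there.
\item The parity of this dimension is exactly what gives two families of maximal isotropic subspaces at $\ell=i$ in the split case and one in the quasi-split case. This ``family'' invariant is preserved only by the parahoric $\calG^\circ_k$, whose orbits are the Schubert cells. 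The stabilizer of $\Lambda_{\{i\}}$ in $G^\circ(F_0)$ contains elements exchanging the two top cells: take a product of a reflection with determinant $-1$ on $\Lambda_i/\Lambda_{-i}$ and one with determinant $-1$ on $\Lambda_{-i}/\pi_0\Lambda_i$. So your transitivity claims must be made for $\calG^\circ_k$, not $\calG_k$.
\end{itemize}

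Second, in (2) your candidate $e_j\mapsto\pi_0^{-1}e_{2g+1-j}$ cannot work.
\begin{itemize}
\item It is $\pi_0^{-1}$ times the order-reversing Weyl element, a similitude with multiplier $\pi_0^{-2}$. Such elements, like scalings and $\Lambda\mapsto\Lambda^\vee$, preserve the type of a self-dual periodic lattice chain. So no combination of them carries $\Lambda_{\{i\}}$ to $\Lambda_{\{n-i\}}$ when $2i\neq n$.
\item You need a multiplier of odd valuation, as in the element $\tau$ used for Theorem~\ref{thm:symp-max-schubert}(2). Here take $e_j\mapsto e_{n+j}$ and $e_{n+j}\mapsto\pi_0^{-1}e_j$ for $1\le j\le n$, adding $f_1\mapsto f_2$ and $f_2\mapsto\pi_0^{-1}f_1$ in the quasi-split case.
\item This is a similitude with multiplier $\pi_0^{-1}$ satisfying $\tau(\Lambda_m)=\Lambda_{m+n}$, hence $\tau(\Lambda_{\{i\}})=\Lambda_{\{n-i\}}$. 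As you anticipated, it may exchange $\pm$.
\end{itemize}

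Finally, your normal-form heuristic for transitivity is not a proof. The reductive quotient of $\calG^\circ_k$ is of type $\mathrm O_{2i}\times\mathrm O_{2n-2i}$ (resp.\ $\mathrm O_{2i+1}\times\mathrm O_{2n-2i+1}$), not $\GL\times\mathrm O$, and it does not by itself determine the orbits. As in the paper, it is the Bruhat-order description in \cite{Yang25,YZZ26} that actually carries (3) and (4).
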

\begin{proof}
For the split case see \cite[Proposition~2.8]{Yang25} and for the quasi-split case see \cite[Theorem~1.7, Remark~1.10, Theorem~2.7]{YZZ26}.
\end{proof}
By Theorem~\ref{thm:max-schubert}(2), we may assume from now
on that $0\le 2i\le n$. We now record the affine charts $U_i^\pm\subset \RM_i^\pm$ containing a point
$\ast$ in the minimal stratum of \eqref{SStratification} in the split case,
respectively of \eqref{QStratification} in the quasi-split case.

\begin{thm}\label{thm:affine-chart}
Assume that $I=\{i\}$ with $0\le 2i\le n$. The $\calG^\circ$-translates of
$U_i^\pm$ cover $\RM_i^\pm$. Moreover, the following explicit descriptions hold.
\begin{altenumerate}
\item Assume that we are in the split case. Then
\[
U_i^\pm
\simeq
\mathbb{A}^{\frac{(n-2i)(n+2i-1)}{2}}_{\mathcal{O}_F}
\times_{\mathcal{O}_F}
\Spec R_i^\pm,
\qquad
R_i^\pm=\mathcal{O}_F[X]/I_i^\pm,
\]
where $X$ is a $2i\times 2i$ matrix and
\[
I_i^\pm
:=
(X^tH_{2i}X+\pi_0H_{2i},\;
XH_{2i}X^t+\pi_0H_{2i})
+
J_i^\pm.
\]
Here $J_i^\pm$ is generated by
\[
[U:U'](X)
\mp
\sgn(\sigma_U)\sgn(\sigma_{U'})
[U^\perp:{U'}^\perp](X)
\]
and
\[
[V:V'](X)
\pm
\sgn(\sigma_V)\sgn(\sigma_{V'})
\pi_0
[V^\perp:{V'}^\perp](X),
\]
for all subsets $U,U'$ of $[1,2i]$ of cardinality $i$ and all subsets $V,V'$ of
$[1,2i]$ of cardinality $i+1$. The complements are taken with respect to the
involution $j\mapsto 2i+1-j$.

\item Assume that we are in the quasi-split case. Then
\[
U_i^\pm
\simeq
\mathbb{A}_{\mathcal{O}_F}^{\frac{(n-2i)(n+2i+1)}{2}}
\times_{\mathcal{O}_F}
\Spec R_i,
\qquad
R_i=\mathcal{O}_F[X]/I_i^\pm,
\]
where $X$ is a $(2i+1)\times(2i+1)$ matrix and
\[
I_i^\pm
:=
\bigl(
X^tH_{2i+1}X+\pi_0H_{2i+1},\;
XH_{2i+1}X^t+\pi_0H_{2i+1}
\bigr)
+
J_i^\pm.
\]
Here $J_i^\pm$ is generated by the relations
\[
[U^\perp:{U'}^\perp](X)
\mp
(-1)^i
\sgn(\sigma_U)\sgn(\sigma_{U'})
\pi
[U:U'](X),
\]
for all subsets $U,U'\subset[1,2i+1]$ of cardinality $i$. The complements are
taken with respect to the involution $j\mapsto 2i+2-j$.
\end{altenumerate}
\end{thm}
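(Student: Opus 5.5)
The plan is to follow the proofs of the analogous results in \cite{Yang25} (split case) and \cite{YZZ26} (quasi-split case): write down a ``worst point'' $\ast$ in the minimal Schubert cell, form the standard open affine neighborhood of the big cell translated to $\ast$, and compute the equations there explicitly. Concretely, let $\ast$ be the point with $\CF_{\Lambda_i}$ spanned by an explicit subset of the standard basis vectors, chosen so that $\iota(\CF_i)$ has the minimal rank $\max\{0,2i-n\}=0$ (recall $2i\le n$). Let $U_i^\pm$ be the open subscheme where $\CF_{\Lambda_i}$ is the graph of a homomorphism from this fixed $g$-dimensional coordinate subspace to a complementary one. By (LM3), the condition $\pi_0\Lambda_{-i}\subset\Lambda_i\subset\Lambda_{-i}$ together with periodicity and duality (LM2) shows that $\CF_{\Lambda_i}$ determines the whole tuple, so $U_i^\pm$ is cut out inside an affine space of $g\times g$ matrices.

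First, write the transition maps $\Lambda_i\to\Lambda_{-i}=\Lambda_i^\vee$ and $\Lambda_{-i}\to\pi_0^{-1}\Lambda_i$ in the standard bases; they are diagonal with entries $1$ and $\pi_0$, and the $\pi_0$'s are concentrated in a $2i\times 2i$ (respectively $(2i+1)\times(2i+1)$ in the quasi-split case, because of $f_1,f_2$) block. Decompose the graph matrix into blocks accordingly. The isotropy condition (LM2) and the compatibility $\iota(\CF_{\Lambda_i})\subset\CF_{\Lambda_{-i}}=\CF_{\Lambda_i}^\perp$ let one solve for every block except the central one, call it $X$, in terms of $X$ and free variables. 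These free variables form the affine factor $\mathbb{A}^{(n-2i)(n+2i\mp\cdots)/2}$; their number is checked by comparing with the relative dimension in Theorem \ref{thm:max-schubert}(1) minus $\dim\Spec R_i$. What remains on $X$ are exactly the quadratic relations $X^tHX+\pi_0H=0$ and $XHX^t+\pi_0H=0$, coming from the two composites of the transition maps being multiplication by $\pi_0$.

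Next, impose the spin condition LM4$\pm$. Expand $\wedge^g\CF_{\Lambda_i}$ in Plücker coordinates and use the explicit description of the Pappas--Rapoport operator $a$, which sends $e_S$ to $\pm e_{S^\perp}$ (suitably scaled by powers of $\pi_0$, or $\pi$ in the quasi-split case via the $f_1,f_2$ normalization). This converts LM4$\pm$ into linear relations among minors of $X$, and the relevant minors are those of size $i$ and $i+1$; the other minors are eliminated by the already-solved blocks. These relations generate $J_i^\pm$, and the sign $\sgn(\sigma_U)\sgn(\sigma_{U'})$ is produced by reordering wedge products. The covering statement follows because $U_i^\pm$ is open, $\calG^\circ$-stable translates of it contain $\ast$, and every $\calG^\circ$-orbit in the special fiber contains the minimal cell in its closure (Theorem \ref{thm:max-schubert}(3),(4)). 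Any point of $\RM_i^\pm$ therefore has an open neighborhood meeting the closed orbit, and this forces the point to lie in some translate. The generic fiber is handled in the same way, or by properness.

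The main obstacle is the spin step: showing that LM4$\pm$ restricted to the chart is generated exactly by the listed minor relations, with correct signs and powers of $\pi_0$ and $\pi$, and without extra conditions from other minor sizes. I would first carry out the case $i=1$ by hand to fix conventions, and then prove the general identities by a Laplace-expansion argument that reduces $g\times g$ minors of the graph matrix to minors of $X$. Where it is needed, I would use the quadratic relations on $X$ (the adjugate-type identity $X^{-1}\sim -\pi_0^{-1}HX^tH$ generically) to relate complementary minors.
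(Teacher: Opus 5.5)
Your set-up (worst point $\ast$, graph chart determined by $\mathcal{F}_{\Lambda_i}$, solving the outer blocks from (LM2)/(LM3), quadratic relations on the central block $X$, reading off $J_i^\pm$ from LM4$\pm$ in Plücker coordinates, orbit-closure argument for the covering) is the right framework. However, the step you yourself single out as the main obstacle is not handled, and as planned it does not go through.

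You want to show that LM4$\pm$ on the chart is generated \emph{exactly} by the listed minor relations and imposes nothing on the affine-space variables. Where needed, you propose to relate complementary minors via $X^{-1}=-\pi_0^{-1}HX^tH$. That identity exists only after inverting $\pi_0$. So at best it shows that the LM4$\pm$-ideal and $I_i^\pm$ (plus the block equations) agree after $\otimes_{\mathcal{O}_F}F$. It gives no control over $\pi$-torsion, i.e.\ over the special fiber, which is where the content of the theorem lies: that mod $\pi$ the spin condition yields precisely $\wedge^{i+1}X$ and $I_\mp$ and nothing more, and that the remaining coordinates are genuinely free. Your dimension count is only a consistency check for the latter. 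An integral Laplace-expansion bookkeeping of all Plücker relations of all sizes is neither sketched nor easy.

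The paper sidesteps exact generation. In the split case it proves only the easy inclusion: the listed relations follow from LM4$\pm$ and the naive conditions. This gives a closed immersion $U_i^\pm\hookrightarrow\mathcal{U}_i':=\mathbb{A}^{(n-2i)(n+2i-1)/2}_{\mathcal{O}_F}\times\operatorname{Spec}\mathcal{O}_F[X]/I_i^\pm$. The immersion is an isomorphism on generic fibers. It is also an isomorphism on special fibers, by the independent description of the special fiber of the spin local model in \cite[Theorem 1.10]{Yang25}.

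Since $U_i^\pm$ is $\mathcal{O}_F$-flat by Theorem \ref{thm:spin-flat}, this suffices. Let $\mathfrak{a}$ be the kernel on coordinate rings. Then $\mathfrak{a}[1/\pi]=0$ and $\mathfrak{a}\subset\pi\,\mathcal{O}(\mathcal{U}_i')$. Torsion-freeness of $\mathcal{O}(U_i^\pm)$ gives $\mathfrak{a}=\pi\mathfrak{a}$, hence $\mathfrak{a}=0$ by Nakayama, as $\mathfrak{a}$ is supported on $V(\pi)$. The quasi-split case is quoted from \cite{YZZ26}.

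So the missing ingredient is a flatness input together with an independent identification of the special fiber. Alternatively, you could prove flatness of $\mathcal{U}_i'$ by a topological-flatness argument as in Proposition \ref{prop flat R(d,l)}; then $\mathcal{U}_i'$ is the closure of its generic fiber and hence contained in $U_i^\pm$. Either way, you only need the easy direction of your spin computation.

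Two minor slips. With the paper's conventions $\Lambda_{-i}=\Lambda_i^\vee\subset\Lambda_i\subset\pi_0^{-1}\Lambda_{-i}$, so your inclusions and transition maps are reversed. For the covering, argue that the union of translates is open and $\mathcal{G}^\circ$-stable. Its closed complement, if nonempty, meets the special fiber by properness and then contains the closed orbit $\mathrm{M}_i^\pm(0)\ni\ast$, a contradiction.
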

\begin{proof}
In the quasi-split case the result follows from
\cite[Theorem~1.8, Proposition~5.7, \S6]{YZZ26}. For the split case, first set 
\[
\calU_i' =\mathbb{A}^{\frac{(n-2i)(n+2i-1)}{2}}_{\mathcal{O}_F}
\times_{\mathcal{O}_F}
\Spec \frac{\mathcal{O}_F[X]}{(X^tH_{2i}X+\pi_0H_{2i},\;
XH_{2i}X^t+\pi_0H_{2i})
+
J_i^\pm}
\] 
where $J_i^\pm$ is as in the statement. The equations in $J_i^\pm$ are obtained from the
spin condition by a similar calculation as in the quasi-split case. Thus, we get a closed immersion $
U_i^\pm\hookrightarrow \calU_i'$. The generic fibers agree, and by \cite[Theorem~1.10]{Yang25} the special fibers also agree. Hence, this closed immersion is an isomorphism. 
\end{proof}
For $0\le \ell<i$, let
\[
S_\ell:=\overline{\RM_i^\pm(\ell)}\subset \RM_{i,k}^\pm
\]
be the corresponding Schubert variety.

\begin{prop}\label{prop:special-fiber-schubert-centers}
Assume that $I=\{i\}$ with $0\le 2i\le n$.

\begin{altenumerate}
\item Assume that we are in the split case. Then
\[
U_{i,k}^\pm
\simeq
\mathbb{A}_k^{\frac{(n-2i)(n+2i-1)}{2}}
\times_k
\Spec R_{i,k}^\pm,
\]
where $\Spec R^\pm_{i,k}$ is reduced and
\[
R_{i,k}^\pm:=
\frac{k[X]}
{
\bigl(
XH_{2i}X^t,\;
X^tH_{2i}X,\;
\wedge^{i+1}X
\bigr)+I_\mp
}.
\]
Here $I_\mp$ is generated by
\[
[U:U'](X)
\mp
\sgn(\sigma_U)\sgn(\sigma_{U'})
[U^\perp:{U'}^\perp](X)
\]
for all subsets $U,U'\subset[1,2i]$ of cardinality $i$. Moreover, for every $0\le \ell<i$ one has
\[
U_i^\pm\cap S_\ell
\simeq
\Spec\left(R_i^\pm/(\pi,\wedge^{\ell+1}X)\right)
\times_k
\mathbb{A}_k^{\frac{(n-2i)(n+2i-1)}{2}}.
\]
In particular, $U_i^\pm\cap S_\ell$ is cut out on the $\Spec R_i^\pm$-factor by
the ideal $I_\ell=(\pi,\wedge^{\ell+1}X)$ and
\[
\Spec \frac{R_i^\pm}{(\pi,\wedge^{\ell+1}X)}
=\Spec
\frac{k[X]}
{
\bigl(
X^tH_{2i}X,\;
XH_{2i}X^t,\;
\wedge^{\ell+1}X
\bigr)
}
\]
is integral of dimension $
\ell(4i-2\ell-1)$.

\item Assume that we are in the quasi-split case. Then
\[
U_{i,k}^\pm
\simeq
\mathbb{A}_k^{\frac{(n-2i)(n+2i+1)}{2}}
\times_k
\Spec R_{i,k},
\]
where $\Spec R_{i,k}$ is integral and
\[
R_{i,k}:=
\frac{k[X]}
{
\bigl(
X^tH_{2i+1}X,\;
XH_{2i+1}X^t,\;
\wedge^{i+1}X
\bigr)
}.
\]

Moreover, for every $0\le \ell<i$ one has
\[
U_i^\pm\cap S_\ell
\simeq
\Spec\left(R_i/(\pi,\wedge^{\ell+1}X)\right)
\times_k
\mathbb{A}_k^{\frac{(n-2i)(n+2i+1)}{2}}.
\]
In particular, $U_i^\pm\cap S_\ell$ is cut out on the $\Spec R_i$-factor by
the ideal $I_\ell=(\pi,\wedge^{\ell+1}X)$ and
\[
\Spec \frac{R_i}{(\pi,\wedge^{\ell+1}X)}
= \Spec
\frac{k[X]}
{
\bigl(
X^tH_{2i+1}X,\;
XH_{2i+1}X^t,\;
\wedge^{\ell+1}X
\bigr)
}
\]
is integral of dimension $
\ell(4i-2\ell+1)$.
\end{altenumerate}
\end{prop}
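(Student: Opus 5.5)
The plan is to reduce everything to the explicit chart description of Theorem \ref{thm:affine-chart}, together with the known structure of the special fibers from \cite{Yang25, YZZ26}.

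\textbf{Special fiber presentation.} First I obtain the presentation of $U_{i,k}^\pm$ by setting $\pi=0$ (hence $\pi_0=-\pi^2=0$) in the ideal $I_i^\pm$.
\begin{itemize}
\item The quadratic relations become $X^tHX=XHX^t=0$.
\item In the split case, the second family of $J_i^\pm$ reduces to $[V:V'](X)=0$ for $|V|=|V'|=i+1$, which is $\wedge^{i+1}X$. The first family survives as $I_\mp$.
\item In the quasi-split case, the relations in $J_i^\pm$ become $[U^\perp:U'^\perp](X)=0$ for $|U|=i$, so $|U^\perp|=i+1$. These give exactly $\wedge^{i+1}X$.
\end{itemize}
This yields the naive presentation of $U_i^\pm\otimes k$. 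Reducedness, and integrality in the quasi-split case, follow from Theorem \ref{thm:spin-flat}, since the special fiber of $\RM_i^\pm$ is reduced. Note that $U_i^\pm\otimes k$ is an open subscheme of that special fiber, and the affine-space factor does not affect this. Irreducibility in the quasi-split case follows from Theorem \ref{thm:max-schubert}(4).

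\textbf{Identifying the Schubert varieties.} Next I identify $U_i^\pm\cap S_\ell$. On the chart, the map $\iota\colon\CF_i\to\CF_{2n-i}$ is given in standard coordinates by a block whose rank equals the rank of $X$, up to an invertible change; I read this off from the chart construction in \cite{Yang25,YZZ26}. So the locus $\mathrm{rank}\,\iota(\CF_i)\le\ell$ is cut out set-theoretically by $\wedge^{\ell+1}X$. By Theorem \ref{thm:max-schubert}, the closure $S_\ell$ equals the union of the strata $\RM_i^\pm(\ell')$ with $\ell'\le\ell$, which is exactly this rank locus. So set-theoretically $U_i^\pm\cap S_\ell=V(\pi,\wedge^{\ell+1}X)$.

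For $\ell<i$, the minors of size $\ell+1\le i$ imply the $I_\mp$ relations, since each of those is a difference of $i\times i$ minors. They also imply $\wedge^{i+1}X$. Hence the quotient simplifies to $k[X]/(X^tHX,XHX^t,\wedge^{\ell+1}X)$.

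\textbf{Main obstacle: reducedness and dimension.} To upgrade the set-theoretic equality to a scheme-theoretic one, I will show that $k[X]/(X^tHX,XHX^t,\wedge^{\ell+1}X)$ is reduced and integral of the stated dimension. Since Schubert varieties are reduced by definition, equality then follows. This is the step I expect to be hardest.

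My first approach is to recognize this scheme as the variety of pairs: matrices $X$ of rank $\le\ell$ with $\mathrm{Im}X$ and $\mathrm{Im}X^t$ totally isotropic. Such a variety is a "quiver/orthogonal determinantal" variety, handled by standard monomial theory (De Concini--Procesi style) or by a resolution. The resolution I have in mind is
\[
\widetilde{Z}=\{(X,W,W')\ :\ W,W'\ \text{isotropic of dim}\ \ell,\ \mathrm{Im}X\subset W,\ \mathrm{Im}X^t\subset W'\}.
\]
This is a vector bundle of rank $\ell^2$ over $\OGr(\ell)\times\OGr(\ell)$. Its dimension is $\ell^2+2\bigl(\ell(m-\ell)-\ell(\ell+1)/2\bigr)$, where $m=2i$ or $2i+1$. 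This gives $\ell(4i-2\ell-1)$ in the split case and $\ell(4i-2\ell+1)$ in the quasi-split case, matching the claim. For $\ell<i$ the map $\widetilde{Z}\to Z$ is birational onto an irreducible image, so the image is integral of this dimension.

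To get reducedness of the defining ideal, I would instead identify the ring with the coordinate ring of the Schubert variety. The route is through the Frobenius-split and normal structure of affine Schubert varieties, together with the fact that the equations define the opposite Schubert cell intersection computed in \cite{Yang25,YZZ26}. If that is unavailable, I would check generic reducedness via the Jacobian at a rank-$\ell$ point, and then apply a Cohen--Macaulay argument (e.g.\ from the Schubert variety being CM) to get $R_0$ plus $S_1$.
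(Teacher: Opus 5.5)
\textbf{There is a genuine gap, at the step you flag as the main obstacle.} The rest of the plan is fine.
\begin{itemize}
\item Reducing $I_i^\pm$ modulo $\pi$ gives the stated presentations.
\item Reducedness, and in the quasi-split case integrality, of the special-fiber ring follow from Theorem~\ref{thm:spin-flat} and Theorem~\ref{thm:max-schubert}(4).
\item The rank-locus description gives $U_i^\pm\cap S_\ell=V(\pi,\wedge^{\ell+1}X)$ set-theoretically.
\item For $\ell<i$ the ideal simplifies to $(X^tHX,XHX^t,\wedge^{\ell+1}X)$.
\item Your rank-$\ell^2$ bundle over $\OGr(\ell,m)\times\OGr(\ell,m)$ correctly gives irreducibility and dimension $\ell(2m-2\ell-1)$ of the \emph{underlying reduced} variety.
\end{itemize}
For comparison, the paper proves the whole proposition purely by citation. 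It uses \cite[Thm.~1.10, Prop.~3.7, Prop.~4.44]{Yang25} and \cite[Thm.~1.9, Lemma~6.2, Thm.~6.5, proof of Prop.~5.4]{YZZ26}.

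Neither of your two routes shows that $Q:=k[X]/(X^tHX,XHX^t,\wedge^{\ell+1}X)$ is reduced.
\begin{itemize}
\item \emph{First route.} Normality, Frobenius splitting and Cohen--Macaulayness of $S_\ell$ are properties of the reduced scheme. They say nothing about whether $(X^tHX,XHX^t,\wedge^{\ell+1}X)$ is a radical ideal. ``Identifying $Q$ with the coordinate ring of $S_\ell$'' is exactly the claim to be proved, so this route is either circular or amounts to citing \cite[Prop.~3.7]{Yang25} and \cite{YZZ26}.
\item \emph{Fallback route.} Generic reducedness is obtainable and gives $R_0$. But Serre's criterion also needs $S_1$ for $Q$ itself, i.e.\ no embedded primes in $Q$. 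The CM property of $S_\ell$, or of $U^\pm_{i,k}$, does not transfer to the possibly non-reduced closed subscheme $\Spec Q$.
\end{itemize}
So the scheme-theoretic equality and the integrality of $Q$ remain unproved.

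\textbf{One way to close the gap without citation.} Use the argument the paper gives in the symplectic analogue (Lemma~\ref{newbasis}, Proposition~\ref{IntegralDomainSymplectic}).
\begin{itemize}
\item Show that the leading principal $\ell\times\ell$ minor $\Delta_\ell$ is a nonzerodivisor on $Q$, for instance via a standard-monomial basis compatible with the rank condition. Then $Q\hookrightarrow Q_{\Delta_\ell}$.
\item Note that $\Delta_\ell$ does not vanish on the locus: $e_1,\dots,e_\ell$ span an isotropic subspace for $H_m$ when $\ell\le i$.
\item On $D(\Delta_\ell)$, write scheme-theoretically $X=\binom{I_\ell}{U}\,A\,(I_\ell\ \ V)$ with $A$ invertible. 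The isotropy equations become $F^tHF=0$ and $GHG^t=0$ for $F=\binom{I_\ell}{U}$ and $G=(I_\ell\ \ V)$, which cut out open cells of $\OGr(\ell,m)$.
\item Hence $Q_{\Delta_\ell}$ is a smooth domain, so $Q$ is a domain. This also yields the dimension, which makes your resolution unnecessary.
\end{itemize}
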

\begin{proof}
(1) The special fiber description is given by
\cite[Theorem 1.10]{Yang25}. The
description of $U_i^\pm\cap S_\ell$ is obtained from \cite[Proposition~3.7]{Yang25}, the dimension formula from \cite[Proposition~4.44]{Yang25}.

(2) The special-fiber description follows from \cite[Theorem~1.9]{YZZ26}. By \cite[Lemma~6.2]{YZZ26} and
\cite[Theorem~6.5]{YZZ26}, respectively, $\Spec R_{i,k}$ is irreducible and reduced, hence integral. The description and integrality of $U_i^\pm\cap S_\ell$ follow by the proof of \cite[Proposition~5.4]{YZZ26}. The dimension formula is obtained from \cite[Proposition~4.44]{Yang25}.
\end{proof}

\quash{
\section{Orthogonal local models}\label{orthogonal-local-models}

In this section, we recall the definitions of the split and quasi-split (but non-split) even orthogonal local models. For the split case  we follow \cite{Yang25}, while for the quasi-split case we follow \cite{YZZ26}.

\subsection{Split even orthogonal local models}\label{split-local-models}

Let $F_0$ be a complete discretely valued field with ring of integers
$\mathcal{O}_{F_0}$, uniformizer $\pi_0$, and residue field $k$ of characteristic
$p>2$. Let $V:=F_0^{2n}$ with ordered basis $e_1,\dots,e_{2n}$, equipped with the split symmetric
$F_0$-bilinear form
\[
\psi(e_i,e_j)=\delta_{i,\,2n+1-j}.
\]
Let $G:=\GO(V,\psi)$ and denote by $G^\circ$ the identity component of $G$. For $0\le i\le n$, we denote by
\[
\Lambda_i:=\mathcal{O}_{F_0}\langle
\pi_0^{-1}e_1,\dots,\pi_0^{-1}e_i,e_{i+1},\dots,e_{2n}
\rangle.
\]
the standard lattices in $V$. For any lattice $\Lambda\subset V$, let $\Lambda^\vee:=\{x\in V\mid \psi(x,\Lambda)\subset \mathcal{O}_{F_0}\}.$ For every non-empty subset $I\subset [0,n]$,
define the associated standard self-dual periodic lattice chain by
\[
\Lambda_I:=\{\Lambda_\ell\}_{\ell\in 2n\mathbb{Z}\pm I},
\]
where
\[
\Lambda_{-i}:=\Lambda_i^\vee \quad \text{and} \quad  \Lambda_\ell:=\pi_0^{-d}\Lambda_{\pm i}\quad\text{for }\ell=2nd\pm i.
\]
As in \cite[\S1.5, Theorem~2.2]{Yang25}, any self-dual periodic lattice chain of $V$ (in the sense of  \cite[Corollary 2.17]{RZbook}) is $G^\circ(F_0)$-conjugate to some $\Lambda_I$. Let $\calG_I$ be the affine smooth group scheme of similitude
automorphisms of the lattice chain $\Lambda_I$; its neutral component
$\calG_I^\circ$ is the corresponding parahoric group scheme for $G^\circ$ in the sense of Bruhat--Tits (see \cite[\S1.2, \S2]{Yang25}).

\begin{defn}[{\cite[Definition~1.1]{Yang25}}]\label{naivelocalmodeldef}
The \emph{naive local model} $\RM_I^{\naive}$ is the projective scheme over
$\mathcal{O}_{F_0}$ representing the functor that sends each $\mathcal{O}_{F_0}$-algebra
$R$ to the set of $R$-submodules $(\mathcal{F}_\Lambda)_{\Lambda\in \Lambda_I}$ such that:
\begin{itemize}
    \item[(LM1)] for every $\Lambda\in \Lambda_I$, Zariski locally on $\Spec R$,
    $\mathcal{F}_\Lambda$ is a direct summand of $\Lambda_R:=\Lambda\otimes_{\mathcal{O}_{F_0}}R$ of rank $n$;
    \item[(LM2)] for every $\Lambda\in \Lambda_I$, one has $(\psi\otimes 1)(\mathcal{F}_\Lambda,\mathcal{F}_{\Lambda^\vee})=0$;
    \item[(LM3)] for every inclusion $\Lambda\subset\Lambda'$ in $\Lambda_I$, the
    natural map $\Lambda_R\to \Lambda'_R$ sends $\mathcal{F}_\Lambda$ to
    $\mathcal{F}_{\Lambda'}$, and the isomorphism $ \Lambda_R\xrightarrow{\sim}(\pi_0\Lambda)_R$ identifies $\mathcal{F}_\Lambda$ with $\mathcal{F}_{\pi_0\Lambda}$.
\end{itemize}
\end{defn}

The generic fiber of $\RM_I^{\naive}$ is isomorphic to the orthogonal Grassmannian $\OGr(n,V)$ of (maximal) totally isotropic
$n$-dimensional subspaces of $V$ which has two connected components $\OGr(n,V)=\OGr(n,V)_+\sqcup \OGr(n,V)_-.$ Each connected component has dimension $n(n-1)/2$ (see \cite[\S1.1]{Yang25}) and we can identify $\OGr(n,V)_\pm$ with the flag variety $G^\circ_{F_0}/P_{\mu_\pm}$, where $P_{\mu_\pm}$ denotes the parabolic subgroup (over $F_0$) associated to a minuscule cocharacter $\mu_\pm$ given by \begin{flalign}
    \mu_+\coloneqq (1^{(n)},0^{(n)}) \text{\ and\ } \mu_-\coloneqq (1^{(n-1)},0,1,0^{(n-1)}). \label{cochar}
\end{flalign}

In \cite[\S7]{PR}, Pappas and Rapoport defined an involution operator $a$ on $\wedge^{n}_{F_0}V$ inducing a decomposition  \[
	\wedge^{n}_{F_0}V= W_+\oplus W_-, \label{Wdecomp}
\]
where $W_\pm$ denotes the $\pm 1$-eigenspace for $a$. For an $\CO_{F_0}$-lattice $\Lambda$ in $V$, set \[(\wedge^{n}_{\CO_{F_0}}\Lambda)_\pm \coloneqq \wedge^{n}_{\CO_{F_0}}\Lambda\cap W_\pm.  \]

\begin{defn}[{\cite[Definition~1.2]{Yang25}}]\label{defspinlocalmodel}
The \emph{spin local model} $\RM_I^\pm$ is the projective scheme over
$\mathcal{O}_{F_0}$ representing the functor sending each $\mathcal{O}_{F_0}$-algebra
$R$ to the set of $R$-modules $(\mathcal{F}_\Lambda)_{\Lambda\in \Lambda_I}$ such that $(\mathcal{F}_\Lambda)_{\Lambda\in \Lambda_I}\in \RM_I^{\naive}(R)$ and
\begin{itemize}
        \item[LM4$\pm$.] for any $\Lambda\in\Lambda_I$, Zariski locally on $\Spec R$,  the line $\bigwedge_{R}^{n} \mathcal{F}_{\Lambda}$ is contained in
    \[
    \operatorname{Im}\left[ \left( \wedge_{\CO_{F_0}}^{n} \Lambda_{\CO_{F_0}} \right)_{\pm} \otimes_{\CO_{F_0}} R \longrightarrow \wedge_{R}^{n} \Lambda_{R} \right].
    \]
    \end{itemize}
\end{defn}

The generic fiber of $\RM_I^\pm$ is isomorphic to $\OGr(n,V)_\pm$. For simplicity, set $\calG\coloneqq \calG_{I}$  
for the affine smooth group scheme of similitude automorphisms of $\Lambda_I$. Note that $\calG$ naturally acts on $\RM^\naive_I$ and  $\calG^\circ$ preserves $\RM^\pm_I$ (see \cite[Lemma 3.4]{Yang25}). Recall that given $\calG^\circ$ and the minuscule cocharacter $\mu_\pm$, the associated schematic local model $\RM^{\rm loc}_{\\calG^\circ,\mu_\pm}$ represents the corresponding  v-sheaf local model in the sense of Scholze--Weinstein \cite[\S 21.4]{scholze2020berkeley}.

\begin{thm}\label{thm:split-spin-flat}
For every non-empty subset $I\subset [0,n]$, the spin local model $\RM_I^\pm$ is isomorphic to $ \RM^{\rm loc}_{\calG^\circ,\mu_\pm}$. In particular, $\RM_I^\pm$ is
flat over $\mathcal{O}_{F_0}$ of relative dimension $n(n-1)/2$, normal, and Cohen-Macaulay with reduced special fiber. 
\end{thm}

We now restrict to the (pseudo-)maximal parahoric case, i.e.\ $I=\{i\}$, and we write 
$\RM_i^\pm:=\RM_{\{i\}}^\pm.$

\begin{defn}\label{DefSch}
Let $\iota:\Lambda_i\to \pi_0^{-1}\Lambda_{-i}=\Lambda_{2n-i}$ denote the natural
inclusion map (and its base change). For an integer $\ell$, denote by $\RM_i^\pm(\ell)\subset \RM_{i,k}^\pm$ the locus where $\iota(\mathcal{F}_i)$ has rank $\ell$. Here with $\RM_{i,k}^\pm$ we denote the special fiber of $ \RM_{i}^\pm$.
\end{defn}

\begin{prop}[{\cite[Proposition~2.8]{Yang25}}]\label{prop:split-schubert}
Suppose that $I=\{i\}\subset [0,n]$.
\begin{itemize}
    
    \item[(1)] If $i=0$ or $i=n$, then $\RM_i^\pm$ is isomorphic to a connected
    component of the orthogonal Grassmannian $\OGr(n,2n)$ over
    $\mathcal{O}_{F_0}$. In particular, $\RM_i^\pm$ is irreducible and smooth of
    relative dimension $n(n-1)/2$.
    
    \item[(2)] If $i\neq 0,n$, then there exists a stratification of the special fiber  
    \begin{equation}\label{SStratification}
         \RM^\pm_{i,k}
    =
    \coprod_{\ell=\max\{0,2i-n\}}^{i}\RM_i^\pm(\ell),
    \end{equation}
    where the top stratum $\RM_i^\pm(i)$ decomposes into exactly two Schubert
    cells of dimension $n(n-1)/2$, and each lower stratum $\RM_i^\pm(\ell)$, for
    $\max\{0,2i-n\}\le \ell<i$, is a single Schubert cell. In particular, the special fiber $\RM^\pm_{i,k}$ has equidimension $n(n -1)/2$, and has exactly two irreducible components whose intersection is irreducible.    
    \item[(3)] The spin local model $\RM_i^\pm$ is isomorphic to $\RM_{n-i}^\pm$. 
\end{itemize}
\end{prop}

By Proposition~\ref{prop:split-schubert}(3), we may assume that $0\le 2i\le n$.
For subsets $U,U'\subset [1,2i]$ of cardinality $i$, written in increasing order, we
write $[U:U'](X)$ for the corresponding minor of a $2i\times 2i$ matrix $X$, and we
write $U^\bot$, ${U'}^\bot$ for the complementary subsets with respect to the involution
$j\mapsto 2i+1-j$; cf.\ \cite[\S3.2.3, Proposition~3.5]{Yang25}.

\begin{thm}\label{thm:split-affine-chart}
Assume $I=\{i\}$ with $0\leq 2i\leq n$. Let $U_i^\pm\subset \RM_i^\pm$ be an affine chart containing a
worst point $\ast$ in the lowest stratum of (\ref{SStratification}).
The following hold.
\begin{itemize}
    \item[(1)] The $\calG^\circ$-translates of $U_i^\pm$ cover $\RM_i^\pm$.

    \item[(2)] The special fiber $(U_i^\pm)_k$ is reduced and isomorphic to
    \[
    \Spec R_{i,k}^\pm \times_k \mathbb{A}_k^{\frac{(n-2i)(n+2i-1)}{2}}.
    \]
    Here 
    \[
    R_{i,k}^\pm:=
    \frac{k[X]}
    {(XH_{2i}X^t,\; X^tH_{2i}X,\; \wedge^{i+1}X)+I_\mp},
    \]
    where $X$ is a $2i\times 2i$ matrix, $H_{2i}$ is the unit anti-diagonal matrix of size $2i$, and $I_\mp$ denotes the ideal generated by
    \[
    [U:U'](X)\mp \sgn(\sigma_U)\sgn(\sigma_{U'})[U^\bot:{U'}^\bot](X)
    \]
    for all subsets $U,U'\subset [1,2i]$ of cardinality $i$. 

    \item[(3)] One has an isomorphism \[
       U_i^\pm\simeq \BA^{\frac{(n-2i)(n+2i-1)}{2}}_{\CO_{F_0}}\times_{\CO_{F_0}}\Spec R_i^\pm \text{\ with\ } R_i^\pm=\CO_{F_0}[X]/I_i^\pm,
    \]
    where  \[
    I_i^\pm\coloneqq (X^tH_{2i}X+\pi_0H_{2i}, XH_{2i}X^t+\pi_0 H_{2i}) +J_i^\pm,
    \]
    with $J_i^\pm$ generated by \begin{flalign*}
        &[U:U'](X)\mp\sgn(\sigma_U)\sgn(\sigma_{U'})[U^\perp:{U'}^\perp](X) \\
        \text{\ and\ } & [V:V'](X)\pm\sgn(\sigma_V)\sgn(\sigma_{V'})\pi_0[V^\perp:{V'}^\perp](X)
    \end{flalign*} 
    for all subsets $U,U'$ (resp. $V,V'$) of $[1,2i]$ of cardinality $i$ (resp. $i+1$).
\end{itemize}
\end{thm}
\begin{proof}
    
\end{proof}

\subsection{Quasi-split non-split even orthogonal local models}\label{quasi-split-local-models}

Let $F_0$ be a complete discretely valued field with ring of integers
$\mathcal{O}_{F_0}$, uniformizer $\pi_0$, and residue field $k$ of characteristic
$p>2$. Let $F/F_0$ be a ramified quadratic extension, and choose a uniformizer
$\pi\in F$ such that $\pi^2=-\pi_0$. Let $V$ be the $(2n+2)$-dimensional $F_0$-vector space with ordered basis
$e_1,\dots,e_{2n},f_1,f_2$, equipped with the symmetric $F_0$-bilinear form
$\phi$ whose matrix with respect to this basis is
\[
\phi=
\begin{pmatrix}
H_{2n} & 0\\
0 & \begin{pmatrix}\pi_0&0\\0&1\end{pmatrix}
\end{pmatrix},
\]
where $H_{2n}$ is the unit anti-diagonal matrix of size $2n$. After replacing $F_0$ by a sufficiently big unramified extension, any quasi-split but non-split symmetric space of dimension $2n+ 2 $ becomes isomorphic to $(V,\phi)$, so working with the explicit form above entails no loss of generality; see \cite[\S 2.1]{YZZ26}. Let $G:=\GO(V,\phi)$ and denote by $G^\circ$ the identity component of $G$.
Then $G$ is quasi-split but not split. For $0\le i\le n$, denote by
\[
\Lambda_i:=
\mathcal{O}_{F_0}\langle
\pi_0^{-1}e_1,\dots,\pi_0^{-1}e_i,e_{i+1},\dots,e_{2n},\pi_0^{-1}f_1,f_2
\rangle
\]
the standard lattices in $V$. For any lattice $\Lambda\subset V$, let $\Lambda^\vee:=\{x\in V\mid \phi(x,\Lambda)\subset \mathcal{O}_{F_0}\}.$ For every non-empty subset $I\subset [0,n]$, define the associated standard
self-dual periodic lattice chain by $\Lambda_I:=\{\Lambda_\ell\}_{\ell\in 2n\mathbb{Z}\pm I}$, where
\[
\Lambda_{-i}:=\Lambda_i^\vee
\qquad\text{and}\qquad
\Lambda_\ell:=\pi_0^{-d}\Lambda_{\pm i}\quad\text{for }\ell=2nd\pm i.
\]
As in \cite[Theorem~2.7 and Corollary~2.9]{YZZ26}, any self-dual periodic lattice chain of $V$ is
$G^\circ(F_0)$-conjugate to some $\Lambda_I$. Let $\calG_I$ denote the affine smooth
group scheme of similitude automorphisms of the lattice chain $\Lambda_I$; its
neutral component $\calG_I^\circ$ is the corresponding parahoric group scheme for
$G^\circ$ in the sense of Bruhat--Tits; see \cite[(3.0.3)]{YZZ26}.

The \emph{naive local model} $\RM_I^{\naive}$ is defined exactly as in
Definition~\ref{naivelocalmodeldef}, with $\psi$ replaced by $\phi$, with
$\mathcal{O}_{F_0}$ as base ring, with rank $n+1$ in \textup{(LM1)}, and with
periodicity given by $\pi_0$. Thus $\RM_I^{\naive}$ is a projective scheme over
$\mathcal{O}_{F_0}$. Its generic fiber is $\OGr(n+1,V)$, which is connected of dimension
$n(n+1)/2$ (see \cite[Lemma~3.2]{YZZ26}); after base change to $F$ it decomposes as

\[
\RM^\naive_{I,F_0}\otimes_{F_0}F \simeq \OGr(n+1,V)_+\sqcup \OGr(n+1,V)_-,
\]
where
\[
\OGr(n+1,V)_\pm \simeq G^\circ_F/P_{\mu_\pm},
\]
with
\begin{flalign}
    \mu_+\coloneqq (1^{(n+1)},0^{(n+1)}) \text{\ and\ }
    \mu_-\coloneqq (1^{(n)},0,1,0^{(n)}). \label{cochar-qs}
\end{flalign}


Define the decomposition
\[
\wedge_F^{n+1}V_F= W_+\oplus W_-,
\qquad
(\wedge_{\mathcal{O}_F}^{n+1}\Lambda)_\pm \coloneqq
\wedge_{\mathcal{O}_F}^{n+1}\Lambda\cap W_\pm
\]
exactly as in the split case. Then 
the \emph{spin local model} $\RM_I^\pm$ is defined as in
Definition~\ref{defspinlocalmodel}, with $\wedge^n$ replaced by $\wedge^{n+1}$ and
$\mathcal{O}_F$ in place of $\mathcal{O}_{F_0}$. Thus $\RM_I^\pm$ is a projective
scheme over $\mathcal{O}_F$, and its generic fiber is isomorphic to
$\OGr(n+1,V)_\pm$; cf.\ \cite[8.2.1]{PR}.

For simplicity, set $\calG\coloneqq \calG_I$. As in the split case, $\calG$ naturally acts on
$\RM_I^{\naive}$ and $\calG^\circ$ preserves $\RM_I^\pm$; see
\cite[Lemma~3.4]{YZZ26}. We denote by
$\RM^{\rm loc}_{\calG^\circ,\mu_\pm}$ the associated schematic local model.

\begin{thm}[{\cite[Theorem~1.3(1), Proposition~3.6]{YZZ26}}]
\label{thm:qs-spin-flat}
For every non-empty subset $I\subset [0,n]$, the spin local model $\RM_I^\pm$ is
isomorphic to $\RM^{\rm loc}_{\calG^\circ,\mu_\pm}$. In particular, $\RM_I^\pm$ is
flat over $\mathcal{O}_F$ of relative dimension $n(n+1)/2$, normal, and
Cohen--Macaulay with reduced special fiber.
\end{thm}

We now restrict to the (pseudo-)maximal parahoric case, i.e. $I=\{i\}$ and we write $\RM_i^\pm:=\RM_{\{i\}}^\pm.$ As in Definition~\ref{DefSch}, let $\iota: \Lambda_i\ra \pi\inverse\Lambda_{-i}=\Lambda_{2n-i}$
denote the natural inclusion map (and its base change). For an integer $\ell$, let $\RM^\pm_i(\ell)\sset \RM^\pm_{i,k}$
be the locus where $\iota(\mathcal{F}_i)$ has rank $\ell$.

\begin{thm}[{\cite[Theorem~1.7, Theorem~2.7(2) ]{YZZ26}}]\label{prop:qs-schubert}
Let $I = \{i\}$. 
\begin{altenumerate}
    \item  There exists a stratification of the special fiber \begin{flalign}\label{QStratification}
  \RM^\pm_{i,k}=\coprod_{\ell=\max\cbra{0,2i-n}}^i \RM^\pm_i(\ell), 
    \end{flalign}
    where each stratum $\RM^\pm_i(\ell)$ is a single Schubert cell and $\RM^\pm_i(\ell')$ is contained in the Zariski closure $\ol{\RM^\pm_i(\ell)}$ if and only if $\ell'\leq \ell$. Consequently, $ \RM^\pm_{i,k}$ is irreducible and contains precisely $\min{\cbra{i,n-i}}+1$ Schubert cells. 
     \item The spin local model $\RM_i^\pm$ is isomorphic to $\RM_{n-i}^\pm$.
\end{altenumerate}
\end{thm}

By Theorem \ref{prop:qs-schubert}(2), we may
assume that $0\le 2i\le n$. As in the split case, for subsets $U,U'\subset [1,2i+1]$ of cardinality $i$,
written in increasing order, we write $[U:U'](X)$ for the corresponding minor
of a $(2i+1)\times (2i+1)$ matrix $X$, and we write $U^\bot,{U'}^\bot$ for
the complementary subsets with respect to the involution $j\mapsto 2i+2-j$;
cf.\ \cite[\S5.2, Theorem~5.6]{YZZ26}.

\begin{thm}[{\cite[Theorem~1.9, Proposition~5.8, \S 6]{YZZ26}}]\label{thm:qs-affine-chart}
Assume $I = \{i\}$. Let $U_i^\pm \subset \RM_i^\pm $ be an affine chart containing a worst point $\ast$ in the lowest stratum of (\ref{QStratification}). The following hold.
\begin{itemize}
    \item[(1)] The $\calG_i^\circ$-translates of $U_i^\pm$ cover $\RM_i^\pm$.
    
    \item[(2)] One has an isomorphism
    \[
    U_i^\pm \simeq
    \mathbb{A}_{\mathcal{O}_F}^{\frac{(n-2i)(n+2i+1)}{2}}
    \times_{\mathcal{O}_F}
    \Spec R_i  \text{ with } R_i=\mathcal{O}_F[X]/I_i^{\pm},
    \]
    where $X$ is a $(2i+1)\times(2i+1)$ matrix and
    \[
    I_i^\pm :=
    \bigl(X^tH_{2i+1}X+\pi_0H_{2i+1},\; XH_{2i+1}X^t+\pi_0H_{2i+1}\bigr)+J_i^\pm,
    \]
    with $J_i^\pm$ generated by the relations
    \[
    [U^\bot:{U'}^\bot](X)
    \mp
    (-1)^i\sgn(\sigma_U)\sgn(\sigma_{U'})\,\pi\,[U:U'](X)
    \]
    for all subsets $U,U'\subset [1,2i+1]$ of cardinality $i$.
    
    \item[(3)] In particular, the special fiber is isomorphic to
    \[
    U_{i,k}^\pm \simeq
    \mathbb{A}_k^{\frac{(n-2i)(n+2i+1)}{2}}
    \times_k \Spec R_{i,k},
    \]
    where $\Spec R_{i,k}$ is an integral scheme and
    \[
    R_{i,k}:=
    \frac{k[X]}
    {\bigl(X^tH_{2i+1}X,\; XH_{2i+1}X^t,\; \wedge^{i+1}X\bigr)}.
    \]
\end{itemize}
\end{thm}
}

\section{Semi-stable resolution for quasi-split orthogonal local models}  \label{sec-qsorth}

We assume throughout this section that we are in the quasi-split case. Our goal is to construct a semi-stable resolution of the quasi-split orthogonal local model in the maximal parahoric case. By Theorem~\ref{thm:max-schubert}(2), we may assume that $I=\{i\},\, 1\le i\le \left\lfloor\frac n2\right\rfloor$. The case $i=0$ is excluded, since $\RM_0^\pm$ is already smooth over
$\mathcal{O}_F$ by Theorem~\ref{thm:max-schubert}(1). Thus in what follows we
assume $1\le i\le \left\lfloor\frac n2\right\rfloor$. 

\subsection{Reduction to the affine local statement}
We keep the notation of \S \ref{orthogonal-local-models} for
the Schubert varieties $S_\ell$ with $0\le \ell\le i$ and set
\[
S_\ell:=\overline{\RM_i^\pm(\ell)}\subset \RM^\pm_{i,k}.
\]
By Theorem~\ref{thm:max-schubert}(4), the special fiber has a chain of Schubert
varieties
\[
S_0\subset S_1\subset\cdots\subset S_i=\RM^\pm_{i,k}.
\]

\begin{thm}\label{thm:qs-semistable-resolution}
Assume that $I=\{i\}$ with $1\le i\le \lfloor n/2\rfloor$. Let
\[
\RM_i^\pm=Z_0
\xleftarrow{b_1}
Z_1
\xleftarrow{b_2}
\cdots
\xleftarrow{b_i}
Z_i
\]
be the sequence of blow-ups such that $b_1$ is the blow-up of
$Z_0=\RM_i^\pm$ along $S_0$, and for $2\le \ell\le i$, the morphism
\[
b_\ell:Z_\ell\longrightarrow Z_{\ell-1}
\]
is the blow-up along the strict transform of $S_{\ell-1}$ in $Z_{\ell-1}$.
Then $Z_i$ is semi-stable over $\mathcal{O}_F$.
\end{thm}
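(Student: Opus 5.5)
The plan is to reduce to the explicit affine chart and then compute the blow-ups there by induction on the size of the matrix.

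\textbf{Step 1: reduction to the chart.} Each Schubert variety $S_\ell$ is $\calG^\circ$-stable. Blow-ups commute with flat base change, and strict transforms are compatible with restriction to open subsets. Hence the whole tower $Z_0\leftarrow\cdots\leftarrow Z_i$ is $\calG^\circ_{\CO_F}$-equivariant. Semi-stability is local, and by Theorem~\ref{thm:affine-chart} the $\calG^\circ$-translates of $U_i^\pm$ cover $\RM_i^\pm$, so it suffices to treat the tower restricted to $U_i^\pm$. By Theorem~\ref{thm:affine-chart}(2) and Proposition~\ref{prop:special-fiber-schubert-centers}(2), $U_i^\pm\cap S_\ell$ is the pullback of $V(\pi,\wedge^{\ell+1}X)\subset\Spec R_i$ along the smooth projection. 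Since blowing up commutes with smooth base change, we are reduced to an affine statement: the successive blow-up of $\Spec R_i$ along the strict transforms of $V(\pi,\wedge^{\ell+1}X)$, for $\ell=0,\dots,i-1$, is semi-stable over $\CO_F$.

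\textbf{Step 2: the first blow-up.} For $\ell=0$ the center is $V(\pi,X)$, the worst point. Blowing up this point, I will cover the blow-up by the charts where one coordinate $t\in\{\pi\}\cup\{x_{ab}\}$ generates the exceptional ideal. On the $\pi$-chart, write $X=\pi X'$. The relation $X^tHX+\pi_0H=0$ with $\pi_0=-\pi^2$ becomes $X'^tHX'=H$, and the relation $[U^\perp:U'^\perp]=\pm\pi[U:U']$ becomes a relation in $X'$ whose degree shifts by one. So this chart is smooth over $\CO_F$ (an open piece of an orthogonal group, or of a coset in it); it is the ``new'' component.

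On a chart with $t=x_{ab}$, the entry $x'_{ab}=1$ of the rescaled matrix $X'$ is a unit. I will perform row and column operations compatible with the symmetric form $H_{2i+1}$, i.e.\ a hyperbolic pair splits off. The aim is to show that the chart is isomorphic to an affine space times a scheme of the same shape as $\Spec R_{i-1}$, with matrix size $2i-1$, but over the base $\CO_F[t,u]/(tu-\pi)$ or a similar base with one extra factor $t$. Under this identification, the strict transforms of $V(\pi,\wedge^{\ell+1}X)$ should correspond to the loci $\wedge^{\ell}$ in the smaller matrix.

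\textbf{Step 3: induction.} The reduction in Step 2 should turn the remaining tower of blow-ups into the analogous tower for $i-1$, now over a semi-stable base of the form $uv=\pi$. The induction hypothesis must therefore be strengthened to a statement about families over such bases, say over $\CO_F[t_1,\dots,t_r]/(t_1\cdots t_r-\pi)$ with a matrix model $X^tHX+t_1^2\cdots t_r^2 H=0$. The conclusion would be that the successive blow-ups are étale-locally of the form $\CO_F[y_1,\dots,y_m]/(y_1\cdots y_s-\pi)$. The base case is $i=0$, where the chart is smooth. At the end I will check that the special fiber is reduced, which should follow from reducedness of the special fibers in Theorem~\ref{thm:spin-flat}.

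\textbf{Main obstacle.} The hardest part is the non-$\pi$ charts in Step 2. There I must verify that the spin relations $J_i^\pm$ and both quadratic relations transform exactly into the corresponding relations for the smaller matrix. I also must verify that the strict transforms of the determinantal centers match under this identification, rather than total transforms or transforms with embedded components. To handle this I will use the integrality of $R_i/(\pi,\wedge^{\ell+1}X)$ from Proposition~\ref{prop:special-fiber-schubert-centers} together with flatness. Together these identify the strict transform with the closure of the generic locus, and so with the corresponding determinantal locus one size down.
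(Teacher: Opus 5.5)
Your skeleton matches the paper's: reduce to $\Spec R_i$, note that the exceptional-coordinate chart is smooth, reduce the other charts to a smaller matrix over a semi-stable base, then induct. But the inductive statement you write in Step~3 is wrong, and that statement is the crux. On $D_+(y_{ab})$ one has $X=x_{ab}Y$ and $\pi=x_{ab}\alpha$, so $X^tHX-\pi^2H=x_{ab}^2(Y^tHY-\alpha^2H)$. After splitting off the hyperbolic pair, the $(2i-1)\times(2i-1)$ matrix $Z$ satisfies $Z^tHZ=\alpha^2H$, $ZHZ^t=\alpha^2H$, and the spin relations with $\alpha$ in place of $\pi$, while $\pi=\alpha x_{ab}$. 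Only the new coordinate $\alpha$ enters squared.

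Your model $X^tHX+t_1^2\cdots t_r^2H=0$ over $t_1\cdots t_r=\pi$ is just the base change of $R_{i-1}$ to a semi-stable base, and it is not the chart. Near a point with $x_{ab}=0$ and $\alpha$ a unit, your model is (up to the unit $\alpha$) $\Spec R_{i-1}$ again, hence singular for $i\ge 2$. The true chart is smooth there, since $\alpha^{-1}Z$ lies in an orthogonal group. The correct family is the paper's $\calU(d,\ell)$: relations $X^tHX-T_0^2H$, $XHX^t-T_0^2H$, the spin relations $J_d^\pm$ with $T_0$ in place of $\pi$, and $\pi=T_0T_1\cdots T_\ell$, with one distinguished factor $T_0$. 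Correspondingly, the centers must be $V(T_0,\wedge^rX)$, so one blows up $(X,T_0)$, not $(X,\pi)$. The strict transform of $V(\pi,\wedge^{r+1}X)$ is $V(\alpha,\wedge^rZ)$. It is not $V(\pi,\wedge^rZ)$, which contains the piece $V(x_{ab},\wedge^rZ)$ of the exceptional divisor. Your ``analogous tower over $uv=\pi$'' leaves exactly this unspecified.

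Second, checking that the relations transform correctly only gives a closed immersion of the chart into $\calU(i-1,1)\times\mathbb{A}^{4i-2}$ (resp.\ into the orthogonal-group component). To get an isomorphism, and to identify the strict transforms at every later stage, you need that every $\calR(d,\ell)$ is flat and integral of dimension $d(2d+1)+\ell+1$, followed by a dimension comparison. The paper proves this from three inputs: the special fiber ideal is $\bigcap_r\mathscr I_r$ with each $\mathscr I_r$ prime; Lemma~\ref{lemmaRingA}; and G\"ortz's topological flatness criterion. Integrality of $R_i/(\pi,\wedge^{\ell+1}X)$ alone, which is what you propose to use, does not supply this for the intermediate models.

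There are also smaller omissions:
\begin{itemize}
\item The charts with $a$ or $b$ equal to the middle index $i+1$ do not split off a hyperbolic pair, since that basis vector is anisotropic. They must be shown to be covered by the other charts, as in Lemma~\ref{lem:relevant-charts}.
\item You need that later centers miss the $\alpha$-chart (Lemma~\ref{lem:alpha-chart}).
\item Reducedness of the special fiber comes from the local form $T_0\cdots T_i=\pi$, not from Theorem~\ref{thm:spin-flat}.
\end{itemize}
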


\begin{remark}
    This generalizes the result in \cite[\S 7]{YZZ26}, where we resolve the singularities for $I=\cbra{1}$.
\end{remark}

We recall the following definition.
\begin{defn}[{\cite[Definition 1.1, Proposition 1.3]{Ha}}]
    An integral scheme $Y$, flat and separated of finite type over a discrete valuation ring $\CO$ with a uniformizer $\pi$, is \dfn{(strictly) semi-stable} if the generic fiber $Y_\eta$ is smooth, and every closed point $x$ in the special fiber $Y_0$ admits an open neighborhood which for some $s\in \BN$ is smooth over the scheme $\Spec \CO[x_1,\ldots,x_s]/(x_1\cdots x_s-\pi)$. 
\end{defn}

By Theorem~\ref{thm:affine-chart}, there exists an open affine subscheme
$U_i^\pm\subset \RM_i^\pm$ containing a point $\ast\in S_0$ such that
\[
U_i^\pm
\simeq
\Spec R_i
\times_{\mathcal{O}_F}
\mathbb{A}_{\mathcal{O}_F}^{\frac{(n-2i)(n+2i+1)}{2}},
\qquad
R_i=\mathcal{O}_F[X]/I_i^\pm,
\]
where $X$ is a $(2i+1)\times(2i+1)$ matrix, and the $\calG_i^\circ$-translates of
$U_i^\pm$ cover $\RM_i^\pm$.

By Proposition~\ref{prop:special-fiber-schubert-centers}, for every
$0\le \ell<i$, the intersection $U_i^\pm\cap S_\ell$ is cut out on the
$\Spec R_i$-factor by the ideal
\[
I_\ell=(\pi,\wedge^{\ell+1}X).
\]
Therefore, to prove Theorem~\ref{thm:qs-semistable-resolution}, it suffices to
establish the following affine local statement.

\begin{prop}\label{prop:qs-affine-semistable}
Let
\[
\calX_0:=\Spec R_i,
\qquad
I_\ell:=(\pi,\wedge^{\ell+1}X)\subset R_i
\qquad
(0\le \ell<i).
\]
Consider the sequence of blow-ups
\begin{equation}\label{quasisplit.seq. of blow-ups}
\calX_0
\xleftarrow{c_1}
\calX_1
\xleftarrow{c_2}
\cdots
\xleftarrow{c_i}
\calX_i,
\end{equation}
where $c_1$ is the blow-up along $V(I_0)$, and for $2\le \ell\le i$, the morphism
\[
c_\ell:\calX_\ell\longrightarrow \calX_{\ell-1}
\]
is the blow-up along the strict transform of $V(I_{\ell-1})$ in $\calX_{\ell-1}$.
Then $\calX_i$ is semi-stable over $\mathcal{O}_F$.
\end{prop}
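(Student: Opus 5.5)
Our approach is induction on $i$: we show that after the first blow-up, every affine chart of $\calX_1$ is (up to a smooth factor and a unit change of variables) again of the form $\Spec R_{i-1}$ times an affine space, or is already semi-stable. Under this identification, the strict transforms of $V(I_\ell)$ go to $V(I_{\ell-1})$ for the smaller matrix.

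\emph{Step 1: the first blow-up.} The center $V(I_0)=V(\pi,X)$ is the single point $\ast$ (times affine space). Since $\wedge^1X$ consists of the entries of $X$, the ideal $I_0$ is $(\pi,x_{ab})$. The blow-up is covered by the $\pi$-chart and the $x_{ab}$-charts.

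On the $\pi$-chart we write $X=\pi Y$. The relation $X^tH X+\pi_0H=0$ becomes $\pi^2(Y^tHY-H)=0$, using $\pi_0=-\pi^2$. After removing $\pi$-torsion (we are taking the strict transform of a flat scheme), this gives $Y^tHY=H$, and similarly the spin relations become the spin condition for $Y\in \mathrm{O}(H)$. This chart is smooth over $\CO_F$, with empty exceptional locus on the special fiber apart from a reduced divisor $\pi=0$. So this chart is fine.

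On an $x_{ab}$-chart, the $\calG$-equivariance lets us use row and column operations from the stabilizer of $\ast$ (the Levi part acting by $X\mapsto AXB$ with $A,B$ preserving $H$ up to scalar). With these we may move the entry to a convenient position, say $a=1$, $b=2i+1$ (an "isotropic corner"). Set $u=x_{1,2i+1}$, so $\pi=u t$ and $X=uX'$. Completing $X'$ via elementary operations over the local ring, we put it in block form $\begin{pmatrix}1&*&*\\ *&Y&*\\ *&*&*\end{pmatrix}$, where $Y$ is $(2i-1)\times(2i-1)$. We then check that the equations for $X$ become: smooth equations eliminating the border entries, together with $Y^tH_{2i-1}Y+\pi_0' H_{2i-1}=0$ and the analogous spin relations. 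Here $\pi_0'$ is the product of the new "uniformizer" $t$ with unit/exceptional factors, and $u$ is an exceptional divisor coordinate. Concretely, we expect the local equation to take the shape $u\cdot t=\pi$, together with an $R_{i-1}$-type presentation in $Y$ over the base $\CO_F[u,t]/(ut-\pi)$ in which $t$ plays the role of $\pi$. Under this chart, $V(\wedge^{\ell+1}X)$ has strict transform $V(\wedge^{\ell}Y)$ together with $t=0$.

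\emph{Step 2: induction.} By Step 1, after the first blow-up the remaining blow-ups along the strict transforms of $V(I_1),\dots,V(I_{i-1})$ are, chart by chart, the blow-ups of the model of size $i-1$ along $V(t,\wedge^{\ell}Y)$, base-changed along $\CO_F[u,t]/(ut-\pi)$. Blowing up commutes with flat base change. The inductive hypothesis for size $i-1$, applied over a base with a "uniformizer" $t$, then says the result is locally smooth over $\Spec\CO[t, y_1,\dots,y_s]/(y_1\cdots y_s-t)$. Multiplying by $u$ gives local models smooth over $x_1\cdots x_{s+1}=\pi$. The base case $i=0$ (a $1\times1$ matrix with $x^2=-\pi_0=\pi^2$ and spin condition $x=\pm\pi$) is smooth. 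To make the induction legitimate, the statement should be proved in the slightly more general form "over a base $\CO[u_1,\dots,u_r,t]/(u_1\cdots u_r t-\pi)$ with $t$ replacing $\pi$". We also need the strict transforms to be computed correctly, i.e.\ the $\pi$-torsion must be removed consistently, which holds because everything is flat and the generic fibres are unchanged.

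\emph{Main obstacle.} The hard step is the chart computation in Step 1. We must show that on the $x_{ab}$-chart the full ideal $I_i^\pm$ (the quadratic relations and the spin relations $J_i^\pm$ involving $\pi$) transforms exactly into the ideal $I_{i-1}^\pm$ for $Y$ with $t$ in place of $\pi$, plus smooth eliminations, with no extra components. We would first reduce, via the group action, to a single chart, and then use Schur-complement elimination with respect to the unit pivot. The spin relations $[U^\perp:U'^\perp]\mp\pi[U:U']$ should transform into the corresponding relations for $Y$ by the Laplace expansion of minors along the pivot. Reducedness and flatness of the resulting ring would then follow from Proposition~\ref{prop:special-fiber-schubert-centers} for size $i-1$.
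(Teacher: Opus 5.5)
Your architecture matches the paper's. The generalized statement you arrive at, over $\mathcal{O}_F[u_1,\dots,u_r,t]/(u_1\cdots u_r t-\pi)$ with $t$ in place of $\pi$, is exactly the family $\mathcal{U}(d,\ell)$ with $t=T_0$ and $u_j=T_j$. Your $\pi$-chart is the paper's $D_+(\alpha)$, and your Schur-complement reduction of a pivot chart is Lemma~\ref{lemma chart y_{i,j}}, which identifies $D_+(y_{i_0,j_0})$ with $\mathcal{U}(d-1,\ell+1)^{\pm\epsilon}\times\mathbb{A}^{4d-2}$.

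The gap is in what makes that identification valid. Substituting $X=uY$, $\pi=ut$ into the equations only produces a ring $S$ that surjects onto the coordinate ring of the blow-up chart. The two coincide only if $S$ has no $u$-torsion. Likewise, the substitution only shows that the strict transform of $V(\pi,\wedge^{r+1}X)$ is contained in $V(t,\wedge^{r}Y)$; equality needs the latter to be integral of the same dimension. So the induction requires the generalized rings $\mathcal{R}(d,\ell)$ to be $\mathcal{O}_F$-flat and integral for every $\ell\geq 0$. This is Proposition~\ref{prop flat R(d,l)}, which the paper singles out as its main technical input.

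Your justification, that this follows from Proposition~\ref{prop:special-fiber-schubert-centers} in size $i-1$, does not suffice. The special fiber of $\mathcal{R}(d,\ell)$ is cut out by $T_0T_1\cdots T_\ell=0$, and that proposition only controls the component $T_0=0$. The components $T_r=0$ with $r\geq 1$ are the equal-characteristic rings $k[X,T_0]/\bigl((X^tHX-T_0^2H,\,XHX^t-T_0^2H)+J_d^\pm\bigr)$. Their integrality (Lemma~\ref{lemmaRingA}) needs its own argument: flatness over $k[T_0]$ via G\"ortz's topological flatness criterion, with the locus $T_0\neq 0$ identified with a component of the orthogonal group. You then still have to show that the special fiber is the reduced intersection of these $\ell+1$ primes, and lift a point of each component, to get topological flatness of $\mathcal{R}(d,\ell)$.

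Three further steps fail or are missing as written.

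First, ``blowing up commutes with flat base change'' cannot transport the size-$(i-1)$ result from a base with uniformizer $t$. The map $\mathcal{O}_F[t]\to\mathcal{O}_F[u,t]/(ut-\pi)$ is not flat: its fiber over the generic point of $V(t)$ is empty, while over $(t,\pi)$ it is $\mathbb{A}^1_k$. So you must run the induction directly on the generalized family, as you suggest at the end, and then everything rests on the flatness above.

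Second, the monomial symmetries $X\mapsto AXB$ fix the middle index $d+1$, so they cannot move a pivot in the middle row or column to a corner. The paper instead shows (Lemma~\ref{lem:relevant-charts}) that $D_+(y_{d+1,j_0})$, $D_+(y_{i_0,d+1})$ and $D_+(y_{d+1,d+1})$ are covered by the remaining pivot charts and $D_+(\alpha)$. It uses the relations $(c_{j_0},c_{j_0})=0$, $(r_{i_0},r_{i_0})=0$ and $(r_{d+1},r_{d+1})=\alpha^2$.

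Third, you need that all later centers miss the $\pi$-chart, since otherwise further blow-ups could destroy its semi-stability. On $D_+(\alpha)$ one has $(T_0,\wedge^rX)=(T_0)=(X,T_0)$, so the strict transforms do not meet it (Lemma~\ref{lem:alpha-chart}).
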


To prove Proposition~\ref{prop:qs-affine-semistable}, we first introduce in the
next subsection a family of auxiliary affine schemes and establish their basic
properties. In the subsequent subsection, we analyze the blow-ups of these schemes.
The proof of Proposition~\ref{prop:qs-affine-semistable} is then completed in \S \ref{ProofProp3.3}.

\subsection{The affine schemes $\calU(d,\ell)$}\label{section:quasi-split-U(d,l)}
For $0\leq d,\ell\leq i$, let $\calU(d,\ell)^\pm:=\Spec \calR(d,\ell)^\pm,$ where
\[
\calR(d,\ell)^\pm :=
\frac{\calO_F[X,T_0,T_1,\ldots,T_\ell]}
{(X^tH_{2d+1}X-T_0^2H_{2d+1},
XH_{2d+1}X^t-T_0^2H_{2d+1},
\pi-T_0T_1\cdots T_\ell)+J_d^\pm}.
\]
Here $X$ is the square matrix of size $2d+1$, and $J_d^\pm$ is generated by the relations
\[
[U^\bot:{U'}^\bot](X)
\mp
(-1)^d \sgn(\sigma_U)\sgn(\sigma_{U'}) T_0 [U:U'](X).
\]
(We will omit the lower indices
of $H_{2d+1}$ if there is no confusion.) When the sign is fixed, we simply write $\calU(d,\ell)$ and $\calR(d,\ell)$. Observe that when $d=i,\ell=0$, we have 
\[
\calU(i,0)\simeq \Spec R_i;
\]
see Theorem \ref{thm:affine-chart} for the explicit definition of $R_i$.

\begin{prop}\label{prop flat R(d,l)}
The ring $\mathcal{R}(d, \ell)$ is flat over $\mathcal{O}_F$ and integral of (total) dimension $d(2d + 1) + \ell + 1$.
\end{prop}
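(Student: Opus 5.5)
The plan is to reduce everything to the case $\ell=0$, where the ring is essentially $R_d$, and then use the structure $\pi=T_0T_1\cdots T_\ell$ as a base change along a flat map of regular rings. Consider the ring
\[
\calR'(d):=\frac{\calO_F[X,T_0]}{(X^tHX-T_0^2H,\;XHX^t-T_0^2H)+J_d^\pm},
\]
which is the ring $R_d$ of Theorem~\ref{thm:affine-chart} with $\pi$ replaced by a free variable $T_0$. Note that $\pi_0=-\pi^2$, so $-\pi_0$ becomes $T_0^2$. There is a morphism
\[
\calO_F[T_0,T_1,\ldots,T_\ell]/(\pi-T_0T_1\cdots T_\ell)\longrightarrow \calR(d,\ell),
\]
and $\calR(d,\ell)\simeq \calR'(d)\otimes_{\BZ_p[T_0]}\bigl(\calO_F[T_0,\ldots,T_\ell]/(\pi-T_0\cdots T_\ell)\bigr)$, at least after adjusting coefficients. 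The first goal is therefore to prove that $\calR'(d)$ is flat over $\calO_F[T_0]$ (or over $k[T_0]$ and over $\calO_F$) and integral of dimension $d(2d+1)+2$.

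The key step is a homogeneity argument. Give $X$ and $T_0$ degree $1$. The relations in $\calR'(d)$ are then homogeneous: the quadrics $X^tHX-T_0^2H$ are homogeneous of degree $2$, and the spin relations $[U^\perp:U'^\perp](X)\mp(-1)^d\cdots T_0[U:U'](X)$ are homogeneous of degree $d+1$, since $|U^\perp|=d+1$. Consequently, for any unit $\lambda$ the fiber over $T_0=\lambda$ is isomorphic to the fiber over $T_0=1$, via $X\mapsto \lambda X$. Over $T_0\neq 0$ the scheme is therefore $\mathbb{G}_m\times(\text{fiber at }T_0=1)$, that is, $\mathbb{G}_m$ times a smooth connected orthogonal-Grassmannian chart (the generic fiber of the spin local model) of dimension $d(2d+1)$. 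I would check this dimension count using $\dim \OGr$ together with the rank bookkeeping of Theorem~\ref{thm:affine-chart}.

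Over $T_0=0$ the fiber is $R_{d,k}$-like, namely $k[X]/(X^tHX,XHX^t,\wedge^{d+1}X)$ base changed over $\calO_F$. By Proposition~\ref{prop:special-fiber-schubert-centers}(2) this fiber is integral of dimension $d(2d+1)$, once the dimension formula there is applied with $\ell=d$, the top stratum. Flatness of $\calR'(d)$ over $\calO_F[T_0]$ and integrality of $\calR'(d)$ should then follow by a standard argument. I would use that the ring is the Rees-type cone over a flat family; equivalently, I would invoke the flatness and integrality of $R_d=\calR'(d)/(T_0-\pi)$, which is known from Theorem~\ref{thm:spin-flat}, together with the grading. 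Concretely, a graded module over $\calO_F[T_0]$ whose specialization at $T_0=\pi$ is $\calO_F$-flat with the expected fibers is torsion-free in $T_0$. This is the step I expect to be the main obstacle: transferring known flatness and reducedness from the single specialization $T_0=\pi$ to the whole family. My first attempt would be to show that $T_0$ is a nonzerodivisor via the grading. Any $T_0$-torsion element would give a homogeneous torsion element, and dehomogenizing at $T_0=\pi$ against the $\pi$-torsion-freeness of $R_d$ would produce the needed contradiction.

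Finally, for general $\ell$, base change along $\calO_F[T_0]\to \calO_F[T_0,\ldots,T_\ell]/(\pi-T_0\cdots T_\ell)$. This target is flat over $\calO_F$ and is integral of dimension $\ell+1$. Flatness of $\calR(d,\ell)$ over $\calO_F$ follows by composing flat maps. For integrality, the generic part ($\pi$ inverted, so all $T_j$ are units) is integral, being a product of a smooth connected scheme with a torus. It then suffices to show that $\pi$ is a nonzerodivisor, which is flatness, so $\calR(d,\ell)$ embeds into $\calR(d,\ell)[1/\pi]$. The dimension is $d(2d+1)+(\ell+1)$: the relative dimension over $\calO_F$ is $d(2d+1)+\ell$, and one more comes from the base.
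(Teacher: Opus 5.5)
Your reduction to $S:=\mathcal{R}'(d)$ and the base change along $\mathcal{O}_F[T_0]\to\mathcal{O}_F[T_0,\dots,T_\ell]/(\pi-T_0\cdots T_\ell)$ are fine. However, everything rests on flatness of $S$ over the two-dimensional base $\mathcal{O}_F[T_0]$, and the proposal does not establish it.

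First, the dehomogenization step yields no contradiction. A homogeneous $s\in S_n$ with $T_0s=0$ maps to a $\pi$-torsion element of $R_d=S/(T_0-\pi)$, hence to $0$. That only says $s\in(T_0-\pi)S$, which is perfectly consistent. This part can be repaired. Put $K=\bigoplus_n\ker(S_n\to R_d)$. If $s\in K_n$ and $s=(T_0-\pi)u$, compare homogeneous components: this gives $\pi(\bar u_{m-1}-\bar u_m)=0$ in $R_d$ for all $m$, so every $u_m\in K$. For $n$ minimal with $K_n\neq 0$ this forces $s=-\pi u_n$ with $u_n\in K_n$. Hence $K_n=\pi K_n=0$ by Nakayama over $\mathcal{O}_F$, since $S_n$ is finitely generated. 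So $S\hookrightarrow R_d[\lambda^{\pm1}]$ via $s_n\mapsto \bar s_n\lambda^n$, and $S$ is a domain with no $\pi$- or $T_0$-torsion.

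This is still not flatness over $\mathcal{O}_F[T_0]$; a Rees-type algebra over a flat $\mathcal{O}_F$-algebra need not be flat over $\mathcal{O}_F[t]$. Your base-change step genuinely needs that flatness. Suppose $T_0a=\pi b$ in $S$ with $a\notin\pi S$. Then $a-T_1b$ is a nonzero $\pi$-torsion element of $\mathcal{R}(d,1)$; it is nonzero because its constant term in $T_1$ would otherwise lie in $\pi S$.

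So the missing input is that $T_0$ is a nonzerodivisor on $A=S/\pi S=k[X,T_0]/(\dots)$. That is exactly Lemma~\ref{lemmaRingA}, or equivalently $\mathcal{O}_F$-flatness of your $T_0=0$ fiber $\mathcal{O}_F[X]/(X^tHX,XHX^t,\wedge^{d+1}X)$; you only discuss its special fiber. The paper proves it using three facts: $A[T_0^{-1}]$ is an orthogonal-group component times $\mathbb{G}_m$; $k[X]/(X^tHX,XHX^t,\wedge^{d+1}X)$ is integral; and G\"ortz's topological-flatness criterion. It then treats $\mathcal{R}(d,\ell)$ directly: it shows the special fiber is reduced with the $\ell+1$ integral components $\{T_r=0\}$, and lifts a point on each.

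Within your graded framework you could instead argue as follows. Let $\Gamma$ be the $T_0$-power torsion of $A$. Then $A/\Gamma\subset A[T_0^{-1}]$ is a domain of dimension $d(2d+1)+1$, so $A/(\Gamma+T_0A)$ has dimension $d(2d+1)$. It is a quotient of the integral ring $A/T_0A$ of the same dimension, so $\Gamma\subset T_0A$, hence $\Gamma=T_0\Gamma$, and $\Gamma=0$ by degree reasons. Some such argument must be supplied, since it is the heart of the proposition.
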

{
\begin{remark}
    The above proposition is the main technical result of this subsection. The integrality of $\CR(d,\ell)$ plays an essential role in our inductive description of the local equations of the blow-ups; see the proof of Lemma \ref{lemma chart y_{i,j}}.
\end{remark}
}

\begin{proof}
For simplicity, set $\mathcal{R} := \mathcal{R}(d, \ell)$. Then $\mathcal{R}_k \coloneqq \mathcal{R} \otimes k$ is isomorphic to
\[ \frac{k[X, T_0, T_1, \dots, T_ \ell]}{(XH X^t - T_0^2 H,\; X^t H X - T_0^2 H, \;T_0 T_1 \cdots T_ \ell)+ J_d^\pm}. \]
Define 
\[ \mathscr{I} := (XH X^t - T_0^2 H, \; X^t H X - T_0^2 H,\; T_0 T_1 \cdots T_ \ell) + J_d^\pm
\]
and $\mathscr{I}_{r} := (XH X^t - T_0^2 H, \; X^t H X - T_0^2 H, \; T_r)+ J_d^\pm$ for $ r \in \{0,\dots,  \ell\}$. 
\[
\textbf{Claim:}\,\, \mathscr{I}_{r} \text{ is a prime ideal in } \mathcal{R}_k \text{ for } r \in \{0,\dots,  \ell\} \text{ and } \mathscr{I}= \cap_{r=0}^ \ell \mathscr{I}_{r}.
\]
\textit{Proof of the claim:} Note that
\[
\mathcal{R}_k / \mathscr{I}_0 = \frac{k[X, T_1, \dots, T_ \ell]}{(X^t H X, X H X^t, \wedge^{d+1} X)} = \frac{k[X]}{(X^t H X, X H X^t, \wedge^{d+1} X)} \otimes_k k[T_1, \dots, T_ \ell].
\]
Since the ring $k[X]/(X^t H X, X H X^t, \wedge^{d+1} X)$ is integral (see Proposition \ref{prop:special-fiber-schubert-centers} (2)), we get that $\mathcal{R}_k / \mathscr{I}_0$ is integral, and so, $\mathscr{I}_0$ is prime. Moreover, since $\mathcal{R}_k / \mathscr{I}_{r} \simeq \mathcal{R}_k / \mathscr{I}_1$ for any $ 1\leq r \leq \ell$, it remains to show that $\mathcal{R}_k / \mathscr{I}_1$ is integral which follows from Lemma \ref{lemmaRingA}. 

Next, we show that $\mathscr{I} = \cap_{r=0}^ \ell \mathscr{I}_{r}$. Clearly, $\mathscr{I} \subset \cap_{r=0}^ \ell \mathscr{I}_{r}$. Let $x \in \cap_{r=0}^ \ell \mathscr{I}_{r}$. Write $\mathscr{J} = (XH X^t - T_0^2 H,\; X^t H X - T_0^2 H)+J_d^\pm$. Since $x \in \mathscr{I}_0$, we can write $x = g_1 + T_0 f_1$ for some $g_1 \in \mathscr{J}$ and $f_1 \in k[X, T_0, \dots, T_\ell]$. Then $x \in \mathscr{I}_1$ implies that $T_0 f_1 \in \mathscr{I}_1$. Since $\mathscr{I}_1$ is prime and $T_0 \notin \mathscr{I}_1$, we obtain $f_1 \in \mathscr{I}_1$. Then we can write $x = g_2 + T_0 T_1 f_2$ for some $g_2 \in \mathscr{J}$ and $f_2 \in k[X, T_0, \dots, T_\ell]$. Similarly, we obtain $f_2 \in \mathscr{I}_2$. Iterating this process, we may write $x = g_\ell + T_0 T_1 \cdots T_\ell f_\ell$. Hence, $x \in \mathscr{I}$. This finishes the proof of the claim. 

From the above claim we deduce that the special fiber $\Spec  \mathcal{R}_k$ is reduced and has $\ell + 1$ irreducible components of dimension  $d(2d + 1) + \ell$ defined by $\mathscr{I}_{r}$ for $0 \leq r\leq \ell$.

To prove the proposition, it remains to show that $\mathcal{R}$ is topologically flat over $\mathcal{O}_F$. Note that $T_0$ is invertible over the generic fiber $\mathcal{R} \otimes_{\mathcal{O}_F} F$. Then a similar argument as in Lemma \ref{lemmaRingA} shows that $\mathcal{R} \otimes_{\mathcal{O}_F} F$ is integral and smooth of dimension $d(2d + 1) + \ell$ over $F$. In particular, the dimension of $\mathcal{R} \otimes_{\mathcal{O}_F} F$ equals the dimension of each component of the special fiber $\Spec  \mathcal{R}_k$. Let $P_0 \in \text{Spec } \mathcal{R}_k / \mathscr{I}_0$ be a closed point given by
\[ X = \begin{pmatrix} 1 & \\ & 0 \cdot I_{2d} \end{pmatrix}, \ T_1 = T_2 = \dots = T_\ell = 1. \]
Then $P_0$ lifts to an $\mathcal{O}_F$-point of $\text{Spec } \mathcal{R}$ given by
\[ \widetilde{X} = \begin{pmatrix} 1 & & \\ & \pi I_{2d-1} & \\ & & \pi^2 \end{pmatrix}, \ T_0 = \pi, \ T_1 = \dots = T_\ell = 1. \]
Let $P_{r} \in \text{Spec } \mathcal{R}_k / \mathscr{I}_{r}$ be a closed point given by
\[ X = I_{2d+1}, \ T_0 = T_1 = \dots = T_{r-1} = T_{r+1} = \dots = T_\ell = 1, \ T_{r} = 0. \]
Then $P_{r}$ lifts to an $\mathcal{O}_F$-point of $\text{Spec } \mathcal{R}$ given by
\[ \widetilde{X} = I_{2d+1}, \ T_0 = T_1 = \dots = T_{r-1} = T_{r+1} = \dots = T_\ell = 1, \ T_{r} = \pi. \]
Note that each $P_{r}$ lies in only one irreducible component of $\text{Spec } \mathcal{R}_k$. By \cite[Lemma 2]{Goertz05}, $\mathcal{R}$ is topologically flat over $\mathcal{O}_F$. Then we obtain that $\mathcal{R}$ is $\mathcal{O}_F$-flat. Since the generic fiber is integral of dimension $d(2d+1)+\ell$, we have that $\mathcal{R}$ is integral of dimension $d(2d+1)+\ell+1$. 
\end{proof}
\begin{lemma}\label{lemmaRingA}
The ring $A := k[X, T_0]/\left((XH X^t - T_0^2 H, X^t H X - T_0^2 H)+J_d^\pm\right)$ is integral of dimension $d(2d + 1) + 1$.
\end{lemma}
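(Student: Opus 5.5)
The plan is to exploit that all defining relations of $A$ are homogeneous, and to compare $A$ with its localization at $T_0$. Give $X$ and $T_0$ degree $1$. Then $XHX^t-T_0^2H$ and $X^tHX-T_0^2H$ are homogeneous of degree $2$. The generators of $J_d^\pm$,
\[
[U^\bot:{U'}^\bot](X)\mp(-1)^d\sgn(\sigma_U)\sgn(\sigma_{U'})T_0[U:U'](X),
\]
are homogeneous of degree $d+1$. So $A$ is a positively graded $k$-algebra with $A_0=k$ and $T_0\in A_1$. Let $K:=\ker(A\to A[T_0^{-1}])$ be the $T_0$-power torsion, a homogeneous ideal. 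The argument has three steps: (i) $A[T_0^{-1}]$ is a domain of dimension $d(2d+1)+1$; (ii) $A/T_0A$ is a domain of dimension $d(2d+1)$; (iii) a graded Nakayama argument forces $K=0$. Then $A\subset A[T_0^{-1}]$ is a domain of the right dimension.

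\emph{Step (i).} On $D(T_0)$ put $Y:=T_0^{-1}X$. The relations become $Y^tHY=H=YHY^t$, so $Y\in \mathrm{O}(H)$. Using that $T_0^{d+1}$ divides $(d+1)$-minors and $T_0^d$ divides $d$-minors, the relations $J_d^\pm$ become
\[
[U^\bot:{U'}^\bot](Y)=\pm(-1)^d\sgn(\sigma_U)\sgn(\sigma_{U'})[U:U'](Y).
\]
For an orthogonal matrix, the complementary-minor (Jacobi) identity gives
\[
[U^\bot:{U'}^\bot](Y)=\epsilon\,\det(Y)\,[U:U'](Y)
\]
with an explicit sign $\epsilon$. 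Hence $J_d^\pm$ exactly imposes $\det Y=\pm1$ (one fixed sign); one direction of this uses that some $d$-minor of $Y$ is nonzero. Thus
\[
A[T_0^{-1}]\simeq k[T_0^{\pm1}]\otimes_k k[g\cdot\mathrm{SO}(H)],
\]
with $g\cdot\mathrm{SO}(H)$ a single coset of $\mathrm{SO}(H)$ in $\mathrm{O}(H)$. This is smooth and integral of dimension $d(2d+1)+1$. The only delicate point is tracking signs so that $J_d^\pm$ selects exactly one determinant component rather than being empty or mixing both.

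\emph{Step (ii).} Setting $T_0=0$, the relations $J_d^\pm$ say that every $(d+1)$-minor $[U^\bot:{U'}^\bot](X)$ vanishes. As $U,U'$ range over all $d$-subsets, $U^\bot,{U'}^\bot$ range over all $(d+1)$-subsets. Hence
\[
A/T_0A\simeq k[X]/(X^tHX,\ XHX^t,\ \wedge^{d+1}X)=R_{d,k}.
\]
By Proposition~\ref{prop:special-fiber-schubert-centers}(2), with $i=d$ and the case $\ell=i$ of the dimension formula, this ring is integral of dimension $d(2d+1)$.

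\emph{Step (iii).} The quotient $B:=A/K$ embeds in $A[T_0^{-1}]$, so it is a domain. Its dimension is $d(2d+1)+1$, since $B[T_0^{-1}]=A[T_0^{-1}]$ and $B$ is graded and finitely generated. Also $T_0$ is a nonzerodivisor on $B$. Therefore $\dim B/T_0B=d(2d+1)$ by Krull's principal ideal theorem together with graded catenarity of affine domains. Now $B/T_0B=A/(K+T_0A)$ is a quotient of the domain $A/T_0A$ of the same dimension $d(2d+1)$, so the surjection $A/T_0A\to B/T_0B$ is an isomorphism and $K\subset T_0A$.

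If $T_0x\in K$ then $x\in K$, since $K$ is $T_0$-saturated. Hence $K=T_0K$, and because $K$ is a homogeneous ideal of a positively graded ring with $\deg T_0>0$, graded Nakayama gives $K=0$. So $A=B$ is integral of dimension $d(2d+1)+1$.

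I expect the main obstacle to be the sign bookkeeping in Step (i), namely verifying via Jacobi's identity that $J_d^\pm$ on $D(T_0)$ is equivalent to a single determinant condition; Steps (ii) and (iii) are formal given the earlier results.
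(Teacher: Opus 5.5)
Your argument is correct. Steps (i) and (ii) coincide with the paper's proof. The paper likewise identifies $A[T_0^{-1}]$ with $k[T_0^{\pm1}]$ tensored with one connected component of the split orthogonal group. For the sign bookkeeping it quotes \cite[Lemma 4.3 (3)]{Yang25}, where you use Jacobi's identity; the precise input for your ``one direction'' is that the $d$-minors of the invertible matrix $Y$ generate the unit ideal. It also identifies $A/T_0A$ with $k[X]/(XHX^t,X^tHX,\wedge^{d+1}X)$, integral of dimension $d(2d+1)$.

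The difference is in step (iii). The paper proves that $A$ is flat over $k[T_0]$. Since the special fiber is reduced and irreducible of the right dimension, it lifts the explicit point $X=\mathrm{diag}(1,0\cdot I_{2d})$ to $\mathrm{diag}(1,T_0I_{2d-1},T_0^2)$ and applies G\"ortz's topological flatness criterion \cite[Lemma 2]{Goertz05}. It then upgrades to flatness via reducedness and concludes from $A\hookrightarrow A\otimes_{k[T_0]}k(T_0)$. You replace both the explicit lift and the external criterion by the homogeneity of the relations. The grading guarantees that $T_0$ is a non-unit on the flat closure $B=A/K$, since the irrelevant maximal ideal of $B$ contains $T_0$. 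The dimension count then forces $B/T_0B=A/T_0A$, and graded Nakayama kills the $T_0$-torsion. You obtain exactly $T_0$-torsion-freeness rather than full flatness over $k[T_0]$, which is all that integrality needs.

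Your route is more self-contained, but it uses the grading and the irreducibility of $A/T_0A$ essentially. The paper's lifting argument is the one that carries over to $\mathcal{R}(d,\ell)$ over $\mathcal{O}_F$. There the relation $\pi-T_0T_1\cdots T_\ell$ destroys the grading, and the special fiber has $\ell+1$ components. Each of these must be shown, by an explicit lift, to meet the closure of the generic fiber.

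One small point, shared with the paper: $\dim A/T_0A=d(2d+1)$ is the case $\ell=i$, which the stated dimension formula (for $\ell<i$) does not literally cover. It follows by subtracting the affine factor from the relative dimension $n(n+1)/2$ of the flat local model, whose special fiber is irreducible.
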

\begin{proof}
We first show that $A$ is flat over $k[T_0]$. Over the localization $A[T_0^{-1}]$, set $Y=T_0^{-1}X$. Then 
\[ A[T_0^{-1}] \simeq \frac{k[Y]}{(Y H Y^t - H, \; Y^t H Y - H)+\mathcal{J}_d^{\pm}} \otimes_k k[T_0, T_0^{-1}], \]
where $\mathcal{J}_d^{\pm}$ denotes the ideal generated by $[U^\bot:{U'}^\bot](Y) - (-1)^d \text{sgn}(\sigma_U) \text{sgn}(\sigma_{U'}) [U : U'](Y)$ and $U, U'$ run through subsets of $[1, 2d + 1]$ of cardinality $d$. In fact, we have that
\[ \frac{k[Y]}{(Y H Y^t - H, \; Y^t H Y - H)+\mathcal{J}_d^{\pm}} \simeq \frac{k[Y]}{(Y H Y^t - H,\; Y^t H Y - H, \; \text{det}(Y) - (-1)^d)}, \]
which is the coordinate ring of a connected component of the (split) orthogonal group over $k$ (see \cite[Lemma 4.3 (3)]{Yang25}).  
It follows that $A[T_0^{-1}]$ is integral and smooth, and so flat, over $k[T_0, T_0^{-1}]$ of (relative) dimension $d(2d + 1)$. To prove flatness of $A$ over $k[T_0]$, it remains to show that $A_{(T_0)}$ is flat over the localization $k[T_0]_{(T_0)}$ of $k[T_0]$ at the prime ideal $(T_0)$. It suffices to show that the special fiber $A_{(T_0)} \otimes_{k[T_0]_{(T_0)}} k$ is reduced and $A_{(T_0)}$ is topologically flat over $k[T_0]_{(T_0)}$. Note that the special fiber $A_{(T_0)} \otimes_{k[T_0]_{(T_0)}} k$ is isomorphic to $k[X]/(XH X^t, X^t H X, \wedge^{d+1} X)$, which is reduced (and irreducible) of dimension $d(2d + 1)$. Consider a point in the special fiber given by the diagonal matrix
\[ X = \begin{pmatrix} 1 & \\ & 0 \cdot I_{2d} \end{pmatrix}. \]
Then it lifts to a closed point in the generic fiber given by
\[ \widetilde{X} = \begin{pmatrix} 1 & & \\ & T_0 \cdot I_{2d-1} & \\ & & T_0^2 \end{pmatrix}. \]
By \cite[Lemma 2]{Goertz05}, $A_{(T_0)}$ is topologically flat over $k[T_0]_{(T_0)}$. Thus, we prove that $A$ is flat over $k[T_0]$.

By the flatness of $A$, we have $A \hookrightarrow A \otimes_{k[T_0]} k(T_0)$. By the analysis in the previous paragraph, we obtain that $A \otimes_{k[T_0]} k(T_0)$ is integral, so is $A$.
\end{proof}

\subsection{The Blow-up of $\calU(d,\ell)$}\label{section blow-ups}
In this subsection we consider the blow-up $\Bl_{\calI_0}(\calU(d,\ell))$ of $\calU(d,\ell)$ along the ideal $\calI_0= (X,T_0)=((x_{i,j})_{1\leq i,j\leq 2d+1},T_0)$. Let $\tilde{\calR}(d,\ell)$ be the graded $\calR(d,\ell)$-algebra $\oplus_{t\geq 0} \calI_0^t$ 
with $\calI_0^0=\calR(d,\ell)$. Then $\Bl_{\calI_0}(\calU(d,\ell))
=
\Proj \widetilde{\calR}(d,\ell)$ is covered by the $(2d+1)^2+1$ affine charts $D_+(y_{i,j})$ and $D_+(\alpha)$,
where $y_{i,j}$ and $\alpha$  denote the homogeneous degree-one elements corresponding to 
$x_{i,j}$ and $T_0$, respectively. In $\widetilde{\calR}(d,\ell)$ we have the
relations
\begin{equation}\label{eq wedge}
x_{i,j}y_{r,s}-x_{r,s}y_{i,j}=0,
\qquad
x_{i,j}\alpha-T_0y_{i,j}=0
\end{equation}
for all $1\leq i,j,r,s\leq 2d+1$.

\begin{lemma}\label{lem:relevant-charts}
The blow-up $\Bl_{\calI_0}(\calU(d,\ell))$ is covered by the affine charts
$D_+(y_{i_0,j_0})$ with $i_0,j_0\neq d+1$, together with the chart $D_+(\alpha)$.
\end{lemma}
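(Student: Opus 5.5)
We have to show that the charts $D_+(y_{i,d+1})$ and $D_+(y_{d+1,j})$ (middle row or column) are contained in the union of the others. Equivalently, at every point of the blow-up, at least one of $\alpha$ or some $y_{i_0,j_0}$ with $i_0,j_0\neq d+1$ is nonzero. The plan is to work with the homogeneous versions of the defining relations. Since the exceptional relations \eqref{eq wedge} say that $(y_{i,j},\alpha)$ is proportional to $(x_{i,j},T_0)$, every homogeneous relation of $\calR(d,\ell)$ in $X,T_0$ gives a relation among the $y$'s and $\alpha$ on the blow-up. Concretely, on $D_+(y_{r,s})$ we have $x_{i,j}=x_{r,s}\,(y_{i,j}/y_{r,s})$ and $T_0=x_{r,s}\,(\alpha/y_{r,s})$. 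Substituting into $XHX^t-T_0^2H=0$ gives $x_{r,s}^2(YHY^t-a^2H)=0$, where $Y=(y_{i,j}/y_{r,s})$ and $a=\alpha/y_{r,s}$.

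The key step uses the integrality of $\calR(d,\ell)$ (Proposition~\ref{prop flat R(d,l)}). It implies that the Rees algebra $\widetilde{\calR}(d,\ell)$ is a domain. Moreover $x_{r,s}$ is a nonzerodivisor on the chart, unless the chart is empty, which happens if $x_{r,s}=0$ in $\calR$; that does not occur because $\widetilde X=I$ is a point. So on each chart we may cancel $x_{r,s}^2$. More cleanly, in the Rees algebra itself the degree-two elements
\[
\textstyle\sum_{a,b} y_{i,a}H_{a,b}y_{j,b}-\alpha^2H_{i,j}
\]
are annihilated by the regular element $T_0$ (or $x_{r,s}$), because multiplying by $T_0^2$ lands in $\calI_0^2\cdot(\text{relation})=0$. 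Hence these elements vanish. We thus obtain $YHY^t=\alpha^2H$ and $Y^tHY=\alpha^2H$ in $\widetilde{\calR}(d,\ell)$, with $Y=(y_{i,j})$.

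Now suppose a point lies where $\alpha=0$ and $y_{i_0,j_0}=0$ for all $i_0,j_0\neq d+1$. Only the middle row and column of $Y$ can then be nonzero. Look at the $(d+1,d+1)$ entry of $Y^tHY=0$, noting that $H$ pairs index $a$ with $2d+2-a$. We get
\[
\textstyle\sum_a y_{a,d+1}\,y_{2d+2-a,d+1}=0 .
\]
Every term with $a\neq d+1$ involves $y_{a,d+1}y_{2d+2-a,d+1}$, and in this sum $d+1$ is fixed as the column index. So I will instead use the $(i,j)$ entries with $i,j\neq d+1$ as well. The $(j,j')$ entry of $Y^tHY$ is $\sum_a y_{a,j}y_{2d+2-a,j'}$. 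For $j=j'=d+1$ this reduces to $y_{d+1,d+1}^2$ plus terms with both factors in the middle column off-center. Similarly, the $(d+1,d+1)$ entry of $YHY^t$ involves the middle row.

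Combining the row and column relations, we will deduce that all $y_{a,d+1}$ and $y_{d+1,b}$ vanish. First, the $(i,d+1)$ entries of $YHY^t$ for $i\neq d+1$ equal $y_{i,d+1}y_{d+1,d+1}$, because the other columns of row $i$ vanish. Together with the center entry $y_{d+1,d+1}^2+\dots$, this forces $y_{d+1,d+1}=0$. Then the entries $(i,i')$ of $YHY^t$ with $i,i'\neq d+1$ give $y_{i,d+1}y_{i',d+1}=0$, so all $y_{i,d+1}$ are nilpotent. Symmetrically, all $y_{d+1,j}$ are nilpotent. Hence every homogeneous coordinate vanishes at the point, which is impossible in Proj. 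This proves that the listed charts cover the blow-up.

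The main obstacle is justifying that the homogenized relations hold in the Rees algebra. That is where the integrality in Proposition~\ref{prop flat R(d,l)} enters, via torsion-freeness of $\widetilde{\calR}(d,\ell)\subset\calR(d,\ell)[t]$. The subsequent entry-chasing is routine.
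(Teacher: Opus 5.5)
Your argument is correct and is essentially the paper's. Both proofs use only the homogenized relations $YHY^t=\alpha^2H$ and $Y^tHY=\alpha^2H$, through the diagonal entries in which the middle index $d+1$ pairs with itself. The paper works chart by chart: on $D_+(y_{d+1,j_0})$ the relation $(c_{j_0},c_{j_0})=0$ forces some $y_{s,j_0}$ with $s\le d$ to be a unit, and the $(d+1,d+1)$ entries handle $D_+(y_{d+1,d+1})$. You read the same entries modulo a homogeneous prime and reach a contradiction.

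One correction: no integrality of $\calR(d,\ell)$ is needed for the homogenized relations. In $\widetilde{\calR}(d,\ell)\subset\calR(d,\ell)[t]$ one has $YHY^t-\alpha^2H=(XHX^t-T_0^2H)t^2=0$ tautologically. Your aside that the point $\widetilde X=I$ shows $x_{r,s}\neq 0$ fails for $r\neq s$, but you never actually use it.
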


\begin{proof}

We first examine the affine chart $D_+(y_{i_0, j_0})$ with $i_0=d+1, j_0\neq d+1$. Set $t^*=2d+2-t$ for $1\leq t\leq 2d+1$. In this chart, we have $i_0=i_0^*$, and $j_0\neq j_0^*$. Let $Y=(y_{i,j})$ be the square matrix with $y_{i_0,j_0}=1$. By (\ref{eq wedge}), we have $X=x_{i_0, j_0}Y$ and $T_0=x_{i_0,j_0}\alpha$. It is not hard to see that relations $X^tH_{2d+1}X-T_0^2H_{2d+1},\; XH_{2d+1}X^t-T_0^2H_{2d+1},\ \pi-T_0T_1\cdots T_\ell$ translate to 
\[
Y^tH_{2d+1}Y-\alpha^2H_{2d+1},\quad YH_{2d+1}Y^t-\alpha^2H_{2d+1},\quad \pi-\alpha x_{i_0,j_0}T_1\cdots T_\ell,
\]
and the relations in $J_d^\pm$ are equivalent to 
\[
[U^\bot:{U'}^\bot](Y)
\mp
(-1)^d \sgn(\sigma_U)\sgn(\sigma_{U'}) \alpha [U:U'](Y),
\]
for all subsets $U,U'\subset [1,2d+1]$ of cardinality $d$. 

Let $r_s$ (resp. $c_t$) denote the $s$-th row (resp. $t$-th column) in the matrix $Y$. The identity $Y^tH_{2d+1}Y=\alpha^2H_{2d+1}$ (resp. $YH_{2d+1}Y^t=\alpha^2H_{2d+1}$) is equivalent to $(r_i, r_j)=\alpha^2\cdot \delta_{i,j^*}$ (resp. $(c_i, c_j)=\alpha^2\cdot \delta_{i,j^*}$), where $(\ , \ ): \calO_F^{2d+1}\times \calO_F^{2d+1}\rightarrow \calO_F$ is the symmetric non-degenerate bilinear form associated to the matrix $H_{2d+1}$. The relation $(c_{j_0}, c_{j_0})=0$ gives
\[
1+\sum\limits_{s=1, s\neq d+1}^{2d+1}y_{s, j_0}y_{s^*, j_0}=0,
\]
which forces at least one of the variables $y_{1,j_0}, \cdots, y_{d, j_0}$ to be invertible. Hence 
\[
D_+(y_{d+1, j_0})\subset \bigcup_{s=1}^{d} D_+(y_{s, j_0}).
\]

Similarly, for the affine chart $D_+(y_{i_0, j_0})$ with $i_0\neq d+1, j_0=d+1$, the relation $(r_{i_0}, r_{i_0})=0$ yields $1+\sum_{t=1, t\neq d+1}^{2d+1}y_{i_0, t}y_{i_0, t^*}=0$, so $D_+(y_{i_0, d+1})\subset \cup_{t=1}^{d} D_+(y_{i_0, t})$. 

Finally, for the affine chart $D_+(y_{d+1, d+1})$, the identities $(r_{d+1}, r_{d+1})=\alpha^2$, $(c_{d+1}, c_{d+1})=\alpha^2$ imply:
\[
1+2\sum\limits_{s=1}^d y_{s,d+1}y_{s^*,d+1}=\alpha^2,\quad 
1+2\sum\limits_{t=1}^d y_{d+1,t}y_{d+1,t^*}=\alpha^2.
\]
Therefore we conclude 
\[
D_+(y_{d+1, d+1})\subset (\bigcup_{s=1}^d D_+(y_{s, d+1}))\cup D_+(\alpha)\subset (\bigcup_{s=1}^d\bigcup_{t=1}^d D_+(y_{s, t}))\cup D_+(\alpha).
\]
This completes the proof of the lemma.
\end{proof}


By Lemma~\ref{lem:relevant-charts}, it remains to analyze the charts $D_+(y_{i_0,j_0})$ with
$i_0\neq i_0^\ast$ and $j_0\neq j_0^\ast$, and the chart $D_+(\alpha)$.

\begin{lemma}\label{lemma chart y_{i,j}}
The affine chart $D_+(y_{i_0,j_0})$ with $i_0\neq i_0^\ast$ and $j_0\neq j_0^\ast$
is isomorphic to
\[
\calU(d-1,\ell+1)^{\pm\epsilon}\times \mathbb{A}_{\calO_F}^{4d-2},
\]
where $\epsilon=(-1)^{i_0+j_0+1}$. In particular, for $\ell=0$, the affine chart $D_+(y_{i_0,j_0})$ is isomorphic to
\begin{equation}\label{eq Z}
\begin{split}
&\Spec \frac{\calO_F[Z, T_0, T_1]}{(Z^tHZ-T_0^2H, ZHZ^t-T_0^2 H, \pi-T_0T_1)+J_{d-1}^{\pm\epsilon}}\times \mathbb{A}_{\calO_F}^{4d-2}.
\end{split}
\end{equation}
\end{lemma}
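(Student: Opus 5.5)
We work directly in the chart. On $D_+(y_{i_0,j_0})$ we put $x:=x_{i_0,j_0}$, so that by \eqref{eq wedge} we have $X=xY$ and $T_0=x\alpha$, where $Y=(y_{i,j})$ with $y_{i_0,j_0}=1$. The coordinate ring of the chart is a quotient of
\[
\calR(d,\ell)[Y,\alpha]/(y_{i_0,j_0}-1,\ X-xY,\ T_0-x\alpha)
\]
by its $x$-power torsion. As in the proof of Lemma \ref{lem:relevant-charts}, dividing the defining relations by the appropriate powers of $x$ gives a ``naive'' chart ring $B$. It is generated by $Y$, $\alpha$, $x$, $T_1,\dots,T_\ell$, subject to the following relations:
\begin{itemize}
\item $Y^tHY=\alpha^2H$ and $YHY^t=\alpha^2H$;
\item $\pi=\alpha\, x\, T_1\cdots T_\ell$;
\item $[U^\perp:{U'}^\perp](Y)\mp(-1)^d\sgn(\sigma_U)\sgn(\sigma_{U'})\,\alpha\,[U:U'](Y)$ for all $U,U'$ of cardinality $d$.
\end{itemize}
Here we have used that these minors are homogeneous of degrees $d+1$ and $d$, and that $T_0=x\alpha$.

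There is a natural surjection $B\to\calO(D_+(y_{i_0,j_0}))$. The plan is to prove
\[
B\simeq \calR(d-1,\ell+1)^{\pm\epsilon}\otimes_{\calO_F}\calO_F[u_1,\dots,u_{4d-2}].
\]
Proposition \ref{prop flat R(d,l)} then shows that $B$ is an $\calO_F$-flat integral domain. Since $x\neq 0$ in $B$, the element $x$ is a nonzerodivisor, so $B$ has no $x$-torsion and the surjection is an isomorphism. This is exactly where the integrality of $\calR(d,\ell)$ is used, as anticipated in the remark after Proposition \ref{prop flat R(d,l)}.

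To identify $B$, we first normalize the position $(i_0,j_0)$. Since $i_0\neq i_0^*$, there is a permutation matrix $P$ preserving $H$ that interchanges $i_0\leftrightarrow 1$ and $i_0^*\leftrightarrow 2d+1$; similarly there is $Q$ for $j_0$. Replacing $Y$ by $PYQ$ preserves the quadratic relations. Its only effect on the relations in $J_d^\pm$ is a global sign coming from $\det P\det Q$ together with the reindexing of the minors, and we expect this to produce the factor $\epsilon=(-1)^{i_0+j_0+1}$. So we may assume $(i_0,j_0)=(1,1)$.

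With $y_{11}=1$ we perform an orthogonal Gaussian elimination. Let $a=(y_{2,1},\dots,y_{2d,1})$ and $b=(y_{1,2},\dots,y_{1,2d})$. There are unipotent elements $g(a),h(b)$ of the orthogonal group of $H$, lying in the unipotent radical of the stabilizer of the isotropic line $\langle e_1\rangle$, whose entries are polynomial in $a$ and $b$ respectively, such that
\[
g(a)^{-1}\,Y\,h(b)^{-1}=\begin{pmatrix}1&0&0\\0&Z&0\\0&0&\alpha^2\end{pmatrix}
\]
with $Z$ of size $2d-1$. The entry $\alpha^2$ in the corner is forced by the relations $(c_1,c_{2d+1})=\alpha^2$ and $(r_1,r_{2d+1})=\alpha^2$. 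The quadratic relations for $Y$ become equivalent to $Z^tHZ=\alpha^2H$ and $ZHZ^t=\alpha^2H$, and the $4d-2$ coordinates of $a,b$ become free variables.

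Next we translate the minor relations. By Laplace expansion along the first row and last column of the block-diagonal normal form, each $d$-minor (resp. $(d+1)$-minor) of $Y$ that is not trivially zero equals, up to sign, either a $(d-1)$-minor (resp. $d$-minor) of $Z$ or $\alpha^2$ times a lower minor. The relations in $J_d^\pm$ should then reduce precisely to those in $J_{d-1}^{\pm\epsilon}$ for $Z$ with parameter $\alpha$; the shift $(-1)^d\mapsto(-1)^{d-1}$ is absorbed by the sign from the expansion along the first row. Finally, we set $T_0':=\alpha$, $T_1':=x$ and $T_{r+1}':=T_r$ for $1\le r\le\ell$. The relation $\pi=\alpha xT_1\cdots T_\ell$ becomes $\pi=T_0'T_1'\cdots T_{\ell+1}'$, which gives $\calR(d-1,\ell+1)^{\pm\epsilon}[a,b]$.

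We expect the main obstacle to be this last bookkeeping step. We must check that the relations of $J_d^\pm$ on $Y$ generate exactly the ideal $J_{d-1}^{\pm\epsilon}$ on $Z$ (together with relations already implied by the quadratic ones), and that the sign is exactly $\epsilon$. We would first verify the claim on the minors involving row and column $1$, and reduce the remaining ones using the orthogonality relations. The case $\ell=0$ in \eqref{eq Z} is then just the specialization of the general statement.
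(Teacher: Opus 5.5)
\textbf{There is a genuine gap: the step you postpone carries the whole lemma.}

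Parts of your reduction are sound. The chart ring is $B$ modulo its $x$-power torsion, and $B[x^{-1}]\cong\mathcal{R}(d,\ell)[x^{-1}]$. So if $B$ were a domain with $x\neq 0$, the map $B\to\mathcal{O}(D_+(y_{i_0,j_0}))$ would indeed be an isomorphism. The unipotent normal form $g(a)^{-1}Yh(b)^{-1}=\mathrm{diag}(1,Z,\alpha^2)$ is also a tidy way to handle the quadratic relations, and it recovers the paper's $z_{s,t}=y_{s,t}-y_{i_0,t}y_{s,j_0}$. (One correction: $g(a)$ must be lower unipotent, i.e. lie in the radical of the stabilizer of $\langle e_{2d+1}\rangle$, since $g(a)e_1$ is the first column of $Y$.)

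To get $B\simeq\mathcal{R}(d-1,\ell+1)^{\pm\epsilon}[a,b]$ you must show two things. First, every generator of $J_d^\pm$, for all $U,U'$, lies in the ideal generated by the quadratic relations and $J_{d-1}^{\pm\epsilon}(Z)$. Second, the converse inclusion holds. Your proposed mechanism does not do this as stated. Laplace expansion on $\mathrm{diag}(1,Z,\alpha^2)$ computes minors of $g(a)^{-1}Yh(b)^{-1}$, not minors of $Y$. By Cauchy--Binet, each minor of $Y$ is only a linear combination of those. To transport the relations you would first need the $J_d^\pm$-ideal to be stable under $Y\mapsto gYh$ for $g,h\in\mathrm{SO}(H)$. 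After that you would still need a case analysis covering the index sets that avoid row or column $1$. Your proof that $B$ is a domain goes through this very isomorphism, so the argument does not close without it.

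\textbf{The missing idea is that the remaining relations never need to be checked.} The paper translates only the relations with $i_0\in V$ and $j_0\in V'$. For these, $V^\perp=U^\perp\cup\{i_0\}$ and ${V'}^\perp={U'}^\perp\cup\{j_0\}$. Sylvester's identity with pivot $y_{i_0,j_0}=1$ then turns $[V:V'](Y)$ and $[V^\perp:{V'}^\perp](Y)$ directly into $[U:U'](Z)$ and $[U^\perp:{U'}^\perp](Z)$, up to sign. This gives $J_{d-1}^{\pm\epsilon}$ for $Z$. Together with the quadratic relations, it yields a surjection $T:=\mathcal{R}(d-1,\ell+1)^{\pm\epsilon}\otimes_{\mathcal{O}_F}\mathcal{O}_F[a,b]\twoheadrightarrow\mathcal{O}(D_+(y_{i_0,j_0}))$.

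Integrality is then applied to the source, not to $B$. By Proposition~\ref{prop flat R(d,l)}, $T$ is a domain of dimension $d(2d+1)+\ell+1$. The chart, being open in the blow-up of $\mathcal{U}(d,\ell)$, has the same dimension, so the kernel is zero. In your notation this reads $T\twoheadrightarrow B\twoheadrightarrow\mathcal{O}(D_+(y_{i_0,j_0}))$. The dimension count makes both maps isomorphisms, with no bookkeeping on the other minors.

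You also need not chase the exact sign $\epsilon$. For odd matrix size, $Z\mapsto -Z$ interchanges $\mathcal{U}(d-1,\ell+1)^{+}$ and $\mathcal{U}(d-1,\ell+1)^{-}$. Hence the isomorphism class does not depend on $\epsilon$.
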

\begin{proof}
Consider the affine chart $D_+(y_{i_0,j_0})$. Assume $y_{i_0, j_0}=1$ with $i_0, j_0\neq d+1$. Then we obtain $i_0\neq i_0^*, j_0\neq j_0^*$.  Evaluating the relations  $(r_{i_0}, r_{i_0})=0$ (resp. $(c_{j_0}, c_{j_0})=0$) gives
\begin{equation}\label{eq 312}
\begin{array}{ll}
  y_{i_0,j_0^*}=-\frac12\sum\limits_{t=1, t\neq j_0, j_0^*}^{2d+1} y_{i_0,t}y_{i_0,t^*},   &  
  y_{i_0^*,j_0}=-\frac12\sum\limits_{s=1, s\neq i_0, i_0^*}^{2d+1} y_{s,j_0}y_{s^*,j_0}.\
\end{array}
\end{equation}
Consider the relations $(r_{i_0}, r_s)=0$ (resp. $(c_{j_0}, c_t)=0$) for $s\neq i_0, i_0^*$, $t\neq j_0, j_0^*$, we obtain
\begin{equation}\label{eq 313}
\begin{array}{ll}
 y_{s,j_0^*}=-\sum\limits_{t=1, t\neq j_0, j_0^*}^{2d+1} y_{i_0,t}y_{s,t^*}+\frac12 y_{s,j_0}\sum\limits_{t=1, t\neq j_0, j_0^*}^{2d+1}y_{i_0,t}y_{i_0,t^*},   \\ 
  y_{i_0^*,t}=-\sum\limits_{s=1, s\neq i_0, i_0^*}^{2d+1} y_{s,j_0}y_{s^*,t}+\frac12 y_{i_0,t}\sum\limits_{s=1, s\neq i_0, i_0^*}^{2d+1}y_{s, j_0}y_{s^*,j_0}.
\end{array}
\end{equation}
From (\ref{eq 313}), we derive the identity $\sum_{t\neq j_0, j_0^*}y_{i_0,t}y_{i_0^*,t^*}=\sum_{s\neq i_0, i_0^*}y_{s,j_0}y_{s^*,j_0^*}$ . Applying either $(r_{i_0}, r_{i_0^*})=\alpha^2$ or $(c_{j_0}, c_{j_0^*})=\alpha^2$ then yields
\begin{equation}\label{eq 314}
y_{i_0^*, j_0^*}=\alpha^2-\frac34\sum\limits_{s\neq i_0, i_0^*}\sum\limits_{t\neq j_0, j_0^*}y_{i_0,t}y_{i_0,t^*}y_{s,j_0}y_{s^*,j_0}+\sum\limits_{s\neq i_0, i_0^*}\sum\limits_{t\neq j_0, j_0^*}y_{i_0,t}y_{s,j_0}y_{s^*,t^*}.
\end{equation}
Equations (\ref{eq 312})--(\ref{eq 314}) show that the variables $y_{i_0^*,t}, y_{s, j_0^*}$ for $1\leq s,t\leq 2d+1$ are uniquely determined by $\alpha$ and the remaining entries of $Y$.

It remains to verify the bilinear form identities $(r_i, r_j)=\alpha^2\cdot \delta_{i,j^*}$ for $i, j\neq i_0$ (resp. $(c_i, c_j)=\alpha^2\cdot \delta_{i,j^*}$ for $i, j\neq j_0$). Set 
\[
z_{s,t}:=y_{s,t}-y_{i_0,t}y_{s,j_0},\quad \text{for}~s\neq i_0, i_0^*, t\neq j_0, j_0^*.
\]
Let $Z=(z_{s,t})_{s\neq i_0, i_0^*,~ t\neq j_0, j_0^*}$ be the square matrix of size $2d-1$. Consider the relations $(r_{s}, r_{t})=\alpha^2\cdot \delta_{s,t^*}$ for $s, t\neq i_0, i_0^*$. By using (\ref{eq 313}), we obtain
\begin{flalign*}
(r_s, r_t)&=y_{t,j_0}y_{s,j_0^*}+y_{s,j_0}y_{t,j_0^*}+\sum\limits_{k\neq j_0, j_0^*} y_{s,k}y_{t,k^*}\\
&=\sum\limits_{k\neq j_0, j_0^*}(y_{s,k}-y_{i_0,k}y_{s,j_0})(y_{t,k^*}-y_{i_0,k^*}y_{t, j_0})\\
&=\sum\limits_{k\neq j_0, j_0^*}z_{s,k}z_{t,k^*}=\alpha^2 \delta_{s,t^*}
\end{flalign*}
This translates to $Z^tH_{2d-1}Z=\alpha^2H_{2d-1}$. Similarly, for the relation $(c_s, c_t)=\alpha^2\cdot \delta_{s,t^*}$ with $s, t\neq j_0, j_0^*$, we obtain $\sum_{k\neq i_0, i_0^*}z_{k,s}z_{k^*,t}=\alpha^2\delta_{s,t^*}$, which is equivalent to $ZH_{2d-1}Z^t=\alpha^2H_{2d-1}$. The remaining bilinear form identities, i.e., $(r_{i_0^*}, r_s)=0$ for $s\neq i_0$, $(c_{j_0^*}, c_t)=0$ for $t\neq j_0$ are automatically satisfied. We omit these routine verifications for brevity.

For the spin condition, consider $U=\{s_1, s_2, \cdots , s_{d-1}\}\subset [1, 2d+1]\setminus\{i_0, i_0^*\}$, and $U'=\{t_1, t_2, \cdots , t_{d-1}\}\subset [1, 2d+1]\setminus\{j_0, j_0^*\}$ of cardinality $d-1$, where $[U:U'](Z)$ ranges over all the $(d-1)$ minors of the matrix $Z$. Set $V:=U\cup\{i_0\}, V'=U'\cup\{j_0\}$. By Sylvester's determinant identity and $y_{i_0, j_0}=1$, we obtain:
\begin{equation}
[V:V'](Y)=
\left(\begin{array}{ccc}
  y_{s_1,t_1}-y_{s_1,j_0}y_{i_0,t_1}   & \cdots & y_{s_1,t_{d-1}}-y_{s_1,j_0}y_{i_0, t_{d-1}}\\
   \vdots  & \ddots & \vdots\\
   y_{s_{d-1},t_1}-y_{s_{d-1},j_0}y_{i_0,t_1}   & \cdots & y_{s_{d-1},t_{d-1}}-y_{s_{d-1},j_0}y_{i_0, t_{d-1}}
\end{array}
\right)=[U:U'](Z).
\end{equation}
Observe that $V^\bot=U^{\bot}\cup\{i_0\}$, ${V'}^\bot={U'}^{\bot}\cup\{i_0\}$, which immediately gives $[U^\bot: {U'}^\bot](Z)=[V^\bot:{V'}^\bot](Y)$. Moreover, the signature terms satisfy 
\begin{flalign*}
\sgn(\sigma_V)\sgn(\sigma_{V'})&=(-1)^{\sum V+\sum V'}=(-1)^{\sum U+\sum U'+i_0+j_0}\\
&=(-1)^{i_0+j_0}\sgn(\sigma_U)\sgn(\sigma_{U'}).     
\end{flalign*}
Consequently, the relations in $J_d^\pm$ translate to
\begin{equation}
[U^\bot: {U'}^\bot](Z)\mp (-1)^{d+i_0+j_0}\sgn(\sigma_U)\sgn(\sigma_{U'})\alpha [U:U'](Z).
\end{equation}

By setting $\epsilon=(-1)^{i_0+j_0+1}$, we have thus established that the affine chart $D_+(y_{i_0, j_0})$ with $i_0\neq i_0^*, j_0\neq j_0^*$ is the closed subscheme of
\begin{equation}\label{eq 317}
\frac{\calO_F[Z,\alpha,x_{i_0, j_0}, T_1,\cdots, T_\ell]}{(Z^tHZ-\alpha^2H,\; ZHZ^t-\alpha^2H,\pi-\alpha x_{i_0, j_0}T_1\cdots T_\ell)+J_{d-1}^{\pm\epsilon}}\times \mathbb{A}_{\calO_F}^{4d-2}   
\end{equation}
where $Z$ is the square matrix of size $2d-1$, and 
\[
\mathbb{A}_{\calO_F}^{4d-2}=\Spec \calO_F[y_{i_0,t}, y_{s,j_0}]_{s\neq i_0, i_0^*, t\neq j_0, j_0^*}
\]
is the corresponding affine space.
By sending $\alpha\mapsto T_0, x_{i_0, j_0}\mapsto T_{\ell+1}$, we obtain that (\ref{eq 317}) is isomorphic to $\calR(d-1,\ell+1)^\mp\times \mathbb{A}_{\calO_F}^{4d-2}$. Hence we obtain a closed immersion
\[
D_+(y_{i_0,j_0}) \hookrightarrow \calU(d-1,\ell+1)^{\pm\epsilon}\times \mathbb{A}_{\calO_F}^{4d-2}.
\]
By Proposition~\ref{prop flat R(d,l)}, the target is integral of dimension
\[
(d-1)(2d-1)+(\ell+1)+1+(4d-2)=d(2d+1)+\ell+1.
\]
Since $D_+(y_{i_0,j_0})$ is an open subscheme of
$\Bl_{(X,T_0)}(\calU(d,\ell))$, it has the same dimension as $\calU(d,\ell)$, namely
$d(2d+1)+\ell+1$. Therefore the above closed immersion is an isomorphism. \end{proof}



We next analyze the chart $D_+(\alpha)$.

\begin{lemma}\label{lem:alpha-chart-semistable}
The affine chart $D_+(\alpha)$ is
semi-stable over $\calO_F$. More precisely, $D_+(\alpha)$ is isomorphic to
\[
C_d^{\pm}\times_{\calO_F}
\Spec \calO_F[T_0,\ldots,T_\ell]/(\pi-T_0T_1\cdots T_\ell),
\]
where $C_d^{\pm}$ is a connected component of the split orthogonal group over $\calO_F$. In particular, for $\ell=0$, the chart
$D_+(\alpha)$ is smooth over $\calO_F$.
\end{lemma}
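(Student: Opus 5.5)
In the chart $D_+(\alpha)$ we have $\alpha=1$, so by \eqref{eq wedge} we get $X=T_0Y$ with $Y=(y_{i,j})$. I will first write down the local ring of this chart explicitly and then identify it with the product in the statement.

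\textbf{Step 1: translate the relations.} Substituting $X=T_0Y$, the relations $X^tHX-T_0^2H$ and $XHX^t-T_0^2H$ become $T_0^2(Y^tHY-H)$ and $T_0^2(YHY^t-H)$. The relation in $J_d^\pm$ becomes
\[
T_0^{d}\bigl([U^\bot:{U'}^\bot](Y)\mp(-1)^d\sgn(\sigma_U)\sgn(\sigma_{U'})[U:U'](Y)\bigr),
\]
since $|U^\bot|=d+1$ and $|U|=d$. The relation $\pi-T_0T_1\cdots T_\ell$ is unchanged.

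\textbf{Step 2: remove the powers of $T_0$.} The chart of a blow-up is the affine blow-up algebra, i.e. the subring $\calR(d,\ell)[X/T_0]\subset\calR(d,\ell)[T_0^{-1}]$. Since $\calR(d,\ell)$ is an integral domain (Proposition~\ref{prop flat R(d,l)}), $T_0$ is a nonzerodivisor in this chart. Hence the $T_0$-powers can be cancelled, and the chart is a quotient of
\[
B:=\frac{\calO_F[Y,T_0,\ldots,T_\ell]}{(Y^tHY-H,\ YHY^t-H,\ \pi-T_0\cdots T_\ell)+\mathcal J_d^\pm}.
\]
There is therefore a surjection $B\twoheadrightarrow\calO(D_+(\alpha))$.

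\textbf{Step 3: identify $B$ as a product.} As in the proof of Lemma~\ref{lemmaRingA}, which relies on \cite[Lemma 4.3 (3)]{Yang25}, the ideal $(Y^tHY-H,\ YHY^t-H)+\mathcal J_d^\pm$ equals $(Y^tHY-H,\ YHY^t-H,\ \det Y\mp(-1)^d)$ up to the fixed sign. This is the ideal of a connected component $C_d^\pm$ of the split orthogonal group $\mathrm O(H_{2d+1})$ over $\calO_F$. This identification was stated over $k$, but the same argument via Laplace expansion works over $\mathbb Z[1/2]$ and hence over $\calO_F$, because $p>2$. The component is $\mathrm{SO}_{2d+1}$ or its translate by $-1$, so it is smooth over $\calO_F$. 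Consequently
\[
B\simeq \calO(C_d^\pm)\otimes_{\calO_F}\calO_F[T_0,\ldots,T_\ell]/(\pi-T_0\cdots T_\ell).
\]
This ring is flat and integral, and it is semi-stable: it is smooth over $\Spec\calO_F[T_0,\dots,T_\ell]/(\pi-T_0\cdots T_\ell)$, since the $T_r$ already give the required coordinates. In particular $\dim B=d(2d+1)+\ell+1$.

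\textbf{Step 4: the surjection is an isomorphism.} This is the main point to check, namely that no extra relations appear in the chart. The chart is an open subscheme of the blow-up of the integral scheme $\calU(d,\ell)$, so it is integral of dimension $d(2d+1)+\ell+1$. We thus have a surjection from the integral ring $B$ onto an integral ring of the same finite dimension. Its kernel is a prime ideal $\mathfrak p$ with $\dim B/\mathfrak p=\dim B$, and since $B$ is a domain this forces $\mathfrak p=0$. So the surjection is an isomorphism, exactly as in the end of the proof of Lemma~\ref{lemma chart y_{i,j}}.

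For $\ell=0$ the relation reads $\pi=T_0$, so $T_0$ is eliminated and $D_+(\alpha)\simeq C_d^\pm$, which is smooth over $\calO_F$.
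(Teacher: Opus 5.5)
Your proposal is correct and follows the paper's proof. Both arguments substitute $X=T_0Y$ on $D_+(\alpha)$, identify the resulting ring with $C_d^\pm\times_{\calO_F}\Spec\calO_F[T_0,\ldots,T_\ell]/(\pi-T_0\cdots T_\ell)$ via the determinant description of the spin relations from Lemma~\ref{lemmaRingA}, and then upgrade the closed immersion to an isomorphism by comparing integral schemes of the same dimension. Your Step~2 usefully spells out the cancellation of the powers of $T_0$ (using that $T_0$ is a nonzerodivisor), which the paper leaves implicit. One small slip: the common factor in the spin relations is $T_0^{d+1}$, not $T_0^{d}$, since both terms have degree $d+1$ in $(X,T_0)$; this does not affect the argument.
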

\begin{proof}
Consider the affine chart $D_+(\alpha)$, and assume $\alpha=1$. Then
\[
x_{i,j}=T_0y_{i,j}
\qquad (1\le i,j\le 2d+1).
\]
Substituting this into the defining equations of $\calU(d,\ell)$, the relations
\[
X^tHX-T_0^2H=0,\qquad XHX^t-T_0^2H=0
\]
become
\begin{equation}
Y^tHY-H=0,\qquad YHY^t-H=0.
\end{equation}
Similarly, the relations in $J_d^\pm$ become
\begin{equation}
[U^\bot:{U'}^\bot](Y)
\mp
(-1)^d \sgn(\sigma_U)\sgn(\sigma_{U'}) [U:U'](Y),
\end{equation}
for all subsets $U,U'\subset [1,2d+1]$ of cardinality $d$.
As in Lemma~\ref{lemmaRingA}, these equations cut out a connected component
$C_d^\pm$ of the split orthogonal group over $\calO_F$, i.e. 
\[
\Spec
\frac{\calO_F[Y]}
{(Y^tHY-H,\; YHY^t-H,\; [U^\bot:{U'}^\bot](Y)
\mp
(-1)^d \sgn(\sigma_U)\sgn(\sigma_{U'}) [U:U'](Y))}\]
is isomorphic to 
\[
C_d^\pm := \Spec\frac{\calO_F[Y]}{(Y^tHY-H,\; YHY^t-H,\; \det(Y)-(-1)^d)}.
\]
In particular,
$C_d^\pm$ is smooth over $\calO_F$ of relative dimension $d(2d+1)$,
hence of total dimension $d(2d+1)+1$. The remaining equation is
\[
\pi=T_0T_1\cdots T_\ell.
\]
Hence, $ D_+(\alpha)$ is a closed subscheme of
\[
C_d^\pm\times_{\calO_F}
\Spec \calO_F[T_0,\ldots,T_\ell]/(\pi-T_0T_1\cdots T_\ell).
\]
The second factor is a standard semi-stable scheme over $\mathcal O_F$ of
relative dimension $\ell$. Since $C^\pm_d$ is smooth over $\mathcal O_F$ of
relative dimension $d(2d+1)$, the product is semi-stable over $\mathcal O_F$
of total dimension $d(2d+1)+\ell+1$.

On the other hand, $D_+(\alpha)$ is an open subscheme of
$\Bl_{\calI_0}(\calU(d,\ell))$, hence has the same dimension as $\calU(d,\ell)$, namely
$d(2d+1)+1+\ell$ (see Proposition \ref{prop flat R(d,l)}). Therefore the above closed immersion is an isomorphism, and
$D_+(\alpha)$ is semi-stable over $\calO_F$.

If $\ell=0$, then the second factor is
\[
\Spec \calO_F[T_0]/(\pi-T_0)\simeq \Spec \calO_F,
\]
so $D_+(\alpha)\simeq C_d^\pm$ is smooth over $\calO_F$.
\end{proof}

\quash{\begin{lemma}
Let $c_1: \Bl_{\calI_0}(\calU(d,0))\rightarrow \calU(d,0)$ be the blow-up along the ideal $\calI_0=(X,T_0)$. Then the affine chart $D_+(\alpha)$ is smooth with dimension $d(2d+1)+1$.
\end{lemma}
\begin{proof}
Consider the affine chart $D_+(\alpha)$, and assume $\alpha=1$. Then
\[
x_{i,j}=T_0y_{i,j}
\qquad (1\le i,j\le 2d+1).
\]
Substituting this into the defining equations of $\calU(d,0)$, the relations
\[
X^tHX-T_0^2H,\qquad XHX^t-T_0^2H,\qquad \pi-T_0
\]
translate to
\[
Y^tHY-H,\qquad YHY^t-H,\qquad \pi-T_0,
\]
and the relations in $J_d^\pm$ become
\[
[U^\bot:{U'}^\bot](Y)
\mp
(-1)^d \sgn(\sigma_U)\sgn(\sigma_{U'}) [U:U'](Y),
\]
for all subsets $U,U'\subset [1,2d+1]$ of cardinality $d$.

Since $\ell=0$, we have $\pi=T_0$, and therefore $D_+(\alpha)$ is a closed
subscheme of
\begin{equation}\label{eq:alpha-chart-ambient}
\Spec
\frac{\calO_F[Y]}
{(Y^tHY-H,\; YHY^t-H,\; [U^\bot:{U'}^\bot](Y)
\mp
(-1)^d \sgn(\sigma_U)\sgn(\sigma_{U'}) [U:U'](Y))}.
\end{equation}
As in Lemma \ref{lemmaRingA}, the latter ring is isomorphic to
\[\frac{\calO_F[Y]}{(Y^tHY-H,\; YHY^t-H,\; \det(Y)-(-1)^d)},
\]
which is the coordinate ring of a connected component of the split orthogonal
group over $\calO_F$. In particular, it is smooth over $\calO_F$ of dimension
$d(2d+1)+1$.

On the other hand, $D_+(\alpha)$ is an open subscheme of
$\Bl_{\calI_0}(\calU(d,0))$, hence has the same dimension as $\calU(d,0)$, namely
$d(2d+1)+1$. Therefore the above closed immersion is an isomorphism, and
$D_+(\alpha)$ is smooth of dimension $d(2d+1)+1$.

\end{proof}}

\subsection{Semi-stable models}\label{ProofProp3.3}
\begin{proof}[Proof of Proposition~\ref{prop:qs-affine-semistable}]
Since $\calU(i,0)\simeq \calX_0$, the first blow-up $c_1:\calX_1\rightarrow \calX_0$ is along $V(I_0)=V((X,\pi))$. From the discussion in \S \ref{section blow-ups}, the scheme $\calX_1$ is covered by open charts $D_+(\alpha)$ and $D_+(y_{i_0, j_0})$ with $i_0\neq i_0^*$, $j_0\neq j_0^*$. 

\begin{lemma}\label{lem:alpha-chart}
Let
\[
\rho:\Bl_{(X,T_0)}(\calU(d,\ell))\longrightarrow \calU(d,\ell)
\]
be the blow-up along the ideal $(X,T_0)$. Then for every $r\ge 1$, the strict
transform of $V(T_0,\wedge^r X)$ does not meet the chart $D_+(\alpha)$.
\end{lemma}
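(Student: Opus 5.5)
The plan is to show that on $D_+(\alpha)$ the total transform of $V(T_0,\wedge^rX)$ is contained in the exceptional divisor. The strict transform is the closure of the preimage of $V(T_0,\wedge^rX)\setminus V(X,T_0)$, so it then cannot meet this chart.

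By Lemma~\ref{lem:alpha-chart-semistable}, on $D_+(\alpha)$ we have $X=T_0Y$ with $Y$ satisfying $Y^tHY=H$. Hence $\det(Y)^2=1$ and $Y$ is invertible there. The exceptional divisor on this chart is cut out by the pullback of $(X,T_0)=T_0\cdot(Y,1)=(T_0)$, so it is $V(T_0)$.

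First consider $r\ge 1$ with $r\le 2d+1$. The pullback of the ideal $(T_0,\wedge^rX)$ is
\[
(T_0,\;T_0^r\wedge^rY)=(T_0),
\]
so the preimage of $V(T_0,\wedge^rX)$ in $D_+(\alpha)$ is exactly $V(T_0)$, the exceptional divisor. For $r>2d+1$ the ideal $\wedge^rX$ is zero and the same computation applies.

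It remains to pass from the total transform to the strict transform. Let $W=V(T_0,\wedge^rX)$ and $E=V(X,T_0)$. The strict transform $\widetilde W$ is the schematic closure of $\rho^{-1}(W\setminus E)$ in the blow-up, where $\rho^{-1}(W\setminus E)\cong W\setminus E$ since $\rho$ is an isomorphism away from $E$. We have $\rho^{-1}(W\setminus E)\cap D_+(\alpha)\subset \rho^{-1}(W)\cap D_+(\alpha)=V(T_0)\cap D_+(\alpha)$, which lies in the exceptional divisor and hence in $\rho^{-1}(E)$. It is therefore disjoint from the complement of $\rho^{-1}(E)$, so $\rho^{-1}(W\setminus E)\cap D_+(\alpha)=\emptyset$.

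Since $D_+(\alpha)$ is open, the closure of a set disjoint from $D_+(\alpha)$ is also disjoint from it. Thus $\widetilde W\cap D_+(\alpha)=\emptyset$.

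The step needing the most care is this closure argument. If $W\setminus E$ were empty, for instance when $r=1$ and $W=E$, the strict transform is empty and the statement is trivial. The argument uses no dimension or flatness input beyond the chart description already established in Lemma~\ref{lem:alpha-chart-semistable}.
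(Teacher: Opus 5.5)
Your proof is correct and is essentially the paper's argument: on $D_+(\alpha)$ one has $X=T_0Y$, so both $(T_0,\wedge^rX)$ and $(X,T_0)$ pull back to $(T_0)$. Hence the preimage of $V(T_0,\wedge^rX)\setminus V(X,T_0)$ in that chart is empty, and so the strict transform misses it; your additional remarks (invertibility of $Y$, the case $r>2d+1$, the explicit closure step) are correct but not needed.
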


\begin{proof}
On the chart $D_+(\alpha)$, one has $X=T_0\cdot Y$. Hence
\[
\wedge^r X=T_0^r \wedge^rY.
\]
It follows that
\[
(T_0,\wedge^r X)\mathcal{O}_{D_+(\alpha)}
=
(T_0,T_0^r\cdot \wedge^r Y)
=
(T_0).
\]
On the other hand,
\[
(X,T_0)\mathcal{O}_{D_+(\alpha)}=(T_0).
\]
Therefore the inverse images of the closed subschemes $V(T_0,\wedge^r X)$ and
$V(X,T_0)$ in $D_+(\alpha)$ coincide. Hence the inverse image of
\[
V(T_0,\wedge^r X)\setminus V(X,T_0)
\]
in $D_+(\alpha)$ is empty, and so the strict transform of $V(T_0,\wedge^r X)$
does not meet $D_+(\alpha)$.
\end{proof}

In particular, when $d=i, \ell=0$, we have the blow-up $\rho: \calX_1=\Bl_{I_0}(\calU(i,0))\rightarrow \calU(i,0)$. The strict transforms of $V(\pi, \wedge^r X)$ do not meet the chart $D_+(\alpha)$. It is enough to consider the sequence of blow-ups restricting to $D_+(y_{i_0, j_0})$.

The strict transform of $V(\pi, \wedge^{r+1} X)\subset \calU(i,0)$ is cut out by the ideal $(T_0, \wedge^r Z)$ in $D_{+}(y_{i_0, j_0})\simeq U(i-1,1)^{\pm\epsilon}\times \mathbb{A}_{\calO_F}^{4i-2}$. Indeed, for $s\leq r$, all $s\times s$-minors of the matrix $Z$ in (\ref{eq Z}) can be represented by the $(s+1)\times (s+1)$-minors of $X$, and $\pi=T_0\cdot x_{i_0, j_0}$. Thus, 
the strict transform of $V(\pi, \wedge^{r+1} X)$ is contained in $V((T_0, \wedge^r Z))\subset D_{+}(y_{i_0, j_0})$. The scheme $V(\pi, \wedge^{r+1} X)$ is isomorphic to 
\[
\Spec k[X]/(X^tH_{2d+1}X, XH_{2d+1}X^t, \wedge^{r+1}X),
\]
which is integral of dimension $r(4i-2r+1)$ (see Proposition \ref{prop:special-fiber-schubert-centers} (2)). On the other hand, the scheme
\[
\mathbb{A}_{\calO_F}^{4i-2}\times \calU(i-1,1)^{\pm\epsilon}/(\wedge^r Z, T_0)\simeq
\mathbb{A}_{\calO_F}^{4i-2}\times \Spec k[Z, T_1]/(ZH_{2d-1}Z^t, Z^tH_{2d-1}Z, \wedge^r Z),
\]
which is integral of dimension $(r-1)(4(i-1)-2(r-1)+1)+(4i-2)+1$. Therefore, the above two schemes are integral with the same dimension, so that the strict transform of $V(\pi, \wedge^{r+1} X)$ is equal to $V((T_0, \wedge^r Z))$. Thus, the blow-up sequence in (\ref{quasisplit.seq. of blow-ups})
\[
\mathcal{X}_0 \xleftarrow{\,c_1\,} \mathcal{X}_1 \xleftarrow{\,c_2\,} \cdots \xleftarrow{\,c_i\,} \mathcal{X}_i,
\]
reduces to
\begin{equation}
\mathcal{Y}_0 \xleftarrow{\,c_1'\,} \mathcal{Y}_1 \xleftarrow{\,c_2'\,} \cdots \xleftarrow{\,c_{i-1}'\,} \mathcal{Y}_{i-1},    
\end{equation}
where $\mathcal{Y}_0=\calU(i-1,1)^{\pm\epsilon}$, $c_1'$ is the blow-up along $V(T_0, Z)$, and for $2\leq \ell\leq i-1$, the morphism
$c'_{\ell}\colon \mathcal{Y}_{\ell}\to \mathcal{Y}_{\ell-1}$ is the blow-up along the strict transform of
$V((T_0, \wedge^\ell Z))$ in $\mathcal{Y}_{\ell-1}$. By induction on $i$, after performing $i$ successive blow-ups of $\calX_0$, 
each affine chart in $\calX_i$ is either semi-stable, coming from an
$\alpha$-chart as in Lemma~\ref{lem:alpha-chart-semistable}, or isomorphic to
\begin{equation}
\mathbb{A}_{\calO_F}^{2i^2}\times \calU(0,i)^{\epsilon'}\simeq
\mathbb{A}_{\calO_F}^{2i^2}\times \Spec \calO_F[T_0, T_1,\cdots ,T_i]/(\pi-T_0\cdots T_i)
\end{equation}
where $\epsilon'\in\{+, -\}$ is the initial sign. The right-hand side is semi-stable over $\calO_F$. This finishes the proof of
Proposition~\ref{prop:qs-affine-semistable} and therefore of
Theorem~\ref{thm:qs-semistable-resolution}.
\end{proof}

\section{Semi-stable resolution for split orthogonal local models}\label{s.s. resolution 2}

In this section, we show that a suitable sequence of blow-ups of the
split orthogonal local model $\RM_{i,\CO_F}^\pm$, along the Schubert strata in the special fiber, yields a semi-stable model. 

\subsection{Split orthogonal local model.} We consider the (pseudo-)maximal parahoric case. As in the quasi-split case, we may assume $I=\{i\}$ with $1\leq i\leq \lfloor \frac{n}{2}\rfloor$ by Theorem~\ref{thm:max-schubert}(2). The spin local model $\M_i^\pm$ is an irreducible projective scheme over $\calO_{F_0}$. Set
\[
\M^\pm_{i,\calO_F}:=\M^\pm_i\otimes_{\calO_{F_0}}\calO_F.
\]
For each integer $\ell$ with $0\leq \ell\leq i$, let $S_\ell$ be the Schubert variety of the special fiber $\M_{i,\calO_F}^\pm\otimes_{\calO_F} k\simeq \M_{i}^\pm\otimes_{\calO_{F_0}} k$. By Theorem~\ref{thm:max-schubert}(3), we have a chain of Schubert varieties:
\[
\begin{tikzcd}[column sep=1em, row sep=0.6em]
&  S_i\\
S_0\subset  S_1\subset  \cdots \subset S_{i-1} \arrow[ru, phantom,sloped, "\subset"near end]\arrow[rd, phantom,sloped, "\subset"near end] &  \\
& S_i',
\end{tikzcd}
\]
where $\overline{\M_i^\pm(\ell)}=S_\ell$ for $0\leq \ell\leq i-1$, and $\overline{\M_i^\pm(i)}=S_i\cup S_i'$. 

\begin{thm}\label{thm:split-semistable-resolution}
Assume $I=\{i\}$ with $1\le i\le \lfloor \frac n2\rfloor$. Let
\[
\M^\pm_{i,\calO_F}=Z_0 \xleftarrow{\,b_1\,} Z_1 \xleftarrow{\,b_2\,} \cdots \xleftarrow{\,b_i\,} Z_i
\]
be the sequence of blow-ups such that $b_1$ is the blow-up of $Z_0=\M^\pm_{i,\calO_F}$ along
$S_0$, and for $2 \leq \ell \leq i$, the morphism
\[
b_{\ell}\colon Z_{\ell}\to Z_{\ell-1}
\]
is the blow-up along the strict transform of $S_{\ell-1}$ in $Z_{\ell-1}$.
Then $Z_i$ is semi-stable over $\mathcal{O}_F$.
\end{thm}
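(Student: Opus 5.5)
Our plan is to follow the structure of the quasi-split case in \S\ref{sec-qsorth}. By Theorem~\ref{thm:affine-chart}(1) the $\calG^\circ$-translates of $U_i^\pm$ cover $\M^\pm_{i,\calO_F}$. Blow-ups along $\calG^\circ$-stable centers commute with these translates, and with the smooth factor $\mathbb{A}^{\frac{(n-2i)(n+2i-1)}{2}}$. By Proposition~\ref{prop:special-fiber-schubert-centers}(1), $U_i^\pm\cap S_\ell$ is cut out by $I_\ell=(\pi,\wedge^{\ell+1}X)$ for $\ell<i$. So it suffices to prove an affine analogue of Proposition~\ref{prop:qs-affine-semistable} for $\Spec R_i^\pm\otimes_{\calO_{F_0}}\calO_F$: successively blow up along $V(I_0),\dots,V(I_{i-1})$, taking strict transforms, and show the result is semi-stable. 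Note that $S_i,S_i'$ are never used as centers; only $i$ blow-ups occur.

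Over $\calO_F$ we have $\pi_0=-\pi^2$, so the relations read $X^tHX=\pi^2H$ and $XHX^t=\pi^2H$. This suggests introducing auxiliary rings
\[
\calR'(d,\ell)^\pm=\calO_F[X,T_0,\dots,T_\ell]/\bigl(X^tH_{2d}X-T_0^2H_{2d},\ XH_{2d}X^t-T_0^2H_{2d},\ \pi-T_0\cdots T_\ell\bigr)+J'^\pm_d ,
\]
with $X$ of size $2d$. Here $J'^\pm_d$ consists of the spin relations of Theorem~\ref{thm:affine-chart}(1), with $\pi_0$ replaced by $-T_0^2$ in the $(i+1)$-minor relations and the $i$-minor relations kept. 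We have $\calR'(i,0)\simeq R_i^\pm\otimes\calO_F$.

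The first step is the analogue of Proposition~\ref{prop flat R(d,l)}: $\calR'(d,\ell)$ is flat and integral of dimension $d(2d-1)+\ell+1$. We would argue exactly as there. Decompose the special fiber into the components $\mathscr I_r$. The component $r=0$ is $k[X]/(X^tHX,XHX^t,\wedge^{d+1}X)+I_\mp$, which is integral by Proposition~\ref{prop:special-fiber-schubert-centers}(1) and \cite{Yang25}. The components $r\ge1$ reduce to the analogue of Lemma~\ref{lemmaRingA}: over $T_0\neq 0$, setting $Y=T_0^{-1}X$ gives a connected component of the split orthogonal group $\mathrm O_{2d}$, cut out by $\det Y=\pm1$. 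Flatness then follows from \cite[Lemma 2]{Goertz05}, using explicit diagonal lifts (e.g.\ $\widetilde X=\diag(1,\dots,1,\pi^2,\dots)$ suitably arranged). A subtlety is that for $d$ in the split case the top stratum has two components, so $\mathscr I_0$ might split into two primes. If so, we would check that the spin relations $I_\mp$ select exactly one, which is how $R^\pm_{i,k}$ becomes reduced with the right components.

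Next we blow up $\calU'(d,\ell)$ along $(X,T_0)$. The chart $D_+(\alpha)$ is $C'^\pm_d\times\Spec\calO_F[T_0,\dots,T_\ell]/(\pi-T_0\cdots T_\ell)$, which is semi-stable. The proof of Lemma~\ref{lem:alpha-chart} carries over verbatim, so strict transforms of the later centers avoid this chart. For the charts $D_+(y_{i_0,j_0})$, all indices now satisfy $i_0\neq i_0^*$ because the size $2d$ is even, so no analogue of Lemma~\ref{lem:relevant-charts} is needed. The computations \eqref{eq 312}--\eqref{eq 314}, the substitution $z_{s,t}=y_{s,t}-y_{i_0,t}y_{s,j_0}$, and Sylvester's identity identify the chart with a closed subscheme of $\calU'(d-1,\ell+1)^{\pm\epsilon}\times\mathbb A^{4d-4}$. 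Equality then follows from integrality and a dimension count, using $(d-1)(2d-3)+\ell+2+4d-4=d(2d-1)+\ell+1$.

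We expect the main obstacle to be the spin relations under this reduction. In the split case there are two families of relations, with $i$- and $(i+1)$-minors, and with different signs and powers of $T_0$. We must check that Sylvester's identity sends both families to the corresponding families for $Z$ with a consistent sign $\epsilon$. For the $(i+1)$-minors, complements of $V=U\cup\{i_0\}$ lose $i_0^*$, and the power of $T_0$ must match. Once this holds, the identification of strict transforms with $V(T_0,\wedge^rZ)$ goes through. It uses integrality of both sides, with the dimension $\ell(4i-2\ell-1)$ from Proposition~\ref{prop:special-fiber-schubert-centers}(1). Induction on $d$ then terminates at $\calU'(0,i)\times\mathbb A^{N}\simeq\Spec\calO_F[T_0,\dots,T_i]/(\pi-T_0\cdots T_i)\times\mathbb A^N$, which is semi-stable, and this proves the theorem.
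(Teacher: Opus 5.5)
Your route is the paper's. You reduce to $\Spec R_i^\pm$ and introduce $\calU(d,\ell)^\pm$; your $J'^\pm_d$ is the paper's $J_d^\pm$ after relabelling $V\leftrightarrow V^\bot$. You then prove flatness and integrality in dimension $d(2d-1)+\ell+1$, identify $D_+(y_{i_0,j_0})\simeq\calU(d-1,\ell+1)^{\pm\epsilon}\times\mathbb A^{4d-4}$ and $D_+(\alpha)$ as an $\mathrm{O}_{2d}$-component times the standard semi-stable scheme, and induct.

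The flatness step, however, rests on a false claim, and your proposed remedy would fail. The ring $\calR(d,\ell)_k/\mathscr I_0\simeq R^\pm_{d,k}\otimes_k k[T_1,\dots,T_\ell]$ is \emph{not} a domain. Proposition~\ref{prop:special-fiber-schubert-centers}(1) only says $R^\pm_{i,k}$ is reduced; integrality is asserted there only for the quotients by $(\pi,\wedge^{\ell+1}X)$ with $\ell<i$. Nor do the spin relations $I_\mp$ single out one component. The chart contains the worst point $\ast\in S_0\subset S_i\cap S_i'$, so it meets both top-dimensional components $S_i,S_i'$, the same ones you note are never used as centers. Already for $d=1$, the relations $X^tHX=XHX^t=0$ and $\det X=0$ force every product of two distinct entries of $X$ to vanish. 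The linear spin relations then leave a union of two coordinate lines, e.g.\ the $x_{11}$- and $x_{22}$-axes for one choice of sign. This is exactly where the paper departs from the quasi-split argument. It writes $\mathscr I_0=\mathfrak p_1\cap\mathfrak p_2$ with two primes, via Theorem~\ref{thm:max-schubert}(3), and then $\mathscr I=\mathfrak p_1\cap\mathfrak p_2\cap\bigcap_{r\ge1}\mathscr I_r$.

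The consequence for your argument is concrete. \cite[Lemma 2]{Goertz05} must be applied to \emph{every} irreducible component of the special fibre, each with a liftable point lying on that component only. So both $V(\mathfrak p_1)$ and $V(\mathfrak p_2)$, each of dimension $d(2d-1)+\ell$, need such a point. A single diagonal lift reaches only one of them. For $d=1$ and the sign above, $\diag(1,T_0^2)$ and $\diag(T_0^2,1)$ do the job.

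The same issue recurs in your analogue of Lemma~\ref{lemmaRingA}. Flatness of $A$ over $k[T_0]$, which is what gives primality of $\mathscr I_r$ for $r\ge1$, again requires lifting both components of the fibre at $T_0=0$, and that fibre is again $R^\pm_{d,k}$.

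Once both components are handled, integrality of $\calR(d,\ell)$ still follows from integrality of the generic fibre. Irreducibility of the special fibre is never needed: only integrality is used in the chart identification and the strict-transform comparison. From that point on, your plan coincides with the paper's.
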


By Theorem~\ref{thm:affine-chart}, there exists an open affine subscheme $U_i^{\pm}\subset \M_{i,\calO_F}^{\pm}$ containing a worst point $\ast \in S_0$ such that
\[
       U_i^\pm\simeq \BA^{\frac{(n-2i)(n+2i-1)}{2}}_{\CO_{F}}\times_{\CO_{F}}\Spec R_i \text{\ with\ } R_i=\CO_{F}[X]/I_i^\pm,
\]
where the matrix $X$ has size $2i\times 2i$, and the $\calG_i^\circ$-translates of
$U_i^{\pm}$ cover $M_{i, \calO_F}^{\pm}$.

By Theorem~\ref{thm:affine-chart} and
Proposition~\ref{prop:special-fiber-schubert-centers}, to prove Theorem~\ref{thm:split-semistable-resolution} it suffices to establish the following affine local statement.

\begin{prop}\label{prop:split-affine-semistable}
Let
\[
\mathcal{X}_0:=\operatorname{Spec} R_i,
\qquad
I_{\ell}:=(\pi,\wedge^{\ell+1}X)\subset R_i
\qquad (0\leq \ell<i).
\]
Consider the sequence of blow-ups
\begin{equation}\label{split.seq. of blow-ups}
\mathcal{X}_0 \xleftarrow{\,c_1\,} \mathcal{X}_1 \xleftarrow{\,c_2\,} \cdots \xleftarrow{\,c_i\,} \mathcal{X}_i,    
\end{equation}
where $c_1$ is the blow-up along $V(I_0)$, and for $2\leq \ell\leq i$, the morphism
$c_{\ell}: \mathcal{X}_{\ell}\to \mathcal{X}_{\ell-1}$ is the blow-up along the strict transform of
$V(I_{\ell-1})$ in $\mathcal{X}_{\ell-1}$.
Then $\mathcal{X}_i$ is semi-stable over $\mathcal{O}_F$.
\end{prop}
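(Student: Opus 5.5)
I will mimic the inductive strategy of \S\ref{sec-qsorth}, adapted to the split equations of Theorem~\ref{thm:affine-chart}(1). The first step is to introduce auxiliary rings
\[
\calR(d,\ell)^\pm:=\frac{\calO_F[X,T_0,\dots,T_\ell]}{(X^tH_{2d}X+T_0^2H_{2d},\;XH_{2d}X^t+T_0^2H_{2d},\;\pi-T_0T_1\cdots T_\ell)+J_d^\pm(T_0)},
\]
where $X$ is a $2d\times 2d$ matrix. Here $J_d^\pm(T_0)$ is obtained from $J_d^\pm$ by substituting $T_0^2$ for $\pi_0$ in the relations between the $(d+1)$-minors and their complements; up to a unit, $\pi_0=-\pi^2$, so the sign can be absorbed. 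Since $\pi^2=-\pi_0$, we have $\calR(i,0)^\pm\simeq R_i$ after setting $T_0=\pi$.

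The second step is the analogue of Proposition~\ref{prop flat R(d,l)}: $\calR(d,\ell)$ is $\calO_F$-flat and integral of dimension $d(2d-1)+\ell+1$. The proof should copy the earlier argument. The special fiber decomposes into the components $T_r=0$. The component $T_0=0$ is (Schubert ring)$\times\mathbb A^\ell$, which is integral by Proposition~\ref{prop:special-fiber-schubert-centers}(1). One subtlety is that the split special fiber has two top components, but the spin relations $I_\mp$ select one, so $R^\pm_{i,k}/(\wedge^{i}X)$-type quotients remain integral. The other components are controlled by an analogue of Lemma~\ref{lemmaRingA}: inverting $T_0$ and setting $Y=T_0^{-1}X$ gives a connected component of $\mathrm{GO}/\mathrm{O}_{2d}$. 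Topological flatness then follows from explicit lifts and \cite[Lemma 2]{Goertz05}.

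The third step is to blow up $\calU(d,\ell)$ along $(X,T_0)$.
\begin{itemize}
\item On $D_+(\alpha)$, the blow-up is isomorphic to $C_d^\pm\times\Spec\calO_F[T_0,\dots,T_\ell]/(\pi-\prod T_j)$, which is semi-stable. By the argument of Lemma~\ref{lem:alpha-chart}, strict transforms of $V(T_0,\wedge^rX)$ miss this chart.
\item In the split case every index $i_0$ satisfies $i_0\ne i_0^*$, so no analogue of Lemma~\ref{lem:relevant-charts} is needed; all charts $D_+(y_{i_0,j_0})$ are relevant.
\item On $D_+(y_{i_0,j_0})$, I will solve for row $i_0^*$ and column $j_0^*$ exactly as in \eqref{eq 312}--\eqref{eq 314}. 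I will then set $z_{s,t}=y_{s,t}-y_{i_0,t}y_{s,j_0}$, which gives a $(2d-2)\times(2d-2)$ matrix $Z$ satisfying $Z^tHZ=-\alpha^2H$.
\item Sylvester's identity turns the $(d+1)$-minor and $d$-minor spin relations of $Y$ into the corresponding relations for $Z$, possibly with the sign twisted by $(-1)^{i_0+j_0}$.
\end{itemize}
This should yield a closed immersion $D_+(y_{i_0,j_0})\hookrightarrow \calU(d-1,\ell+1)^{\pm\epsilon}\times\mathbb A^{4d-4}$. Both sides are integral of dimension $d(2d-1)+\ell+1$, so the immersion is an isomorphism.

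The final step is the induction. The strict transform of $V(\pi,\wedge^{r+1}X)$ in $D_+(y_{i_0,j_0})$ equals $V(T_0,\wedge^rZ)$. I will prove this by containment via minors, then an integrality and dimension comparison using the dimension $\ell(4i-2\ell-1)$ from Proposition~\ref{prop:special-fiber-schubert-centers}(1). After $i$ steps, the remaining charts are $\mathbb A^N\times\calU(0,i)=\mathbb A^N\times\Spec\calO_F[T_0,\dots,T_i]/(\pi-T_0\cdots T_i)$, which is semi-stable.

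I expect the main obstacle to be the spin relations in the split case, which involve both $i$-minors (with no $\pi$ factor) and $(i+1)$-minors (with a $\pi_0$ factor). I must check that both families transform under Sylvester's identity into exactly the relations $J_{d-1}^{\pm\epsilon}(\alpha)$, including at the bottom size $d=1$, and that integrality of the special fiber still holds despite the two-component phenomenon. If the direct check is awkward, my fallback is to show that the chart's ideal contains $J_{d-1}$ and rely solely on the dimension/integrality comparison.
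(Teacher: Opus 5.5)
Your plan follows the paper's proof. You use the auxiliary rings $\calR(d,\ell)^\pm$ with both families of spin relations, the charts $D_+(\alpha)$ and $D_+(y_{i_0,j_0})\simeq\calU(d-1,\ell+1)^{\pm\epsilon}\times\mathbb A^{4d-4}$, and the induction that identifies strict transforms by integrality and dimension. However, one step in your flatness/integrality proposition is false.

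\textbf{The false claim.} You claim that the locus $T_0=0$ of the special fiber, namely $\Spec R^\pm_{d,k}\times\mathbb A^\ell$, is integral because ``the spin relations $I_\mp$ select one'' of the two top components. They do not.
\begin{itemize}
\item Proposition~\ref{prop:special-fiber-schubert-centers}(1) only says that $R^\pm_{d,k}$ is reduced. Its integrality statements concern the quotients by $\wedge^{\ell+1}X$ with $\ell<d$.
\item Theorem~\ref{thm:max-schubert}(3) shows that the special fiber of the \emph{spin} model $\RM^\pm_d$ still has two irreducible components $S_d$ and $S_d'$, both passing through the worst point. The spin condition picks a connected component of the generic fiber, not a component of the special fiber.
\item Already for $d=1$ the spin relations force $X$ to be diagonal (or antidiagonal). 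Then $\calR(1,0)^\pm\simeq\calO_F[a,c]/(ac-\pi^2)$, whose special fiber $\{ac=0\}$ is reducible.
\end{itemize}
So the special fiber of $\calR(d,\ell)^\pm$ has $\ell+2$ irreducible components, not $\ell+1$. This is exactly what the paper records by writing $\mathscr I_0=\mathfrak p_1\cap\mathfrak p_2$.

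\textbf{What survives and what breaks.} Your reducedness argument survives: proving $\mathscr I=\bigcap_r\mathscr I_r$ only needs $\mathscr I_r$ prime for $r\ge 1$ and $\mathscr I_0$ radical. Your later strict-transform step also survives, since it only uses the integral loci $\{\wedge^{r+1}X=0\}$ with $r<d$.

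What fails is topological flatness. Choosing one lift in each set $\{T_r=0\}$ leaves one component of $\{T_0=0\}$ unchecked. The same problem occurs in your analogue of Lemma~\ref{lemmaRingA}, whose fiber over $T_0=0$ is again $\Spec R^\pm_{d,k}$. You deduce integrality of $\calR(d-1,\ell+1)^{\pm\epsilon}$ from flatness, and that integrality is what makes the closed immersion of $D_+(y_{i_0,j_0})$ an isomorphism. So this must be repaired.

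\textbf{How to fix it.} You can exhibit a lift through each of the two components. Alternatively, for $\calR(d,\ell)^\pm$, invert $u=T_1\cdots T_\ell$ and substitute $X=u^{-1}X'$. This identifies $\calR(d,\ell)^\pm[u^{-1}]$ with $\calR(d,0)^\pm\otimes_{\calO_F}\calO_F[T_1^{\pm1},\dots,T_\ell^{\pm1}]$. Here $\calR(d,0)^\pm$ is the chart of the flat spin local model $\RM^\pm_d$ (Theorem~\ref{thm:spin-flat}, with $n=2d$), and $\{u\neq 0\}$ is dense in both components of $\{T_0=0\}$.

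A minor point: your quadratic relation should read $X^tHX-T_0^2H$, since $\pi_0=-\pi^2$. This is harmless.
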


The proof of Proposition \ref{prop:split-affine-semistable} is similar to the proof of Proposition \ref{prop:qs-affine-semistable}. Indeed, we only need to focus on the ideal $J_i^\pm$ given by the spin condition, which is different from the quasi-split case. Similar to \S \ref{section:quasi-split-U(d,l)}, we study a family of auxiliary affine schemes $\calU(d,\ell)^\pm:=\Spec \calR(d,\ell)^\pm$, where
\[
\calR(d,\ell)^\pm :=
\frac{\calO_F[X,T_0,T_1,\ldots,T_\ell]}
{(X^tH_{2d}X-T_0^2H_{2d},
XH_{2d}X^t-T_0^2H_{2d},
\pi-T_0T_1\cdots T_\ell)+J_d^\pm}.
\]
Here $X$ is the square matrix of size $2d$, and $J_d^\pm$ is generated by the relations
\begin{flalign*}
&[U^\bot:{U'}^{\bot}](X)\mp\sgn(\sigma_U)\sgn(\sigma_{U'})[U:U'](X) \\
\text{\ and\ } & [V^\bot:{V'}^\bot](X)\mp\sgn(\sigma_V)\sgn(\sigma_{V'})\cdot T_0^2[V:{V'}](X)
    \end{flalign*} 
for all subsets $U,U'$ (resp. $V,V'$) of $[1,2d]$ of cardinality $d$ (resp. $d-1$). We omit the lower indices of $H_{2d}$ if there is no confusion. When the sign is fixed, we simply write $\calU(d,\ell)$ and $\calR(d,\ell)$. Observe that when $d=i,\ell=0$, we have 
\[
\calU(i,0)\simeq \Spec R_i;
\]

\begin{prop}
The ring $\mathcal{R}(d, \ell)$ is flat over $\mathcal{O}_F$ and integral of (total) dimension $d(2d - 1) + \ell + 1$.    
\end{prop}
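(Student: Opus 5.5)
We follow the proof of Proposition~\ref{prop flat R(d,l)} in the quasi-split case. Write $\mathcal{R}:=\calR(d,\ell)$ and
\[
\mathscr{I}_r:=(XHX^t-T_0^2H,\;X^tHX-T_0^2H,\;T_r)+J_d^\pm,\qquad 0\le r\le \ell .
\]
The plan has four steps.

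\textbf{Step 1: decompose the special fiber.} We show that the special fiber $\mathcal{R}_k$ is reduced with exactly $\ell+1$ irreducible components $V(\mathscr{I}_r)$, each of dimension $d(2d-1)+\ell$. The claim $\mathscr{I}=\bigcap_r\mathscr{I}_r$ follows verbatim from the earlier argument (peel off $T_0,T_1,\dots$ successively), once each $\mathscr{I}_r$ is known to be prime with $T_j\notin\mathscr{I}_r$ for $j\ne r$.

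\textbf{Step 2: primality of $\mathscr{I}_0$.} Setting $T_0=0$, the second family of spin relations becomes $[V^\perp:V'^\perp](X)=0$ for $|V|=d-1$. These are exactly the $(d+1)$-minors, since $|V^\perp|=d+1$. The first family gives $I_\mp$. Hence
\[
\mathcal{R}_k/\mathscr{I}_0\simeq \frac{k[X]}{(XHX^t,\;X^tHX,\;\wedge^{d+1}X)+I_\mp}\otimes_k k[T_1,\dots,T_\ell],
\]
and the first factor is $R^\pm_{d,k}$ from Proposition~\ref{prop:special-fiber-schubert-centers}(1). One subtlety: in the split case $R^\pm_{d,k}$ has two irreducible components $S_d,S_d'$. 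So we must check that imposing $I_\mp$ selects exactly one of them, making the ring integral of dimension $d(2d-1)$. I expect this follows from \cite{Yang25}: the spin condition separates the two top Schubert cells. If it does not, we would instead compute that the component count matches.

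\textbf{Step 3: primality of $\mathscr{I}_r$ for $r\ge1$ (the analogue of Lemma~\ref{lemmaRingA}).} Here we must show that
\[
A:=k[X,T_0]/\bigl((XHX^t-T_0^2H,\;X^tHX-T_0^2H)+J_d^\pm\bigr)
\]
is integral of dimension $d(2d-1)+1$. After inverting $T_0$ and putting $Y=T_0^{-1}X$, the relations become $Y\in \mathrm{O}_{2d}$ together with the spin relations. The latter say that $Y$ acts on $\wedge^d$ and $\wedge^{d-1}$ compatibly with Hodge duality with sign $\mp$, which cuts out one connected component $\det Y=\pm1$ (cf.\ \cite[Lemma 4.3]{Yang25}). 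This component is smooth and integral of dimension $d(2d-1)$. Flatness of $A$ over $k[T_0]$ at $(T_0)$ then follows from reducedness of the fiber $A/T_0$ (by Step 2) together with topological flatness via \cite[Lemma 2]{Goertz05}. For the latter we lift a point of the special fiber: take a diagonal $X$ of the form $\mathrm{diag}(1,\dots,1,0,\dots,0)$ with $d$ ones (pairing with $H$ adapted), which lifts as $\mathrm{diag}(1^{(d)},T_0^{2\,(d)})$ with the ordering chosen so that $XHX^t=T_0^2H$. The sign of the lift must be chosen to match $\pm$. Integrality of $A$ then follows from flatness.

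\textbf{Step 4: flatness of $\mathcal{R}$.} The generic fiber is $C_d^\pm\times\{T_1\cdots T_\ell\text{ units}\}$, which is integral and smooth of dimension $d(2d-1)+\ell$. We lift one point from each component of the special fiber:
\begin{itemize}
\item for $\mathscr{I}_0$, the diagonal point above with $T_0=\pi$ and all other $T_j=1$;
\item for $\mathscr{I}_r$ with $r\ge1$, the point $X=I$ (or $X=\mathrm{diag}$ adjusted to have the correct determinant sign) with $T_r=\pi$ and all other $T_j=1$.
\end{itemize}
\cite[Lemma 2]{Goertz05} then gives topological flatness. Combined with reducedness of the special fiber, this yields flatness, and integrality follows from the generic fiber.

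\textbf{Main obstacle.} I expect the hardest point to be Step 2: showing that $I_\mp$ cuts out exactly one component of the $\wedge^{d+1}$-locus, and that the result is reduced. This is the genuinely new input relative to the quasi-split case.
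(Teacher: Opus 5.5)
Your outline matches the paper's: decompose $\mathcal{R}_k$, peel off $T_0,T_1,\dots$, then use \cite[Lemma 2]{Goertz05} plus reducedness. But Step~2 rests on a false expectation, and Steps~3--4 inherit the error.

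The ideal $I_\mp$ is already part of $R^\pm_{d,k}$ as defined in Proposition~\ref{prop:special-fiber-schubert-centers}(1). That ring is the worst-point chart of the special fiber of the \emph{spin} local model. By Theorem~\ref{thm:max-schubert}(3), this special fiber still contains two top-dimensional Schubert cells, and both of their closures pass through the worst point. The spin condition separates the two connected components of the generic fiber, not these two cells. Concretely, on the rank-$d$ locus the column space and the row space of $X$ are maximal isotropic for $H$. The relations $I_\mp$ only fix the product of their two family signs, which leaves two of the four possibilities. Consequently:
\begin{itemize}
\item $\mathscr{I}_0=\mathfrak{p}_1\cap\mathfrak{p}_2$ is an intersection of two distinct primes, which is exactly what the paper's proof uses;
\item $\mathcal{R}_k/\mathscr{I}_0$ is not a domain;
\item $\Spec\mathcal{R}_k$ has $\ell+2$ irreducible components, not $\ell+1$.
\end{itemize}

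This breaks your topological-flatness steps as written. \cite[Lemma 2]{Goertz05} needs, for \emph{every} irreducible component of the special fiber, a liftable point lying on no other component. Your point $\mathrm{diag}(1^{(d)},0^{(d)})$ lies on only one of $V(\mathfrak{p}_1)$ and $V(\mathfrak{p}_2)$. This affects the fiber $A/T_0\simeq R^\pm_{d,k}$ in Step~3 and the locus $V(\mathscr{I}_0)$ in Step~4.

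The repair is short.
\begin{itemize}
\item The peeling argument for $\mathscr{I}=\bigcap_r\mathscr{I}_r$ never uses primality of $\mathscr{I}_0$, since membership in $\mathscr{I}_0$ only means $x=g+T_0f$. It needs only that $\mathscr{I}_r$ is prime for $r\ge 1$. So $\mathscr{I}=\mathfrak{p}_1\cap\mathfrak{p}_2\cap\bigcap_{r\ge1}\mathscr{I}_r$ is still radical.
\item You must also lift a point of the second component. Take $\mathrm{diag}(1^{(d-1)},0,1,0^{(d-1)})$, the conjugate of your point by the permutation matrix in $\mathrm{O}(H)$ exchanging $e_d$ and $e_{d+1}$. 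One checks directly that its lift $\mathrm{diag}(1^{(d-1)},T_0^2,1,T_0^{2(d-1)})$ (with $T_0=\pi$ in Step~4) satisfies the same relations as your lift, including $J_d^\pm$ for the same sign. Its column and row spaces both switch family, so it lies on the other component only.
\end{itemize}
With both lifts in Steps~3 and~4, your argument goes through.
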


\begin{proof}
    Write $\CR\coloneqq \CR(d,l)^\pm$. Then $\mathcal{R}_k \coloneqq \mathcal{R} \otimes k$ is isomorphic to
\[ \frac{k[X, T_0, T_1, \dots, T_ \ell]}{(XH X^t - T_0^2 H,\; X^t H X - T_0^2 H, \;T_0 T_1 \cdots T_ \ell)+ J_d^\pm}. \]
Define 
\begin{flalign*}
    \mathscr{I} &:= (XH X^t - T_0^2 H, \; X^t H X - T_0^2 H,\; T_0 T_1 \cdots T_ \ell) + J_d^\pm,\\ \mathscr{I}_{r} &:= \sI+(T_r), \text{\ for $ r \in \{0,\dots,  \ell\}$}.
\end{flalign*}
    By Theorem \ref{thm:max-schubert} (3), we have \begin{flalign*}
        \sI_0=\fp_1\cap \fp_2, \text{\ and\ $\fp_1, \fp_2$ are prime ideals.}
    \end{flalign*}   
    A similar proof of Proposition \ref{prop flat R(d,l)} implies that $\sI_r$ is a prime ideal for $r\in [1,\ell]$, and that \[ \sI=\cap_{r=0}^\ell\sI_r=\fp_1\cap\fp_2\cap(\cap_{r=1}^\ell\sI_r).\]
    The rest of the proof proceeds similarly as in Proposition \ref{prop flat R(d,l)}.
\end{proof}

\subsection{Semi-stable resolution} Consider the blow-up $\Bl_{\calI_0}(\calU(d,\ell))$ of $\calU(d,\ell)$ along the ideal $\calI_0= (X,T_0)=((x_{i,j})_{1\leq i,j\leq 2d},T_0)$. The blow-up $\Bl_{\calI_0}(\calU(d,\ell))$ is covered by the $(2d)^2+1$ affine charts $D_+(y_{i,j})$ and $D_+(\alpha)$,
where $y_{i,j}$ and $\alpha$  denote the homogeneous degree-one elements corresponding to $x_{i,j}$ and $T_0$, respectively. For $1\leq i_0\leq 2d$, set $i_0^*=2d+1-j$. Since $X$ is of size $2d$, we always have $i_0^*\neq i_0, j_0^*\neq j_0$.

\begin{lemma}\label{lemma64}
The blow-up $\Bl_{\calI_0}(\calU(d,\ell))$ is covered by the affine charts
$D_+(y_{i_0,j_0})$ for $1\leq i_0,j_0\leq 2d$, together with the chart $D_+(\alpha)$.
\begin{altenumerate}
    \item The affine chart $D_+(y_{i_0, j_0})$ is isomorphic to
\[
\calU(d-1,\ell+1)^{\pm\epsilon}\times \mathbb{A}_{\calO_F}^{4d-4},
\]
where $\epsilon=(-1)^{i_0+j_0}$.
    \item The affine chart $D_+(\alpha)$ is isomorphic to 
\[
\Spec\frac{\calO_F[Y]}{(Y^tH_{2d}Y-H_{2d},\; YH_{2d}Y^t-H_{2d},\; \det(Y)\mp 1)}\times_{\calO_F}
\Spec \frac{\calO_F[T_0,\ldots,T_\ell]}{(\pi-T_0T_1\cdots T_\ell)}.
\]
\end{altenumerate}
\end{lemma}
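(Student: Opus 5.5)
The plan is to mirror the proofs of Lemma~\ref{lem:relevant-charts}, Lemma~\ref{lemma chart y_{i,j}} and Lemma~\ref{lem:alpha-chart-semistable} in the split setting. The covering statement is immediate: $\Bl_{\calI_0}(\calU(d,\ell))$ is $\Proj$ of the Rees algebra of $(X,T_0)$, whose degree-one generators are the $y_{i,j}$ and $\alpha$. Since $2d$ is even, every index $i_0$ satisfies $i_0\neq i_0^\ast$. So, unlike the quasi-split case, no chart has to be discarded, and none of the reductions used in Lemma~\ref{lem:relevant-charts} is needed.

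For (1), work on $D_+(y_{i_0,j_0})$ with $y_{i_0,j_0}=1$, so that $X=x_{i_0,j_0}Y$ and $T_0=x_{i_0,j_0}\alpha$. The orthogonality relations become $Y^tHY=\alpha^2H$ and $YHY^t=\alpha^2H$, and $\pi=\alpha\, x_{i_0,j_0}T_1\cdots T_\ell$.

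\begin{enumerate}
\item Use the relations $(r_{i_0},r_s)=\alpha^2\delta_{i_0,s^\ast}$ and $(c_{j_0},c_t)=\alpha^2\delta_{j_0,t^\ast}$, exactly as in \eqref{eq 312}--\eqref{eq 314}, to solve for row $i_0^\ast$ and column $j_0^\ast$ in terms of $\alpha$ and the other entries.
\item Set $z_{s,t}=y_{s,t}-y_{i_0,t}y_{s,j_0}$ for $s\neq i_0,i_0^\ast$ and $t\neq j_0,j_0^\ast$. This gives a $(2d-2)\times(2d-2)$ matrix $Z$ with $Z^tHZ=\alpha^2H$ and $ZHZ^t=\alpha^2H$, computed as before.
\item The free variables $y_{i_0,t}$ and $y_{s,j_0}$ number $2(2d-2)=4d-4$.
\item Translate the two families of generators of $J_d^\pm$. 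Sylvester's identity gives $[U\cup\{i_0\}:U'\cup\{j_0\}](Y)=[U:U'](Z)$, and the complements correspond with a sign change $(-1)^{i_0+j_0}$. The weight in $T_0=x\alpha$ also has to be tracked.
\end{enumerate}

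The $d$-minor relations of $Y$ involving row $i_0$ and column $j_0$ become the $(d-1)$-minor relations in $Z$. These pair $[V^\bot:V'^\bot](Z)$, of size $d-1=(d-1)+0$, against $[V:V'](Z)$, of size $d-1$, with a factor $\alpha^2$. Here one must check carefully that the $T_0^2$-type relation of $J_d$ for $(d-1)$-subsets descends to the $T_0^2$-type relation of $J_{d-1}$ for $(d-2)$-subsets. The homogeneity in $x_{i_0,j_0}$ should make this work, since both sides scale by $x^{2d-\ldots}$ in the same way. Relations not containing row $i_0$ or column $j_0$ should follow from these together with the orthogonality. The outcome is a closed immersion $D_+(y_{i_0,j_0})\hookrightarrow \calU(d-1,\ell+1)^{\pm\epsilon}\times\mathbb{A}^{4d-4}_{\calO_F}$, via $\alpha\mapsto T_0$ and $x_{i_0,j_0}\mapsto T_{\ell+1}$.

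To finish (1), compare dimensions. By the preceding proposition the target is integral of dimension $(d-1)(2d-3)+\ell+2+4d-4=d(2d-1)+\ell+1$. The blow-up of the integral scheme $\calU(d,\ell)$ has this same dimension, so the closed immersion is an isomorphism, as in Lemma~\ref{lemma chart y_{i,j}}.

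For (2), set $\alpha=1$, so that $X=T_0Y$. The equations become $Y^tHY=H$, $YHY^t=H$, and $\pi=T_0\cdots T_\ell$. The $d$-minor relations become $[U^\bot:U'^\bot](Y)=\pm\sgn\sgn[U:U'](Y)$, which by \cite[Lemma 4.3]{Yang25} cut out the component $\det Y=\pm1$. The $T_0^2$-relations become identities on the orthogonal group, because they are Cramer-type identities for $Y^{-1}=HY^tH$ there. The same closed-immersion-plus-dimension argument as in Lemma~\ref{lem:alpha-chart-semistable} then concludes (2).

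I expect the main obstacle to be the sign and weight bookkeeping for the second family of spin relations under the Sylvester reduction in (1), in particular getting exactly $\epsilon=(-1)^{i_0+j_0}$.
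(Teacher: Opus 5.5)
Your proposal is correct and follows essentially the paper's route: substitute on each chart, pass to $Z=(y_{s,t}-y_{i_0,t}y_{s,j_0})$, translate the orthogonality relations and both spin families via Sylvester's identity (Jacobi's identity on $D_+(\alpha)$), count the $4d-4$ free variables, and conclude with the closed-immersion-plus-dimension argument of Lemma~\ref{lemma chart y_{i,j}}. The paper's terse ``analogous'' leaves that last step implicit, and it also makes your separate claim that relations avoiding row $i_0$ follow unnecessary. One slip to fix: the first family of $J_{d-1}$ pairs two $(d-1)$-minors of $Z$ with no factor $\alpha^2$, and the $\alpha^2$ appears only in the second family, which pairs $d$-minors with $(d-2)$-minors of $Z$ (both sides of the original relation scale by $x_{i_0,j_0}^{d+1}$).
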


\begin{proof}
The proof is analogous to Lemma \ref{lemma chart y_{i,j}} and Lemma \ref{lem:alpha-chart}. 
By setting $z_{s,t}=y_{s,t}-y_{i_0,t}y_{s,j_0}$,for $s\neq i_0, i_0^*, t\neq j_0, j_0^*$. The relations $X^tH_{2d}X-T_0^2H_{2d},
XH_{2d}X^t-T_0^2H_{2d}$ translate into
\[
Z^tH_{2d-2}Z-T_0^2H_{2d-2},\quad
ZH_{2d-2}Z^t-T_0^2H_{2d-2}.
\]
Moreover, the relations in $J_d^\pm$ translate to 
\begin{flalign*}
&[U^\bot:{U'}^{\bot}](Z)\mp (-1)^{i_0+j_0}\sgn(\sigma_U)\sgn(\sigma_{U'})[U:U'](Z) \\
\text{\ and\ } & [V^\bot:{V'}^\bot](Z)\mp (-1)^{i_0+j_0}\sgn(\sigma_V)\sgn(\sigma_{V'})\cdot \alpha^2[V:{V'}](Z).
    \end{flalign*}
The free variables $y_{i_0, t}, y_{s, j_0}$ for $s\in [1,2d]\setminus\{i_0, i_0^*\}$, $t\in [1,2d]\setminus\{j_0, j_0^*\}$ yield the affine space $\mathbb{A}_{\calO_F}^{4d-4}$. This completes the proof of (1).

For (2), note that we have $X=T_0Y$ such that $Y^tH_{2d}Y=H_{2d}, YH_{2d}Y^t=H_{2d}$. Similarly to (1), the relations in $J_d^\pm$ become
\[
\det(Y)=\pm 1
\]
by Jacobi's identity. This finishes the proof of (2).
\end{proof}
The proof of Proposition~\ref{prop:split-affine-semistable} follows exactly as
in the quasi-split case; see the proof of Proposition~\ref{ProofProp3.3}.
Indeed, Lemma~\ref{lemma64} gives the same induction on $d$: the chart
$D_+(\alpha)$ is already strictly semi-stable, and each chart
$D_+(y_{i_0,j_0})$ is isomorphic to
\[
\calU(d-1,\ell+1)^{\pm\epsilon}\times \mathbb A_{\calO_F}^{4d-4}.
\]
Iterating gives a strictly semi-stable modification of $\calU(d,\ell)$ over
$\calO_F$.

\section{Semi-stable resolution for Weil restriction orthogonal case}\label{section ss ortho Res}
In this section, we construct a semi-stable resolution for the Weil restriction local model in the maximal parahoric case. We keep the same notation as in \S \ref{sec-splitting}. In particular, $F_0$ is a complete discretely valued field with ring of integers $\calO_{F_0}$, uniformizer $\pi_0$, and perfect residue field $k_{F_0}$ of characteristic $p> 2$; and $(B,*,V, \pair{\ ,\ })$ is a (local) PEL-datum over $F_0$ as in \S \ref{sec-splitting}.  

We assume that in \eqref{Bprod}, $I_1=\cbra{1}$ is a singleton and of type (D), i.e., $$B\simeq M_{m_1}(F_1)$$ for a finite extension $F_1$ of $F_0$, and the involution\footnote{Note that the condition on $*$ implies that $m_1$ is an even integer.} $*:M_{m_1}(F_1)\rightarrow M_{m_1}(F_1)$ is given by $b^*=Jb^tJ^{-1}$ for some invertible alternating matrix $J$. In particular, $*$ acts trivially on $F_1$, which is identified with scalar matrices in $M_{m_1}(F_1)$.  {For simplicity, set $d=[F_1:F_0]$ and let $e=[F_1: F_1^{\rm unr}]$ denote the ramification index, $f=[F_1^{\rm unr}: F_0]$ so that $d=ef$. Let $L$ be the Galois closure of $F_1/F_0$, with  ring of integers $\calO_{L}$. We write $\Sigma \coloneqq \cbra{\sigma_{j}^l\colon F_1\ra L}_{1\leq j\leq f,1\leq l\leq e} \text{\ and\ } \Sigma^\unra\coloneqq \cbra{\sigma_{j}\colon F_1^\unra\ra L}_{1\leq j\leq f}$. 
We set \begin{flalign}  \label{eq-KK'}
    K\coloneqq L(\sqrt{-\sigma_j^l(\pi_1)}\ |\ 1\leq j\leq f, 1\leq l\leq e)  
\end{flalign}  with ring of integers $\calO_K$. } 

{
\begin{lemma}\label{lem-KL0}
   Let $L^0$ be the unique unramified quadratic extension of $L$. Then we have $K=L(\sqrt{-\pi_1})$ if $F_1=F_0$; and $K=L^0(\sqrt{-\pi_1})$ otherwise. 
\end{lemma}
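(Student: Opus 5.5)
Write $\varpi_{j}^{l}:=\sigma_j^l(\pi_1)$ for the conjugates of $\pi_1$ in $L$. Since $F_1/F_1^{\rm unr}$ is totally ramified of degree $e$ and $\pi_1$ is a uniformizer of $F_1$, each $\varpi_j^l$ is a uniformizer of $L$ up to the ramification index: writing $e_L$ for the ramification index of $L/F_1^{\rm unr}$-conjugates, we have $v_L(\varpi_j^l)=v_L(\varpi_1^1)$ for all $j,l$. I would first handle the trivial case: if $F_1=F_0$, then $\Sigma=\{\mathrm{id}\}$, $L=F_0$, and $K=L(\sqrt{-\pi_1})$ by definition, so there is nothing to prove. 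Assume from now on $F_1\neq F_0$.

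The plan in the remaining case is to compare square classes. The key identity is
\[
-\varpi_j^l=(-\pi_1)\cdot u_{j}^{l},\qquad u_j^l:=\varpi_j^l/\pi_1\in L^\times ,
\]
viewing $\pi_1=\varpi_1^1\in L$ via the fixed embedding. Since all $\varpi_j^l$ have the same valuation in $L$, each $u_j^l$ is a unit of $\CO_L$. As $p>2$, a unit of $\CO_L$ is a square in $L$ iff its reduction is a square in the residue field $k_L$, and every unit becomes a square in the unramified quadratic extension $L^0$ (whose residue field is the quadratic extension of $k_L$, in which every element of $k_L$ is a square). Hence $\sqrt{u_j^l}\in L^0$ for all $j,l$, giving $K\subset L^0(\sqrt{-\pi_1})$.

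For the reverse inclusion I need (a) $\sqrt{-\pi_1}\in K$, which is immediate as $\sigma_1^1$ is one of the embeddings (after identifying $\pi_1$ with $\sigma_1^1(\pi_1)$), and (b) $L^0\subset K$, i.e. some $u_j^l$ is a non-square unit, so that $\sqrt{u_j^l}$ generates $L^0$ over $L$. This step (b) is the main obstacle: it is where $F_1\neq F_0$ must be used. I would try to exploit the norm: $\prod_{j,l}\varpi_j^l=N_{F_1/F_0}(\pi_1)$ and the roots $\varpi_j^l$ are permuted transitively by $\Gal(L/F_0)$. If all $u_j^l$ were squares in $L$, then $L(\sqrt{-\varpi_j^l})=L(\sqrt{-\pi_1})$ for all $j,l$, so $K=L(\sqrt{-\pi_1})$ would be Galois over $F_0$, being the compositum of the conjugates of $L(\sqrt{-\pi_1})$. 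I expect one then has to use this Galois stability together with the choice of $\pi_1$ (which the paper may allow to be adjusted, e.g. replacing $\pi_1$ by a unit multiple whose conjugate ratios are non-squares, possible when $f>1$ via an unramified unit whose Frobenius conjugate ratio is a nonsquare, and when $e>1$ via the ratio $\zeta$-type units of $L$) to force a non-square ratio. Concretely I would first treat $f>1$: choose $u\in\CO_{F_1^{\rm unr}}^\times$ with $\sigma(u)/u$ a non-square residue, which exists since Frobenius on $k_{F_1^{\rm unr}}$ moves square classes appropriately; then treat $f=1$, $e>1$, using the residue of $\varpi_j^l/\varpi_1^1$, which is a root of unity of order dividing $e$ in $k_L$, and checking it can be chosen a non-square. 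If this fails for a given $\pi_1$, I would refine the statement to hold for a suitable choice of uniformizer, which suffices for the paper's purposes since $K$ only enters through the extension $E'$.
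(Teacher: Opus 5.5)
Your argument for the inclusion $K\subseteq L^0(\sqrt{-\pi_1})$ is exactly the paper's proof: the elements $-\sigma_j^l(\pi_1)$ differ from $-\pi_1$ by units $u_j^l$ of $\mathcal{O}_L$, and since $p>2$ every such unit has a square root in $L^0$ by Hensel's lemma. That inclusion is also all the paper's proof establishes. The reverse inclusion $L^0\subseteq K$, which you correctly isolate as the crux, cannot be supplied, because it is false in general.

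\begin{itemize}
\item Take $F_0=\mathbb{Q}_p$, $F_1$ unramified quadratic, and $\pi_1=p$. Then $L=F_1$ and both conjugates of $\pi_1$ equal $p$. So $K=L(\sqrt{-p})$ has degree $2$ over $L$, while $L^0(\sqrt{-p})$ has degree $4$.
\item Take $F_1=L=\mathbb{Q}_p(\sqrt p)$ with $p\equiv 1 \pmod 4$ and $\pi_1=\sqrt p$. The only nontrivial ratio is $-1$, which is a square in $L$. So again $K=L(\sqrt{-\pi_1})$ is a ramified quadratic extension of $L$ and does not contain $L^0$.
\end{itemize}

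Your proposed repairs fail as well, even if you allow a change of uniformizer.

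\begin{itemize}
\item Suppose $F_1/F_0$ is unramified and $\pi_1=u\pi_0$. Let $q=|k_{F_0}|$ and let $\sigma$ induce $x\mapsto x^{q^m}$ on residues. Then $\sigma(\pi_1)/\pi_1=\sigma(u)/u$ has residue $\bar u^{\,q^m-1}=\bigl(\bar u^{\,(q^m-1)/2}\bigr)^2$, which is always a square. So your claim that Frobenius ``moves square classes appropriately'' is false.
\item Suppose $F_1/F_0$ is totally ramified. Since $k_{F_1}=k_{F_0}$, we get $\sigma(w)/w\equiv 1$ modulo the maximal ideal of $\mathcal{O}_L$ for every $w\in\mathcal{O}_{F_1}^\times$. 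Hence the square class of $\sigma(\pi_1)/\pi_1$ does not depend on the choice of uniformizer.
\item That class is trivial in the second example above. It is also trivial for every totally and tamely ramified $F_1/F_0$ of odd degree $e$: there $F_1=F_0(\varpi^{1/e})$, and the conjugate ratios are $e$-th roots of unity, which are squares when $e$ is odd.
\item The fact that $L(\sqrt{-\pi_1})$ is then Galois over $F_0$ is no contradiction; it simply happens.
\end{itemize}

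What your argument does prove is the corrected statement. Since $K=L(\sqrt{-\pi_1},\sqrt{u_j^l} : j,l)$, we have $K=L^0(\sqrt{-\pi_1})$ if some $u_j^l$ is a non-square in $L$, and $K=L(\sqrt{-\pi_1})$ otherwise. In every case $K\subseteq L^0(\sqrt{-\pi_1})$, and this extension is unramified of degree at most $2$.

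This corrected statement is what the later choice of $E'$ in \S\ref{NcyclesEcohomology} needs. The operative point is unramifiedness: base change along an unramified extension preserves semi-stability, whereas a ramified enlargement of $K$ would not. So your remark that ``$K$ only enters through $E'$'' is not by itself a justification.
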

\begin{proof}
    If $F_1=F_0$, then the lemma is clear. Now we assume $F_1\neq F_0$. Note that for any two embeddings $\sigma, \sigma'\in \Sigma$, elements $-\sigma(\pi_1)$ and $-\sigma'(\pi_1)$ differ by a unit in $\CO_L$. Since $p\neq 2$, any unit in $\CO_L$ admits a square root in $L^0$ by Hensel's lemma. The lemma then follows. 
\end{proof}
}
Let $\calL$ be a self-dual multichain of $\calO_B$--lattices in $V$ and $\calG =\calG_\calL^\circ$ be the neutral component of the similitude automorphism group of $\calL$. Note that $V\simeq V_1^{m_1}$ by Morita equivalence, where $V_1$ is an $F_1$-vector space of even dimension. We assume that  $\dim V_1=2g$. Define 
\begin{equation}\label{eq-sym-pair-weilres}
\{x,y \}:=\bb x,J y\pp  
\end{equation}
for all $x,y\in V$.  Since $J$ is alternating, the pairing $\{\ ,\ \}$ is a non-degenerate $F_0$-symmetric pairing on $V$. We also use $\{\ ,\ \}$ to denote its restriction to the Morita component $V_1$. 
Since the duality isomorphism $\Hom_{F_1} (V_1, F_1)\simeq \Hom_{F_0}(V_1,F_0)$ given by composing with the trace $\Tr_{F_1/F_0}: F_1\rightarrow F_0$, there exists a unique non-degenerate symmetric form $(\ ,\ ): V_1\times V_1\rightarrow F_1$ such that
\[
\{x,y\}=\Tr_{F_1/F_0}(\delta\cdot ( x,y)),
\] 
for all $x, y\in V_1$, where $\delta$ is an $\calO_{F_1}$-generator of the inverse different $\mathcal{D}^{-1}_{F_1/F_0}$. Let $G$ denote the group over $F_0$ whose set of $R$-points ($R$ is any $F_0$-algebra) is the following \begin{flalign*}
    G(R)\coloneqq \cbra{g\in \GL_{B\otimes_{F_0}R}(V\otimes_{F_0}R)\ |\ \pair{gv,gw}=c(g)\pair{v,w}, c(g)\in R\cross }.
\end{flalign*}
Let $H=\GO(V_1,(\ ,\ ))$ denote the orthogonal similitude group over $F_1$, whose set of $R$-points ($R$ is any $F_1$-algebra) is \begin{flalign*}
    H(R)\coloneqq \cbra{g\in \GL(V_1\otimes_{F_1}R)\ |\ (gx,gy)=c'(g)(x,y), c'(g)\in {R'}^\times}.
\end{flalign*}

\begin{lemma}\label{lm-weilres-gpG-ortho}
    The group $G$ is isomorphic to the subgroup $G'$ of $\Res_{F_1/F_0}H$ whose similitude character is defined over $F_0$. More precisely, for any $F_0$-algebra $R$, \begin{flalign*}
        G(R)=H'(R) \coloneqq \cbra{h\in H(R\otimes_{F_0}F_1)\ |\ c'(h)\in R\cross }.
    \end{flalign*}
\end{lemma}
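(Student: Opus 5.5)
The plan is to identify both groups inside $\GL_{B}(V)$ and compare their defining conditions. By Morita equivalence $V\simeq V_1^{m_1}$ with $B=M_{m_1}(F_1)$ acting in the standard way, we get $\GL_{B\otimes_{F_0}R}(V\otimes_{F_0}R)=\GL_{F_1\otimes_{F_0}R}(V_1\otimes_{F_0}R)$ for every $F_0$-algebra $R$. This is functorial in $R$, so both $G$ and $H'$ are subgroup functors of the same group $\Res_{F_1/F_0}\GL(V_1)$, and the goal is to show they coincide there.

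\textbf{Step 1: rewrite the symplectic similitude condition.} Since $J\in B^\times$ and $g$ commutes with $B$, we have $gJ=Jg$. Hence
\[
\pair{gx,gy}=c\pair{x,y}\ \ \forall x,y \quad\Longleftrightarrow\quad \{gx,gy\}=c\{x,y\}\ \ \forall x,y,
\]
because $\{gx,gy\}=\pair{gx,Jgy}=\pair{gx,gJy}$ and $J$ is invertible. We will also check that $\{\ ,\ \}$ is compatible with the Morita decomposition: $\{bx,y\}=\{x,\bar b y\}$ for a suitable involution, and $\{\ ,\ \}$ is $B$-balanced. This lets us restrict the condition to $V_1$, and conversely the condition on $V_1$ extends to $V=V_1^{m_1}$ via the (orthogonal) Morita decomposition. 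Here $c\in R^\times$ throughout.

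\textbf{Step 2: pass from $\{\ ,\ \}$ to $(\ ,\ )$ via the trace.} On $V_1\otimes_{F_0}R$, a module over $F_1\otimes_{F_0}R$, we have $\{x,y\}=\Tr(\delta(x,y))$ with $(\ ,\ )$ extended $F_1\otimes R$-bilinearly. The trace pairing $(a,b)\mapsto\Tr(\delta ab)$ is perfect on $F_1\otimes R$. Fix $g$ that is $F_1\otimes R$-linear and $c\in R^\times$. Then $\{gx,gy\}=c\{x,y\}$ for all $x,y$ if and only if
\[
\Tr\bigl(\delta a\,[(gx,gy)-c(x,y)]\bigr)=0 \quad \text{for all } a\in F_1\otimes R,
\]
after replacing $x$ by $ax$. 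By perfectness this is equivalent to $(gx,gy)=c(x,y)$. Thus $g\in G(R)$ with multiplier $c$ exactly when $g\in H(R\otimes_{F_0}F_1)$ with $c'(g)=c\in R^\times$.

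\textbf{Step 3: the converse inclusion.} If $h\in H(R\otimes F_1)$ has $c'(h)\in R^\times$, then running the same equivalences backwards gives $h\in G(R)$. Both identifications are functorial in $R$, so they give an isomorphism of group functors, hence of group schemes.

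The main obstacle I expect is Step 1: making precise that the symplectic similitude condition on $V$ is equivalent to the symmetric one on the Morita factor $V_1$. This requires checking that the $*$-compatibility $\pair{bv,w}=\pair{v,b^*w}$ with $b^*=Jb^tJ^{-1}$ makes the idempotent decomposition $V=\bigoplus e_{ii}V$ behave well under $\{\ ,\ \}$. The cross terms must be controlled by $B$-equivariance of $g$, so that a similitude on one copy of $V_1$ determines one on all of $V$. I would first compute $\{e_{ij}x,y\}$ explicitly using $b^*=Jb^tJ^{-1}$ to fix this bookkeeping.
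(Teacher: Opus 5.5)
Your proposal is correct and follows the same route as the paper. The paper's proof is a single chain of equalities: first Morita reduction to $V_1$, then replacing $\langle\ ,\ \rangle$ by $\{\ ,\ \}=\langle\ ,J\,\cdot\ \rangle$ (using that $g$ commutes with $J\in B^\times$), then passing to $(\ ,\ )$ via the trace, with the multiplier $c(g)=c'(g)\in R^\times$ unchanged throughout. Your extra justifications are exactly what the paper leaves implicit: the adjoint of $b$ for $\{\ ,\ \}$ is $b^t$, so the $e_{ii}V$ are mutually orthogonal and the $e_{i1}$ are isometries; and the pairing $(a,b)\mapsto\Tr(\delta ab)$ is perfect on $F_1\otimes_{F_0}R$.
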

\begin{proof}
    Let $R$ be a $F_0$-algebra. By Morita equivalence, we have $$\GL_{B\otimes_{F_0}R}(V\otimes_{F_0} R)\simeq \GL_{F_1\otimes_{F_0}R}(V_1\otimes_{F_0} R).$$ Recall that $V=V_1^{m_1}$ by Morita equivalence. For any $v, w\in V\otimes_{F_0} R$, write $v_1$ (resp. $w_1$) for the corresponding vector in $V_1\otimes_{F_0} R$. For any $g\in \GL_{B\otimes_{F_0}R}(V\otimes_{F_0} R)$, let $g_1$ be the corresponding element in $\GL_{F_1\otimes_{F_0}R}(V_1\otimes_{F_0} R)$. Then we obtain
\begin{flalign*}
G(R)&= \cbra{g\in \GL_{B\otimes_{F_0}R
    }(V\otimes_{F_0}R)\mid \pair{gv,gw}=c(g)\pair{v,w}, c(g)\in R^\times }\\
    &=\cbra{g_1\in \GL_{F_1\otimes_{F_0}R
    }(V_1\otimes_{F_0}R)\mid \{g_1v_1,g_1w_1\}=c(g)\{v_1,w_1\}, c(g)\in R^\times }\\
    &=\cbra{g_1\in \GL_{F_1\otimes_{F_0}R
    }(V_1\otimes_{F_0}R)\mid (g_1v_1,g_1w_1)=c(g)(v_1,w_1), c(g)\in R^\times }\\ &=H'(R).
    \end{flalign*}
\end{proof}






\quash{
Recall that $F_0$ is a complete discretely valued field with ring of integers $\calO_{F_0}$, uniformizer $\pi_0$, and perfect residue field $k_{F_0}$ of characteristic $p> 2$. Let $F_1$ be a finite extension of $F_0$, and let $F_1^{\rm unr}\subset F_1$ denote the maximal unramified extension of $F_0$ in $F_1$ with uniformizer $\pi_1$. Set $d=[F_1:F_0]$ and let $e=[F_1: F_1^{\rm unr}]$ denote the ramification index, $f=[F_1^{\rm unr}: F_0]$ so that $d=ef$. Let $L$ be the Galois closure of $F_1/F_0$, with  ring of integers $\calO_{L}$. We order the embeddings $F_1^{\rm unr}\hookrightarrow L$ as $\sigma_1, \dots, \sigma_f$, and the embeddings $F_1\hookrightarrow L$ as $\{\sigma_j^l\}_{1\leq j\leq f, 1\leq l\leq e}$. Write $\Sigma \coloneqq \cbra{\sigma_{j}^l\colon F_1\ra L}_{1\leq j\leq f,1\leq l\leq e} \text{\ and\ } \Sigma^\unra\coloneqq \cbra{\sigma_{j}\colon F_1^\unra\ra L}_{1\leq j\leq f}$. Furthermore, let $F/F_1$ be a ramified quadratic extension with a uniformizer $\pi$ satisfying $\pi^2=-\pi_1$. We set $K=FL$ with ring of integers $\calO_K$. 
	
Consider the finite semi-simple algebra $B=M_{2}(F_1)$ with involution $*$. We assume that $F_1$ is the set of $*$-invariants of the center of $M_{2}(F_1)$, where $*:M_{2}(F_1)\rightarrow M_{2}(F_1)$ is given by $b^*=J_2b^tJ_2^{-1}$, i.e., the singleton index set $I_1=\{1\}$ is of type (D).  

For a finite dimensional $B$-module $V$ with a non-degenerate alternating $F_0$-bilinear form $\bb \ ,\ \pp$, let $\calL$ be a self-dual multichain of $\calO_B$--lattices in $V$ and $\calG =\calG_\calL^\circ$ be the neutral component of the similitude automorphism group of $\calL$. In this case, we have that $V\simeq V_1^{2}$ by Morita equivalence, where $V_1$ is an $F_1$-vector space of even dimension. We assume that  $\dim V_1=2g$. Define 
\begin{equation}
\{x,y \}:=\bb x,J_2y\pp  
\end{equation}
for all $x,y\in V$.  Since $J_2^*=-J_2$, the pairing $\{\ ,\ \}$ is a non-degenerate $F_0$-symmetric pairing on $V$. We also use $\{\ ,\ \}$ to denote its restriction to the Morita component $V_1$. Using the duality isomorphism $\Hom_{F_1} (V_1, F_1)\simeq \Hom_{F_0}(V_1,F_0)$ given by composing with the trace $\Tr_{F_1/F_0}: F_1\rightarrow F_0$, we see that there exists a unique non-degenerate symmetric form $(\ ,\ ): V_1\times V_1\rightarrow F_1$ such that
\[
\{x,y\}=\Tr_{F_1/F_0}(\delta\cdot ( x,y)),
\] 
for all $x, y\in V_1$, where $\delta$ is an $\calO_{F_1}$-generator of the inverse different $\mathcal{D}^{-1}_{F_1/F_0}$. 

\begin{prop}\label{prop-weil-res-GO-iso}
The group $G$ over $F_0$ is isomorphic to $\Res_{F_1/F_0}\GO(V_1, (\ ,\ ))$.    
\end{prop}

\begin{proof}
Let $R$ be a $F_0$-algebra. We have $GL_{B\otimes_{F_0}R}(V\otimes_{F_0} R)\simeq GL_{F_1\otimes_{F_0}R}(V_1\otimes_{F_0} R)$.   For any $v, w\in V\otimes_{F_0} R$, write $v=v_1+v_2$ (resp. $w=w_1+w_2$) in $ V\otimes_{F_0} R\simeq V_1^2\otimes_{F_0} R$ by the Morita decomposition. For any $g\in GL_{B\otimes_{F_0}R}(V\otimes_{F_0} R)$, let $g_1$ be the corresponding element in $GL_{F_1\otimes_{F_0}R}(V_1\otimes_{F_0} R)$. Then we obtain
\begin{flalign*}
G(R)&= \cbra{g\in \GL_{B\otimes_{F_0}R
    }(V\otimes_{F_0}R)\mid \pair{gv,gw}=c(g)\pair{v,w}, c(g)\in R^\times }\\
    &=\cbra{g_1\in \GL_{F_1\otimes_{F_0}R
    }(V_1\otimes_{F_0}R)\mid \{g_1v_1,g_1w_1\}=c(g)\{v_1,w_1\}, c(g)\in R^\times }\\
    &=\cbra{g_1\in \GL_{F_1\otimes_{F_0}R
    }(V_1\otimes_{F_0}R)\mid (g_1v_1,g_1w_1)=c(g)(v_1,w_1), c(g)\in R^\times }
    \end{flalign*}
\end{proof}
}
We assume that the matrix with respect to the standard basis of the symmetric form $(\ , \ )$,  is $\psi$ (\ref{eq split form orthogonal}) in the split case, and $\phi$ in the quasi-split case (\ref{eq quasi-split form}). Let $\mu_\pm$ denote the minuscule cocharacter of the standard maximal torus in $\GO(V_1, (\ ,\ ))$. For each embedding $\sigma_j^l\in \Sigma$, we fix a minuscule cocharacter $\mu_{\sigma_j^l}=\mu_\pm$, and let $\mu: \mathbb{G}_{m, \bar{F}_0}\rightarrow G_{\bar{F}_0}$ denote the geometric cocharacter of $G$ whose $\sigma_j^l$-component in is $\mu_{\sigma_j^l}$. Let $E\subset L$ denote the reflex field of $\{\mu\}$ with the ring of integers $\calO_E$. Thus, by Def \ref{def-naivelocmod}, we obtain the naive local model $\Mnaive(\calL, \mu)$ over $\calO_E$ for the local model triple $(G, \calG,\mu)$.

From now on, we assume that $\calL=\Lambda_I$, where $I=\{2n\ZZ\pm i\}$, such that $\calG$ is a maximal parahoric subgroup. We claim that there exists a semi-stable resolution for $\Mloc(\calL, \mu)$ after base change to $\calO_K$.

Recall that we have the decompositions 
\begin{flalign*}
V_{1,L}=V_1\otimes_{F_0}L=\bigoplus_{1\leq j\leq f} V_{1,j} \text{\ and\ } V_{1,j}=\bigoplus_{1\leq l\leq e}V_{1,j}^l
\end{flalign*} 
on $V$. Since the involution $*$ acts trivially on $\sigma_{j}$, we have $*(1,j)=(1,j)$, which induces a non-degenerate $F_0$-bilinear alternating pairing $\pair{\ ,\ }\colon V_{1,j}\times V_{1,j}\ra F_0$. The pairing $\pair{\ ,\ }$ induces a perfect symmetric pairing $\{\ ,\ \}$ by (\ref{eq-sym-pair-weilres}). Thus, for each lattice $\Lambda_{1,j}\subset V_{1,j}$, we obtain a symmetric pairing:
\begin{flalign}
	\{\ ,\ \}: \Lambda_{1,j}\times (\Lambda^\vee)_{1,j}\lra \CO_{F_0}. 
\end{flalign}

By Definition \ref{def-naivesplmod}, there exists a naive splitting model $\M^{\rm nspl}_\calL=\M^{\rm nspl}(\calL, \mu, (\sigma_j^l))$. It is a projective scheme over $\Spec \calO_{L}$ that represents a rigidified version of the moduli problem defining $\Mnaive(\calL, \mu)$. Recall that there exists a forgetful morphism 
\begin{equation}\label{eq splitting to local}
f: \M^{\rm nspl}_\calL\rightarrow \Mnaive_\calL\otimes_{\calO_E} \calO_{L}, 
\end{equation}
which  restricts to an isomorphism on generic fibers.

Since $*(1,j)=(1,j)$, by Proposition \ref{prop-splitting}, this yields a diagram of morphisms of schemes over $\CO_L$:
\begin{equation}\label{eq-locdiag-ortho}
\begin{tikzcd}[column sep={6em, between origins}, row sep={3em, between origins}]
&  \arrow[ld, "\alpha_1"']\tilde{\M}^{\rm nspl}_\calL\arrow[rd, "\alpha_2"]&\\
\M^{\rm nspl}_\calL   &&   \hspace{2em}\prod_{\sigma_j^l\in\Sigma} \Mnaive(\calL_{\sigma_j^l}, \mu_{\sigma_j^l}),
\end{tikzcd}
\end{equation}
in which the slanted arrow $\alpha_1$ is an $\calH$-torsor, and $\alpha_2$ is a smooth, $\mathcal{H}$-equivariant morphism. Note that each $\Mnaive(\calL_{\sigma_j^l}, \mu_{\sigma_j^l})$ is isomorphic to the orthogonal naive local model $\Mnaive_i$ (see Definition \ref{naivelocalmodeldef}).

Consider the product of local models {$\prod_{\sigma_j^l\in\Sigma} \Mloc_i(\sigma_j^l)$}, where $\RM^\loc_i(\sigma_j^l)$ denotes the flat closure of $\Mnaive(\calL_{\sigma_j^l}, \mu_{\sigma_j^l})$. Since the closed immersion  $\iota\colon \prod_{\sigma_j^l\in\Sigma} \Mloc_i(\sigma_j^l)\rightarrow \prod_{\sigma_j^l\in\Sigma} \Mnaive(\calL_{\sigma_j^l}, \mu_{\sigma_j^l})$ is also a $\mathcal{H}$-equivariant morphism, we have a linear
modification such that the diagram restricts to a new diagram:
\begin{equation}\label{eq-local model diagram spl-ortho}
\begin{tikzcd}[column sep={6em, between origins}, row sep={3em, between origins}]
&  \arrow[ld, "\alpha_1"']\tilde{\M}^{\rm spl}_\calL\arrow[rd, "\alpha_2"]&\\
\M^{\rm spl}_\calL   &&   \prod_{\sigma_j^l\in\Sigma} \Mloc_i(\sigma_j^l),
\end{tikzcd}
\end{equation}
where $\alpha_1$ is a  $\mathcal{H}$-torsor and $\alpha_2$ is a smooth $\mathcal{H}$-equivariant morphism.

\begin{thm}\label{thm ss Weil ortho}
Suppose $F_1/F_0$ is finite separable. Assume that $I=\{i \}$ with $0< 2i\leq n$. After base change to $\calO_{K}$ (see \eqref{eq-KK'}), there exists a projective
$\calG_{\CO_{K}}$-equivariant morphism
\[
\M^{\rm ss}(\calL,\mu)\rightarrow \Mloc(\calL, \mu)\otimes_{\calO_E} \calO_{K},
\]
which is an isomorphism on the generic fibers and $\M^{\rm ss}(\calL,\mu)$ is semi-stable over $\calO_{K}$.    
\end{thm}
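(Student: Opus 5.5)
The plan is to build $\M^{\rm ss}(\calL,\mu)$ on the splitting model and to transport semi-stability through the diagram \eqref{eq-local model diagram spl-ortho}. The argument has three steps.

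\textbf{Step 1: semi-stable models of the factors.} Over $\CO_K$, each factor $\Mloc_i(\sigma_j^l)$ is an orthogonal spin local model for the symmetric space $V_{1,j}^l$ over $L$. The relevant uniformizer is $\sigma_j^l(\pi_1)$, and the ramified quadratic extension needed in \S\ref{orthogonal-local-models} is generated by $\sqrt{-\sigma_j^l(\pi_1)}$. By the definition \eqref{eq-KK'} of $K$, all of these square roots lie in $K$. After base change to $\CO_K$, Theorem~\ref{thm:qs-semistable-resolution} (quasi-split case) or Theorem~\ref{thm:split-semistable-resolution} (split case) therefore applies to each factor. This gives, for every factor, a sequence of blow-ups along strict transforms of the Schubert varieties $S_0\subset\cdots\subset S_{i-1}$ ending in a semi-stable $Z_i(\sigma_j^l)$.

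There is one caveat: $L$ may itself be ramified over the base of $\Mloc_i$, since the base is $\CO_L$ and not $\CO_{F_0}$. I would handle this by noting that $\Mloc_i(\sigma_j^l)$ is obtained by base change of the model defined over the local field with uniformizer $\sigma_j^l(\pi_1)$, and then checking that the affine-chart computations of Proposition~\ref{prop:qs-affine-semistable} go through verbatim over $\CO_K$ with $\pi$ replaced by a suitable uniformizer. Because the Schubert varieties $S_\ell$ are $\CG_{[\sigma]}$-stable, each $Z_i(\sigma_j^l)$ is $\CG_{[\sigma]}$-equivariant.

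\textbf{Step 2: resolving the product.} I apply Proposition~\ref{introprop3} (Hartl) to $\prod_{\sigma}Z_i(\sigma)$. Blowing up successively the products of irreducible components of the special fibers gives a semi-stable scheme $P^{\rm ss}\to\prod_\sigma\Mloc_i(\sigma)\otimes\CO_K$. The irreducible components of the special fiber of each $Z_i(\sigma)$ are strict transforms of $S_i$ and of the exceptional divisors. These are stable under the connected group $\CH$, so the centers of Hartl's blow-ups are $\CH$-stable, and hence so is $P^{\rm ss}$.

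\textbf{Step 3: descending to the local model.} Blow-ups commute with flat base change. I therefore pull the same sequence of centers back along the smooth map $\alpha_2$, and then descend along the $\CH$-torsor $\alpha_1$, which is possible because the centers are $\CH$-stable. This produces a modification $\M^{\rm ss}\to\M^{\rm spl}_\calL\otimes\CO_K$ that is étale-locally isomorphic to a smooth scheme over $P^{\rm ss}$. Semi-stability is étale local and preserved by smooth morphisms, so $\M^{\rm ss}$ is semi-stable. $\CG$-equivariance follows from the $\CG$-equivariance of $\alpha_1$ and $\alpha_2$ (Proposition~\ref{prop-splitting}). Finally, I compose with $f\colon\M^{\rm spl}_\calL\to\Mloc(\calL,\mu)\otimes\CO_L$. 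This map is projective and is an isomorphism on generic fibers. Its image lies in $\Mloc$ because $\M^{\rm spl}$ is flat and hence equals the closure of its generic fiber. The composite $\M^{\rm ss}\to\Mloc(\calL,\mu)\otimes_{\CO_E}\CO_K$ is therefore the required projective, generically isomorphic, equivariant morphism.

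The main obstacle is the descent in Step 3. One must check that each blow-up center on the product is stable under the torsor group $\CH$, and also under the $\CG$-action that twists the trivializations $\varphi$, so that the successive blow-ups descend from $\tilde\M^{\rm spl}$ to $\M^{\rm spl}$ with a $\CG$-action. I would first establish that every center is a union of $\CH\times\CG$-orbit closures, using that the Schubert strata and their strict transforms are defined by rank conditions invariant under the parahoric.
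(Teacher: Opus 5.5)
Your Steps 1--3 are essentially the paper's proof:
\begin{itemize}
\item factorwise blow-ups along Schubert varieties via Theorems~\ref{thm:qs-semistable-resolution} and \ref{thm:split-semistable-resolution};
\item Hartl's blow-ups of the product along products of $\CH$-stable components;
\item the linear modification through \eqref{eq-local model diagram spl-ortho};
\item composition with $f$.
\end{itemize}
Where you add something is the caveat in Step~1. That caveat is real, but the fix you propose does not work in the stated generality. The paper's proof passes over the same point: it simply identifies $\Mloc_i(\sigma)$ with the spin local model over the ring of integers of $L(\sqrt{-\sigma(\pi_1)})$.

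\textbf{Why the fix fails.} The chain $\CL_{[\sigma]}$ consists of the lattices $\Lambda\otimes_{\CO_{F_1},\sigma}\CO_L$. Its periodicity is $\sigma(\pi_1)$, not a uniformizer of $L$. Hence $\Mloc_i(\sigma)\otimes_{\CO_L}\CO_K$ is the base change of $\M_i^\pm$, defined over the ring of integers of $F_1(\sqrt{-\pi_1})$. The element $\sqrt{-\sigma(\pi_1)}$ is built into the equations, namely $X^tHX=-\sigma(\pi_1)H$ and the spin relations. You therefore cannot trade it for a uniformizer of $K$.

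If you run Proposition~\ref{prop:qs-affine-semistable} verbatim over $\CO_K$ (equivalently, base change the resolution), the terminal charts are $\mathbb A^{2i^2}_{\CO_K}\times\Spec\CO_K[T_0,\ldots,T_i]/(\sqrt{-\sigma(\pi_1)}-T_0\cdots T_i)$. These are semi-stable precisely when $\sqrt{-\sigma(\pi_1)}$ is a uniformizer of $K$, i.e.\ when $e(L/\sigma(F_1))\le 2$.
\begin{itemize}
\item This holds when $F_1/F_0$ is tame, since then $e(L/\sigma(F_1))=1$.
\item It fails for some wild extensions. Take $F_0=\BQ_5$ and $F_1=\BQ_5(\sqrt[5]{5})$. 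Then $L=F_1(\zeta_5)$ is totally ramified of degree $20$ over $\BQ_5$, so $e(L/F_1)=4$ and $v_K(\sqrt{-\sigma(\pi_1)})=2$. Near a point with $T_0=T_1=0$ the chart is $T_0T_1=\varpi_K^2\cdot(\text{unit})$, which is not regular.
\end{itemize}

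\textbf{What is needed.} You need an extra step before Step~2. After resolving $\M_i^\pm$ over the ring of integers of $F_1(\sqrt{-\pi_1})$, make the ramified base change of $Z_\sigma$ to $\CO_K$ semi-stable by further explicit toroidal blow-ups (cf.\ \cite{Ha}). Their centers are defined by the components of the special fiber and the uniformizer. They therefore remain $\CH$- and $\CG$-stable, and your Steps~2 and~3 then go through unchanged. Alternatively, restrict the statement to $e(L/F_1)\le 2$.
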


\begin{proof}
Consider the product of local models $\prod_{\sigma_j^l\in\Sigma} \Mloc_i(\sigma_j^l)$. For each embedding $\sigma=\sigma_j^l\in \Sigma$, the local model $\Mloc_i(\sigma)$ is isomorphic to the spin local model $\M^\pm_i$ over the ring of integers of ${L(\sqrt{-\sigma(\pi_1)})}$ by Theorem \ref{thm:spin-flat}. We obtain a semi-stable resolution $Z_{\sigma_j^l} \rightarrow \Mloc_i(\sigma_j^l)$ by Theorem \ref{thm:qs-semistable-resolution}, \ref{thm:split-semistable-resolution}. We then consider the product scheme $\prod_{\sigma_j^l\in \Sigma} Z_{\sigma_j^l}$ over $\calO_K$. While the product of semi-stable models is not necessarily semi-stable, one may apply 
succesive blow-ups along the products of irreducible components of the special fibers of the factors $Z_{\sigma_j^l}$ and obtain a semi-stable resolution
\[
\pi: Z\rightarrow \prod_{\sigma_j^l\in\Sigma} \Mloc_i(\sigma_j^l)
\] 
over $\calO_K$ (see \cite[Proposition 2.1]{Ha}).

Note that each irreducible component of the special fibers is a Schubert variety. Since Schubert varieties are stable under the action of $\mathcal{H}$, the morphism  $\pi$ is $\mathcal{H}$-equivariant. Using the linear modification \cite[\S 2]{P}, we obtain the following cartesian diagrams:
\begin{equation}
\begin{array}{ccccc}
   \M^{\rm ss}(\calL, \mu) & \xleftarrow{\ \alpha_1'\ } &
\tilde{Z} & \xrightarrow{\ \alpha_2'\quad } &  Z\\
\Bigg\downarrow{\gamma} &&\Bigg\downarrow \beta &&\Bigg\downarrow{\pi}\\  
\M^{\rm spl}_\calL &\xleftarrow{\ \alpha_1\ } &
\tilde{\M}^{\rm spl}_\calL & \xrightarrow{\ \alpha_2\ } &  \prod_{\sigma_j^l\in\Sigma} \Mloc_i,
\end{array}
\end{equation}
where $\alpha_1$ (resp. $\alpha_1'$) is $\calH$-torsor, and $\alpha_2$ (resp. $\alpha_2'$) is smooth, $\calH$-equivariant. In fact, since blow-up commutes with \'etale localization,
$\M^{\rm ss}(\calL, \mu)$ is a successive blow-up of $\M^{\rm spl}_\calL$.

We claim that the morphism $\ga: \M^{\rm ss}(\calL, \mu)\rightarrow \M^{\rm spl}_\calL$ is $\calG$-equivariant. Indeed, Proposition \ref{prop-splitting} shows that the splitting diagram
\eqref{eq-local model diagram spl-ortho} is $\calG$-equivariant; in particular, both $\al_1$ and $\al_2$ are $\calG$-equivariant. From the diagram morphism $\calG\rightarrow \prod_{\sigma_j^l\in \Sigma}\calG_{\sigma_j^l}$, the factorwise resolutions and
Hartl's construction are equivariant for the $\calG$-actions on the unramified
factors. Thus, the morphism $\ga$ is $\calG$-equivariant. Composing the $\calG$-equivariant morphisms $\ga$ and $f$, we obtain the desired projective $\calG$-equivariant morphism $\M^{\rm ss}(\calL, \mu)\rightarrow \Mloc(\calL, \mu)\otimes \calO_K$, which is an isomorphism on the generic fiber.
\end{proof}

\quash{
Let $F_1$ be a finite extension of $F_0$ contained in a separable closure $F_0^{\rm sep}$ with uniformizer $\pi_1$. Set $d=[F_1:F_0]$ and let $e$ denote the ramification index so that $e| d$. Let $F_1^{\Gal}$ be the Galois closure of $F_1/F_0$, with  ring of integers $\calO_{F_1^{\Gal}}$. Let $ \Phi=\Hom_{F_0}(F_1,F_1^{\Gal})$ denote the set of $F_0$-embeddings of $F_1$ into $F_1^{\Gal}$. 

Let $V=F_1^{2g}$be an $F_1$-vector space equipped with the symmetric $F_1$-bilinear form $(\ , \ )$, whose matrix with respect to the standard basis is $\psi$ (\ref{eq split form orthogonal}) in the split case, and $\phi$ in the quasi-split case (\ref{eq quasi-split form}). Furthermore, let $F/F_1$ be a ramified quadratic extension with a uniformizer $\pi$ satisfying $\pi^2=-\pi_1$. We set $K=FF_1^{\Gal}$ with ring of integers $\calO_K$.

Let $G=\GO(V, (\ , \ ))$ be the orthogonal group over $F_1$ as defined in \S \ref{orthogonal-local-models}. Upon base change to $F_1^{\Gal}$, we have the standard decomposition, i.e.,
\begin{equation}\label{eq ortho splitting G}
(\Res_{F_1/F_0}(G))\otimes_{F_0}{F_1^{\Gal}}\simeq \prod_{\vphi\in \Phi} G\otimes_{F_1,\vphi}{F_1^{\Gal}}.
\end{equation}
Let $\mu_\pm$ denote the minuscule cocharacter of the standard maximal torus in $G$. For each embedding $\vphi\in \Phi$, we fix a minuscule cocharacter $\mu_\vphi=\mu_\pm$, and let $\mu$ denote the geometric cocharacter of $\Res_{F_1/F_0}(G)$ whose $\vphi$-component in (\ref{eq ortho splitting G}) is $\mu_\vphi$. Let $E\subset F_1^\Gal$ denote the reflex field of $\{\mu\}$ with the ring of integers $\calO_E$.

For every non-empty subset $I\subset [0,g]$, let $\Mnaive_I(G)=\Mnaive_I$ (resp.        $\M^\pm_I(G):=M_I^\pm$)  be the   naive local model (resp. spin  local model) as in Definition \ref{naivelocalmodeldef}, \ref{defspinlocalmodel}). We define the  non-degenerate alternating form $\{ \ , \ \}: V\times V\rightarrow F_0$ by 
\[
\{x,y\}:=\Tr_{F_1/F_0}(\delta( x,y)),
\] 
where $\delta$ is an $\calO_{F_1}$-generator of the inverse different $\mathcal{D}^{-1}_{F_1/F_0}$. For any lattice $\Lambda\subset V$, we denote its dual by $\hat{\Lambda}:=\{x\in V\mid \{x,V\}\subset \calO_{F_0}\}$. Analogous to the orthogonal case, one can define the associated self-dual periodic lattice chain $\Lambda_I=\{\Lambda_\ell\}_{\ell\in 2g\ZZ\pm I}$, and the Bruhat--Tits group scheme $\calG_I$ corresponding to the parahoric subgroup of $(\Res_{F_1/F_0}(G))(F_0)$.

We define the {\it naive local model} for the Weil restriction group $G$ as follows.

\begin{defn}\label{def-naive-Res-ortho}
The \emph{naive local model} $\Mnaive_I(\Res_{F_1/F_0}(G))$ is the projective scheme over
$\mathcal{O}_{E}$ representing the functor sending an
$\mathcal{O}_{E}$-algebra $R$ to the set of $R$-modules
$(\mathcal{F}_\Lambda)_{\Lambda\in\Lambda_I}$ such that:
\begin{itemize}
\item[(LM1)] for every $\Lambda\in\Lambda_I$, Zariski locally on $\Spec R$,
$\mathcal{F}_\Lambda$ is a direct summand of $\Lambda_R:=\Lambda\otimes_{\mathcal{O}_{F_0}}R$ of rank $eg$;

\item[(LM2)] for every $\Lambda\in\Lambda_I$, one has $(\{\ , \ \}\otimes 1)
(\mathcal{F}_\Lambda,\mathcal{F}_{\hat{\Lambda}})=0$;

\item[(LM3)] for every inclusion $\Lambda\subset\Lambda'$ in $\Lambda_I$,
the natural map $\Lambda_R\to\Lambda'_R$ sends
$\mathcal{F}_\Lambda$ to $\mathcal{F}_{\Lambda'}$, and the isomorphism $\Lambda_R\xrightarrow{\sim}(a\Lambda)_R$
identifies $\mathcal{F}_\Lambda$ with $\mathcal{F}_{a\Lambda}$ for any $a\in F_1^\times$;

\item[(LM4)] for every $a\in \calO_{F_1}$, one has 
\[
\det(a\mid \calF_\Lambda)=\prod_\vphi \vphi(a)^g.
\]
\end{itemize}    
\end{defn}

From now on, we work after base change to $\calO_{EF}$.
The local model $\Mloc_I(\Res_{F_1/F_0}(G))$ is the scheme theoretic closure in naive local model $\Mnaive_I(\Res_{F_1/F_0}(G))$ of its connected components $\OGr(g, V_{EF})_\pm$. To construct a semi-stable resolution for $\Mloc_I(\Res_{F_1/F_0}(G))$, it suffices to treat the following two cases:
\begin{itemize}
    \item $F_1$ is unramified over $F_0$;
    \item $F_1$ is totally ramified over $F_0$;
\end{itemize}

\subsection{Unramified extension case.} Assume $F_1$ is an unramified extension of $F_0$. By invoking \cite[Proposition 2.14 (ii, (iii))]{HPR} and the splitting (\ref{eq ortho splitting G}), we obtain the following canonical identifications after base change to 
$\calO_{K}$, 
\begin{equation}\label{eq unramified Mloc}
\begin{array}{l}
\Mloc_I(\Res_{F_1/F_0}(G))\otimes \calO_{K} \simeq \Mloc_I(\Res_{F_1/F_0}(G)\otimes K)\\
\simeq  \Mloc_I(\prod_\vphi(G\otimes_{\vphi} K))
\simeq\prod_\vphi \Mloc_I(G)\otimes_{\vphi} \calO_{K}.
\end{array}
\end{equation}
Here $\prod_\vphi\Mloc_I(G)$ is the product of local models over $\calO_{EF}$.
\quash{
$\calO_{F_1^{\Gal}}$, 
\begin{equation}\label{eq unramified Mloc}
\begin{array}{l}
\Mloc_I(\Res_{F_1/F_0}(G))\otimes_{\calO_{F_0}} \calO_{F_1^{\Gal}} \simeq \Mloc_I(\Res_{F_1/F_0}(G)\otimes_{F_0} F_1^\Gal)\\
\simeq  \Mloc_I(\prod_\vphi(G\otimes_{F_1,\vphi} F_1^\Gal))
\simeq\prod_\vphi \Mloc_I(G)\otimes_{\calO_{F_1},\vphi} \calO_{F_1^{\Gal}}.
\end{array}
\end{equation}
}

\begin{prop}\label{prop s.s. unramified}
Suppose $F_1$ is unramified over $F_0$, and let $I=\{i \}$ with $1\leq i\leq \lfloor g/2\rfloor$. After base change to $\calO_K$, there exists a semi-stable resolution 
\[
\M_i^{\rm ss}(\Res_{F_1/F_0}(G))\rightarrow \Mloc_i(\Res_{F_1/F_0}(G))\otimes \calO_K,
\]
which induces an isomorphism on the generic fibers.
\end{prop}

\begin{proof}
Consider the splitting (\ref{eq unramified Mloc}). For each embedding $\vphi\in \Phi$, the local model $\Mloc_i(G)$ is equal to the spin local model $\M^\pm_i(G)$ by Theorem \ref{thm:spin-flat}. We obtain a semi-stable resolution $Z_\vphi \rightarrow \Mloc_i(G)\otimes_\vphi \calO_{K}$ by Theorem \ref{thm:qs-semistable-resolution}, \ref{thm:split-semistable-resolution}. We then consider the product scheme $\prod_{\vphi\in \Phi} Z_\vphi$ over $\calO_K$. While the product of semi-stable models is not necessarily semi-stable, one may apply the combinatorial resolution process described by Hartl. Specifically, by performing a succession of blow-ups along the products of irreducible components of the special fibers of the factors $Z_\vphi$, we obtain a semi-stable model $\M_i^{\rm ss}(\Res_{F_1/F_0}(G))$ over $\calO_K$ as guaranteed by \cite[Proposition 2.1]{Ha}.
\end{proof}
}
\quash{
\subsection{Totally ramified extension case.} Assume $F_1$ is a totally ramified extension of $F_0$. In this case, the splitting (\ref{eq unramified Mloc}) fails. To construct the semi-stable resolution of $\Mloc_I(\Res_{F_1/F_0}(G))$, we need to introduce the Rapoport (naive) splitting model $\M^{\rm nspl}_I(\Res_{F_1/F_0}(G))$ in \cite[\S 14]{PR2}. It is a projective scheme over $\Spec \calO_{F_1^\Gal}$ that represents a rigidified version of the moduli problem defining $\Mloc_I(\Res_{F_1/F_0}(G))$. The splitting model $\Mspl_I(\Res_{F_1/F_0}(G))$ is defined as the scheme-theoretic closure of its generic fiber inside the naive splitting model. In general, there exists a forgetful morphism 
\begin{equation}\label{eq splitting to local}
\tau: \M^{\rm nspl}_I(\Res_{F_1/F_0}(G))\rightarrow \Mnaive_I(\Res_{F_1/F_0}(G))\otimes_{\calO_E} \calO_{F_1^\Gal}, \end{equation}
which  restricts to an isomorphism on generic fibers. Taking scheme-theoretic closures of the common generic fiber on both sides, one obtains that the local model  $\Mloc_I(\Res_{F_1/F_0}(G))$ is the scheme-theoretic image of the restriction of $\tau$ to the splitting model $\M^{\rm spl}_I(\Res_{F_1/F_0}(G))$.

\begin{remark}
For certain reductive groups, including $G=\GL_g$ and $G=\GSp_{2g}$, Pappas--Rapoport showed that the naive splitting model is flat over $\calO_{F_1^\Gal}$. This property does not extend to orthogonal groups: when $G=\GO(V,(\ , \ ))$, the naive splitting model generally fails to be flat over $\calO_{F_1^\Gal}$.
\end{remark}

A key structural observation is that $\M^{\rm nspl}_I(\Res_{F_1/F_0}(G))$ admits an interpretation as a twisted product of   naive local models $\Mnaive_I(G)$ over $\calO_{F_1^\Gal}$. This yields a diagram of morphisms of schemes:
\begin{equation}
\begin{tikzcd}[column sep=1em, row sep=0.6em]
&  \arrow[ld, "p_I"']\tilde{\M}^{\rm nspl}_I\arrow[rd, "q_I"]&\\
\M^{\rm nspl}_I(\Res_{F_1/F_0}(G))   &&   \prod_{\ell=1}^e \Mnaive_I(G),
\end{tikzcd}
\end{equation}
in which the slanted arrow $p_I$ is a torsor under the product of Bruhat--Tits group schemes $\mathcal{H}'=\prod_{\ell=2}^e \mathcal{H}_I^{(\ell)}$, and $q_I$ is a smooth, $\mathcal{H}'$-equivariant morphism {\color{blue}(see Proposition \ref{prop-splitting})}. Here $\Mnaive_I(G)$ are unramified naive local models over $\calO_{F_1^\Gal}$. Since the immersion  $i:\prod_{\ell=1}^e \M^\pm_I(G)\rightarrow \prod_{\ell=1}^e \Mnaive_I(G)\otimes \calO_K$ is also a $\mathcal{H}'$-equivariant morphism, we have a linear
modification which supports a new local model diagram:
\begin{equation}\label{eq local model diagram spl}
\begin{tikzcd}[column sep=1em, row sep=0.6em]
&  \arrow[ld, "p_I"']\tilde{\M}^{\rm spl}_I\arrow[rd, "q_I"]&\\
\M^{\rm spl}_I(\Res_{F_1/F_0}(G))   &&   \prod_{\ell=1}^e \M^\pm_I(G),
\end{tikzcd}
\end{equation}
where $p_I$ is a  $\mathcal{H}'$-torsor and $q_I$ is a smooth $\mathcal{H}'$-equivariant morphism.

\begin{prop}\label{prop totally ramified semi-stable}
Suppose $F_1$ is totally ramified over $F_0$, and let $I=\{i \}$ with $1\leq i\leq \lfloor g/2\rfloor$. After base change to $\calO_K$, there exists a projective
$\calG_{i}$-equivariant morphism
\[
\al: \M_i^{\rm ss}(\Res_{F_1/F_0}(G))\rightarrow \Mloc_i(\Res_{F_1/F_0}(G))\otimes \calO_K,
\]
which is an isomorphism on the generic fibers and whose source is semi-stable over $\calO_K$.
\end{prop}

\begin{proof}
For the product scheme $\prod_{\ell=1}^e \M^\pm_I(G)$ in (\ref{eq local model diagram spl}), there exists a semi-stable resolution 
\[
\pi: Z\rightarrow \prod_{\ell=1}^e \M^\pm_i(G)
\]
by Proposition \ref{prop s.s. unramified}. Here $Z$ is obtained by a succession of blow-ups along the products of Schubert varieties in $\M^\pm_i(G)$. Since Schubert varieties are stable under the action of $\mathcal{H}'$, the morphism  $\pi$ is $\mathcal{H}'$-equivariant. By using the linear modification \cite[\S 2]{P}, we obtain the following cartesian diagrams:
\begin{equation}
\begin{array}{ccccc}
   \M^{\rm ss}_i & \xleftarrow{p'} &
\tilde{Z} & \xrightarrow{q'} &  Z\\
\Bigg\downarrow{\al} &&\Bigg\downarrow \beta &&\Bigg\downarrow{\pi}\\  
\M^{\rm spl}_i(\Res_{F_1/F_0}(G)) &\xleftarrow{p} &
\tilde{\M}^{\rm spl}_i & \xrightarrow{q} &  \prod_{\ell=1}^e \M^\pm_i(G),
\end{array}
\end{equation}
where $p$ (resp. $p'$) is $\calH'$-torsor, and $q$ (resp. $q'$) is smooth, $\calH'$-equivariant. In fact, since blowing-up commutes with \'etale localization,
$\M^{ss}_i$ is the succession blow-ups of $\M^{\rm spl}_i(\Res_{F_1/F_0}(G))$.

We claim that the morphism $\al: \M_i^{ss}\rightarrow \M^{\rm spl}_i(\Res_{F_1/F_0}(G)) $ is $\calG_i$-equivariant. {\color{blue}Indeed, Proposition \ref{prop-splitting} shows that the splitting diagram
\eqref{eq local model diagram spl} is $\calG_i$-equivariant; in particular,
both $p$ and $q$ are $\calG_i$-equivariant.} 
From the diagram morphism $\calG_i\rightarrow \prod_{\ell=1}^e \calG_i^{(\ell)}$, 
the factorwise resolutions and
Hartl's construction are equivariant for the $\calG_i$-actions on the unramified
factors. Thus, the morphism $\pi$ is $\calG_i$-equivariant. Composing the $\calG_i$-equivariant morphisms with $p$ and $q$, we obtain the desired projective $\calG_i$-equivariant morphism $\al: \M_i^{\rm ss}(\Res_{F_1/F_0}(G))\rightarrow \Mloc_i(\Res_{F_1/F_0}(G))\otimes \calO_K$, which is an isomorphism on the generic fiber.
\end{proof}

\subsection{Finite separable extension case.} We now treat the general case where $F_1/F_0$ is an arbitrary finite separable extension. The main theorem of this section is stated as follows.

\begin{thm}\label{thm ss Weil ortho}
Suppose $F_1$ is finite separable over $F_0$. Assume that $I=\{i \}$ with $1\leq i\leq \lfloor g/2\rfloor$. After base change to $\calO_K$, there exists a projective
$\calG_{i}$-equivariant morphism
\[
\M_i^{\rm ss}(\Res_{F_1/F_0}(G))\rightarrow \Mloc_i(\Res_{F_1/F_0}(G))\otimes \calO_K,
\]
which is an isomorphism on the generic fibers and whose source is semi-stable over $\calO_K$.    
\end{thm}

\begin{proof}
Let $F_2$ be the unique maximal unramified extension of $F_0$ in $F_1$, so that $F_1/F_2$ is totally ramified. Set $G':=\Res_{F_1/F_2}G$. Adapting the argument for the unramified case in (\ref{eq unramified Mloc}), we obtain the following identity after base change to $\calO_K$,
\[
\begin{array}{c}
\Mloc_i(\Res_{F_1/F_0}(G))\otimes \calO_K = \Mloc_i(\Res_{F_2/F_0}(G'))\otimes \calO_K\\
\simeq\prod_\vphi \Mloc_i(G')\otimes_{\vphi} \calO_{K},
\end{array}
\]
where the product runs through the set of $F_0$-embeddings $\vphi: F_2\rightarrow F_1^\Gal$. Since $F_1/F_2$ is totally ramified, Proposition \ref{prop totally ramified semi-stable} applies to each embedding $\varphi$. This yields a projective $\calG_i$-equivariant morphism $\al_\vphi: Z_\vphi\rightarrow \Mloc_i(G')\otimes_{\vphi} \calO_{K}$, where $Z_\vphi$ is semi-stable over $\calO_K$ and $\al_\vphi$ is an isomorphism on the generic fibers. Finally, we apply Hartl’s construction method again \cite[Proposition 2.1]{Ha}.  By successively blowing-up products of all irreducible components of the special fiber of $Z_\vphi$, we can get a semi-stable model $\M_i^{\rm ss}(\Res_{F_1/F_0}(G))$ over $\calO_K$. This completes the proof.
\end{proof}
}

\quash{
\section{Applications to orthogonal integral models}\label{ShimuraVarSec}

We briefly recall the integral models of type $D$ following
\cite[\S6]{Yang25} in the split case and \cite[\S8.1]{YZZ26} in the
quasi-split case. For more details we refer to these references. We put
\[
g=
\begin{cases}
n, & \text{in the split case},\\
n+1, & \text{in the quasi-split case}.
\end{cases}
\]
Let $(\bB,*)$ be a definite quaternion algebra over $\BQ$ equipped with a
positive involution $ * $, and let $(\bV,\pair{\ ,\ })$ be a free
$\bB$-module of rank $g$, endowed with a $\BQ$-valued perfect alternating
skew-Hermitian form. Let $\bG=\bG(\bB,*,\bV,\pair{\ ,\ })$ be the associated reductive group over $\BQ$, and let
$\bh:\Res_{\BC/\BR}\BG_m\to \bG_\BR$ define a Hodge structure of type
$\{(-1,0),(0,-1)\}$ on $\bV_\BR$. Then
$(\bB,*,\bV,\pair{\ ,\ },\bh)$ is a PEL-datum of type $D_g$.

Let $p$ be an odd prime and assume that
\begin{enumerate}
\item[(A1)] $B:=\bB\otimes_\BQ \BQ_p \simeq M_2(\BQ_p)$.
\end{enumerate}
Let $V=\bV_{\BQ_p}=V_1\oplus V_2$ be the Morita decomposition and denote by
$H_m$ the unit anti-diagonal matrix of size $m$. 
Assume moreover
that
\begin{enumerate}
\item[(A2)] there exists a $\BQ_p$-basis of the first Morita component $V_1$
such that the induced symmetric form $(\ ,\ )$ on $V_1$ is represented by
$H_{2g}$ in the split case, and by
\[
        \begin{pmatrix}
        H_{2g-2} & \\
        & \begin{pmatrix}p&\\&1\end{pmatrix}
        \end{pmatrix}
\]
in the quasi-split case.
\end{enumerate}

Denote by $\CO_B=M_2(\BZ_p)$, and let $\sL$ be the self-dual
$\CO_B$-lattice chain in $V=\bV_{\BQ_p}$ whose first Morita component is the standard
self-dual $\BZ_p$-lattice chain $\Lambda_I$ in $V_1$, for some
non-empty subset $I\subset [0,n]$, with the notation of
\S\ref{orthogonal-local-models}. Let $\calG$ be the corresponding affine smooth
group scheme of similitude automorphisms of $\sL$, and set
\[
\bK_p:=\calG(\BZ_p)\subset \bG(\BQ_p).
\]
Applying the results of \S\ref{orthogonal-local-models} to the case
$F_0=\BQ_p$ and $F=\BQ_p(\sqrt{-p})$, we obtain the spin local model
\[
\RM_I^\pm \longrightarrow \Spec \CO_E,
\]
which is flat, normal, Cohen--Macaulay, and has reduced special fiber. Here $E=\BQ_p$ in the split case and $E=\BQ_p(\sqrt{-p})$ in the quasi-split case.  As in
\cite[\S8.1]{YZZ26} and \cite[\S6]{Yang25}, one defines the corresponding
integral moduli space $\CA_{\bK}^\pm$ of type $D$ by imposing the orthogonal
spin condition on the first Morita component. Equivalently, for any sufficiently
small open compact subgroup $\bK^p\subset \bG(\BA_f^p)$, setting
$\bK=\bK_p\bK^p$, one has a local model diagram over $\CO_E$
\[
\CA_\bK^\pm \xleftarrow{\alpha'_\bK}\wt{\CA}_\bK^\pm
\xrightarrow{\beta'_\bK} \RM^\pm_{I},
\]
where $\alpha'_\bK$ is a $\calG$-torsor and $\beta'_\bK$ is $\calG$-equivariant
and smooth of relative dimension $\dim \bG$. Equivalently, there is a relatively
representable smooth morphism of algebraic stacks
\[
\phi:\CA_\bK^\pm \longrightarrow
[\RM^\pm_{I}/\calG]
\]
of relative dimension $\dim \bG$.

\begin{corollary}[{\cite[Corollary 6.9]{Yang25}, \cite[Corollary 8.3]{YZZ26}}]\label{coro-AKflat}
The moduli space $\CA_\bK^\pm$ of type $D$ is a normal, Cohen--Macaulay,
flat $\CO_E$-scheme with reduced special fiber.
\end{corollary}


We now assume that $I=\{i\}$ is (pseudo-)maximal parahoric, with $1\le i\le \lfloor n/2\rfloor$,
and let $\bK_i=\bK_i'\bK^p$, where $\bK_i'=\calG_i(\BZ_p)$ is the stabilizer of the
standard lattice chain $\Lambda_{\{i\}}$. Denote by $\RM^\pm_{i,\CO_F}$ (resp. $\CA^\pm_{\bK,\CO_F}$) the base change $\RM^\pm_{i}\otimes_{\CO_E}\CO_F$ (resp. $\CA^\pm_{\bK,\CO_F}$). With the notation of
Theorems~\ref{thm:qs-semistable-resolution} and
\ref{thm:split-semistable-resolution}, let
\[
        \RM_{i,\CO_F}^\pm=Z_0
        \xleftarrow{b_1}
        Z_1
        \xleftarrow{b_2}
        \cdots
        \xleftarrow{b_i}
        Z_i
\]
be the sequence of blow-ups along the Schubert varieties
$S_0,\ldots,S_{i-1}$. Thus $b_1$ is the blow-up of
$Z_0=\RM_{i,\CO_F}^\pm$ along $S_0$, and, for $2\leq \ell\leq i$, the morphism
\[
        b_\ell:Z_\ell\longrightarrow Z_{\ell-1}
\]
is the blow-up along the strict transform of $S_{\ell-1}$. Set
$b=b_1\circ\cdots\circ b_i$. Since the centers $S_\ell$ are
$\calG_i^\circ$-stable, the $\calG_i^\circ$-action on $\RM_i^\pm$ lifts to an
action on $Z_i$, and $b$ is $\calG_i^\circ$-equivariant, projective, and an
isomorphism on generic fibers. In particular, $b$ is a linear modification in
the sense of \cite{P}.

Form the cartesian diagram
\[
\begin{matrix}
\CA_{\bK_i}^{\rm reg} & \longrightarrow &
[Z_i/\calG_i^\circ] \\
\Big\downarrow && \Big\downarrow \\
\CA_{\bK_i,\CO_F}^\pm & \longrightarrow &
[\RM_{i,\CO_F}^\pm/\calG_i^\circ].
\end{matrix}
\]
Then $\CA_{\bK_i}^{\rm reg}\to\CA_{\bK_i,\CO_F}^\pm$ is a linear modification of
$\CA_{\bK_i}^\pm$. Let
\[
        \lambda_i:\CA_{\bK_i,\CO_F}^{\pm}\longrightarrow
        [\RM_{i,\CO_F}^\pm/\calG_i^\circ]
\]
be the morphism induced by the local model diagram, and let $\lambda_{i,k}$
denote its restriction to the special fiber. Following the standard definition of
the Kottwitz--Rapoport stratification via the local model diagram
\cite[\S3.2]{HeR}, we define
\[
        \CA_{\bK_i}^{\pm}(\ell)
        :=
        \lambda_{i,k}^{-1}
        \bigl([M_i^\pm(\ell)/\calG_{i,k}^\circ]\bigr)
        \subset \CA_{\bK_i,k}^{\pm}, \qquad 0\leq \ell < i.
\]
These are the Kottwitz--Rapoport strata corresponding to the Schubert cells
$M_i^\pm(\ell)\subset \RM_{i,k}^\pm$ defined in \S \ref{orthogonal-local-models}. Since
$S_\ell=\overline{M_i^\pm(\ell)}$, the Schubert varieties $S_\ell$ correspond
to the closures $\overline{\CA_{\bK_i}^{\pm}(\ell)}
        \subset \CA_{\bK_i,k}^{\pm}$. Consequently, since blowing up commutes with flat base change, the morphism
\[
        \CA_{\bK_i}^{\rm reg}\longrightarrow \CA_{\bK_i,\CO_F}^{\pm}
\]
has the following description in terms of the Kottwitz--Rapoport stratification.
Let
\[
        \CA_{\bK_i,\CO_F}^{\pm}=Y_0
        \xleftarrow{c_1}
        Y_1
        \xleftarrow{c_2}
        \cdots
        \xleftarrow{c_i}
        Y_i
\]
be the sequence in which $c_1$ is the blow-up of $Y_0=\CA_{\bK_i,\CO_F}^{\pm}$
along $\overline{\CA_{\bK_i}^{\pm}(0)}$, and, for
$2\leq \ell\leq i$, the morphism
\[
        c_\ell:Y_\ell\longrightarrow Y_{\ell-1}
\]
is the blow-up along the strict transform of
$\overline{\CA_{\bK_i}^{\pm}(\ell-1)}$. Then $\CA_{\bK_i}^{\rm reg}= Y_i$.

\begin{corollary}\label{coro-AKsemistable}
For every $I=\{i\}$ with $1\le i\le \lfloor n/2\rfloor$, the linear modification
$\CA_{\bK_i}^{\rm reg}$ has semi-stable reduction over $\CO_F$. In particular,
$\CA_{\bK_i}^{\rm reg}$ is regular and its special fiber is a reduced divisor with
normal crossings.
\end{corollary}
\begin{proof}
This follows from the above discussion and from
Theorem~\ref{thm:qs-semistable-resolution} in the quasi-split case, respectively
Theorem~\ref{thm:split-semistable-resolution} in the split case.
\end{proof}
\begin{remark}\label{rem:PZ}
\begin{enumerate}
    \item Note that for $I=\{0\}$, the model $\CA_{\bK_0}^\pm$ is already smooth.
In the quasi-split case this is a case of ``exotic'' good reduction while in the split case it is the usual hyperspecial
case; see Theorem~\ref{thm:max-schubert}(1).
    \item Similar results can be obtained for corresponding Rapoport-Zink formal
schemes (see \cite[\S 8.2]{YZZ26}).
\end{enumerate}
\end{remark}
}

\part{The Symplectic case}\label{part-symp}
\section{Symplectic local models}\label{symplectic-local-models}

In this section, we recall the definition of the symplectic local models
following \cite{Goertz03}. Let $F_0$ be a complete discretely valued field with ring of integers
$\mathcal{O}_{F_0}$, uniformizer $\pi_0$, and residue field $k$ of
characteristic $p>2$. 
Let $V=F_0^{2g}$ with ordered basis $e_1,\ldots,e_{2g}$. We equip $V$
with the standard symplectic form $\langle\ ,\ \rangle$ whose matrix with
respect to this basis is
\[
J_{2g}:=
\begin{pmatrix}
0&H_g\\
-H_g&0
\end{pmatrix},
\]
where $H_g$ denotes the unit anti-diagonal matrix of size $g$. Set $G:=\GSp(V,\langle\ ,\ \rangle).$
For $0\leq i\leq g$, define the standard lattices
\[
\Lambda_i:=
\mathcal{O}_{F_0}
\langle
\pi_0^{-1}e_1,\ldots,\pi_0^{-1}e_i,
e_{i+1},\ldots,e_{2g}
\rangle.
\]
For any lattice $\Lambda\subset V$, let
$\Lambda^\vee:=\{x\in V\mid \langle x,\Lambda\rangle\subset
\mathcal{O}_{F_0}\}$. For every non-empty subset $I\subset[0,g]$, define the associated standard
self-dual periodic lattice chain by
\[
\Lambda_I:=\{\Lambda_\ell\}_{\ell\in 2g\mathbb{Z}\pm I},
\]
where
\[
\Lambda_{-i}:=\Lambda_i^\vee
\qquad\text{and}\qquad
\Lambda_\ell:=\pi_0^{-d}\Lambda_{\pm i}
\quad\text{for }\ell=2gd\pm i.
\]

Let $\calG_I$ be the affine smooth group scheme of symplectic similitude
automorphisms of the lattice chain $\Lambda_I$. Thus $\calG_I$ is the
Bruhat--Tits group scheme attached to the corresponding parahoric subgroup
of $G(F_0)$.

\begin{defn}\label{def:symp-local-model}
The \emph{symplectic local model} $\Mloc_I$ is the projective scheme over
$\mathcal{O}_{F_0}$ representing the functor sending an
$\mathcal{O}_{F_0}$-algebra $R$ to the set of $R$-modules
$(\mathcal{F}_\Lambda)_{\Lambda\in\Lambda_I}$ such that:
\begin{itemize}
\item[(LM1)] for every $\Lambda\in\Lambda_I$, Zariski locally on $\Spec R$,
$\mathcal{F}_\Lambda$ is a direct summand of $\Lambda_R:=\Lambda\otimes_{\mathcal{O}_{F_0}}R$ of rank $g$;

\item[(LM2)] for every $\Lambda\in\Lambda_I$, one has $(\langle\ ,\ \rangle\otimes 1)
(\mathcal{F}_\Lambda,\mathcal{F}_{\Lambda^\vee})=0$;

\item[(LM3)] for every inclusion $\Lambda\subset\Lambda'$ in $\Lambda_I$,
the natural map $\Lambda_R\to\Lambda'_R$ sends
$\mathcal{F}_\Lambda$ to $\mathcal{F}_{\Lambda'}$, and the isomorphism $\Lambda_R\xrightarrow{\sim}(\pi_0\Lambda)_R$
identifies $\mathcal{F}_\Lambda$ with $\mathcal{F}_{\pi_0\Lambda}$.
\end{itemize}
\end{defn}

The generic fiber of $\Mloc_I$ is the Lagrangian Grassmannian
$\mathrm{LGr}(g,V)
\simeq
G_{F_0}/P_\mu$, where $P_\mu$ is the parabolic subgroup associated to the minuscule
cocharacter $\mu=(1^{(g)},0^{(g)})$. 

\begin{thm}[{\cite[Theorem~2.1]{Goertz03}}]\label{thm:symp-flat}
For every non-empty $I\subset[0,g]$, the symplectic local model
$\Mloc_I$ is flat over $\mathcal{O}_{F_0}$ of relative dimension
$\frac{g(g+1)}{2}$ and canonical in the sense of Scholze--Weinstein \cite[\S 21.4]{scholze2020berkeley}. Moreover, $\Mloc_I$ is normal and Cohen–Macaulay with reduced special fiber.
\end{thm}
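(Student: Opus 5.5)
This statement is attributed to \cite[Theorem~2.1]{Goertz03}, and I would assemble it from that and from the canonicity results recalled in \S\ref{section-canonicalloc}. First I would treat the generic fiber. There $\pi_0$ is invertible, so all lattices in $\Lambda_I$ become equal to $V$ after inverting $\pi_0$. Condition (LM3) then forces all $\mathcal{F}_\Lambda$ to coincide, and (LM2) says this common subspace is Lagrangian. Hence $\Mloc_I\otimes F_0\simeq \mathrm{LGr}(g,V)$, which is smooth of dimension $g(g+1)/2$.

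Second, I would prove flatness, which is the main obstacle. The aim is to show the special fiber is contained in the closure of the generic fiber. Following G\"ortz, I would embed $\Mloc_{I,k}$ into the affine flag variety of $\mathrm{GSp}_{2g}$ over $k((t))$. There its reduced locus becomes a union of affine Schubert cells indexed by the $\mu$-admissible set $\mathrm{Adm}_I(\mu)$. Equality of this $\mu$-permissible set with the admissible set for minuscule $\mu$ in type $C$ (Kottwitz--Rapoport, Haines--Ngô) shows that every torus-fixed point lies in the closure of the generic fiber. I would check this by lifting explicit $T$-fixed points, i.e.\ $\mathcal{O}_{F_0}[\pi_0]$-lines spanned by basis vectors, to $\mathcal{O}_{F_0}$-points, much as in the topological flatness argument of Proposition~\ref{prop flat R(d,l)} via \cite[Lemma 2]{Goertz05}. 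Topological flatness then follows because each Schubert cell is a single orbit of the $\calG_{I,k}$-action, so it suffices to check one point per cell.

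Third, I would upgrade topological flatness to flatness together with reducedness. The special fiber is a union of Schubert varieties in a partial affine flag variety, and these are reduced, normal and Cohen--Macaulay (Faltings, Pappas--Rapoport). The remaining task is to identify the scheme-theoretic special fiber with the reduced union. I would do this with a Frobenius-splitting or standard-monomial argument on the opposite big cell, comparing Hilbert polynomials with the generic fiber. Given flatness and a reduced special fiber, normality follows from Serre's criterion together with \cite[Proposition 9.2]{PZ}, since the generic fiber is smooth. Cohen--Macaulayness lifts from the special fiber, because $\pi_0$ is a regular element.

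Finally, canonicity is obtained from the uniqueness part of Theorem~\ref{thm-repre}. $\Mloc_I$ is flat, projective and normal with reduced special fiber, and its associated v-sheaf agrees with $\CM_{\calG_I,\mu}$. To see this agreement, I would note that both are v-closures of the same generic fiber inside $\Gr_{\calG_I}$, so they coincide. Hence $\Mloc_I\simeq \RM^\loc_{\calG_I,\mu}$.
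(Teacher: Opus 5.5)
The paper gives no proof of this statement; it just quotes \cite[Theorem~2.1]{Goertz03}. Several of your steps are fine:
the generic-fiber computation;
normality via \cite[Proposition~9.2]{PZ};
canonicity, once you produce the closed immersion of the diamond of $\Mloc_I$ into $\Gr_{\calG_I}$ via $B_{\rm dR}^+$-lattice chains;
and topological flatness, once phrased correctly. For the last, the cells in $\Mloc_{I,k}$ are a priori indexed by the $\mu$-permissible set, and $\mathrm{Perm}(\mu)=\mathrm{Adm}(\mu)$ by Kottwitz--Rapoport. You then only need to lift the translation points $t_\lambda$, $\lambda\in W\mu$, because the special fiber of the flat closure is closed and $\calG_{I,k}$-stable.

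However, the step carrying all the content is not supplied. You must show that the \emph{scheme} $\Mloc_{I,k}$ is reduced. Comparing Hilbert polynomials with the generic fiber presupposes that you know the Hilbert polynomial of $\Mloc_{I,k}$ itself, i.e.\ that its moduli equations generate the ideal of the reduced union, which is exactly the claim. Moreover, a Hilbert polynomial is global, so ``on the opposite big cell'' does not fit. One way to supply this step, along the lines of G\"ortz's proof, is as follows. For $I=\{i\}$, on the chart of Proposition~\ref{prop:symp-affine-chart} the special fiber is $\Spec k[X]/(XJ_{2i}X^t,X^tJ_{2i}X)$ times an affine space. De Concini's basis of doubly symplectic standard tableaux \cite{Con} shows this ring is reduced and irreducible; this is the same mechanism as in Lemma~\ref{newbasis} and Proposition~\ref{IntegralDomainSymplectic}. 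For general $I$, inside the partial affine flag variety one has scheme-theoretically $\Mloc_{I,k}=\bigcap_{i\in I}p_i^{-1}(\Mloc_{\{i\},k})$, where the $p_i$ are the smooth projections. Each term is then a Schubert variety, and compatible Frobenius splitting of all Schubert varieties makes this scheme-theoretic intersection reduced. Flatness then follows. The ideal $\mathcal I$ of $\pi_0$-power torsion lies in $\pi_0\mathcal{O}_{\Mloc_I}$, because $\Mloc_{I,k}$ is reduced and set-theoretically equal to the special fiber of the flat closure. Hence $\mathcal I=\pi_0\mathcal I$, and $\mathcal I=0$ by Nakayama.

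Second, the Cohen--Macaulay step fails for non-maximal $I$. You deduce that $\Mloc_{I,k}$ is Cohen--Macaulay because each Schubert variety is. But a union of Cohen--Macaulay varieties need not be Cohen--Macaulay: two planes in $\mathbb A^4$ meeting in a point are not. For non-maximal $I$ the special fiber is in general reducible; at Iwahori level it has $2^g$ components. Cohen--Macaulayness of the union of the Schubert varieties indexed by $\mathrm{Adm}(\mu)$ is a separate, nontrivial theorem (due to X.~He), which you would have to invoke rather than derive. For $I=\{i\}$, the case used in the rest of the paper, your argument does go through, since $\Mloc_{i,k}$ is then a single Schubert variety by Theorem~\ref{thm:symp-max-schubert}.
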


We now restrict to maximal parahoric level. For $0\leq i\leq g$, set
\[
\Mloc_i:=\Mloc_{\{i\}}.
\]

\begin{defn}\label{def:symp-rank-strata}
Let $\iota:\Lambda_i\longrightarrow \pi_0^{-1}\Lambda_{-i}=\Lambda_{2g-i}$
denote the natural inclusion map and its base change. For an integer $\ell$,
denote by $
\Mloc_i(\ell)\subset \Mloc_{i,k}$
the locus where $\iota(\mathcal{F}_i)$ has rank $\ell$. 
\end{defn}

\begin{thm}\label{thm:symp-max-schubert}
Let $I=\{i\}\subset[0,g]$.
\begin{altenumerate}
\item The local models $\Mloc_0$ and $\Mloc_g$ are smooth over
$\mathcal{O}_{F_0}$.

\item The local model $\Mloc_i$ is isomorphic to $\Mloc_{g-i}$.

\item Assume that $i\neq 0,g$. The scheme $\Mloc_{i,k}$ is irreducible and reduced. Also, there exists a stratification
\begin{equation}\label{SymplecticStratification}
 \Mloc_{i,k}
=
\coprod_{\ell=\max\{0,2i-g\}}^i
\Mloc_i(\ell),
\end{equation}
where each stratum $\Mloc_i(\ell)$ is a single Schubert cell. Moreover,
$\Mloc_i(\ell')$ is contained in the Zariski closure
$\overline{\Mloc_i(\ell)}$ if and only if $\ell'\leq \ell$. Consequently,
$\Mloc_{i,k}$ contains precisely $\min\{i,g-i\}+1$
Schubert cells.
\end{altenumerate}
\end{thm}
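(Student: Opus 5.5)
This theorem is essentially a compilation of known results on the symplectic local models of Görtz, so the plan is to reduce each part to a concrete statement about lattices and then cite or check it.

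\textbf{Part (1).} For $i=0$ the chain consists of $\pi_0^{d}\Lambda_0$ with $\Lambda_0^\vee=\Lambda_0$. By (LM3) the datum is a single $\mathcal{F}_{\Lambda_0}\subset \Lambda_{0,R}$ that is Lagrangian for the perfect pairing $\langle\ ,\ \rangle$. Hence $\Mloc_0$ is the Lagrangian Grassmannian $\mathrm{LGr}(g,2g)$ over $\mathcal{O}_{F_0}$, which is smooth. For $i=g$ one has $\Lambda_g^\vee=\pi_0\Lambda_g$, so $\Lambda_g$ is self-dual up to the scalar $\pi_0^{-1}$. The same argument, using the rescaled form $\pi_0\langle\ ,\ \rangle$, which is perfect on $\Lambda_g$, again identifies $\Mloc_g$ with $\mathrm{LGr}(g,2g)$.

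\textbf{Part (2).} Choose $h\in G(F_0)$ such that $h\Lambda_\ell=\Lambda_{\ell+g}$ for all $\ell$, up to a scalar. A concrete choice is $h(e_j)=\pi_0^{-1}e_{j+g}$ for $j\le g$ and $h(e_j)=e_{j-g}$ for $j>g$, adjusted by signs so that it is a symplectic similitude with multiplier $\pi_0^{-1}$. Then $h$ carries the chain $\Lambda_{\{i\}}=\{\Lambda_\ell\}_{\ell\in 2g\mathbb{Z}\pm i}$ onto the chain indexed by $2g\mathbb{Z}\pm i+g=2g\mathbb{Z}\pm(g-i)$, which is $\Lambda_{\{g-i\}}$. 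Transporting $(\mathcal{F}_\Lambda)\mapsto (h\mathcal{F}_\Lambda)$ preserves (LM1)--(LM3), since duality is respected up to the scalar multiplier. This gives $\Mloc_i\simeq\Mloc_{g-i}$.

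\textbf{Part (3).} Irreducibility and reducedness of the special fiber are taken from Görtz \cite{Goertz03}, where the special fiber is embedded in the affine flag variety for $\GSp_{2g}$ and identified with the union of the Schubert cells indexed by the $\mu$-admissible set $\mathrm{Adm}_{\{i\}}(\mu)$. For maximal parahoric level $\{i\}$ the admissible set is $W_{\{i\}}\backslash W_{\{i\}}\,\mathrm{Adm}(\mu)\,W_{\{i\}}/W_{\{i\}}$. The key step is to show that its elements are classified by the single invariant $\ell=\operatorname{rank}\iota(\mathcal{F}_i)$, and that this is totally ordered by Bruhat order.

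\emph{Invariance.} The rank of $\iota(\mathcal{F}_i)$ is $\calG$-invariant, so each Schubert cell lies in a single $\Mloc_i(\ell)$.

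\emph{Transitivity.} To see that $\Mloc_i(\ell)$ is one orbit, I would normalize a point. The image $\iota(\Lambda_i\otimes k)\subset \Lambda_{2g-i}\otimes k$ is identified with a $2g-2i$-dimensional quotient, and the kernel of $\iota$ on $\Lambda_i\otimes k$ is the $2i$-dimensional space $W$ spanned by $\pi_0^{-1}e_1,\dots,\pi_0^{-1}e_i,e_{2g-i+1},\dots,e_{2g}$. Under $\calG_{\{i\},k}$, which surjects to $\mathrm{GSp}(W)\times \mathrm{GSp}(\Lambda_i\otimes k/W)$ up to a unipotent radical, a point is determined by the pair $(\mathcal{F}\cap W,\ \mathcal{F}/(\mathcal{F}\cap W))$. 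Here $\mathcal{F}\cap W$ has dimension $g-\ell$. The conditions (LM2) translate into isotropy conditions on both pieces, which forces $\ell\ge 2i-g$ and $\ell\le i$. Witt's theorem then gives transitivity within each $\ell$.

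\emph{Closure order.} Upper semicontinuity of the rank gives $\overline{\Mloc_i(\ell)}\subset\bigcup_{\ell'\le\ell}\Mloc_i(\ell')$. Conversely, one degenerates explicitly using a one-parameter family in the standard affine chart, in which the rank drops by one, to get $\Mloc_i(\ell-1)\subset\overline{\Mloc_i(\ell)}$. This yields exactly $\min\{i,g-i\}+1$ cells.

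\textbf{Main obstacle.} I expect the hardest step to be transitivity: checking carefully that the (LM2) isotropy conditions for $\mathcal{F}_i$ and $\mathcal{F}_{-i}$ reduce precisely to the stated symplectic data on $W$ and on the quotient. For that step I would rely on Görtz's explicit description of the admissible set rather than recomputing it from scratch.
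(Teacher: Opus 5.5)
Part (1) is correct, and more explicit than the paper, which simply cites G\"ortz. Parts (2) and (3), however, contain concrete mistakes.

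\textbf{Part (2).} Your explicit $h$ does not do what you claim. With $h(e_j)=\pi_0^{-1}e_{j+g}$ for $j\le g$ and $h(e_j)=e_{j-g}$ for $j>g$, one gets $h(\Lambda_0)=\mathcal{O}_{F_0}\langle e_1,\dots,e_g,\pi_0^{-1}e_{g+1},\dots,\pi_0^{-1}e_{2g}\rangle$. More generally, $h(\Lambda_i)$ has valuation pattern $(0^{(g)},(-2)^{(i)},(-1)^{(g-i)})$. None of these is of the form $\pi_0^d\Lambda_m$, because the standard lattices carry $\pi_0^{-1}$ on an initial segment of the basis, and no choice of signs changes this. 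The factor $\pi_0^{-1}$ must sit on the images of $e_{g+1},\dots,e_{2g}$. The paper takes $\tau(e_j)=e_{g+j}$ and $\tau(e_{g+j})=-\pi_0^{-1}e_j$, and checks that $\tau^tJ_{2g}\tau=\pi_0^{-1}J_{2g}$ and $\tau(\Lambda_m)=\Lambda_{m+g}$.

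\textbf{Part (3).} The linear algebra is set up incorrectly.
\begin{itemize}
\item Under $\Lambda_i\otimes k\to\Lambda_{2g-i}\otimes k$, the vectors $\pi_0^{-1}e_1,\dots,\pi_0^{-1}e_i,e_{2g-i+1},\dots,e_{2g}$ map to basis vectors. So they span a complement of the kernel, not the kernel.
\item The kernel is $K=\Lambda_{-i}/\pi_0\Lambda_i=\langle e_{i+1},\dots,e_{2g-i}\rangle$, of dimension $2g-2i$. The image is $Q=\Lambda_i/\Lambda_{-i}$, of dimension $2i$.
\item With your $W$ of dimension $2i$, the relation $g-\ell=\dim(\mathcal F\cap W)\le 2i$ gives $\ell\ge g-2i$, not $\ell\ge 2i-g$.
\end{itemize}
(LM1)--(LM2) force $\mathcal F_{-i}=\mathcal F_i^\perp$, and the correct translation is then not ``isotropy on both pieces'' but the following:
\begin{itemize}
\item the condition $\iota_2(\mathcal F_i)\subset\pi_0^{-1}\mathcal F_{-i}$ says the image $B$ of $\mathcal F_i$ in $Q$ is isotropic for $\pi_0\langle\ ,\ \rangle$, so $\ell\le i$;
\item the condition $\iota_1(\mathcal F_{-i})\subset\mathcal F_i$ says $A=\mathcal F_i\cap K$ is coisotropic in $K$, i.e.\ $A^{\perp}\subset A$, so $g-i\le g-\ell\le 2g-2i$.
\end{itemize}

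Moreover, a point is \emph{not} determined by the pair $(A,B)$. For fixed $(A,B)$ the admissible $\mathcal F_i$ form an affine space $\Hom(B,K/A)$ with no further constraint. So Witt's theorem on $K$ and $Q$ is not enough. You must also show that the unipotent radical of $\calG_{\{i\},k}$, which has dimension $2i(2g-2i)$, acts through all of $\Hom(Q,K)$. One way is to lift each $1+N$ via a Cayley transform, which works since $p>2$.

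Finally, your fallback of reading transitivity off ``G\"ortz's explicit description of the admissible set'' is not available as stated. The paper does not take the cell count from G\"ortz. It proves it in Lemma \ref{SchubertcellsNumberSymplectic}, by identifying the Iwahori--Weyl group and $\mathrm{Adm}(\mu)$ of $\GSp_{2g}$ with those of the quasi-split $\GO_{2g+2}$ and importing the count from the orthogonal case.

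\textbf{Comparison with the paper.} Once repaired, your orbit analysis is a genuinely different and more elementary route to the key step. The paper never classifies orbits. It only observes that each $\Mloc_i(\ell)$ is $\calG_{i,k}$-stable, hence a union of Schubert cells, and concludes from the cell count by pigeonhole. Your approach avoids the comparison with the orthogonal group and shows directly that every stratum with $\max\{0,2i-g\}\le\ell\le i$ is non-empty. Combined with semicontinuity and your degeneration in the affine chart, it also gives the closure relations explicitly, which the paper leaves implicit in the Bruhat-order identification.
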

\begin{proof}
The smoothness of $\Mloc_0$ and $\Mloc_g$, and the irreducibility and
reducedness of $\Mloc_{i,k}$ for $1\leq i\leq g-1$, follow from
\cite[\S5.1]{Goertz03}. For (2), let $\tau\in \GL(V)(F_0)$ be defined, with respect to the ordered basis
$e_1,\ldots,e_g,e_{g+1},\ldots,e_{2g}$, by
\[
\tau=
\begin{pmatrix}
0&-\pi_0^{-1}I_g\\
I_g&0
\end{pmatrix}.
\]
Equivalently,
\[
\tau(e_j)=e_{g+j},\qquad
\tau(e_{g+j})=-\pi_0^{-1}e_j
\qquad (1\leq j\leq g).
\]
Since
\[
\tau^tJ_{2g}\tau=\pi_0^{-1}J_{2g},
\]
we have $\tau\in\GSp(V)(F_0)$. Moreover, from the definition of the standard
lattices one checks that $\tau(\Lambda_m)=\Lambda_{m+g}$
for all $m$ and thus 
\[
\tau(\Lambda_{\{i\}})
=
\{\Lambda_{m+g}\mid m\in 2g\mathbb Z\pm i\}
=
\{\Lambda_m\mid m\in 2g\mathbb Z\pm(g-i)\}
=
\Lambda_{\{g-i\}}.
\]
Therefore we deduce that $\Mloc_i\xrightarrow{\sim}\Mloc_{g-i}.$

Next, observe that for every $g\in \calG_{i,k}$, the submodules
$\iota(\mathcal F_i)$ and $\iota(g\mathcal F_i)$ have the same rank.
Hence each $\Mloc_i(\ell)$ is $\calG_{i,k}$-stable. Under the usual embedding of $\Mloc_{i,k}$ into the corresponding partial affine
flag variety, the $\calG_{i,k}$-orbits are precisely the Schubert cells. Thus each $\Mloc_i(\ell)$ is a union of Schubert cells. Also, by definition of $ \Mloc_i(\ell)$, it follows that $\max\{0,2i-g\}\leq \ell\leq i.$ Now, from Lemma \ref{SchubertcellsNumberSymplectic} we obtain that there are precisely $ i-\max\{0,2i-g\}+1$ Schubert cells in $\Mloc_{i,k}$. Thus, each stratum $\Mloc_i(\ell)$ is a single Schubert cell. 
\end{proof}

\begin{lemma}\label{SchubertcellsNumberSymplectic}
Assume that $1\leq i\leq g-1$. There are $  i-\max\{0,2i-g\}+1$ Schubert cells in $\Mloc_{i,k}$.  
\end{lemma}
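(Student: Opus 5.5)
The count will be obtained from the standard combinatorial description of Schubert cells in the special fiber of the local model, via the admissible set. By \cite{Goertz03} (see also the Kottwitz--Rapoport description), the special fiber $\Mloc_{i,k}$ embeds into the partial affine flag variety of $\GSp_{2g}$ for the parahoric $\calG_i$. Its $\calG_{i,k}$-orbits are indexed by the $\mu$-admissible set $\mathrm{Adm}_{\{i\}}(\mu)=W_{\{i\}}\mathrm{Adm}(\mu)W_{\{i\}}$, viewed in the double coset space $W_{\{i\}}\backslash \widetilde W/W_{\{i\}}$ with $\mu=(1^{(g)},0^{(g)})$. The plan is therefore to count these double cosets.

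\textbf{Step 1 (lattice description of a cell).} Over $k$, a point of $\Mloc_{i,k}$ is a single lattice $\CL$ with
\[
\pi_0\Lambda_i\subset \CL\subset \Lambda_i,\qquad \dim_k \Lambda_i/\CL=g,
\]
subject to the self-duality condition coming from $\CF_{\Lambda_i}$ and $\CF_{\Lambda_{-i}}$. Working with the torus-fixed points, a cell is given by an element $w$ in the admissible set. Modulo $W_{\{i\}}$ on both sides, the relevant invariant of the $W_{\{i\}}$-double coset is the relative position of $\CL$ with respect to the pair $(\Lambda_i,\Lambda_{-i})$. Concretely, this is the rank $\ell$ of $\iota(\CF_i)$ in $\Lambda_{2g-i}\otimes k$, i.e.\ the dimension of the image of $\CF_i$ inside $\Lambda_i/\Lambda_{-i}$-type quotients.

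\textbf{Step 2 (invariants determine the orbit).} I will show that two fixed points with the same $\ell$ lie in the same $\calG_{i,k}$-orbit. The reductive quotient of $\calG_{i,k}$ is essentially $\GSp_{2i}\times\GSp_{2(g-i)}$ acting on $\Lambda_i/\pi_0\Lambda_{-i}$ and $\Lambda_{-i}/\pi_0\Lambda_i$. The unipotent radical moves the off-diagonal part. A point is then determined up to this action by two pieces of data:
\begin{itemize}
\item a Lagrangian-type configuration $\CF_i\subset \Lambda_{i,k}$;
\item its image $\iota(\CF_i)$, which is isotropic of dimension $\ell$.
\end{itemize}
Linear algebra (Witt's theorem for the induced symplectic forms on the two graded pieces) then shows that the only orbit invariant is $\ell$.

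\textbf{Step 3 (realized values).} I will show that the values actually occurring are exactly $\max\{0,2i-g\}\le \ell\le i$. The upper bound $\ell\le i$ comes from $\dim \Lambda_i/\Lambda_{-i}$-considerations. The lower bound comes from
\[
\dim\ker(\iota|_{\CF_i})\le \dim\ker\iota=2g-2i+\dots,
\]
which gives $\ell\ge g-(2g-2i)=2i-g$. For each such $\ell$ I will write down an explicit torus-fixed $\CL$ spanned by standard basis vectors and check that it satisfies (LM1)--(LM3). I also need to check that it lifts to, or lies in the closure of, the generic fiber; flatness is given by Theorem~\ref{thm:symp-flat}, so admissibility can be verified directly via the admissible set, e.g.\ by exhibiting $w\le t_{\mu'}$ for a suitable conjugate $\mu'$. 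Together these give exactly $i-\max\{0,2i-g\}+1$ cells.

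The main obstacle is Step 2, the injectivity of $\ell$ on orbits, since the parahoric action mixes the two graded pieces. I would try first to reduce via Theorem~\ref{thm:symp-max-schubert}(2) to $2i\le g$. Then I would compute the double cosets $W_{\{i\}}\backslash\mathrm{Adm}(\mu)/W_{\{i\}}$ combinatorially, using the known fact that the $\mu$-permissible and $\mu$-admissible sets agree for $\GSp$ \cite{Goertz03}. This reduces Step 2 to counting permissible alcove vectors modulo the finite Weyl group of the Levi, where the count by the number of coordinates equal to the "middle" value is transparent.
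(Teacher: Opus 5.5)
Your argument is correct in outline, but it takes a genuinely different route from the paper's. The paper does not classify orbits. It identifies the standard apartment of $\GSp_{2g}$ with that of the quasi-split non-split group $\GO(V')^\circ$ of dimension $2g+2$ over $k((t))$, and checks that this identification (with $f^*\chi'_j=\chi_j-\eta/2$) matches affine roots and base alcoves. This gives an isomorphism of Iwahori--Weyl groups compatible with the Bruhat order and sending $(1^{(g)},0^{(g)})$ to $\mu_+$, so the admissible sets coincide, and the count $\min\{i,g-i\}+1$ is imported from \cite{YZZ26}.

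You instead classify the $\calG_{i,k}$-orbits on $\Mloc_{i,k}$ directly by linear algebra. Your combinatorial fallback via permissible alcoves modulo $W_{\{i\}}$ is essentially the minuscule-face argument the paper mentions but does not carry out. The paper's route is short given prior work, and it transports the whole Bruhat order (hence closure relations) at once. Yours is self-contained and proves the stronger statement that each rank stratum $\Mloc_i(\ell)$, $\max\{0,2i-g\}\le \ell\le i$, is a single non-empty orbit. That is exactly what Theorem~\ref{thm:symp-max-schubert}(3) extracts from the lemma, and its pigeonhole step tacitly needs the non-emptiness you establish in Step 3. Your argument also makes the reduction to $2i\le g$ unnecessary.

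For Step 2 to become a proof, you need to make the ``Lagrangian-type configuration'' precise.
\begin{itemize}
\item The graded pieces are $K:=\ker\iota=\Lambda_{-i}/\pi_0\Lambda_i$, which is symplectic of dimension $2g-2i$, and $\Lambda_i/\Lambda_{-i}$, which is symplectic of dimension $2i$ for $\pi_0\langle\ ,\ \rangle$. The second piece is not $\Lambda_i/\pi_0\Lambda_{-i}$.
\item Use $\CF_{-i}=\CF_i^{\perp}$ and $\ker\iota_1=K^{\perp}$. Then $\iota_1(\CF_{-i})\subset(\CF_i\cap K)^{\perp_K}$, and both sides have dimension $g-2i+\ell$, so they are equal.
\item Consequently the chain conditions in (LM3) say exactly two things: $\CF_i\cap K$ is coisotropic of dimension $g-\ell$ in $K$, and $\iota(\CF_i)$ is isotropic of dimension $\ell$.
\item Witt's theorem for the Levi $G(\mathrm{Sp}_{2i}\times\mathrm{Sp}_{2g-2i})$ moves any such pair to any other.
\item The unipotent radical acts on $\Lambda_i\otimes k$ by $1+\phi$ with $\phi\in\Hom(\Lambda_i/\Lambda_{-i},K)$ arbitrary. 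It is transitive on the remaining choices of $\CF_i$, which form a torsor under $\Hom(\iota(\CF_i),K/(\CF_i\cap K))$.
\item Such data exist precisely when $\max\{0,2i-g\}\le\ell\le i$.
\end{itemize}

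Finally, no lifting check is needed in Step 3. $\Mloc_i$ is cut out by (LM1)--(LM3) and is flat (Theorem~\ref{thm:symp-flat}), so every point satisfying these conditions already lies in $\Mloc_{i,k}$.
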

One can prove the lemma by a similar argument in \cite[Corollary 4.8]{yang25TopFlat} or \cite[Corollary 4.13]{YZZ26}, using ``minuscule $G$-faces" in \cite[\S 9--10]{KR}. Here, we give an alternative proof by reducing to the corresponding result in \cite{YZZ26}. 
\begin{proof}
	Let $T\sset G$ be the maximal (split) torus consisting of diagonal matrices. We  identify the standard apartment corresponding to $T$ in the (extended) Bruhat--Tits building $\sB(G,K)$ with \begin{flalign*}
	\CA_G=X_*(T)\otimes\BR= \cbra{(r_1,\ldots,r_{2g})\in \BR^{2g}\ |\ r_1+r_{2g}=\cdots=r_{g}+r_{g+1} }\simeq \BR^{g+1}.
\end{flalign*}
Let $\eta$ denote the similitude character of $G$, and let $\chi_j\in X^*(T)$ be the character sending $\diag(x_1,\ldots,x_{2g})\in T$ to $x_j$, for $1\leq j\leq g$. Then the affine root system $\Phi_{\aff} =\Phi_{\aff}(G,T,K)$ is \begin{flalign*}
	\cbra{\pm (\chi_j- \chi_k)+\BZ, \pm(\chi_j+\chi_k-\eta)+\BZ\ |\ 1\leq j<k\leq g} \cup \cbra{\pm(2\chi_j-\eta)+\BZ\ |\ j\in [1,g]}.
\end{flalign*} 

Let $G'\coloneqq \GO(V',\psi)^\circ$ be the identity component of the (quasi-split but non-split) orthogonal similitude group attached to a symmetric space $(V',\psi)$ over $K=k((t))$ of dimension $2g+2$ considered in \cite[\S 2.1]{YZZ26}. 
Fix the maximal $K$-split torus $S$ consisting of diagonal matrices in $G'\sset \GL(V')\simeq\GL_{2g+2}$. We identify its associated standard apartment $\CA_{G'}$ with \begin{flalign*}
	X_*(S)\otimes\BR = \cbra{(r_1,r_2,\ldots,r_{2g},s)\in \BR^{2g+1}\ |\ r_1+r_{2g}=\cdots= r_g+r_{g+1}=s}\simeq \BR^{g+1}.
\end{flalign*}
Then the affine root system $\Phi_\aff' = \Phi'_\aff(G',S,K)$ is \begin{flalign*}
	\cbra{\pm\chi_j'\pm\chi_k'+\BZ\ |\ j,k\in[1,g] \text{\ and\ }j\neq k} \cup \tcbra{\pm\chi_j'+\half\BZ\ |\ j\in [1,g]},
\end{flalign*} 
where $\chi_i'\in X^*(S)$ is the character sending $\diag(x_1,\ldots,x_{2g},y,y)\in S$ to $x_iy\inverse$.
Let $T'$ be the centralizer of $S$. Then $T'$ is a maximal torus of $G'$ and \begin{flalign*}
	X_*(T')_\Gamma\simeq \cbra{(b_1,\ldots,b_g,c-b_g,\ldots,c-b_{1},c)\in\BZ^{2g+1}} \sset X_*(S)\otimes\BR\sset \BR^{2g+1}.
\end{flalign*}
Here $\Gamma\coloneqq \Gal(k((u))/K)$ where $k((u))$ is the quadratic extension of $K$ with $u^2=-t$. 
Then we have an isomorphism \begin{flalign*}
	f\colon X_*(T) &\simto X_*(T')_\Gamma, \quad (r_1,\ldots,r_{2g}) \mapsto (r_1,\ldots,r_{2g},r_1+r_{2g}),
\end{flalign*}
which induces an isomorphism of the apartments $f\colon \CA_G\simto \CA_{G'}$. 
Moreover, by construction, we have $$f^*(\chi_j')=\chi_j-\eta/2.$$ Therefore, $f$ carries the root system $$\Phi_G=\cbra{\pm(\chi_j-\chi_k), \pm(\chi_j+\chi_k)\ |\ 1\leq j<k\leq g}\cup \cbra{\pm2\chi_j\ |\ j\in [1,g]} $$ to the (relative) root system $$\Phi_{G'} = \cbra{\pm\chi_j'\pm \chi_k'\ |\ j,k\in[1,g]\text{\ and\ } j\neq k}\cup \cbra{\pm\chi_j\ |\ j\in[1,g]}.$$ Then $f$ induces an isomorphism between the (finite) Weyl groups $W(\Phi_G)\simeq W(\Phi_{G'})$. Furthermore, $f$ sends the base alcove \begin{flalign*}
	\fa_G\colon -1< 2\chi_1-\eta< 2\chi_2-\eta< \cdots< 2\chi_g-\eta<0
\end{flalign*} to the alcove  \begin{flalign*}
	\fa_{G'}\colon -1/2<\chi_1'<\chi_2'<\cdots<\chi_g'<0.
\end{flalign*} 
Hence, $f$ induces an isomorphism of Iwahori--Weyl groups \begin{flalign*}
	\wt{W}_G\simeq X_*(T)\rtimes W(\Phi_G) \simto \wt{W}_{G'}\simeq X_*(T')_\Gamma\rtimes W(\Phi_{G'}),
\end{flalign*} identifying the $\fa_G$-Bruhat order on $\wt{W}_G$ with the $\fa_{G'}$-Bruhat order on $\wt{W}_{G'}$. Since $f$ maps the minuscule cocharacter $\mu_g=(1^{(g)},0^{(g)})$ of $G$ to the (minuscule) cocharacter $\mu_+=(1^{(g)},0^{(g)},1)\in X_*(T')_\Gamma$ of $G'$. Hence the admissible subset $\Adm(\mu_g)$ for $G$ is identified with $\Adm(\mu_+)$ for $G'$.

It follows from \cite[Corollary 4.13 and Lemma 4.20]{YZZ26} that the Schubert cells in $\RM^\loc_{i,k}$ are indexed by the same set as in the quasi-split orthogonal case. This proves the lemma.  
\end{proof}

By Theorem~\ref{thm:symp-max-schubert}(2), we may assume from now on that
$0\leq 2i\leq g$. We now record the affine chart around a point
$\ast\in \Mloc_{i,k}(0)$ in the minimal stratum.

\begin{prop}\label{prop:symp-affine-chart}
Assume that $I=\{i\}$ with $1\leq i\leq \lfloor g/2\rfloor$. There is an
affine open neighborhood $U_i\subset \Mloc_i$ of a point
$\ast\in \Mloc_{i,k}(0)$ such that
\begin{equation}\label{affinechart}
  U_i
\simeq
\mathbb A_{\mathcal O_{F_0}}^{\frac{(g-2i)(g+2i+1)}{2}}
\times_{\mathcal O_{F_0}}
\Spec R_i,  
\end{equation}
where
\[
R_i
:=
\frac{\mathcal O_{F_0}[X]}
{
\left(
XJ_{2i}X^t-\pi_0J_{2i},\;
X^tJ_{2i}X-\pi_0J_{2i}
\right)
}
\]
with $X$ a $2i\times 2i$ matrix. Moreover, for $0\leq \ell<i$, if $S_\ell:=\overline{\Mloc_i(\ell)}\subset \Mloc_{i,k}$,
then $U_i\cap S_\ell$ is cut out on the $\Spec R_i$-factor by $I_\ell=(\pi_0,\wedge^{\ell+1}X)$. Equivalently,
\begin{equation}\label{Sschubertsymplectic}
U_i\cap S_\ell
\simeq
\Spec
\frac{k[X]}
{
\left(
XJ_{2i}X^t,\;
X^tJ_{2i}X,\;
\wedge^{\ell+1}X
\right)
}
\times_k
\mathbb A_k^{\frac{(g-2i)(g+2i+1)}{2}}.
\end{equation}
\end{prop}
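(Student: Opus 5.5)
The plan has three steps: compute the standard open chart of the Görtz local model around a worst point, solve the linear part of the equations, and then identify the Schubert varieties inside the chart by a rank computation plus a dimension count.

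\emph{The chart.} We follow the approach of \cite[\S5]{Goertz03}. After the symmetry of Theorem~\ref{thm:symp-max-schubert}(2), the lattice chain $\Lambda_{\{i\}}$ reduces by periodicity and duality to the pair $\Lambda_i\subset\Lambda_{2g-i}=\pi_0^{-1}\Lambda_{-i}$. Together with the transition map $\Lambda_{-i}\to\Lambda_i$, these give maps
\[
\iota\colon\Lambda_i\to\Lambda_{2g-i},\qquad \iota'\colon\Lambda_{2g-i}\cong\Lambda_{-i}\to\Lambda_i .
\]
In the standard bases these are diagonal matrices with entries $1$ or $\pi_0$, and $\iota\iota'=\iota'\iota=\pi_0$. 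A point of $\Mloc_i$ is then a pair $(\CF_i,\CF_{-i})$ of rank-$g$ summands, with $\CF_{-i}$ determined by $\CF_i$ through (LM2) as its orthogonal complement, and with the maps carrying each summand into the other. The steps are as follows.
\begin{itemize}
\item Choose $\ast$ to be the point where $\iota(\CF_i)=0$, given by a coordinate subspace adapted to the $2i$ basis vectors on which $\iota$ and $\iota'$ vanish mod $\pi_0$.
\item Take the big-cell neighbourhood in which $\CF_i$ is the graph of a $g\times g$ matrix $A$ and $\CF_{-i}$ the graph of a matrix $B$ in the corresponding coordinates.
\item Write out (LM2) and (LM3) as the matrix equations in $A$ and $B$ used by G\"ortz.
\item Decompose $A$ and $B$ into blocks according to the partition $g=i+(g-2i)+i$ of indices.
\end{itemize}

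\emph{Solving the linear part.} The equations linear in the variables let us solve for all blocks except the blocks meeting the $2i$ special indices. This leaves the $2i\times 2i$ matrix $X$, formed from the relevant blocks of $A$, together with free variables. The free variables are the entries of a symmetric-type block (coming from the Lagrangian condition on the $(g-2i)$-part) and of two off-diagonal blocks. Counting them gives
\[
\tfrac{(g-2i)(g-2i+1)}{2}+2\cdot 2i(g-2i)=\tfrac{(g-2i)(g+2i+1)}{2},
\]
which is the exponent in \eqref{affinechart}. The remaining conditions are that $\iota$ and $\iota'$ preserve the filtration and that the two summands are orthogonal. Using $\iota\iota'=\pi_0$, these become $XJ_{2i}X^t=\pi_0J_{2i}$ and $X^tJ_{2i}X=\pi_0J_{2i}$; one of the two identities expresses $B$'s block in terms of $X$, and the other is then the remaining constraint. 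This produces a closed immersion of $U_i$ into $\BA\times\Spec R_i$.

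The step I expect to be the main obstacle is upgrading this closed immersion to an isomorphism. I would argue exactly as in the proof of Theorem~\ref{thm:affine-chart}. First, show that $\Spec R_i$ is flat over $\CO_{F_0}$ with reduced special fiber. For this, copy Lemma~\ref{lemmaRingA}: after inverting $\pi_0$ and setting $Y=X/\sqrt{\pi_0}$ one gets the symplectic group, which is smooth and connected. The special fiber $k[X]/(XJX^t,X^tJX)$ is identified with the chart of $\Mloc_{i,k}$ computed by G\"ortz, hence is reduced and irreducible. Topological flatness follows from \cite[Lemma 2]{Goertz05} via an explicit lift, for instance $X=\mathrm{diag}(1,\dots,1,\pi_0,\dots,\pi_0)$ suitably arranged. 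Second, a flat scheme with generic fiber equal to that of $U_i$ must agree with the flat closure, and $U_i$ is flat by Theorem~\ref{thm:symp-flat}. Hence the immersion is an isomorphism. The $\calG_i$-translates of $U_i$ cover $\Mloc_i$ because every Schubert cell has $\ast$ in its closure by Theorem~\ref{thm:symp-max-schubert}(3), and the complement of the union of translates is closed and $\calG_i$-stable, so it would contain a closed orbit meeting $S_0$.

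\emph{The Schubert varieties.} On the chart, $\iota(\CF_i)$ modulo the free directions is represented by the matrix $X$ reduced mod $\pi_0$, up to invertible changes of coordinates. Hence the locus where $\iota(\CF_i)$ has rank $\le\ell$ is $V(\pi_0,\wedge^{\ell+1}X)$, and $S_\ell\cap U_i$ is contained in it. For equality, I would show that
\[
k[X]/(XJX^t,\;X^tJX,\;\wedge^{\ell+1}X)
\]
is reduced and irreducible of the same dimension as $S_\ell$. This amounts to an isotropic rank-variety statement: pairs of isotropic-image and isotropic-kernel conditions with rank at most $\ell$. The argument is parallel to \cite[Proposition 3.7]{Yang25}, or can be deduced from the identification of admissible sets in Lemma~\ref{SchubertcellsNumberSymplectic}. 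The dimension comes from \cite[Proposition 4.44]{Yang25}, transported through that identification. Since $S_\ell$ is reduced and irreducible and contained in an integral scheme of the same dimension, the two coincide, which gives \eqref{Sschubertsymplectic}.
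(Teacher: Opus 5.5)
Your treatment of the chart matches the paper in substance. The paper simply quotes G\"ortz's computation \cite[\S5.1]{Goertz03}, transported along $\Mloc_i\simeq\Mloc_{g-i}$, and that computation already gives an isomorphism. Your flatness upgrade is therefore harmless but redundant, especially since you invoke G\"ortz anyway for the reducedness of $k[X]/(XJ_{2i}X^t,X^tJ_{2i}X)$.

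Your bookkeeping there is off. The sum $\frac{(g-2i)(g-2i+1)}{2}+2\cdot 2i(g-2i)$ equals $\frac{(g-2i)(g+6i+1)}{2}$, not $\frac{(g-2i)(g+2i+1)}{2}$. The correct count is $\frac{(g-2i)(g-2i+1)}{2}+2i(g-2i)$, as forced by $\frac{g(g+1)}{2}-i(2i+1)$. Also, $\iota$ and $\iota'$ do not vanish mod $\pi_0$ on the same basis vectors: $\iota'$ vanishes on the $2i$ outer ones and $\iota$ on the $2g-2i$ middle ones.

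The genuine gap is in the Schubert-variety part. Your argument hinges on $R_i(\ell):=k[X]/(XJ_{2i}X^t,X^tJ_{2i}X,\wedge^{\ell+1}X)$ being reduced, and in fact you need it integral. You give no argument for this when $\ell<i$.

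Your second suggested route cannot work. Lemma~\ref{SchubertcellsNumberSymplectic} identifies the Iwahori--Weyl groups, Bruhat orders and admissible sets of $\GSp_{2g}$ and the quasi-split $\GO_{2g+2}$. That is purely combinatorial: it can transport the number of cells and their dimensions, but it says nothing about the scheme structure of a determinantal ideal in this particular ring. Your first route, ``parallel to \cite[Proposition 3.7]{Yang25}'', points to the orthogonal case, which has different quadratic equations, and does not say what carries over.

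The missing idea, which is the real content behind the paper's proof (Proposition~\ref{IntegralDomainSymplectic}), is standard monomial theory:
\begin{enumerate}
\item By \cite[Theorem 6.1]{Con}, doubly symplectic standard tableaux form a basis of $k[X]/(XJX^t,X^tJX)$.
\item Those tableaux all of whose minors have size $\le\ell$ form a basis of $R_i(\ell)$ (Lemma~\ref{newbasis}).
\item The minor $\Delta_\ell$ on rows and columns $\{1,\dots,\ell\}$ is smaller than every minor of size $\le\ell$. Prepending it therefore sends basis elements to distinct basis elements, so $\Delta_\ell$ is a non-zero-divisor.
\item On $D(\Delta_\ell)$ the rank condition forces $X=\binom{I_\ell}{U}A\,(I_\ell\ V)$ with $A$ invertible. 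The two symplectic equations then become isotropy of $\binom{I_\ell}{U}$ and of $(I_\ell\ V)$.
\item Hence $D(\Delta_\ell)\simeq \mathcal U^{\rm col}_\ell\times\GL_\ell\times\mathcal U^{\rm row}_\ell$ is smooth and irreducible of dimension $\ell(4i-2\ell+1)$.
\item The injection $R_i(\ell)\hookrightarrow R_i(\ell)_{\Delta_\ell}$ then shows $R_i(\ell)$ is a domain.
\end{enumerate}
With this in hand, the conclusion is immediate. By the closure relations in Theorem~\ref{thm:symp-max-schubert}(3), $V(\pi_0,\wedge^{\ell+1}X)$ and $U_i\cap S_\ell$ have the same underlying set, and reducedness gives equality as schemes. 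Your dimension comparison via \cite[Proposition 4.44]{Yang25} is then not needed.
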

\begin{proof}
By \cite[\S5.1]{Goertz03}, we obtain the description of the affine chart in
\eqref{affinechart}. In loc. cit. the equations are written in the range
$2i\geq g$; applying the result to the isomorphic local model
$\Mloc_{g-i}$ gives the above description. 

On this chart, the rank of $\iota(\mathcal{F}_i)$ is the rank of the matrix
$X$; see \cite[\S5.1]{Goertz03}. Hence the condition
$\wedge^{\ell+1}\bigl(\iota:\mathcal F_i\to \mathcal F_{2g-i}\bigr)=0$ is cut out on the $\Spec R_i$-factor by
$(\pi_0,\wedge^{\ell+1}X)$. Using Theorem~\ref{thm:symp-max-schubert} and Proposition~\ref{IntegralDomainSymplectic} we obtain \eqref{Sschubertsymplectic}.
\end{proof}

For $0\leq \ell\leq i$, define the quotient ring
\[
R_i(\ell) := \frac{k[X]}
{
\left(
XJ_{2i}X^t,\;
X^tJ_{2i}X,\;
\wedge^{\ell+1}X
\right)
}.
\]
Denote by
\[
        \Delta_\ell:=(\ell,\ell-1,\ldots,1\mid 1,2,\ldots,\ell)
\]
the minor of $X$ whose underlying row and column
subsets are both $\{1,\ldots,\ell\}$, written in the convention for
symplectic minors. As in \cite[\S 4 Example]{Goertz03}, this is a doubly
admissible minor.
Recall from \cite{Con} the order on subsets used in the definition of symplectic standard tableaux:
if
\[
A_0=\{a_1<\cdots<a_p\},\qquad B_0=\{b_1<\cdots<b_q\}
\]
are subsets of $\{1,\ldots,i\}$, then
\[
A_0\leq B_0
\]
means that $p\geq q$ and $a_\nu\leq b_\nu$ for all
$1\leq \nu\leq q$. Hence, if $B_0=\{b_1<\cdots<b_q\}$ has cardinality
$q\leq \ell$, then
\[
\{1,\ldots,\ell\}\leq B_0,
\]
because $b_\nu\geq \nu$ for all $1\leq \nu\leq q$. It follows that $\Delta_\ell$
is smaller than every doubly admissible minor of size at most
$\ell$, in the partial order used for doubly symplectic standard tableaux.
Indeed, in the order on doubly admissible minors one compares the corresponding
row and column subsets separately, and both subsets attached to $\Delta_\ell$
are equal to $\{1,\ldots,\ell\}$.

\begin{lemma}\label{newbasis}
The doubly symplectic standard
tableaux all of whose minors have size at most $\ell$ form a $k$-basis
of $R_i(\ell)$.
\end{lemma}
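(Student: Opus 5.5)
We deduce this from Görtz's standard monomial theory for $R_i(i)$ together with the observation about $\Delta_\ell$ made just above.

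\emph{Step 1: a basis of $R_i(i)$.} By \cite[\S4]{Goertz03}, building on De Concini's symplectic standard monomial theory \cite{Con}, the ring $R_i(i)=k[X]/(XJ_{2i}X^t,X^tJ_{2i}X,\wedge^{i+1}X)$ has a $k$-basis consisting of the doubly symplectic standard tableaux. These are products of doubly admissible minors that are linearly ordered with respect to the order on row and column subsets. We will also need the straightening law that comes with this basis. If $\mu$ is a doubly admissible minor and $T$ is a standard tableau such that $\mu T$ is not standard, then $\mu T$ equals a $k$-linear combination of standard tableaux. Each tableau in this combination has first factor strictly smaller than $\mu$, and its total number of boxes does not exceed that of $\mu T$. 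In addition, every non-admissible minor of size $s$ can be rewritten as a combination of standard tableaux whose minors all have size at least $s$. This last statement is the symplectic analogue of the classical fact that straightening does not decrease the shape in the dominance order.

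\emph{Step 2: spanning.} The ring $R_i(\ell)$ is the quotient of $R_i(i)$ by the ideal generated by $\wedge^{\ell+1}X$. This ideal is spanned by the products $m\cdot T$, where $m$ is a minor of size at least $\ell+1$ and $T$ is standard. We want to show that the ideal is spanned by the standard tableaux that contain at least one minor of size at least $\ell+1$; call these \emph{large} tableaux. We argue by straightening, using induction on the order of the first factor. The size-monotonicity statement from Step 1 guarantees that when we straighten $m\cdot T$, the output consists only of large tableaux. Since every standard tableau is either large or has all of its minors of size at most $\ell$, the standard tableaux with all minors of size at most $\ell$ span $R_i(\ell)$.

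\emph{Step 3: linear independence.} From Step 2 we already know that $I:=(\wedge^{\ell+1}X)R_i(i)$ is contained in the span of the large tableaux. For the reverse inclusion, note that every large tableau is a product of elements of $R_i(i)$, at least one of which is a minor of size at least $\ell+1$ and hence lies in $I$. So $I$ equals the span of the large tableaux, which is the linear span of a subset of the basis from Step 1. The complementary subset of that basis therefore maps to a basis of the quotient $R_i(\ell)=R_i(i)/I$.

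The main obstacle is the monotonicity of sizes under straightening used in Step 2. The role of $\Delta_\ell$ enters here. Since $\Delta_\ell$ is smaller than every doubly admissible minor of size at most $\ell$, and straightening only moves toward smaller tableaux, the minors of size at most $\ell$ form an order ideal that straightening never leaves. I would try to establish this by working with De Concini's filtration by the ideals generated by tableaux whose first factor is at least a given minor, which is the argument used in \cite[\S4]{Goertz03}. If that becomes intricate, a fallback is a dimension count. It would suffice to compare the number of standard tableaux of bounded size, degree by degree, with the Hilbert function of $R_i(\ell)$. The latter can be computed from the integral Schubert variety \eqref{Sschubertsymplectic}, whose dimension is known from Theorem~\ref{thm:symp-max-schubert}.
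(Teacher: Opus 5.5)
Your spanning argument is fine, and it does not even need straightening: a standard tableau containing a minor of size $>\ell$ already lies in $(\wedge^{\ell+1}X)$. The formal deduction in Step 3 is also correct \emph{provided} that $I=(\wedge^{\ell+1}X)R_i(i)$ is contained in the span of the large standard tableaux. But that inclusion is the whole content of linear independence, and it is not proved.

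It rests on an ASL-type straightening law for doubly admissible minors: a strictly smaller first factor, and non-admissible $s$-minors expanding only into terms with minors of size $\geq s$. This is not part of the basis statement of \cite[Theorem 6.1]{Con} that the paper uses, and you yourself flag it as the main obstacle.

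Your heuristic for it is inverted. The doubly admissible minors of size $\leq\ell$ are exactly those $\geq\Delta_\ell$, which is an upward-closed set. Straightening moves downward, so it does leave this set. For example, for $i\geq 2$, $x_{1,2}x_{2,1}=x_{1,1}x_{2,2}-\det(\text{upper left }2\times 2\text{ block})$ turns a product of two entries into a combination containing a $2\times 2$ minor. What would have to be stable is the downward-closed set of minors of size $\geq\ell+1$. That is, you would need $R_i(i)$ to be an ASL on this poset, and that is exactly what is missing.

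The fallback is circular. In the paper, \eqref{Sschubertsymplectic} and the integrality of $R_i(\ell)$ are deduced from Proposition~\ref{IntegralDomainSymplectic}, whose proof uses this very lemma. In any case, a dimension does not determine a Hilbert function degree by degree.

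The missing idea is to prove independence geometrically, which is where $\Delta_\ell$ actually enters. Let $S_{\Delta_\ell}$ be the closure of the $B_i^-\times B_i^+$-orbit of the rank-$\ell$ matrix attached to $\Delta_\ell$. Here $B_i^-$ and $B_i^+$ are the lower and upper triangular Borel subgroups of $\mathrm{Sp}_{2i}$.

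All points of $S_{\Delta_\ell}$ have rank $\leq\ell$, so $S_{\Delta_\ell}\subset\Spec R_i(\ell)$. Hence any linear relation in $R_i(\ell)$ restricts to a relation among functions on $S_{\Delta_\ell}$. De Concini's argument \cite[Lemma 3.5, Theorem 3.7]{Con}, run in the doubly symplectic setting of \cite[Theorem 6.1]{Con}, shows that standard tableaux whose first factor is $\geq\Delta_\ell$ restrict to linearly independent functions on $S_{\Delta_\ell}$. Every tableau with all minors of size $\leq\ell$ has $\Delta_\ell\leq Q_1$. So independence in $R_i(\ell)$ follows without any control of the straightening relations.
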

\begin{proof}
Set
\[
\calR_i:=\frac{k[X]}{(XJ_{2i}X^t,\ X^tJ_{2i}X)}.
\]
By \cite[Theorem 6.1]{Con}, the doubly symplectic standard tableaux form
a $k$-basis of $\calR_i$. Hence their images span $R_i(\ell)=\calR_i/(\wedge^{\ell+1}X)$.
Moreover, any tableau containing a minor of size $>\ell$ maps to zero in
$R_i(\ell)$. Thus $R_i(\ell)$ is spanned by the images of the doubly symplectic
standard tableaux all of whose minors have size at most $\ell$.

It remains to prove linear independence. The proof is similar to the proof of \cite[Theorem 3.7]{Con}. Indeed, let $S_{\Delta_\ell}$ be the corresponding orbit closure in the doubly
symplectic setting of \cite[Theorem 6.1]{Con}, namely the closure of the
$B_i^-\times B_i^+$-orbit of the rank-$\ell$ matrix corresponding to $\Delta_\ell$. Here $B_i^-$ and $B_i^+$ are the lower and upper
triangular Borel subgroups of $\text{Sp}_{2i}$. Since $S_{\Delta_\ell}$ is the closure of
the orbit of a rank-$\ell$ matrix under $B_i^-\times B_i^+$, we have
\[
S_{\Delta_\ell}\subset  \text{Spec}\calR_i \cap \{ \text{rk}\,X\leq \ell\}=\operatorname{Spec}R_i(\ell) .
\]
On the other hand, let $T=(Q_1,\ldots,Q_m)$ be a doubly symplectic standard
tableau all of whose minors have size at most $\ell$. Then its first minor
$Q_1$ satisfies $\Delta_\ell\leq Q_1 .$ Therefore the same argument as in \cite[Lemma 3.5 and
Theorem 3.7]{Con}, applied in the doubly symplectic setting of
\cite[Theorem 6.1]{Con}, shows that the restrictions of these tableaux to
$S_{\Delta_\ell}$ are linearly independent. Hence they are linearly
independent in $R_i(\ell)$. This proves the lemma.    
\end{proof}
\begin{prop}\label{IntegralDomainSymplectic}
The ring $R_i(\ell)$ is a domain.
\end{prop}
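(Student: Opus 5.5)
The plan is to follow the standard straightening-law argument of De Concini (the proof of \cite[Theorem 3.7]{Con}), now using the basis of Lemma~\ref{newbasis}. By that lemma, $R_i(\ell)$ has as $k$-basis the doubly symplectic standard tableaux all of whose minors have size at most $\ell$. In the proof of Lemma~\ref{newbasis} we saw that these tableaux restrict to linearly independent functions on the orbit closure $S_{\Delta_\ell}\subset \operatorname{Spec}R_i(\ell)$, the closure of the $B_i^-\times B_i^+$-orbit of a rank-$\ell$ matrix.

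The first step is to turn this into a comparison of rings. The restriction map $R_i(\ell)\to \calO(S_{\Delta_\ell})$ sends a basis to a linearly independent set, so it is injective. It is also surjective, because $S_{\Delta_\ell}$ is a closed subscheme of $\operatorname{Spec}R_i(\ell)$. Hence $R_i(\ell)\simeq \calO(S_{\Delta_\ell})$, where $S_{\Delta_\ell}$ carries its reduced induced structure. In particular $\operatorname{Spec}R_i(\ell)=S_{\Delta_\ell}$ scheme-theoretically.

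The second step is to show that $S_{\Delta_\ell}$ is integral. It is the closure of a single orbit of the connected group $B_i^-\times B_i^+$, so it is irreducible, and with the reduced structure it is integral. Therefore $R_i(\ell)$ is a domain.

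The main obstacle is the justification, inside the proof of Lemma~\ref{newbasis}, that tableaux with first minor $Q_1\geq\Delta_\ell$ are linearly independent on $S_{\Delta_\ell}$. This is the analogue of \cite[Lemma 3.5]{Con}. The argument evaluates at suitable torus-fixed points, or uses the leading-term behaviour of minors on the big cell of the orbit. It also needs that every minor $Q\geq\Delta_\ell$ does not vanish identically on $S_{\Delta_\ell}$, while minors $Q\not\geq \Delta_\ell$ do vanish there. Since we take Lemma~\ref{newbasis} as already established, the remaining argument is formal.

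As a sanity check, $S_{\Delta_\ell}$ should have dimension $\ell(4i-2\ell+1)$. This matches the Schubert-variety dimension obtained from Proposition~\ref{prop:symp-affine-chart} and Theorem~\ref{thm:symp-max-schubert}, and we can verify it by comparing the Hilbert function counted from the tableau basis.
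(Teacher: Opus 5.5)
Your argument is correct, but it takes a different route from the paper's.

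\textbf{The paper's route.} The paper uses from Lemma~\ref{newbasis} only that the tableaux with minors of size at most $\ell$ form a $k$-basis of $R_i(\ell)$. Since $\Delta_\ell\le P_1$ for every such tableau $(P_1,\ldots,P_m)$, multiplication by $\Delta_\ell$ sends this basis injectively into itself. Hence $\Delta_\ell$ is a nonzerodivisor and $R_i(\ell)\hookrightarrow R_i(\ell)_{\Delta_\ell}$. The paper then computes the localization explicitly. On $D(\Delta_\ell)$ the rank condition forces $D=CA^{-1}B$, so $X=FAG$. The two symplectic equations reduce to $F^tJ_{2i}F=0$ and $GJ_{2i}G^t=0$. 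This gives $D(\Delta_\ell)\simeq\mathcal U_\ell^{\operatorname{col}}\times_k\GL_{\ell,k}\times_k\mathcal U_\ell^{\operatorname{row}}$, where both outer factors are affine spaces.

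\textbf{Your route.} You instead use the stronger fact established inside the proof of Lemma~\ref{newbasis}: the basis stays linearly independent after restriction to the orbit closure $S_{\Delta_\ell}$. This makes $R_i(\ell)\to\calO(S_{\Delta_\ell})$ an isomorphism. Irreducibility of an orbit closure of the connected group $B_i^-\times B_i^+$ then finishes the proof.

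\textbf{Comparison.} Your argument is shorter and proves more, namely that $\operatorname{Spec}R_i(\ell)$ equals the reduced orbit closure scheme-theoretically. However, it rests entirely on the De Concini-type independence on $S_{\Delta_\ell}$, which the paper obtains only by appeal to \cite[Lemma 3.5, Theorem 3.7]{Con}. The paper's route needs only the basis property. Its chart computation also gives a smooth dense open subset and proves the dimension $\ell(4i-2\ell+1)$. That dimension is used later, when strict transforms are identified by comparing integral schemes of equal dimension. Your Hilbert-function remark is only a heuristic for it, so if you use your proof you would still need a separate dimension argument for those later steps.
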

\begin{proof}
The case $\ell=0$ is clear, since the ideal $\wedge^1X$ is generated by
all entries of $X$, and hence $R_i(0)\simeq k$. We therefore assume that
$1\leq \ell\leq i$. We first prove that $\Delta_\ell$ is not a zero divisor in $R_i(\ell)$. Let
\[
T=(P_1,\ldots,P_m)
\]
be a doubly symplectic standard tableau which occurs in the basis of
Lemma \ref{newbasis}. Thus each $P_j$ has size at most $\ell$,
and $P_1\leq P_2\leq\cdots\leq P_m.$ Since $\Delta_\ell\leq P_1$, the tuple $\Delta_\ell T:=(\Delta_\ell,P_1,\ldots,P_m)$ is again a doubly symplectic standard tableau. As in the proof of \cite[Corollary 4.4]{Goertz03} we deduce that $\Delta_\ell$ is not a zero divisor in $R_i(\ell)$.

We next compute the localization $(R_i(\ell))_{\Delta_\ell}$. On
$D(\Delta_\ell)$, write $X$ in block form
\[
X=
\begin{pmatrix}
A & B\\
C & D
\end{pmatrix},
\]
where $A$ is of size $\ell\times \ell$, $B$ of size $\ell\times (2i-\ell) $, $C$ of size $(2i-\ell)\times \ell $ and $D$ of size  $(2i-\ell)\times (2i-\ell) $. Since
$\Delta_\ell= \det A$, the matrix $A$ is invertible on this open set. \quash{The condition $\wedge^{\ell+1}X=0$ is equivalent to $ \text{rk}(X) \leq \ell$. Since $ A$ is of size $\ell\times \ell$ and invertible we get that $ \text{rk}(X) = \ell$. Thus, the columns in $\begin{pmatrix}
 B\\
 D
\end{pmatrix}$ should be a linear combination of the columns of $\begin{pmatrix}
 A\\
 C
\end{pmatrix}$ and thus $\begin{pmatrix}
 B\\
 D
\end{pmatrix} = \begin{pmatrix}
 A\\
 C
\end{pmatrix} v $. From this, using invertibility of $A$, we get that $D=CA^{-1}B $.} The condition $\wedge^{\ell+1}X=0$ means that all
$(\ell+1)$-minors of $X$ vanish. We claim that this implies
\[
        D=CA^{-1}B.
\]
Indeed, set
\[
P_1:=
\begin{pmatrix}
I_\ell & 0\\
-CA^{-1} & I_{2i-\ell}
\end{pmatrix},
\qquad
P_2:=
\begin{pmatrix}
I_\ell & -A^{-1}B\\
0 & I_{2i-\ell}
\end{pmatrix}.
\]
Then $P_1$ and $P_2$ are invertible, and
\[
X':=P_1XP_2
=
\begin{pmatrix}
A & 0\\
0 & D-CA^{-1}B
\end{pmatrix}.
\]
Since $P_1$ and $P_2$ are invertible, the vanishing of all
$(\ell+1)$-minors of $X$ implies the vanishing of all
$(\ell+1)$-minors of $X'$. Put $S:=D-CA^{-1}B.$ Let $s$ be any entry of $S$. Taking the $(\ell+1)$-minor of $X'$
given by the whole $A$-block together with the row and column containing
$s$, we obtain $\det(A)s = 0$. Since $A$ is invertible on $D(\Delta_\ell)$, we get $s=0$. Thus we get $S=0$, i.e. $D=CA^{-1}B$.

Define $U:=CA^{-1},\, V:=A^{-1}B.$ Then
\[
X=
\begin{pmatrix}
I_\ell\\
U
\end{pmatrix}
A
\begin{pmatrix}
I_\ell & V
\end{pmatrix}.
\]
Also, put $F:=
\begin{pmatrix}
I_\ell\\
U
\end{pmatrix},
\,
G:=
\begin{pmatrix}
I_\ell & V
\end{pmatrix}.$ Then $X=FAG$. We claim that the two symplectic equations translate to
\[
X^tJ_{2i}X=0
\quad\Longleftrightarrow\quad
F^tJ_{2i}F=0,
\]
and
\[
XJ_{2i}X^t=0
\quad\Longleftrightarrow\quad
GJ_{2i}G^t=0.
\]
It's enough to show the first equivalence and the second follows similarly. For simplicity, set $E:=F^tJ_{2i}F$. Then
\[
0 = X^tJ_{2i}X
=
G^tA^tEAG =\begin{pmatrix}
A^t EA  & A^t EB\\
B^t EA & B^t EB
\end{pmatrix} .
\]
From $A^t EA = 0 $ we deduce $E =0$ and so $F^tJ_{2i}F =0$. The converse is immediate. Therefore
\[
(R_i(\ell))_{\Delta_\ell}
\simeq
\frac{
k[A,\det(A)^{-1},U,V]
}{
\left(
F^tJ_{2i}F,\,
GJ_{2i}G^t
\right)
}.
\]
Equivalently,
\[
D(\Delta_\ell) =\operatorname{Spec}(R_i(\ell)_{\Delta_\ell})
\simeq
\mathcal U_\ell^{\operatorname{col}}
\times_k
\GL_{\ell,k}
\times_k
\mathcal U_\ell^{\operatorname{row}},
\]
where
\[
\mathcal U_\ell^{\operatorname{col}}
:=
\operatorname{Spec}
\frac{k[U]}{(F^tJ_{2i}F)}
=
\left\{F=\binom{I_\ell}{U}:F^tJ_{2i}F=0\right\},
\]
and
\[
\mathcal U_\ell^{\operatorname{row}}
:=
\operatorname{Spec}
\frac{k[V]}{(GJ_{2i}G^t)}
=
\left\{G=(I_\ell\;V):GJ_{2i}G^t=0\right\}.
\]
The scheme $D(\Delta_\ell)$ is smooth and irreducible of dimension $\ell (4i -2\ell+1)$. To see the dimension first set $F:=
\begin{pmatrix}
I_\ell\\
U_1\\
U_2\end{pmatrix}$ where $U_1$ is of size $(2i-2\ell)\times \ell$ and $U_2$ is of size $\ell \times \ell$. Note that $F^tJF=0$ translates to 
\[
\begin{pmatrix}
I_\ell\\
U_1\\
U_2\end{pmatrix}^t \begin{pmatrix}
0 & 0 &H_\ell\\
0 &  J_{2i -2\ell}&0\\
-H_\ell & 0 & 0\end{pmatrix} \begin{pmatrix}
I_\ell\\
U_1\\
U_2\end{pmatrix} =0
\]
which equals to $ H_{\ell}U_2-U_2^t H_{\ell} +U^t_1J_{2i-2\ell}U_1 =0 $. Thus $ \{F=(I_\ell,U):F^tJF=0\} \simeq \mathbb{A}_k^{\ell(2i-\ell)-\ell(\ell-1)/2}$. Similarly, $\{G=(I_{\ell},V):GJG^t=0\} \simeq \mathbb{A}_k^{\ell(2i-\ell)-\ell(\ell-1)/2}$. So, $\text{dim}(R_i(\ell))_{\Delta_\ell} = 2\left(\ell(2i-\ell)-\frac{\ell(\ell-1)}{2}\right)+\ell^2
=
\ell(4i-2\ell+1)$. 

Since $\Delta_\ell$ is not a zero divisor in $R_i(\ell)$, localization gives an
injection $R_i(\ell)\hookrightarrow (R_i(\ell))_{\Delta_\ell}$. The latter ring is a domain by the above and thus $R_i(\ell)$ is a domain.
\end{proof}

As a consequence of the preceding stratification and affine chart description, we obtain the following moduli-theoretic description of the Schubert varieties appearing in Theorem~\ref{thm:symp-max-schubert}.

\begin{prop}\label{sympschubertmoduli}
Assume that $1\leq 2i\leq  g$. For $0\leq \ell\leq i$, the Schubert variety
 $S_\ell:=\overline{\Mloc_i(\ell)}\subset \Mloc_{i,k}$ represents the functor which sends a $k$-algebra $R$ to the set of pairs $(\calF_i,\calF_{-i})$ satisfying the following conditions:
\begin{altenumerate}
    \item $\calF_{\pm i}\subset \Lambda_{\pm i}\otimes_{\CO_{F_0}}R$ is, Zariski
    locally on $\Spec R$, a direct summand of rank $g$;

    \item $(\langle\ ,\ \rangle\otimes 1)(\calF_i,\calF_{-i})=0$;

    \item if $ \iota_1:\Lambda_{-i}\longrightarrow \Lambda_i,\,
        \iota_2:\Lambda_i\longrightarrow \pi_0^{-1}\Lambda_{-i}$ denote the natural inclusion maps, then after base change to $R$ one has
    \[
        \iota_1(\calF_{-i})\subset \calF_i,\qquad
        \iota_2(\calF_i)\subset \pi_0^{-1}\calF_{-i};
    \]

    \item $\wedge^{\ell+1}
        \bigl(\iota_2:\calF_i\longrightarrow \pi_0^{-1}\calF_{-i}\bigr)=0.$
\end{altenumerate}
\end{prop}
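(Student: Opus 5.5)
The plan is to define $\mathcal S_\ell$ as the closed subfunctor of $\Mloc_{i,k}$ cut out by condition (4), show it is a closed subscheme, and then identify it with $S_\ell$. Conditions (1)–(3) are just the moduli problem of Definition \ref{def:symp-local-model} for the lattice chain $\Lambda_{\{i\}}$ over a $k$-algebra. By periodicity and duality, the whole chain is determined by $\Lambda_i$ and $\Lambda_{-i}$, so $\Mloc_{i,k}(R)$ is exactly the set of pairs $(\calF_i,\calF_{-i})$ satisfying (1)–(3). Condition (4) is the vanishing of the $(\ell+1)$-st exterior power of a map between locally free modules of rank $g$, so it is Zariski locally the vanishing of the $(\ell+1)$-minors of a matrix. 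Hence $\mathcal S_\ell$ is representable by a closed subscheme of $\Mloc_{i,k}$. It is $\calG_{i,k}$-stable because $\calG_i$ commutes with $\iota_2$.

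Next I would compare supports. On $k$-points, (4) says exactly that $\mathrm{rk}\,\iota(\calF_i)\le\ell$. So the underlying set of $\mathcal S_\ell$ is $\coprod_{\ell'\le\ell}\Mloc_i(\ell')$, which by Theorem \ref{thm:symp-max-schubert}(3) is $\overline{\Mloc_i(\ell)}=S_\ell$. The remaining issue is scheme structure: since $S_\ell$ is reduced, I need $\mathcal S_\ell$ to be reduced.

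Reducedness is local, and both schemes are $\calG_{i,k}$-stable. Every $\calG_{i,k}$-orbit closure in $\mathcal S_\ell$ contains the closed orbit $\Mloc_i(0)$, so every point of $\mathcal S_\ell$ has a $\calG$-translate lying in $U_i$ (using that the $\calG$-translates of $U_i$ cover). It therefore suffices to check on the chart $U_i\cap\mathcal S_\ell$ of Proposition \ref{prop:symp-affine-chart}. There, as in the proof of that proposition (following \cite[\S5.1]{Goertz03}), the map $\iota_2$ is represented by the matrix $X$ up to invertible changes of basis, with the extra affine-space coordinates not entering. So condition (4) becomes the ideal $\wedge^{\ell+1}X$, and
\[
U_i\cap \mathcal S_\ell\simeq \Spec R_i(\ell)\times_k \mathbb A_k^{\frac{(g-2i)(g+2i+1)}{2}}.
\]
By Proposition \ref{IntegralDomainSymplectic}, $R_i(\ell)$ is a domain, so this chart is integral and in particular reduced. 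Consequently $\mathcal S_\ell$ is reduced with the same support as $S_\ell$, hence $\mathcal S_\ell=S_\ell$. For $\ell=i$ the condition is vacuous, since $\mathrm{rk}\,\iota\le i$ always holds; this is consistent with $S_i=\Mloc_{i,k}$, which is reduced by Theorem \ref{thm:symp-max-schubert}(3).

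I expect the main obstacle to be verifying that, in G\"ortz's coordinates, the map $\iota_2$ on the chart is given exactly by $X$ up to invertible matrices, with no contribution from the affine-space factor. This has to be extracted carefully from \cite[\S5.1]{Goertz03}, after transporting the chart through the isomorphism $\Mloc_{g-i}\simeq\Mloc_i$ given by $\tau$.
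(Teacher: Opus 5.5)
Your proposal is correct and follows essentially the same route as the paper. Both arguments take the closed subscheme cut out by (1)--(4), match its support with $S_\ell$ via the rank stratification of Theorem \ref{thm:symp-max-schubert}, and check reducedness on the chart $U_i$, where (4) becomes $\wedge^{\ell+1}X$ and $R_i(\ell)$ is a domain by Proposition \ref{IntegralDomainSymplectic}. The step you flag as the main obstacle, that $\iota_2$ is given by $X$ on the chart, is exactly what the paper takes from \cite[\S5.1]{Goertz03} through the proof of Proposition \ref{prop:symp-affine-chart}.
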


\begin{proof}
Let $T_\ell\subset \Mloc_{i,k}$ be the closed subscheme defined by conditions
$(1)$--$(4)$ in the statement. By Theorem~\ref{thm:symp-max-schubert}, the
Schubert cells in $\Mloc_{i,k}$ are described set-theoretically by the rank
condition in Definition~\ref{def:symp-rank-strata}. Hence $T_\ell$ has the same
underlying topological space as $ S_\ell$. It remains to show that $T_\ell$ is reduced.

Since the $\calG_i$-translates of $U_i$ cover $\Mloc_i$, it is enough to prove
that $U_i\cap T_\ell$ is reduced. By the proof of
Proposition~\ref{prop:symp-affine-chart}, the condition
\[
        \wedge^{\ell+1}
        \bigl(\iota_2:\calF_i\longrightarrow \pi_0^{-1}\calF_{-i}\bigr)=0
\]
amounts to $\wedge^{\ell+1}X=0$ on $U_i\cap T_\ell$. Hence
\[
        U_i\cap T_\ell
        \simeq
        \mathbb A_k^{\frac{(g-2i)(g+2i+1)}{2}}
        \times_k
        \Spec R_i(\ell), \text{  with }   R_i(\ell):=
        \frac{k[X]}
        {
        \left(
        XJ_{2i}X^t,\;
        X^tJ_{2i}X,\;
        \wedge^{\ell+1}X
        \right)
        }. 
\]
By Proposition~\ref{IntegralDomainSymplectic}, the ring $R_i(\ell)$ is integral,
hence reduced. Thus $T_\ell$ is reduced and the claim follows.
\end{proof}

\section{Semi-stable resolution for symplectic local models}
\label{symp-semistable-resolution}

In this section, we construct a semi-stable resolution of the symplectic local model in the
maximal parahoric case by a similar sequence of blow-ups as in the orthogonal case in Part I. 

Let $F/F_0$ be a ramified quadratic extension, and choose a uniformizer $\pi\in F$ such that $\pi^2=-\pi_0$.
From now on we work after base change to $\calO_F$. We
write
\[
\Mloc_{i,\calO_F}:=\Mloc_i\otimes_{\calO_{F_0}}\calO_F.
\]
By Theorem~\ref{thm:symp-max-schubert}(2), we may assume that
$I=\{i\}$ with $1\leq i\leq \lfloor g/2\rfloor$. The case $i=0$ is excluded,
since $\Mloc_0$ is already smooth. For $0\leq \ell\leq i$, let
\[
S_\ell:=\overline{\Mloc_i(\ell)}\subset \Mloc_{i,k}
\]
be the corresponding Schubert variety. By Theorem~\ref{thm:symp-max-schubert},
the special fiber has a chain of Schubert varieties
\[
S_0\subset S_1\subset\cdots\subset S_i=\Mloc_{i,k}.
\]

\begin{thm}\label{thm:symp-semistable-resolution}
Assume that $I=\{i\}$ with $1\leq i\leq \lfloor g/2\rfloor$. Let
\[
\Mloc_{i,\calO_F}=Z_0
\xleftarrow{b_1}
Z_1
\xleftarrow{b_2}
\cdots
\xleftarrow{b_i}
Z_i
\]
be the sequence of blow-ups such that $b_1$ is the blow-up of
$Z_0=\Mloc_{i,\calO_F}$ along $S_0$, and for $2\leq \ell\leq i$, the morphism $b_\ell:Z_\ell\longrightarrow Z_{\ell-1}$ is the blow-up along the strict transform of $S_{\ell-1}$ in $Z_{\ell-1}$.
Then $Z_i$ is semi-stable over $\calO_F$.
\end{thm}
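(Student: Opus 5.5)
We follow the strategy of the orthogonal case (Proposition~\ref{prop:qs-affine-semistable}). First we reduce to an affine statement. By Proposition~\ref{prop:symp-affine-chart}, the $\calG_i$-translates of $U_i\simeq \mathbb A^{N}\times \Spec R_i$ cover $\Mloc_i$, and $U_i\cap S_\ell$ is cut out by $I_\ell=(\pi_0,\wedge^{\ell+1}X)=(\pi,\wedge^{\ell+1}X)$ after base change to $\calO_F$. Blow-ups commute with flat base change and with the open immersion, and the centers are $\calG_i$-stable. It therefore suffices to show the following: the successive blow-ups of $\calX_0=\Spec R_i\otimes\calO_F$ along $V(I_0)$, followed by the strict transforms of $V(I_1),\dots,V(I_{i-1})$, produce a semi-stable scheme.

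To set up an induction, we introduce for $0\le d,\ell\le i$ the rings
\[
\calR(d,\ell):=\frac{\calO_F[X,T_0,\dots,T_\ell]}{(XJ_{2d}X^t+T_0^2J_{2d},\;X^tJ_{2d}X+T_0^2J_{2d},\;\pi-T_0T_1\cdots T_\ell)},
\]
with $X$ a $2d\times 2d$ matrix. Since $\pi^2=-\pi_0$, we have $\calR(i,0)\simeq R_i\otimes\calO_F$; the sign is harmless, and we can adjust by replacing $X$ with $-X$ or $J$ with $-J$.

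The first step is to prove the analogue of Proposition~\ref{prop flat R(d,l)}: $\calR(d,\ell)$ is $\calO_F$-flat and integral of dimension $d(2d+1)+\ell+1$.
\begin{itemize}
\item The special fiber is the intersection of the ideals $\mathscr I_r=(\ldots,T_r)$.
\item For $r=0$ the quotient is $R_d(d)\otimes k[T_1,\dots,T_\ell]$, which is a domain by Proposition~\ref{IntegralDomainSymplectic}.
\item For $r\ge1$, integrality reduces to that of $A=k[X,T_0]/(XJX^t+T_0^2J,X^tJX+T_0^2J)$. We show $A$ is flat over $k[T_0]$. After inverting $T_0$ it becomes $\mathrm{GSp}$-type, with $Y=T_0^{-1}X$ satisfying $YJY^t=-J$; this is a torsor under $\mathrm{Sp}_{2d}$, hence smooth and connected. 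Topological flatness at $T_0=0$ follows from \cite[Lemma 2]{Goertz05} using an explicit lift such as $X=\mathrm{diag}(1,T_0 I,\dots,T_0^2)$ suitably adapted to $J$.
\item The intersection argument and the lifting of points $P_r$ then go through exactly as before.
\end{itemize}

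The second step is to analyze $\Bl_{(X,T_0)}\calU(d,\ell)$ chart by chart.
\begin{itemize}
\item On $D_+(\alpha)$ we get $X=T_0Y$ with $YJY^t=-J$. This is smooth (a coset of $\mathrm{Sp}_{2d}$) times $\Spec\calO_F[T_0,\dots,T_\ell]/(\pi-T_0\cdots T_\ell)$, hence semi-stable. The argument of Lemma~\ref{lem:alpha-chart} shows that strict transforms of $V(T_0,\wedge^rX)$ miss this chart.
\item On $D_+(y_{i_0,j_0})$ we have $y_{i_0,j_0}=1$. The relations $(r_{i_0},r_s)$ and $(c_{j_0},c_t)$ for the alternating form solve for row $i_0^*$ and column $j_0^*$. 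In the symplectic case $(r_{i_0},r_{i_0})=0$ is automatic, so no analogue of Lemma~\ref{lem:relevant-charts} is needed, and every $(i_0,j_0)$ is allowed since $i_0\neq i_0^*$ always.
\item We set $z_{s,t}=y_{s,t}-y_{i_0,t}y_{s,j_0}$ for $s\ne i_0,i_0^*$ and $t\ne j_0,j_0^*$. The Schur-complement computation of Lemma~\ref{lemma chart y_{i,j}} should give $Z$ of size $2d-2$ satisfying the same symplectic relations with $\alpha$ in place of $T_0$. This yields a closed immersion into $\calU(d-1,\ell+1)\times\mathbb A^{4d-4}$, sending $x_{i_0,j_0}\mapsto T_{\ell+1}$.
\item The two sides have equal dimension, $(d-1)(2d-1)+\ell+2+4d-4=d(2d+1)+\ell+1$. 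Integrality of the target then forces an isomorphism.
\end{itemize}

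The final step is the induction. By Sylvester's identity, $(s+1)$-minors of $X$ become $s$-minors of $Z$. The strict transform of $V(\pi,\wedge^{r+1}X)$ is thus contained in $V(T_0,\wedge^rZ)$. Both are integral by Proposition~\ref{IntegralDomainSymplectic}, with dimensions $r(4i-2r+1)$ and $(r-1)(4i-2r-1)+4i-3$, which agree. Hence they coincide, and the blow-up sequence on each $y$-chart is the sequence for $\calU(i-1,1)$. Iterating down to $d=0$ gives charts $\mathbb A^{N'}\times\Spec\calO_F[T_0,\dots,T_i]/(\pi-T_0\cdots T_i)$, which are semi-stable.

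The main obstacle is the second step on the $y$-charts. We must carefully verify that the remaining alternating-form relations among the rows and columns other than $i_0,i_0^*$ (respectively $j_0,j_0^*$) are exactly $Z^tJZ=-\alpha^2J$ and $ZJZ^t=-\alpha^2J$. The sign bookkeeping of $J$ versus $H$ replaces the factors $\tfrac12$ of \eqref{eq 312}, and we expect to do this by direct expansion as in Lemma~\ref{lemma chart y_{i,j}}. The flatness of $A$ in the first step is the secondary difficulty.
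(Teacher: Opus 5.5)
Your plan follows the paper's route: reduce to $\Spec R_i$, introduce $\calU(d,\ell)$, prove flatness and integrality, analyse the $\alpha$- and $y$-charts, and induct on $d$. The $y$-chart step, however, is wrong as stated, and the dimension checks you use to close it do not hold.

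In the alternating case the relations $(r_{i_0},r_{i_0})=0$ and $(c_{j_0},c_{j_0})=0$ are satisfied identically. So, unlike \eqref{eq 312} in the orthogonal case, nothing determines $y_{i_0,j_0^*}$ or $y_{i_0^*,j_0}$; the sign bookkeeping does not replace those $\tfrac12$-formulas, since they have no symplectic counterpart at all. The relations $(r_{i_0},r_s)=0$, $(c_{j_0},c_t)=0$ (for $s\neq i_0,i_0^*$, $t\neq j_0,j_0^*$) and $(r_{i_0},r_{i_0^*})=-\alpha^2\varepsilon_{i_0}$ only solve for $y_{s,j_0^*}$, $y_{i_0^*,t}$ and $y_{i_0^*,j_0^*}$. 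The affine factor therefore has the $4d-2$ coordinates $y_{i_0,t}$ ($t\neq j_0$) and $y_{s,j_0}$ ($s\neq i_0$), not $4d-4$. Your map forgets two free coordinates, so it is not a closed immersion.

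The case $d=1$, $\ell=0$ already shows this. There $\calR(1,0)=\calO_F[X]/(\det X+\pi^2)$, and on $D_+(y_{1,1})$ one gets $y_{2,2}=y_{1,2}y_{2,1}-\alpha^2$. So the chart is $\calU(0,1)\times\mathbb{A}^2$, of dimension $4$, not $\calU(0,1)$.

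Your arithmetic hides the discrepancy:
\begin{itemize}
\item $(d-1)(2d-1)+\ell+2+4d-4=d(2d+1)+\ell-1$, not $d(2d+1)+\ell+1$;
\item $(r-1)(4i-2r-1)+4i-3=r(4i-2r+1)-2$.
\end{itemize}
Consequently neither the ``closed immersion of equal dimension into an integral scheme'' step nor the identification of the strict transform of $V(\pi,\wedge^{r+1}X)$ with $V(T_0,\wedge^rZ)$ goes through as written.

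With the correct count, $D_+(y_{i_0,j_0})\simeq\calU(d-1,\ell+1)\times\mathbb{A}^{4d-2}$ as in Proposition~\ref{prop:symp-y-chart}, and everything balances:
\begin{itemize}
\item $(d-1)(2d-1)+\ell+2+(4d-2)=d(2d+1)+\ell+1$;
\item $(r-1)(4i-2r-1)+1+(4i-2)=r(4i-2r+1)$;
\item the terminal charts are $\mathbb{A}^{2i^2}\times\Spec\calO_F[T_0,\dots,T_i]/(\pi-T_0\cdots T_i)$.
\end{itemize}
Your closed-immersion-plus-integrality argument then works, and the rest of your proof agrees with the paper's.

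A smaller slip concerns the reduction. After base change to $\calO_F$, $U_i\cap S_\ell$ is cut out by $(\pi,\wedge^{\ell+1}X)$, which is strictly larger than $(\pi_0,\wedge^{\ell+1}X)$. For example $R_i/(X)\simeq\calO_F/(\pi^2)$, so $(\pi_0,X)=(X)$. The two ideals should not be identified: blowing up the non-reduced $V(X)$ is a different modification, and already for $i=1$ it is not regular.
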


By Proposition~\ref{prop:symp-affine-chart}, there exists an open affine
subscheme $U_i\subset \Mloc_{i,\calO_F}$ containing a point
$\ast\in S_0$ such that
\[
U_i
\simeq
\mathbb A_{\calO_F}^{\frac{(g-2i)(g+2i+1)}{2}}
\times_{\calO_F}
\Spec R_i,  \text{ where } R_i=
\frac{\calO_F[X]}
{
\left(
XJ_{2i}X^t+\pi^2J_{2i},\;
X^tJ_{2i}X+\pi^2J_{2i}
\right)
}.
\]
Moreover, for $0\leq \ell<i$, the intersection $U_i\cap S_\ell$ is cut out
on the $\Spec R_i$-factor by $I_\ell=(\pi,\wedge^{\ell+1}X).$ Since the $\mathcal G_i$-translates of $U_i$ cover $\Mloc_{i,\mathcal O_F}$
and the centers $S_\ell$ are $\mathcal G_i$-stable, it is enough to prove the
corresponding affine statement on $U_i$.

\begin{prop}\label{prop:symp-affine-semistable}
Let
\[
\calX_0:=\Spec R_i,
\qquad
I_\ell:=(\pi,\wedge^{\ell+1}X)\subset R_i
\qquad
(0\leq \ell<i).
\]
Consider the sequence of blow-ups
\[
\calX_0
\xleftarrow{c_1}
\calX_1
\xleftarrow{c_2}
\cdots
\xleftarrow{c_i}
\calX_i,
\]
where $c_1$ is the blow-up along $V(I_0)$, and for $2\leq \ell\leq i$, the
morphism $c_\ell:\calX_\ell\longrightarrow \calX_{\ell-1}$ is the blow-up along the strict transform of $V(I_{\ell-1})$ in
$\calX_{\ell-1}$. Then $\calX_i$ is semi-stable over $\calO_F$.
\end{prop}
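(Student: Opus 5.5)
We follow the same strategy as in the orthogonal case, proving Proposition~\ref{prop:qs-affine-semistable}. For $0\le d,\ell\le i$ introduce the auxiliary rings
\[
\calR(d,\ell):=\frac{\calO_F[X,T_0,T_1,\ldots,T_\ell]}{\left(XJ_{2d}X^t+T_0^2J_{2d},\; X^tJ_{2d}X+T_0^2J_{2d},\; \pi-T_0T_1\cdots T_\ell\right)},
\]
with $X$ a $2d\times 2d$ matrix, and set $\calU(d,\ell):=\Spec\calR(d,\ell)$. Then $\calU(i,0)\simeq \calX_0$, because $\pi^2=-\pi_0$. No spin-type relations appear, so this case is simpler than the orthogonal one.

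\textbf{Step 1: flatness and integrality.} We show that $\calR(d,\ell)$ is $\calO_F$-flat and integral of dimension $d(2d+1)+\ell+1$, following Proposition~\ref{prop flat R(d,l)}. The special fiber is cut out by $T_0T_1\cdots T_\ell$. The component $\{T_0=0\}$ is $\Spec R_d(d)\times\mathbb A^\ell$, which is integral by Proposition~\ref{IntegralDomainSymplectic}; here $\wedge^{d+1}X$ vanishes automatically on the special fiber of the Görtz chart, which is reduced and irreducible by Theorem~\ref{thm:symp-max-schubert}. Each component $\{T_r=0\}$ with $r\ge1$ is governed by the ring $A=k[X,T_0]/(XJX^t+T_0^2J,\ X^tJX+T_0^2J)$. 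Over $T_0\neq0$ this is $\GSp$-like: after the substitution $Y=T_0^{-1}X\cdot(\text{fixed } \sqrt{-1})$ it becomes $\mathrm{Sp}_{2d}\times\mathbb G_m$, which is connected and smooth. Flatness of $A$ over $k[T_0]$ follows from the reduced fiber at $T_0=0$ together with Görtz's topological flatness lemma, using explicit lifts such as $X=\diag(1,\pi I,\ldots,\pi^2)$ adapted to $J$. This gives the intersection decomposition $\mathscr I=\bigcap_r\mathscr I_r$, hence reducedness; Görtz's lemma with explicit $\calO_F$-lifts of one point per component then gives flatness and integrality.

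\textbf{Step 2: the blow-up along $(X,T_0)$.} On $D_+(\alpha)$ we have $X=T_0Y$ with $YJY^t=-J$, so after rescaling $Y$ is a point of $\mathrm{Sp}_{2d}$ (twisted by $\sqrt{-1}$, or use $\GSp$-type coordinates). Hence the chart is $\mathrm{Sp}_{2d}\times\Spec\calO_F[T_0,\ldots,T_\ell]/(\pi-T_0\cdots T_\ell)$, which is semi-stable, by a closed immersion plus dimension count as in Lemma~\ref{lem:alpha-chart-semistable}. On $D_+(y_{i_0,j_0})$, every index satisfies $i_0\neq i_0^*$ for $J_{2d}$, so no analogue of Lemma~\ref{lem:relevant-charts} is needed. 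We solve for the row $i_0^*$ and the column $j_0^*$ using the relations $(r_{i_0},r_s)$ and $(c_{j_0},c_t)$ for the symplectic form, and set $z_{s,t}=y_{s,t}-y_{i_0,t}y_{s,j_0}$. We expect to show $D_+(y_{i_0,j_0})\simeq\calU(d-1,\ell+1)\times\mathbb A^{4d-2}$, with $\alpha\mapsto T_0$ and $x_{i_0,j_0}\mapsto T_{\ell+1}$. The dimension count is
\[
(d-1)(2d-1)+\ell+2+(4d-2)=d(2d+1)+\ell+1,
\]
so the expected free variables are the $4d-2$ entries $y_{i_0,t},y_{s,j_0}$, and these match. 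The antisymmetry of $J$ changes the solved formulas: the diagonal relation $(r_{i_0},r_{i_0})=0$ is vacuous. Instead the entry $y_{i_0^*,j_0^*}$ is determined from $(r_{i_0},r_{i_0^*})=-\alpha^2$, and the entries $y_{i_0,j_0^*}$ and $y_{i_0^*,j_0}$ must be checked to be free or determined consistently. This bookkeeping, which determines the exact affine-space exponent, is the main point to verify carefully. Integrality from Step 1 then makes the closed immersion an isomorphism.

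\textbf{Step 3: induction on strict transforms.} Lemma~\ref{lem:alpha-chart} carries over verbatim, so strict transforms of $V(\pi,\wedge^rX)$ avoid $D_+(\alpha)$. On $D_+(y_{i_0,j_0})$, Sylvester's identity identifies $(s+1)$-minors of $Y$ through row $i_0$ and column $j_0$ with $s$-minors of $Z$. Hence the strict transform of $V(\pi,\wedge^{r+1}X)$ is contained in $V(T_0,\wedge^rZ)\times\mathbb A^{4d-2}$. Both are integral of equal dimension, by Proposition~\ref{IntegralDomainSymplectic}:
\[
r(4i-2r+1)\quad\text{versus}\quad (r-1)(4i-2r-1)+4i-1,
\]
and these agree, so the two coincide. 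The blow-up sequence for $\calU(i,0)$ thus reduces to that for $\calU(i-1,1)$. After $i$ steps, every chart is either an $\alpha$-chart or $\mathbb A^N\times\Spec\calO_F[T_0,\ldots,T_i]/(\pi-T_0\cdots T_i)$, and both are semi-stable. The main obstacle is Step 2's explicit chart computation, together with the flatness argument for $A$ in Step 1.
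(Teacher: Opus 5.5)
Your proposal is correct and is essentially the paper's proof: the paper likewise introduces $\calU(d,\ell)$ and proves its flatness and integrality exactly as in the orthogonal case, with Proposition~\ref{IntegralDomainSymplectic} as input. It then shows $D_+(\alpha)\simeq C_d\times\Spec\calO_F[T_0,\ldots,T_\ell]/(\pi-T_0\cdots T_\ell)$ and $D_+(y_{i_0,j_0})\simeq\calU(d-1,\ell+1)\times\mathbb A^{4d-2}$, and runs the same strict-transform induction. The bookkeeping you deferred comes out as you predicted: in Proposition~\ref{prop:symp-y-chart} the entries $y_{i_0,j_0^\ast}$ and $y_{i_0^\ast,j_0}$ are free, the substitution $z_{s,t}=y_{s,t}-y_{s,j_0}y_{i_0,t}$ turns the remaining relations into $ZJ_{2d-2}Z^t+\alpha^2J_{2d-2}=0=Z^tJ_{2d-2}Z+\alpha^2J_{2d-2}$, and no $\sqrt{-1}$ is needed because $S=\diag(-I_d,I_d)$ satisfies $SJ_{2d}S^t=-J_{2d}$ and so identifies $C_d$ with $\mathrm{Sp}_{2d}$.
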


To prove Proposition~\ref{prop:symp-affine-semistable}, we introduce a family of auxiliary affine schemes. For $0\leq d,\ell\leq i$, let
\[
\calU(d,\ell):=\Spec \calR(d,\ell),
\]
where
\[
\calR(d,\ell):=
\frac{
\calO_F[X,T_0,T_1,\ldots,T_\ell]
}{
\left(
XJ_{2d}X^t+T_0^2J_{2d},\;
X^tJ_{2d}X+T_0^2J_{2d},\;
\pi-T_0T_1\cdots T_\ell
\right)
}.
\]
Here $X$ is a $2d\times 2d$ matrix and
\[
J_{2d}=
\begin{pmatrix}
0&H_d\\
-H_d&0
\end{pmatrix}.
\]
Observe that $\calU(i,0)\simeq \Spec R_i.$ The proof of the following proposition is same as that of Proposition~\ref{prop flat R(d,l)}, using
Proposition~\ref{IntegralDomainSymplectic} in place of Proposition~\ref{prop:special-fiber-schubert-centers}.
We omit the details.
\begin{prop}
The ring $\mathcal{R}(d, \ell)$ is flat over $\mathcal{O}_F$ and integral of (total) dimension $d(2d + 1) + \ell + 1$.    
\end{prop}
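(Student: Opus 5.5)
I will follow the template of the proof of Proposition~\ref{prop flat R(d,l)}, with the symplectic ring of Proposition~\ref{IntegralDomainSymplectic} in place of the orthogonal one. Set $\calR:=\calR(d,\ell)$ and let $\calR_k$ be its special fiber, cut out by $\mathscr I:=(XJX^t+T_0^2J,\ X^tJX+T_0^2J,\ T_0T_1\cdots T_\ell)$. For $0\le r\le \ell$ put $\mathscr I_r:=(XJX^t+T_0^2J,\ X^tJX+T_0^2J,\ T_r)$.

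First I show $\mathscr I_0$ is prime. The quotient by $\mathscr I_0$ is $R_d(d)\otimes_k k[T_1,\dots,T_\ell]$, where $R_d(d)=k[X]/(XJX^t,X^tJX,\wedge^{d+1}X)$. But $\wedge^{d+1}X$ is already contained in the ideal. Indeed, $X^tJX=0$ forces the column space of $X$ to be isotropic in a $2d$-dimensional symplectic space, so its rank is $\le d$. Scheme-theoretically one sees this from $R_d$ agreeing with the special fiber of Görtz's chart in Proposition~\ref{prop:symp-affine-chart}, which Görtz shows contains $\wedge^{d+1}X$. Hence this quotient is a domain by Proposition~\ref{IntegralDomainSymplectic} with $\ell=i=d$.

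Next, for $r\ge 1$ all the quotients by $\mathscr I_r$ are isomorphic to $A\otimes_k k[T_j: j\neq 0,r]$, where $A:=k[X,T_0]/(XJX^t+T_0^2J,\ X^tJX+T_0^2J)$. I prove the symplectic analogue of Lemma~\ref{lemmaRingA}: $A$ is an integral domain of dimension $d(2d+1)+1$.
\begin{itemize}
\item Over $T_0\neq0$, substitute $Y=T_0^{-1}X$. The equations become $YJY^t=-J$ and $Y^tJY=-J$, a torsor under $\mathrm{Sp}_{2d}$: after a change of sign on half the basis it is $\mathrm{Sp}_{2d}$ itself. This is smooth and connected, with no spin condition needed since $\mathrm{Sp}$ is connected, of dimension $d(2d+1)$.
\item At $T_0=0$ the fiber is $R_d(d)$, reduced and irreducible of dimension $d(2d+1)$.
\item A diagonal lift such as $\widetilde X=\diag(1,\dots,1,-T_0^2,\dots,-T_0^2)$, adjusted to satisfy $\widetilde X J\widetilde X^t=-T_0^2J$, shows via \cite[Lemma 2]{Goertz05} that the special point lies in the closure of the generic fiber.
\end{itemize}
This gives flatness over $k[T_0]$, hence integrality, since $A$ embeds in its localization at $T_0$.

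With every $\mathscr I_r$ prime, the same peeling argument as before gives $\mathscr I=\bigcap_r\mathscr I_r$. Write $x=g_1+T_0f_1$; since $T_0\notin\mathscr I_1$ we get $f_1\in\mathscr I_1$, and iterating yields $x=g+T_0\cdots T_\ell f$. So $\calR_k$ is reduced with $\ell+1$ components, each of dimension $d(2d+1)+\ell$. The generic fiber has $T_0$ invertible and is the smooth irreducible $A[T_0^{-1}]$-type scheme of the same dimension. For topological flatness, lift one point from each component. On the $\mathscr I_0$-component, take $X$ of rank one in the isotropic block with $T_{r\ge1}=1$, lifted by a matrix satisfying $XJX^t=-\pi^2J$, with $T_0=\pi$. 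On the $\mathscr I_r$-component, take $X$ with $XJX^t=-J$, all $T_j=1$ except $T_r=\pi$. Each chosen point lies on a single component. Then \cite[Lemma 2]{Goertz05} and reducedness give flatness, and integrality follows from integrality of the generic fiber.

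The main obstacle is the component $\mathscr I_0$. I must confirm scheme-theoretically that $\wedge^{d+1}X$ lies in $(XJX^t,X^tJX)$ in characteristic $p>2$, or else work directly with the ring cut out with $\wedge^{d+1}X$ included and check the ideal equality. I will also need to choose explicit lifts compatible with the sign convention $+T_0^2J$.
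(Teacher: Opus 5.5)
Your proposal is correct and is essentially the paper's own argument: the paper simply says to repeat the proof of Proposition~\ref{prop flat R(d,l)} with Proposition~\ref{IntegralDomainSymplectic} replacing the orthogonal integrality input. Your adaptations are also right: the generic fiber becomes an $\mathrm{Sp}_{2d}$-torsor, and the lifts must respect the sign $+T_0^2J$, e.g.\ $\diag(I_d,-\pi^2 I_d)$ and $\diag(I_d,-I_d)$. The point you flag is harmless, and the paper uses it implicitly. For $g=2d$, $i=d$, the chart of Proposition~\ref{prop:symp-affine-chart} is $U_d\simeq\Spec R_d$, whose special fiber $\Spec k[X]/(XJX^t,X^tJX)$ is reduced and irreducible by Theorem~\ref{thm:symp-max-schubert}(3). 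Alternatively, it is reduced by Theorem~\ref{thm:symp-flat}, and the isotropy of the column space then forces rank $\le d$. Either way $\wedge^{d+1}X$ lies in the ideal and $\mathscr I_0$ is prime.
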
 

\subsection{The blow-up of $\calU(d,\ell)$}

Consider the blow-up
\[
\Bl_{\calI_0}(\calU(d,\ell))\longrightarrow \calU(d,\ell)
\]
along the ideal $\calI_0=(X,T_0)$. Let $y_{a,b}$ and $\alpha$ denote the homogeneous degree-one elements
corresponding to $x_{a,b}$ and $T_0$, respectively. Then the blow-up is covered
by the affine charts $D_+(y_{a,b})$ and $D_+(\alpha)$.

\begin{prop}\label{prop:symp-y-chart}
For every $1\leq a,b\leq 2d$, the affine chart $D_+(y_{a,b})$ is isomorphic to
\[
\calU(d-1,\ell+1)\times_{\calO_F}\mathbb A_{\calO_F}^{4d-2}.
\]
\end{prop}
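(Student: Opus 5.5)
The plan mirrors the proof of Lemma~\ref{lemma chart y_{i,j}}, replacing the symmetric form $H_{2d+1}$ by the symplectic form $J_{2d}$. On $D_+(y_{a,b})$ we set $y_{a,b}=1$. From \eqref{eq wedge} we get $X=x_{a,b}Y$ and $T_0=x_{a,b}\alpha$. The defining relations of $\calU(d,\ell)$ then become
\[
YJ_{2d}Y^t+\alpha^2J_{2d}=0,\qquad Y^tJ_{2d}Y+\alpha^2J_{2d}=0,\qquad \pi-\alpha x_{a,b}T_1\cdots T_\ell=0 .
\]
Let $a^*=2d+1-a$, so that $a^*\neq a$ always and no analogue of Lemma~\ref{lem:relevant-charts} is needed. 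Write $r_s$ and $c_t$ for the rows and columns of $Y$, and $\langle\ ,\ \rangle$ for the form attached to $J_{2d}$. The relations say that $\langle r_s,r_t\rangle$ and $\langle c_s,c_t\rangle$ equal $-\alpha^2$ times the $(s,t)$-entry of $J_{2d}$, which is $\pm\alpha^2\delta_{s,t^*}$ with sign $\epsilon_s$.

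First we eliminate variables. The pairings $\langle r_a,r_s\rangle$ for $s\neq a,a^*$ are linear in the entries $y_{s,b^*}$, with coefficient $\pm y_{a,b}=\pm1$. The pairings $\langle c_b,c_t\rangle$ for $t\neq b,b^*$ are likewise linear in $y_{a^*,t}$. Finally $\langle r_a,r_{a^*}\rangle=\pm\alpha^2$ determines $y_{a^*,b^*}$. Unlike \eqref{eq 312}, the relations $\langle r_a,r_a\rangle=0$ and $\langle c_b,c_b\rangle=0$ hold automatically. So the entries $y_{a,b^*}$ and $y_{a^*,b}$ remain free. Together with the $2(2d-2)$ entries $y_{a,t}$ and $y_{s,b}$ with $s\neq a,a^*$ and $t\neq b,b^*$, these give $4d-2$ free coordinates. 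This matches the exponent in the statement.

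Second, we form the Schur complement
\[
z_{s,t}:=y_{s,t}-y_{s,b}y_{a,t}\qquad (s\neq a,a^*,\ t\neq b,b^*),
\]
which gives a $(2d-2)\times(2d-2)$ matrix $Z$. As in the computation of $(r_s,r_t)$ in Lemma~\ref{lemma chart y_{i,j}}, substituting the expressions for $y_{s,b^*}$ should turn $\langle r_s,r_t\rangle$ into $\langle z_s,z_t\rangle$ with respect to $J_{2d-2}$. Here $J_{2d-2}$ is the restriction of $J_{2d}$ to the indices different from $a,a^*$ and $b,b^*$, and after reindexing it has the standard shape. This gives $ZJ_{2d-2}Z^t+\alpha^2J_{2d-2}=0$, and the transposed relation follows in the same way. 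The signs $\epsilon_s$ need care, but since $J$ restricted to a pair-stable subset is again standard after reordering, this should go through. One must also check that the remaining relations (those involving $r_{a^*}$ and $c_{b^*}$) are consequences; as in the paper, these can be left as routine.

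Third, sending $\alpha\mapsto T_0$ and $x_{a,b}\mapsto T_{\ell+1}$ gives a closed immersion
\[
D_+(y_{a,b})\hookrightarrow \calU(d-1,\ell+1)\times\mathbb A^{4d-2}_{\calO_F}.
\]
By the preceding proposition the target is integral of dimension $(d-1)(2d-1)+\ell+2+4d-2=d(2d+1)+\ell+1$. This equals $\dim\calU(d,\ell)$, which is also the dimension of the open subset $D_+(y_{a,b})$ of the blow-up, so the closed immersion is an isomorphism.

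The main obstacle is the second step: making sure the symplectic signs give exactly $J_{2d-2}$ rather than a twisted form. We also need the surjectivity of the map onto the target. This comes from writing an explicit inverse, namely reconstructing $Y$ from $Z$ and the free variables, or, as in the paper's approach, from the dimension and integrality count.
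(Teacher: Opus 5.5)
Your proposal is correct and is essentially the paper's proof. It uses the same eliminations of $y_{s,b^*}$, $y_{a^*,t}$ and $y_{a^*,b^*}$, the same free coordinates $y_{a,t},y_{s,b},y_{a,b^*},y_{a^*,b}$, and the same substitution $z_{s,t}=y_{s,t}-y_{s,b}y_{a,t}$. The step you left as ``should go through'' is the identity $\sum_{t\neq b,b^*}\epsilon_t z_{s,t}z_{r,t^*}=\sum_{t=1}^{2d}\epsilon_t y_{s,t}y_{r,t^*}$, and it does follow from your formula for $y_{s,b^*}$ together with $\sum_{t\neq b,b^*}\epsilon_t y_{a,t}y_{a,t^*}=0$.

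Your closing closed-immersion plus integrality/dimension argument is the one the paper uses in the orthogonal lemma; the symplectic write-up simply asserts the isomorphism. In fact this argument makes the check of the leftover relations involving $r_{a^*}$ and $c_{b^*}$ unnecessary: a surjection from the coordinate ring of an integral target onto a ring of the same dimension must already be injective.
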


\begin{proof}
For simplicity, let $N=2d$, $J=J_{2d}$ and put $t^\ast=N+1-t$. We write 
\[
\varepsilon_t=
\begin{cases}
1, & 1\leq t\leq d,\\
-1, & d+1\leq t\leq 2d.
\end{cases}
\]
Thus $\varepsilon_{t^\ast}=-\varepsilon_t$. 
On the affine chart
$D_+(y_{i_0,j_0})$, with $y_{i_0,j_0}=1$, we have
\[
X=x_{i_0,j_0}Y,\qquad T_0=x_{i_0,j_0}\alpha.
\]
Substituting these expressions into the defining equations, we obtain
\[
YJY^t+\alpha^2J=0,\qquad
Y^tJY+\alpha^2J=0,
\]
together with $\pi=\alpha x_{i_0,j_0}T_1\cdots T_\ell.$ For simplicity, write $i=i_0,\, j=j_0$ and set
\[
I'\coloneqq \{1,\ldots,N\}\setminus\{i,i^\ast\},\qquad
J'\coloneqq \{1,\ldots,N\}\setminus\{j,j^\ast\}.
\]
The row equations and column equations in $YJY^t+\alpha^2J=0$ and $Y^tJY+\alpha^2J=0$ give
\[
\sum_{t=1}^N \varepsilon_t y_{a,t}y_{b,t^\ast}
=
-\alpha^2\varepsilon_a\delta_{b,a^\ast},\quad
\sum_{s=1}^N \varepsilon_s y_{s,a}y_{s^\ast,b}
=
-\alpha^2\varepsilon_a\delta_{b,a^\ast}.
\]
 
For $s\in I'$, the row equation $(r_i,r_s)=0$ gives
\[
\varepsilon_j y_{s,j^\ast}
+\varepsilon_{j^\ast}y_{i,j^\ast}y_{s,j}
+
\sum_{t\in J'}\varepsilon_t y_{i,t}y_{s,t^\ast}=0.
\]
Since $\varepsilon_{j^\ast}=-\varepsilon_j$, this gives
\begin{equation}\label{1}
y_{s,j^\ast}
=
y_{i,j^\ast}y_{s,j}
-
\varepsilon_j
\sum_{t\in J'}\varepsilon_t y_{i,t}y_{s,t^\ast}.
\end{equation}

Similarly, for $t\in J'$, the column equation $(c_j,c_t)=0$ gives
\begin{equation}\label{2}
y_{i^\ast,t}
=
y_{i^\ast,j}y_{i,t}
-
\varepsilon_i
\sum_{s\in I'}\varepsilon_s y_{s,j}y_{s^\ast,t}.
\end{equation}

Finally, the row equation $(r_i,r_{i^\ast})=-\alpha^2\varepsilon_i$
gives
\begin{equation}\label{3}
y_{i^\ast,j^\ast}
=
y_{i,j^\ast}y_{i^\ast,j}
-
\varepsilon_j
\sum_{t\in J'}\varepsilon_t y_{i,t}y_{i^\ast,t^\ast}
-
\alpha^2\varepsilon_i\varepsilon_j.
\end{equation}

Now define, for $s\in I'$ and $t\in J'$,
\begin{equation}\label{4}
z_{s,t}:=y_{s,t}-y_{s,j}y_{i,t}.
\end{equation}
Let $Z=(z_{s,t})_{s\in I',\ t\in J'}$. We claim that the remaining
row and column equations are exactly the symplectic equations for $Z$. Indeed, put
$
C_s:=\sum_{t\in J'}\varepsilon_t y_{i,t}y_{s,t^\ast}$.
Then (\ref{1}) traslates to
\[
y_{s,j^\ast}=y_{i,j^\ast}y_{s,j}-\varepsilon_jC_s.
\]
Using (\ref{4}), we compute for $s,r\in I'$:
\[
\begin{aligned}
&\sum_{t\in J'}\varepsilon_t z_{s,t}z_{r,t^\ast}
=
\sum_{t\in J'}\varepsilon_t
\bigl(y_{s,t}-y_{s,j}y_{i,t}\bigr)
\bigl(y_{r,t^\ast}-y_{r,j}y_{i,t^\ast}\bigr)  \\
&=
\sum_{t\in J'}\varepsilon_t y_{s,t}y_{r,t^\ast}
-y_{r,j}\sum_{t\in J'}\varepsilon_t y_{s,t}y_{i,t^\ast}
-y_{s,j}\sum_{t\in J'}\varepsilon_t y_{i,t}y_{r,t^\ast}+ y_{s,j}y_{r,j}\sum_{t\in J'}\varepsilon_t y_{i,t}y_{i,t^\ast}.
\end{aligned}
\]
The term
$
\sum_{t\in J'}\varepsilon_t y_{i,t}y_{i,t^\ast}
$
equals to zero because the form is alternating. Moreover,
\[
\sum_{t\in J'}\varepsilon_t y_{s,t}y_{i,t^\ast}
=
-\sum_{t\in J'}\varepsilon_t y_{i,t}y_{s,t^\ast}
=
-C_s.
\]
Therefore
\[
\sum_{t\in J'}\varepsilon_t z_{s,t}z_{r,t^\ast}
=
\sum_{t\in J'}\varepsilon_t y_{s,t}y_{r,t^\ast}
+y_{r,j}C_s-y_{s,j}C_r.
\]
On the other hand, the $(s,r)$-entry of $YJY^t$ is
\[
\begin{aligned}
\sum_{t=1}^N\varepsilon_t y_{s,t}y_{r,t^\ast}
&=
\sum_{t\in J'}\varepsilon_t y_{s,t}y_{r,t^\ast}
+\varepsilon_j y_{s,j}y_{r,j^\ast}
-\varepsilon_j y_{s,j^\ast}y_{r,j}  \\
&=
\sum_{t\in J'}\varepsilon_t y_{s,t}y_{r,t^\ast}
+y_{r,j}C_s-y_{s,j}C_r.
\end{aligned}
\]
Thus
\[
\sum_{t\in J'}\varepsilon_t z_{s,t}z_{r,t^\ast}
=
\sum_{t=1}^N\varepsilon_t y_{s,t}y_{r,t^\ast}.
\]
Since $YJY^t+\alpha^2J=0$, we obtain
\begin{equation}
\sum_{t\in J'}\varepsilon_t z_{s,t}z_{r,t^\ast}
=
-\alpha^2\varepsilon_s\delta_{r,s^\ast}
\end{equation}
for all $s,r\in I'$. Equivalently,
\[
ZJ_{2d-2}Z^t+\alpha^2J_{2d-2}=0.
\]

The column equations are treated in the same way. Put
\[
D_t:=\sum_{s\in I'}\varepsilon_s y_{s,j}y_{s^\ast,t}.
\]
Then (\ref{2}) says
\[
y_{i^\ast,t}=y_{i^\ast,j}y_{i,t}-\varepsilon_iD_t.
\]
Using again the definition of $z_{s,t}$, one obtains, for $t,u\in J'$,
$
\sum_{s\in I'}\varepsilon_s z_{s,t}z_{s^\ast,u}
=
\sum_{s=1}^N\varepsilon_s y_{s,t}y_{s^\ast,u}.
$
Hence, by the equation $Y^tJY+\alpha^2J=0$,
\begin{equation}
\sum_{s\in I'}\varepsilon_s z_{s,t}z_{s^\ast,u}
=
-\alpha^2\varepsilon_t\delta_{u,t^\ast}
\end{equation}
for all $t,u\in J'$. Equivalently,
\[
Z^tJ_{2d-2}Z+\alpha^2J_{2d-2}=0.
\]

Consequently the chart $D_+(y_{i_0,j_0})$ is isomorphic to
\[
Spec
\frac{
\mathcal O[Z,\alpha,x_{i_0,j_0},T_1,\ldots,T_\ell]
}{
\left(
ZJ_{2d-2}Z^t+\alpha^2J_{2d-2},\,
Z^tJ_{2d-2}Z+\alpha^2J_{2d-2},\,
\pi-\alpha x_{i_0,j_0}T_1\cdots T_\ell
\right)
}
\times_{\mathcal O}\mathbb A^{4d-2}_{\mathcal O}.
\]
Here the affine-space factor has coordinates
\[
y_{i,t}\quad (t\in J'),\qquad
y_{s,j}\quad (s\in I'),\qquad
y_{i,j^\ast},\qquad
y_{i^\ast,j}.
\]
Thus its dimension is $(2d-2)+(2d-2)+2=4d-2.$ After renaming $\alpha=T_0',\, x_{i_0,j_0}=T_{\ell+1}',$
we get $\pi=T_0'T_1\cdots T_\ell T_{\ell+1}'.$ Therefore $D_+(y_{i_0,j_0})
\simeq
U(d-1,\ell+1)\times_{\mathcal O}\mathbb A^{4d-2}_{\mathcal O}.$
\end{proof}

\begin{prop}\label{prop:symp-alpha-chart}
The affine chart $D_+(\alpha)$ is isomorphic to
\[
C_d\times_{\calO_F}
\Spec \frac{\calO_F[T_0,\ldots,T_\ell]}
{\left(\pi-T_0T_1\cdots T_\ell\right)},
\]
where
\[
C_d:=
\Spec
\frac{\calO_F[Y]}
{
\left(
YJ_{2d}Y^t+J_{2d},\;
Y^tJ_{2d}Y+J_{2d}
\right)
}.
\]
In particular, $C_d$ is smooth over $\calO_F$, and $D_+(\alpha)$ is
semi-stable over $\calO_F$. If $\ell=0$, then $D_+(\alpha)$ is smooth over
$\calO_F$.
\end{prop}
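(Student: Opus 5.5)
On the chart $D_+(\alpha)$ we have $\alpha=1$, so by the relations \eqref{eq wedge} (which hold verbatim here) $X=T_0Y$ with $Y=(y_{a,b})$. Substituting into $XJ_{2d}X^t+T_0^2J_{2d}$ and $X^tJ_{2d}X+T_0^2J_{2d}$ gives $T_0^2(YJY^t+J)$ and $T_0^2(Y^tJY+J)$. In the Rees algebra the degree-two relations read directly $YJY^t+\alpha^2J=0$ and $Y^tJY+\alpha^2J=0$; with $\alpha=1$ we get $YJY^t+J=0$ and $Y^tJY+J=0$, with no $T_0$-torsion issue. The relation $\pi=T_0T_1\cdots T_\ell$ is unchanged, and the entries $x_{a,b}=T_0y_{a,b}$ are eliminated. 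This yields a closed immersion
\[
D_+(\alpha)\hookrightarrow C_d\times_{\calO_F}\Spec \calO_F[T_0,\ldots,T_\ell]/(\pi-T_0\cdots T_\ell),
\]
and it is in fact surjective onto the target as a map of rings, since $D_+(\alpha)$ is generated by $Y$ and the $T_r$.

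Next I will identify $C_d$. The equation $Y^tJY=-J$ says $Y$ is an anti-similitude with multiplier $-1$, and $YJY^t=-J$ follows from it: $Y$ is invertible since $\det(Y)^2=1$, and then $Y^{-1}=-J^{-1}Y^tJ$. Pick $\gamma\in\GSp_{2d}(\calO_F)$ with multiplier $-1$, for instance $\gamma=\diag(I_d,-I_d)$, which satisfies $\gamma^tJ\gamma=-J$. Then $Y\mapsto \gamma^{-1}Y$ identifies $C_d$ with $\mathrm{Sp}_{2d,\calO_F}$. Since $\mathrm{Sp}_{2d}$ is smooth over $\calO_F$ of relative dimension $d(2d+1)$, $C_d$ is smooth of that relative dimension with connected fibers. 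The target is therefore smooth over the standard semi-stable scheme $\Spec\calO_F[T_0,\ldots,T_\ell]/(\pi-T_0\cdots T_\ell)$, hence integral and semi-stable of total dimension $d(2d+1)+\ell+1$.

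To conclude that the closed immersion is an isomorphism, I will argue by dimension and integrality exactly as in Lemma~\ref{lem:alpha-chart-semistable}. $D_+(\alpha)$ is a nonempty open subscheme of the blow-up of the integral scheme $\calU(d,\ell)$, which has dimension $d(2d+1)+\ell+1$ by the proposition preceding this one. Hence $D_+(\alpha)$ is integral of the same dimension as the integral target, and a closed immersion between integral schemes of equal finite dimension is an isomorphism. (Alternatively, the surjectivity of the ring map noted above gives this directly once one checks that no further relations are imposed.)

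For $\ell=0$ the second factor is $\Spec\calO_F[T_0]/(\pi-T_0)\simeq\Spec\calO_F$, so $D_+(\alpha)\simeq C_d$ is smooth. The main point to handle with care is the identification of $C_d$ with a symplectic group torsor, which rests on choosing $\gamma$ with multiplier $-1$; the remaining steps are routine.
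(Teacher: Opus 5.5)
Your proposal is correct and follows the paper's proof. You set $\alpha=1$ and substitute $X=T_0Y$ to get a closed immersion of $D_+(\alpha)$ into $C_d\times_{\calO_F}\Spec\calO_F[T_0,\ldots,T_\ell]/(\pi-T_0\cdots T_\ell)$, identify $C_d$ with $\mathrm{Sp}_{2d,\calO_F}$ by translating by a multiplier $-1$ element (your $\gamma$ is $-S$ in the paper), and upgrade to an isomorphism by the integrality/dimension comparison of Lemma~\ref{lem:alpha-chart-semistable}.

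The one soft spot, shared with the paper, is the step ``open in the blow-up, hence of dimension $d(2d+1)+\ell+1$''. This does not hold for arbitrary opens: the generic fiber is open of smaller dimension. A cleaner route compares generic fibers. There $T_0$ is a unit, so $D_+(\alpha)_F=\calU(d,\ell)_F$ and the target's generic fiber are integral of the same dimension, hence equal. The kernel of the surjection onto $\calO(D_+(\alpha))$ is then $\pi$-power torsion in an $\calO_F$-flat ring, hence zero.
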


\begin{proof}
On the chart $D_+(\alpha)$, we may set $\alpha=1$. Then \[
XJ_{2d}X^t+T_0^2J_{2d}=0,\qquad
X^tJ_{2d}X+T_0^2J_{2d} =0 ,\qquad
\pi=T_0T_1\cdots T_\ell
\] translate to 
\[
YJ_{2d}Y^t+J_{2d}=0,
\qquad
Y^tJ_{2d}Y+J_{2d}=0, \qquad \pi=T_0T_1\cdots T_\ell.
\]
Hence $D_+(\alpha)$ is isomorphic to the closed subscheme
\[
C_d\times_{\mathcal O_F}
\Spec \,\mathcal O_F[T_0,\ldots,T_\ell]/(\pi-T_0T_1\cdots T_\ell).
\]
It remains to note that $C_d$ is smooth over $\mathcal O_F$. Let $S=\operatorname{diag}((-1)^{(d)},(1)^{(d)})$. Then
\[
SJ_{2d}S^t=-J_{2d}.
\]
Thus multiplication by $S^{-1}$ identifies $C_d$ with the usual
symplectic group:
\[
C_d\xrightarrow{\ \sim\ }\text{Sp}_{2d,\mathcal O_F},
\qquad
Y\longmapsto S^{-1}Y.
\]
Indeed, if $YJ_{2d}Y^t=-J_{2d}$, then
\[
(S^{-1}Y)J_{2d}(S^{-1}Y)^t
=
S^{-1}(-J_{2d})(S^{-1})^t
=
J_{2d}.
\]
Conversely, if $A\in \text{Sp}_{2d}$, then $Y=SA$ satisfies
\[
YJ_{2d}Y^t=SJ_{2d}S^t=-J_{2d}.
\]
Therefore $C_d$ is smooth over $\calO_F$ of relative dimension
$d(2d+1)$.
\end{proof}
\begin{proof}[Proof of Proposition~\ref{prop:symp-affine-semistable}]
The proof is the same as the proof of Proposition~\ref{prop:qs-affine-semistable},
replacing Lemmas~\ref{lemma chart y_{i,j}} and
\ref{lem:alpha-chart-semistable} by
Propositions~\ref{prop:symp-y-chart} and
\ref{prop:symp-alpha-chart}. Thus, after $i$ successive blow-ups, every affine
chart is semi-stable either coming from an $\alpha$-chart as in Proposition \ref{prop:symp-alpha-chart} or isomorphic to
\[
\mathbb A_{\calO_F}^{2i^2}\times_{\calO_F}
\Spec\frac{\calO_F[T_0,\ldots,T_i]}{(\pi-T_0\cdots T_i)}.
\]
Hence, $\calX_i$ is semi-stable over $\calO_F$.
\end{proof}

\section{Semi-stable resolution for Weil restriction symplectic case}\label{section ss sym Res}

In this section, we construct a semi-stable resolution for the Weil restriction symplectic case. We continue with the notation of \S \ref{section ss ortho Res}. Suppose that $(B, *, V, \bb\ ,\ \pp)$ is a local PEL-datum over $F_0$ as in \S \ref{sec-splitting}.

We assume that in (\ref{Bprod}), $I_1=\{1\}$ is a singleton and of type (C), i.e., 
\[
B\simeq M_{m_1}(F_1)
\]
for any positive integer $m_1$, and the involution  $*:M_{m_1}(F_1)\rightarrow M_{m_1}(F_1)$ is given by $b^*=b^t$. In particular, $*$ acts trivially on $F_1$, which is identified with scalar matrices in $M_{m_1}(F_1)$.  Here $F_1$ is a finite extension  of $F_0$. As in \S \ref{section ss ortho Res}, we define the fields $L$ and $K$, and the set of field embeddings $\Sigma=\cbra{\sigma_j^l\colon F_1\ra L}$. 

Let $\calL=\Lambda_{2g\ZZ\pm i}$ be a self-dual multichain of $\calO_B$--lattices in $V$, and let $\calG$ be the neutral component of the similitude automorphism group of $\calL$. By Morita equivalence, $V\simeq V_1^{m_1}$  where $V_1$ is an $F_1$-vector space of even dimension. We assume that  $\dim V_1=2g$. Since the duality isomorphism $\Hom_{F_1} (V_1, F_1)\simeq \Hom_{F_0}(V_1,F_0)$ given by composing with the trace $\Tr_{F_1/F_0}: F_1\rightarrow F_0$, there exists a unique non-degenerate alternating form $\bb\ ,\ \pp': V_1\times V_1\rightarrow F_1$ such that
\[
\bb x,y\pp=\Tr_{F_1/F_0}(\delta\cdot \bb x,y\pp'),
\] 
for all $x, y\in V_1$, where $\delta$ is an $\calO_{F_1}$-generator of the inverse different $\mathcal{D}^{-1}_{F_1/F_0}$. Let $G$ denote the group over $F_0$ whose set of $R$-points ($R$ is any $F_0$-algebra) is the following \begin{flalign*}
    G(R)\coloneqq \cbra{g\in \GL_{B\otimes_{F_0}R}(V\otimes_{F_0}R)\ |\ \pair{gv,gw}=c(g)\pair{v,w}, c(g)\in R\cross }.
\end{flalign*}
Let $H=\GSp(V_1,\bb\ ,\ \pp')$ denote the symplectic similitude group over $F_1$, whose set of $R$-points ($R$ is any $F_1$-algebra) is \begin{flalign*}
    H(R)\coloneqq \cbra{g\in \GL(V_1\otimes_{F_1}R)\ |\ \bb gx,gy\pp'=c'(g)\bb x,y\pp', c'(g)\in {R'}^\times}.
\end{flalign*}

\begin{lemma}\label{lm-weilres-gpG-symp}
    The group $G$ is isomorphic to the subgroup $G'$ of $\Res_{F_1/F_0}H$ whose similitude character is defined over $F_0$. More precisely, for any $F_0$-algebra $R$, \begin{flalign*}
        G(R)=H'(R) \coloneqq \cbra{h\in H(R\otimes_{F_0}F_1)\ |\ c'(h)\in R\cross }.
    \end{flalign*}
\end{lemma}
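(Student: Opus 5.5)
The plan is to copy the proof of Lemma~\ref{lm-weilres-gpG-ortho} almost verbatim. The orthogonal argument never used the symmetry of the form, only Morita equivalence and trace duality, so it should transfer.

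Fix an $F_0$-algebra $R$. First, by Morita equivalence for $B\simeq M_{m_1}(F_1)$, the decomposition $V\simeq V_1^{m_1}$ gives an identification
\[
\GL_{B\otimes_{F_0}R}(V\otimes_{F_0}R)\simeq \GL_{F_1\otimes_{F_0}R}(V_1\otimes_{F_0}R),
\]
sending $g\mapsto g_1$. Since $b^*=b^t$, I will use the Morita idempotent $e=e_{11}$, which is $*$-stable. The condition $\pair{bv,w}=\pair{v,b^*w}$ then shows that $\pair{\ ,\ }$ on $V$ is determined by its restriction to $V_1=eV$. Concretely, after identifying the $m_1$ copies via matrix units, $\pair{v,w}=\sum_k\pair{v_k,w_k}$ with all $v_k,w_k\in V_1$. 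Because the involution is the plain transpose, no auxiliary matrix $J$ is needed, unlike the type (D) case. Hence $\pair{gv,gw}=c(g)\pair{v,w}$ for all $v,w\in V\otimes R$ is equivalent to $\pair{g_1x,g_1y}=c(g)\pair{x,y}$ for all $x,y\in V_1\otimes_{F_0}R$.

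Second, I pass from the $F_0$-valued form to the $F_1$-valued one. Trace duality gives $\pair{x,y}=\Tr_{F_1/F_0}(\delta\pair{x,y}')$, extended $R$-linearly on $V_1\otimes_{F_0}R$ as a module over $F_1\otimes_{F_0}R$. The map
\[
\Hom_{F_1\otimes R}(V_1\otimes R,F_1\otimes R)\to\Hom_R(V_1\otimes R,R),\qquad \phi\mapsto \Tr(\delta\phi),
\]
is an isomorphism, by base change of the field case.

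Now let $g_1$ be $F_1\otimes R$-linear and $c\in R^\times$. The identity $\Tr(\delta\pair{g_1x,g_1y}')=c\,\Tr(\delta\pair{x,y}')$ holds for all $y$. Replacing $y$ by $ay$ with $a\in F_1$ and using the injectivity above, this yields $\pair{g_1x,g_1y}'=c\pair{x,y}'$. The converse direction is immediate. So $G(R)$ is the set of $h\in H(R\otimes_{F_0}F_1)$ whose similitude factor $c'(h)$ lies in $R^\times\subset(R\otimes F_1)^\times$, which is $H'(R)$.

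I expect the main point needing care to be the Morita step: checking that the form on $V$ really decomposes orthogonally over the copies of $V_1$ for a transpose-type involution. This follows because $e_{kk}^*=e_{kk}$ and the $e_{kk}$ are orthogonal idempotents, which gives $\pair{e_{kk}v,e_{ll}w}=0$ for $k\neq l$. The second step is routine.
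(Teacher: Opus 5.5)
Your proposal is correct and takes the same route the paper intends. The paper omits this proof and refers to Lemma~\ref{lm-weilres-gpG-ortho}, which is exactly your argument: a Morita reduction to $V_1$, then trace duality to pass from the $F_0$-valued form to $\langle\ ,\ \rangle'$. You also check two points the paper leaves implicit: that $e_{kk}^*=e_{kk}$ makes the Morita decomposition orthogonal, so no auxiliary $J$ is needed in type (C); and that $\phi\mapsto\Tr(\delta\phi)$ remains injective after base change to $R$.
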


We omit the proof here since it is similar to Lemma \ref{lm-weilres-gpG-ortho}. We assume that the matrix with respect to the standard basis of the symplectic form $\bb \ , \ \pp'$  is $J_{2g}$. Let $\mu$ denote the minuscule cocharacter of the standard maximal torus in $\GSp(V_1, \bb \ ,\ \pp')$. For each embedding $\sigma_j^l\in \Sigma$, we fix a minuscule cocharacter $\mu_{\sigma_j^l}=(1^{(g)}, 0^{(g)}))$, and let $\mu: \mathbb{G}_{m, \bar{F}_0}\rightarrow G_{\bar{F}_0}$ denote the geometric cocharacter of $G$ whose $\sigma_j^l$-component in is $\mu_{\sigma_j^l}$. Let $E\subset L$ denote the reflex field of $\{\mu\}$ with the ring of integers $\calO_E$. Thus, by Definition  \ref{def-naivelocmod} (resp. Definition \ref{def-naivesplmod}), we obtain the naive local model $\Mnaive(\calL, \mu)$ over $\calO_E$ (resp. the naive splitting model $\M^{\rm nspl}_\calL$ over $\calO_L$). Note 
that in the symplectic case, the naive local models and naive splitting models coincide with the local models (Definition \ref{def:symp-local-model}) and the splitting model (\cite[\S 9]{PR2}) respectively. Similar to (\ref{eq-local model diagram spl-ortho}), we have a local model diagram in the following:
\begin{equation}\label{eq-local model diagram spl-symp}
\begin{tikzcd}[column sep={6em, between origins}, row sep={3em, between origins}]
&  \arrow[ld, "\alpha_1"']\tilde{\M}^{\rm spl}_\calL\arrow[rd, "\alpha_2"]&\\
\M^{\rm spl}_\calL   &&   \prod_{\sigma_j^l\in\Sigma} \Mloc_i,
\end{tikzcd}
\end{equation}
where $\alpha_1$ is a  $\mathcal{H}$-torsor and $\alpha_2$ is a smooth $\mathcal{H}$-equivariant morphism. 

\begin{thm}\label{thm ss Weil symplec}
Let $F_1$ be a finite separable extension of $F_0$. Assume that $I=\{i \}$ with $1\leq i\leq \lfloor g/2\rfloor$. After the base change to $\calO_K$, there exists a projective
$\calG_{\calO_K}$-equivariant morphism
\[
\M^{\rm ss}(\calL, \mu)\rightarrow \Mloc(\calL, \mu)\otimes_{\calO_E} \calO_K,
\]
which is an isomorphism on the generic fibers and whose source is semi-stable over $\calO_K$.    
\end{thm}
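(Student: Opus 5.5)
We follow the same strategy as in the proof of Theorem~\ref{thm ss Weil ortho}, which in the symplectic case is simpler because naive and canonical local models agree. The plan is to resolve each factor of the product in the splitting diagram \eqref{eq-local model diagram spl-symp}, make the product semi-stable by Hartl's procedure, transport the modification along the diagram, and finally compose with the forgetful map to $\Mloc(\calL,\mu)\otimes\calO_K$.

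First, for each embedding $\sigma=\sigma_j^l\in\Sigma$, the factor $\Mloc_i$ in \eqref{eq-local model diagram spl-symp} is the symplectic local model of Definition~\ref{def:symp-local-model} over $\calO_L$, with uniformizer $\sigma(\pi_1)$. Over the ring of integers of $L(\sqrt{-\sigma(\pi_1)})\subset K$, Theorem~\ref{thm:symp-semistable-resolution} gives a semi-stable resolution $Z_\sigma\to \Mloc_i\otimes\calO_K$. It is obtained by successive blow-ups along strict transforms of the Schubert varieties $S_0\subset\cdots\subset S_{i-1}$. We use Theorem~\ref{thm:symp-max-schubert}(2) to reduce to $1\le i\le\lfloor g/2\rfloor$; if $i=0$, the factor is already smooth. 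Since the centers are $\calG_{[\sigma]}$-stable, each $Z_\sigma$ carries a compatible action and the morphism is equivariant. Because $K$ contains all $\sqrt{-\sigma(\pi_1)}$ (Lemma~\ref{lem-KL0}), all factors can be taken over the common base $\calO_K$.

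Second, we apply Hartl's Proposition~\ref{introprop3} to $\prod_\sigma Z_\sigma$ over $\calO_K$. The centers are products of irreducible components of the special fibers of the $Z_\sigma$. Each such component is the exceptional divisor or strict transform of a Schubert variety, hence stable under $\calH=\prod\calG_{[\sigma]}$. This yields an $\calH$-equivariant projective modification $\pi\colon Z\to\prod_\sigma\Mloc_i\otimes\calO_K$ with $Z$ semi-stable.

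Third, we pull back along $\alpha_2$ and descend along the $\calH$-torsor $\alpha_1$, using the linear modification of \cite[\S 2]{P}. Set $\tilde Z=\tilde\M^{\rm spl}_\calL\times_{\prod\Mloc_i}Z$; it is smooth over $Z$, hence semi-stable. Then $\M^{\rm ss}(\calL,\mu):=\calH\backslash\tilde Z$ is étale locally isomorphic to $\tilde Z$, so it is semi-stable. The map $\M^{\rm ss}\to\M^{\rm spl}_\calL\otimes\calO_K$ is projective and an isomorphism on generic fibers, because blow-ups commute with smooth base change. For $\calG$-equivariance, Proposition~\ref{prop-splitting} shows that $\alpha_1,\alpha_2$ are $\calG$-equivariant, with $\calG$ acting on the product through $\calG\to\prod\calG_{[\sigma]}$; since all centers are stable under this product, the modification is $\calG$-equivariant. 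Finally, we compose with the forgetful map $f\colon\M^{\rm spl}_\calL\to\Mloc(\calL,\mu)\otimes\calO_L$. This map is an isomorphism on generic fibers and projective, and it lands in the flat closure $\Mloc$ because $\M^{\rm spl}$ is flat.

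The main obstacle is the descent in the third step: we must verify that the $\calH$-equivariant structure on $Z$ really makes $\tilde Z\to\tilde\M^{\rm spl}_\calL$ descend to a blow-up of $\M^{\rm spl}_\calL$ and preserves semi-stability. We would check this directly on charts where $\alpha_1$ is trivial. There the statement reduces to smooth base change of blow-ups, together with the observation that the ideal sheaves of the centers are $\calH$-invariant. We would also check that the field base change to $\calO_K$ is compatible with the identification of the canonical local model, using Proposition~\ref{prop-mloc}.
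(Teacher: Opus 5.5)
Your strategy is the paper's: resolve each factor of the target of \eqref{eq-local model diagram spl-symp} by Theorem~\ref{thm:symp-semistable-resolution}, apply Hartl's blow-ups to the product over $\calO_K$, transport along the splitting diagram by Pappas' linear modification, and compose with the forgetful map $f$. Your remarks on the $\calH$-stability of the components are, if anything, more explicit than the paper's; you correctly note that the components are exceptional divisors and strict transforms, not Schubert varieties themselves. The same holds for your remarks on descent along $\alpha_1$. There is, however, one step that fails in the stated generality. The paper, through its proof of Theorem~\ref{thm ss Weil ortho}, makes the same tacit assumption.

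The factor indexed by $\sigma$ is not a symplectic local model over $\calO_L$ with uniformizer $\sigma(\pi_1)$. The element $\sigma(\pi_1)$ is a uniformizer of $\sigma(F_1)$, but in general not of $L$, and the factor is the base change $\Mloc_i\otimes_{\calO_{F_1},\sigma}\calO_L$ of the local model over $\calO_{F_1}$. On the standard chart its equations are $XJ_{2i}X^t+a^2J_{2i}=0=X^tJ_{2i}X+a^2J_{2i}$ with $a=\sqrt{-\sigma(\pi_1)}$. The computation of Proposition~\ref{prop:symp-affine-semistable} yields semi-stability only if $a$ is a uniformizer of $\calO_K$. Equivalently, the resolution from Theorem~\ref{thm:symp-semistable-resolution} over $\calO_{F_1(\sqrt{-\pi_1})}$ stays semi-stable after base change along $\sigma$ to $\calO_K$ only if $K$ is unramified over $\sigma(F_1)(a)$. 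This happens exactly when $e(L/\sigma(F_1))\le 2$, so it always holds when $F_1/F_0$ is tamely ramified (then $e(L/F_1)=1$). It can fail in the wild case. For $F_0=\mathbb{Q}_5$ and $F_1=\mathbb{Q}_5(\sqrt[5]{2})$, the field $L=F_1(\zeta_5)$ is totally ramified of degree $20$ over $\mathbb{Q}_5$, so $e(L/F_1)=4$ and $v_K(a)=2$. Already for $i=1$ the chart is $\det X=-a^2$. Blowing up along $S_0$ leaves the chart $\det Y=-a^2\varpi_K^{-2}$, where $\varpi_K$ is a uniformizer of $K$, and this chart is still singular at the origin of its special fiber. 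So, as written, both your argument and the paper's need either a restriction such as $e(L/F_1)\le 2$ or an additional resolution step for the remaining singularities.
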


\begin{proof}
We perform a similar proof as in Theorem \ref{thm ss Weil ortho}. For each embedding $\sigma_j^l\in\Sigma$, we obtain a semi-stable resolution $Z_{\sigma_j^l}\rightarrow \Mloc_i\otimes_{\sigma_j^l} \calO_K$ by Theorem \ref{thm:symp-semistable-resolution}. By using Hartl's method again, we obtain a semi-stable resolution 
\[
\pi: Z\rightarrow \prod_{\sigma_j^l} \Mloc_i\otimes \calO_K.
\]
Using the linear modification \cite[\S 2]{P}, we obtain a semi-stable model $\M^{\rm ss}(\calL, \mu)$ over $\calO_K$ such that the morphsim $\ga: \M^{\rm ss}(\calL, \mu)\rightarrow \Mspl_\calL$ is $\calG$-equivalent. The remainder of the argument proceeds exactly as in the proof of Theorem \ref{thm ss Weil ortho}.
\end{proof}

\quash{
Let $V=F_1^{2g}$be an $F_1$-vector space equipped with the symplectic $F_1$-bilinear form $\bb \ ,\ \pp$, whose matrix with respect to the standard basis is $J_{2g}$. Let $G=\GSp_{2g}$ be the symplectic group over $F_1$ as defined in \S \ref{symplectic-local-models}. Upon base change to $F_1^{\Gal}$, we again have the standard decomposition, i.e.,
\begin{equation}\label{eq splitting G}
(\Res_{F_1/F_0}(G))\otimes_{F_0}{F_1^{\Gal}}\simeq \prod_{\vphi\in \Phi} G\otimes_{F_1,\vphi}{F_1^{\Gal}}.
\end{equation}
For every non-empty subset $I\subset [0,g]$, let $\Mloc_I(G):=M_I^\loc$ be the symplectic local model (see Definition \ref{def:symp-local-model}). 
Let $\mu_\vphi:=(1^{(g)}, 0^{(g)})$ denote the cocharacter of the standard maximal torus in $G$, and let $\mu$ denote the geometric cocharacter of $\Res_{F_1/F_0}(G)$ whose $\vphi$-component in (\ref{eq splitting G}) is $\mu_\vphi$. Then the reflex field of $\mu$ is $F_0$ (see \cite[\S 8]{PR2}). We define the  non-degenerate alternating form $\{ \ , \ \}': V\times V\rightarrow F_0$ by 
\begin{equation}\label{eq form Res spl}
\{x,y\}':=\Tr_{F_1/F_0}(\delta\bb x,y\pp),    
\end{equation}
where $\delta$ is an $\calO_{F_1}$-generator of the inverse different $\mathcal{D}^{-1}_{F_1/F_0}$. Analogously to the orthogonal case, one can define the dual lattice $\hat{\Lambda}$ for $\Lambda$ and a self-dual periodic lattice chain $\Lambda_I$.

The {\it naive local model} $\Mnaive_I(\Res_{F_1/F_0}(G))$ is defined exactly the same way as for Definition \ref{def-naive-Res-ortho}, that is, we impose condition {(\rm LM2)} with the
understanding that all notation is taken with respect to $\{\ ,\ \}'$. The local model $\Mloc_I(\Res_{F_1/F_0}(G))$ over $\calO_{F_0}$ is the scheme theoretic closure in naive local model $\Mnaive_I(\Res_{F_1/F_0}(G))$ of its generic fiber. 

When $F_1$ is an unramified extension of $F_0$, G\"ortz proved that the naive local model $\Mnaive_I(\Res_{F_1/F_0}(G))$ coincides with the local model $\Mloc_I(\Res_{F_1/F_0}(G))$ by the splitting (\ref{eq splitting G}). In particular, it is flat over $\calO_{F_0}$ with reduced special fiber (see \cite{Goertz03}). When $F_1$ is a totally ramified extension of $F_0$, the naive local model $\Mnaive_I(\Res_{F_1/F_0}(G))$ fails to be flat. The splitting model $\Mspl_I(\Res_{F_1/F_0}(G))$ over $\Spec \calO_{F_1^\Gal}$ is defined in \cite[Part II]{PR2} with an explicit moduli description.

\begin{thm}\label{thm ss Weil symplec}
Let $F_1$ be a finite separable extension of $F_0$. Assume that $I=\{i \}$ with $1\leq i\leq \lfloor g/2\rfloor$. After the base change to $\calO_K$, there exists a projective
$\calG_{i}$-equivariant morphism
\[
\M_i^{\rm ss}(\Res_{F_1/F_0}(G))\rightarrow \Mloc_i(\Res_{F_1/F_0}(G))\otimes \calO_K,
\]
which is an isomorphism on the generic fibers and whose source is semi-stable over $\calO_K$.    
\end{thm}

\begin{proof}
We perform a similar proof as in Theorem \ref{thm ss Weil ortho}. \redcolor{needs modification later, the current proof of Theorem 7.3 is different:} In the unramified case, the local model $\Mloc_i(\Res_{F_1/F_0}(G))\otimes \calO_K$ is isomorphic to a product of   local models by \ref{eq splitting G}. Therefore, the desired semi-stable reduction follows from Theorem \ref{symp-semistable-resolution}. In the totally ramified case, there is a $\calG_i$-equivariant projective forgetful morphism
\[
\tau: \Mspl_i(\Res_{F_1/F_0}(G))\rightarrow \Mloc_i(\Res_{F_1/F_0}(G))\otimes_{\calO_{F_0}}\calO_{F_1^\Gal}
\]
Moreover, after base change to $\calO_K$, the splitting model is étale locally isomorphic to the product $\prod_{\ell=1}^e \Mloc_I(\GSp_{2g})$ (see \cite[\S 9]{PR2} for more details). The remainder of the argument proceeds exactly as in the proof of Theorem \ref{thm ss Weil ortho}.
\end{proof} 
}

\quash{
\section{Applications}\label{symp-ShimuraVarSec}
\subsection{Symplectic integral models}
We now explain the application to Siegel integral models with parahoric level structure; see \cite[\S5--\S6]{H} and \cite[\S1.2]{PRS}. Let $(\bV,\langle\ ,\ \rangle)$ be a $2g$-dimensional symplectic
$\BQ$-vector space and put
\[
        \bG=\GSp(\bV,\langle\ ,\ \rangle).
\]
The associated Shimura variety is the Siegel modular variety: over the generic
fiber it parametrizes principally polarized abelian varieties of dimension $g$,
with prime-to-$p$ level structure. We fix an odd prime $p$, and choose a
$\BQ_p$-basis of $\bV_{\BQ_p}$ in which the symplectic form is represented by
\[
        J_{2g}=
        \begin{pmatrix}
        0&H_g\\
        -H_g&0
        \end{pmatrix}.
\]
Let $\sL$ be the self-dual periodic $\mathbb Z_p$-lattice chain in
$\bV_{\mathbb Q_p}$ corresponding to a non-empty subset $I\subset[0,g]$. Let $\calG_I$ be the affine smooth group
scheme of symplectic similitude automorphisms of $\sL$, and set  $\bK_p:=\calG_I(\mathbb Z_p)\subset \bG(\mathbb Q_p)$. For a sufficiently small open compact subgroup
$\bK^p\subset \bG(\mathbb A_f^p)$, set $\bK=\bK_p\bK^p$.

Let $\calA_{\bK}$ be the corresponding integral model over $\mathbb Z_p$. It
parametrizes principally polarized abelian schemes of dimension $g$, with
$\bK^p$-level structure away from $p$, and with parahoric level structure at
$p$ corresponding to the lattice chain $\sL$. 
As in \cite[\S2.4]{GY}, there is a local model diagram
\[
\calA_\bK
\xleftarrow{\alpha'_\bK}
\widetilde{\calA}_\bK
\xrightarrow{\beta'_\bK}
\Mloc_I,
\]
where $\alpha'_\bK$ is a $\calG_I$-torsor and $\beta'_\bK$ is
$\calG_I$-equivariant and smooth. Equivalently, there is a relatively
representable smooth morphism of algebraic stacks
\[
        \calA_\bK\longrightarrow [\Mloc_I/\calG_I].
\]

\begin{corollary}\label{coro-symp-flat}
The integral model $\calA_\bK$ is flat over $\BZ_p$, normal, and
Cohen--Macaulay with reduced special fiber.
\end{corollary}

\begin{proof}
This follows from the local model diagram and Theorem \ref{thm:symp-flat}.
\end{proof}

We now assume that $I=\{i\}$, with $1\leq i\leq \lfloor g/2\rfloor$. Let
$\bK_i=\bK_i'\bK^p$ and $        \bK_i':=\calG_i(\BZ_p)$. Also, set $F=\mathbb Q_p(\sqrt{-p})$, with ring of integers $\calO_F$, and  $\calA_{\bK_i,\calO_F}:=\calA_{\bK_i}\otimes_{\BZ_p}\calO_F,
        \,
        \Mloc_{i,\calO_F}:=\Mloc_i\otimes_{\BZ_p}\calO_F$. Recall the sequence of blow-ups
\[
        \Mloc_{i,\calO_F}=Z_0
        \xleftarrow{b_1}
        Z_1
        \xleftarrow{b_2}
        \cdots
        \xleftarrow{b_i}
        Z_i
\]
from Theorem~\ref{thm:symp-semistable-resolution}. Since the centers $S_0,\ldots,S_{i-1}$ are $\calG_i$-stable, the
$\calG_i$-action on $\Mloc_i$ lifts to $Z_i$, and the morphism
\[
        b:Z_i\longrightarrow \Mloc_{i,\calO_F}
\]
is $\calG_i$-equivariant, projective, and an isomorphism on generic fibers. In
particular, $b$ is a linear modification in the sense of \cite{P}. Form the cartesian diagram
\[
\begin{matrix}
\calA_{\bK_i}^{\rm reg} & \longrightarrow & [Z_i/\calG_i] \\
\Big\downarrow && \Big\downarrow \\
\calA_{\bK_i,\calO_F} & \longrightarrow & [\Mloc_{i,\calO_F}/\calG_i].
\end{matrix}
\]
Then $\calA_{\bK_i}^{\rm reg}\longrightarrow \calA_{\bK_i,\calO_F}$ is a linear modification in the sense of \cite[\S 2]{P}. Equivalently, it is obtained by successively blowing up
the closures of the Kottwitz--Rapoport strata corresponding, under the local model
diagram, to the Schubert varieties
\[
        S_0\subset \cdots \subset S_{i-1}\subset \Mloc_{i,k}.
\]
(See \S\ref{ShimuraVarSec} for the analogous construction in the orthogonal case.)
\begin{corollary}\label{coro-symp-semistable}
For every $I=\{i\}$ with $1\leq i\leq \lfloor g/2\rfloor$, the linear
modification $\calA_{\bK_i}^{\rm reg}$ has semi-stable reduction over
$\calO_F$. In particular, $\calA_{\bK_i}^{\rm reg}$ is regular and its special
fiber is a reduced divisor with normal crossings.
\end{corollary}

\begin{proof}
This follows from the local model diagram and
Theorem~\ref{thm:symp-semistable-resolution}.
\end{proof}

\begin{remark}
For $I=\{0\}$ and $I=\{g\}$, the corresponding integral models are already
smooth over $\BZ_p$.
\end{remark}
}
\quash{
\subsection{Restriction of scalars for $\text{GSp}_{2g}$}\label{subsec:restriction-scalars-symplectic}
In this subsection we explain how the semi-stable resolution of
Theorem~\ref{thm:symp-semistable-resolution} gives a semi-stable resolution
in the restriction of scalars case for $\GSp_{2g}$, using the splitting
models of Pappas--Rapoport \cite{PR2}.

Let $F/F_0$ be a finite totally ramified extension of complete discretely valued
fields, with residue characteristic $p\neq 2$. Let $K$ be the Galois closure of
$F/F_0$, and let $\calO_K$ be its ring of integers. We write $ \Phi=\Hom_{F_0}(F,K)$ for the set of $F_0$-embeddings of $F$ into $K$. We consider the group $G=\Res_{F/F_0}\GSp_{2g}$. Let $\calL$ be a self-dual periodic $\calO_F$-lattice chain in the standard
$2g$-dimensional symplectic $F$-vector space whose stabilizer defines a maximal
parahoric subgroup of $G(F_0)$. Let $\calG_{\calL}$ be the corresponding
parahoric group scheme over $\calO_K$, and let $\Mloc_{\calL}$ be the local model associated to $\calL$ and to the cocharacter $ \mu=\{(1^{(g)},0^{(g)})\}_{\varphi\in\Phi}$; see \cite[\S12]{PR2}.
\quash{
\begin{prop}\label{prop:restriction-scalars-local-model-semistable}
After base change to $\calO_K$, there exists a projective
$\calG_{\calL}$-equivariant morphism
\[
        \widetilde{\M}_{\calL}
        \longrightarrow
        \Mloc_{\calL,\calO_K}
\]
which is an isomorphism on the generic fiber and whose source is semi-stable over
$\calO_K$.
\end{prop}

\begin{proof}
(!NEEDS MODIFICATION!)
Let $\M^{\rm spl}_{\calL}$ be the Pappas--Rapoport splitting model attached to
$\Res_{F/F_0}\GSp_{2g}$ and to the lattice chain $\calL$. Choose an ordering $ \Phi=\{\varphi_1,\ldots,\varphi_e\}.$ By \cite[\S 9]{PR2}, after base change to $\calO_K$, there is a torsor
diagram
\[
        \M^{\rm spl}_{\calL}
        \xleftarrow{\alpha}
        \widehat{\M}^{\rm spl}_{\calL}
        \xrightarrow{\beta}
        \prod_{\varphi\in\Phi}\Mloc_{\varphi},
\]
where $\Mloc_{\varphi_l}$ is the corresponding unramified symplectic local
model $N_I^l$ in the notation of \cite[\S 9]{PR2}. Let
\[
        H:=\prod_{l=2}^e H_I^{(l)}
\]
be the smooth group scheme of \cite[\S 9]{PR2}. Then both arrows in
the above diagram are $H$-torsors. Moreover, the splitting model comes with a
projective $\calG_{\calL}$-equivariant morphism
\[
        p_{\calL}:\M^{\rm spl}_{\calL}
        \longrightarrow
        \Mloc_{\calL,\calO_K}
\]
which is an isomorphism on the generic fiber (see \cite[\S 12]{PR2}).

For each $\varphi$, Theorem~\ref{thm:symp-semistable-resolution} gives a
projective equivariant morphism
\[
        Z_\varphi\longrightarrow \Mloc_{\varphi}
\]
which is an isomorphism on the generic fiber and whose source is semi-stable over
$\calO_K$. Taking the product over all $\varphi$, and then applying Hartl's
desingularization of products of semi-stable schemes
\cite[Proposition~2.1]{Ha}, we obtain a projective morphism
\[
        N\longrightarrow \prod_{\varphi\in\Phi}\Mloc_{\varphi}
\]
which is an isomorphism on the generic fiber and whose source is semi-stable over
$\calO_K$. We claim that this morphism is $H$-equivariant. Indeed, the group
$H=\prod_{l=2}^e H_I^{(l)}$ acts on the product
\[
        \prod_{\varphi\in\Phi}\Mloc_{\varphi}
\]
through the factors $\Mloc_{\varphi_l}$, $l=2,\ldots,e$. The
factorwise morphisms
\[
        Z_{\varphi_l}\longrightarrow \Mloc_{\varphi_l}
\]
are equivariant for these actions. Moreover, Hartl's product desingularization is
obtained by blowing up products of irreducible components of the special fibers,
and these components are preserved by the induced $H$-action. Hence
\[
        N\longrightarrow \prod_{\varphi\in\Phi}\Mloc_{\varphi}
\]
is $H$-equivariant. Set
\[
        \widehat N
        :=
        \widehat{\M}^{\rm spl}_{\calL}
        \times_{\prod_{\varphi\in\Phi}\Mloc_\varphi}
        N 
\]
and define
\[
         \widetilde{\M}_{\calL}:=\widehat N/H .
\]
Then we obtain a projective morphism
\[
         \widetilde{\M}_{\calL}
        \longrightarrow
        \M^{\rm spl}_{\calL}.
\]

We now check the $\calG_{\calL}$-equivariance. The natural
$\calG_{\calL}$-action on the lattice chain $\calL$ induces compatible
actions on
\[
        \M^{\rm spl}_{\calL},\qquad
        \widehat{\M}^{\rm spl}_{\calL},\qquad
        \prod_{\varphi\in\Phi}\Mloc_\varphi .
\]
With respect to these actions, the maps $\alpha$ and $\beta$ are
$\calG_{\calL}$-equivariant. The morphism
\[
        N\longrightarrow \prod_{\varphi\in\Phi}\Mloc_{\varphi}
\]
is also $\calG_{\calL}$-equivariant, since the factorwise resolutions and
Hartl's construction are equivariant for the induced actions on the unramified
factors. Therefore the $\calG_{\calL}$-action lifts to $\widehat N$. This
action is compatible with the $H$-action, and hence it descends to the quotient
\[
         \widetilde{\M}_{\calL}=\widehat N/H .
\]
Thus the morphism
\[
         \widetilde{\M}_{\calL}
        \longrightarrow
        \M^{\rm spl}_{\calL}
\]
is $\calG_{\calL}$-equivariant. Composing with the $\calG_{\calL}$-equivariant morphism $p_{\calL}$ gives the
desired projective $\calG_{\calL}$-equivariant morphism
\[
        \widetilde{\M}_{\calL}
        \longrightarrow
        \Mloc_{\calL,\calO_K}
\]
which is an isomorphism on the generic fiber.
\end{proof}
}

Next, choose a level $\mathbf K=K_pK^p$, where $K_p$ is the maximal
parahoric subgroup corresponding to $\calL$ and $K^p$ is a sufficiently
small compact open subgroup away from $p$. Let $\calA_{\mathbf K}$ be the
associated PEL integral model for $\Res_{F/F_0}\GSp_{2g}$. After base change
to $\calO_K$, the local model diagram gives
\begin{equation}\label{LMDSym}
    \calA_{\mathbf K,\calO_K}
        \xleftarrow{\alpha_{\mathbf K}}
        \widetilde{\calA}_{\mathbf K,\calO_K}
        \xrightarrow{\beta_{\mathbf K}}
        \Mloc_{\calL,\calO_K},
\end{equation}
where $\alpha_{\mathbf K}$ is a $\calG_{\calL}$-torsor and
$\beta_{\mathbf K}$ is smooth and $\calG_{\calL}$-equivariant. Applying the construction of linear modification \cite[\S 2]{P} we obtain:

\begin{corollary}\label{cor:restriction-scalars-shimura-semistable}
After base change to $\calO_K$, the integral model
$\calA_{\mathbf K,\calO_K}$ admits a linear modification
\[
        \calA_{\mathbf K}^{\rm reg}
        \longrightarrow
        \calA_{\mathbf K,\calO_K}
\]
which is an isomorphism on the generic fiber and whose source is regular
semi-stable over $\calO_K$.
\end{corollary}

\begin{proof}
The proof follows by Proposition~\ref{prop:restriction-scalars-local-model-semistable} and the local model diagram (\ref{LMDSym}).
\end{proof}
}

\part{Applications} \label{part-application}
\section{Applications to Shimura varieties}
\label{applications_shimura}

We keep the notation of the Introduction \S \ref{Intro}. Thus $(\bG,X)$ is the Shimura datum
attached to the PEL datum $(\bB,*,\bV,\pair{\ ,\ })$ satisfying the assumption $(\dagger)$, and
$\rK=\rK_p\rK^p$ with $\rK_p\sset \bG(\BQ_p)$ \dfn{maximal parahoric} and $\rK^p$ sufficiently small.
Let $v\mid p$ be the fixed place of the reflex field $\bE$, and put
$E=\bE_v$. 

\subsection{Semi-stable integral models}
We denote by $\sS_\rK$ the canonical integral model over $\CO_E$.
Let $(G,\CG,\mu)$ be the local model triple attached to the maximal parahoric level
$\rK_p$, and write $\RM^\loc_{\CG,\mu}$ for the corresponding canonical local
model. Recall that there exists a local model diagram
\begin{equation}\label{LMDapplication}
\begin{tikzcd}[column sep={5em, between origins}, row sep={2em, between origins}]
&  \arrow[ld, "\alpha"']\wt{\sS}_\rK\arrow[rd, "\beta"]&\\
\sS_\rK   &&   \RM^\loc_{\CG,\mu},
\end{tikzcd}
\end{equation}
where $\alpha$ is a $\CG$-torsor and $\beta$ is smooth and
$\CG$-equivariant. Equivalently, we have a relatively representable smooth morphism of algebraic stacks
\[
        \lambda\colon
        \sS_\rK
        \longrightarrow
        [\RM^\loc_{\CG,\mu}/\CG].
\]

{
\begin{remark} \label{rmk-lmdpara}
    The local model diagram in \eqref{LMDapplication} is constructed in \cite[(8.12)]{PZ} when $\rK_p$ is the stabilizer subgroup of a self-dual lattice chain. It descends to the corresponding diagram for the associated parahoric subgroup by \cite[Proposition 4.3.3]{daniels2026conjecture}. Note that the proof in \cite[\S A.3]{daniels2026conjecture} shows that the local model diagram in \cite[(8.12)]{PZ} is schematic.
\end{remark}
}

\begin{corollary}\label{applicationsFlatintegralmodels}
The canonical integral model $\sS_\rK$ is flat and normal over $\CO_E$, and
its special fiber is reduced.
\end{corollary}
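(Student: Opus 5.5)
The plan is to transport the three properties from the local model $\RM^\loc_{\CG,\mu}$ to $\sS_\rK$ along the local model diagram \eqref{LMDapplication}, using that flatness, normality and reducedness of the special fiber are all local for the smooth topology. First, Theorem \ref{thm-repre} gives that $\RM^\loc_{\CG,\mu}$ is flat, projective and normal over $\CO_E$, and that its geometric special fiber is reduced. Since the special fiber is geometrically reduced, it is in particular reduced.

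Second, pass to the torsor $\wt{\sS}_\rK$. The morphism $\beta$ is smooth, so $\wt{\sS}_\rK\to \Spec\CO_E$ factors as a smooth map followed by a flat one, and is therefore flat. Normality ascends along smooth morphisms (Serre's criterion $R_1+S_2$ is preserved, cf. EGA IV 6.5.4), so $\wt{\sS}_\rK$ is normal. On special fibers, $\beta_s\colon \wt{\sS}_{\rK,s}\to \RM^\loc_{\CG,\mu,s}$ is smooth with reduced target, so $\wt{\sS}_{\rK,s}$ is reduced; reducedness also ascends along smooth maps.

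Third, descend along $\alpha$. Because $\CG$ is smooth over $\CO_{F_0}$ (hence over $\CO_E$ after base change), the $\CG$-torsor $\alpha$ is smooth and surjective, hence faithfully flat; the same holds for its special fiber $\alpha_s$. Flatness of $\sS_\rK$ over $\CO_E$ then follows by fpqc descent. Normality and reducedness descend along faithfully flat morphisms (EGA IV 2.1.13, 6.5.4, or the Stacks Project), which gives normality of $\sS_\rK$ and reducedness of $\sS_{\rK,s}$.

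The only point needing care is that \eqref{LMDapplication} is a genuine diagram of schemes, with $\alpha$ a torsor under the parahoric $\CG$ rather than under the full stabilizer group scheme. This is exactly what Remark \ref{rmk-lmdpara} provides, via \cite{PZ} and \cite{daniels2026conjecture}. Once that is in place, the argument is formal; equivalently, one can work with the smooth morphism $\lambda$ to the quotient stack.
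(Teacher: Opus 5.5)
Your proposal is correct and follows the same route as the paper, whose proof simply cites the local model diagram \eqref{LMDapplication} together with Theorem~\ref{thm-repre}. You spell out what that one-line proof leaves implicit: flatness, normality and reducedness of the special fiber ascend along the smooth morphism $\beta$ and then descend along the smooth, surjective (hence faithfully flat) torsor $\alpha$.
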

\begin{proof}
The statement follows by the local model diagram (\ref{LMDapplication}) and Theorem~\ref{thm-repre}.    
\end{proof}

\begin{thm}\label{applications-semistable}
Assume $(\dagger)$. Then, after a finite extension $E'/E$, there exists a semi-stable integral model $\sS_\rK^{\rm ss}$ over $\CO_{E'}$, together with a projective morphism 
\[
        \sS_\rK^{\rm ss}
        \longrightarrow
        \sS_{\rK,\CO_{E'}}
\]
which is an isomorphism on the generic fiber. In particular, $\sS_\rK^{\rm ss}$ is regular and its special
fiber is a reduced divisor with normal crossings. 
\end{thm}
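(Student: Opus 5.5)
The plan is to transport a semi-stable $\CG_{\CO_{E'}}$-equivariant modification of the local model $\RM^\loc_{\CG,\mu}$ to the integral model $\sS_\rK$ through the local model diagram \eqref{LMDapplication}, i.e.\ through the smooth morphism $\lambda\colon \sS_\rK\to[\RM^\loc_{\CG,\mu}/\CG]$. The local input is Theorem~\ref{IntroductionMainResultMloc}.

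\textbf{Step 1: reduce the local model to products.} Under $(\dagger)$, write $B=\bB_{\BQ_p}$ as in \eqref{decompBi}. Since $(\dagger)$ excludes ramified unitary factors, each factor is of type (AL), (AU), (C) or (D). The group $G$ is a central extension (up to the similitude factor) of a product of Weil restrictions of $\GL$, of unitary groups that become $\GL$ over an unramified extension, of $\GSp$, and of $\GO$. Using Proposition~\ref{prop-mloc}(1)--(4), after an unramified base change $\RM^\loc_{\CG,\mu}$ becomes a product of local models attached to Weil restrictions $\Res_{F_i/F_0}H_i$ with $H_i\in\{\GL,\GSp,\GO\}$ and maximal parahoric level on each factor.

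\textbf{Step 2: resolve each factor and form $E'$.} For each factor:
\begin{itemize}
\item the $\GL$ (hyperspecial) factors are already smooth;
\item the $\GSp$ factors are handled by Theorem~\ref{thm ss Weil symplec};
\item the $\GO$ factors are handled by Theorem~\ref{thm ss Weil ortho}.
\end{itemize}
Each of these gives a $\CG_{\CO_K}$-equivariant projective semi-stable modification over $\CO_{K}$. Let $E'$ be the compositum of $E$ with all the fields $K$ (as in \eqref{eq-KK'} and Lemma~\ref{lem-KL0}). Apply Hartl's Proposition~\ref{introprop3} to the product of these semi-stable models over $\CO_{E'}$. Its blow-up centres are products of irreducible components of the special fibres, which are Schubert varieties and hence stable under the group. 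The result is a $\CG_{\CO_{E'}}$-equivariant projective modification $\RM^{\rm ss}\to\RM^\loc_{\CG,\mu}\otimes\CO_{E'}$ that is an isomorphism on generic fibres, with $\RM^{\rm ss}$ semi-stable.

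\textbf{Step 3: linear modification.} Define $\sS^{\rm ss}_\rK$ by the cartesian square
\[
\sS^{\rm ss}_\rK:=\sS_{\rK,\CO_{E'}}\times_{[\RM^\loc_{\CG,\mu,\CO_{E'}}/\CG]}[\RM^{\rm ss}/\CG].
\]
Equivalently, set $\wt\sS^{\rm ss}=\wt\sS_\rK\times_{\RM^\loc}\RM^{\rm ss}$ and descend along the $\CG$-torsor $\alpha$, which is the linear modification of \cite[\S2]{P}. Properness and projectivity of $\RM^{\rm ss}\to\RM^\loc$, and the isomorphism on generic fibres, both descend along the torsor. Projectivity uses the $\CG$-linearized relatively ample bundle coming from the blow-up ideals. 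Since $\wt\sS^{\rm ss}\to\RM^{\rm ss}$ is smooth, $\sS^{\rm ss}_\rK$ is smooth-locally isomorphic to $\RM^{\rm ss}$ times affine space. Semi-stability is local for the smooth topology: \'etale locally a point of $\sS^{\rm ss}$ admits a neighbourhood smooth over $\Spec\CO_{E'}[x_1,\dots,x_s]/(x_1\cdots x_s-\pi)$. It follows that $\sS^{\rm ss}_\rK$ is regular with reduced normal-crossings special fibre.

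\textbf{Main obstacle.} The delicate points are the equivariance and descent. First, one must check that the group actions lift through Hartl's blow-ups and through the splitting-model diagram, so that the modification is genuinely $\CG$-equivariant rather than only $\CH$-equivariant. Second, one must check that the notion of semi-stability in the definition descends from the torsor $\wt\sS^{\rm ss}$ to $\sS^{\rm ss}_\rK$. Third, the central-extension step of Proposition~\ref{prop-mloc}(3) must be compatible with the parahoric $\CG$ used in the diagram; this is handled via Remark~\ref{rmk-lmdpara}. For the descent, I would first show that the smooth morphism gives étale-local charts: choose étale-local sections of the torsor $\alpha$. Regularity and the normal-crossings property then transfer directly.
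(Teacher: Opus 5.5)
Your overall route is the paper's: build a $\CG_{\CO_{E'}}$-equivariant projective semi-stable modification $\RM^{\rm ss}\to\RM^{\rm loc}_{\CG,\mu}\otimes_{\CO_E}\CO_{E'}$, pull it back along $\sS_{\rK,\CO_{E'}}\to[\RM^{\rm loc}_{\CG,\mu}/\CG]$ as a linear modification, and transfer semi-stability through the smooth local model diagram. For types (C) and (D), and in Step~3, you match the paper.

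The gap is in Steps~1--2. There you assert that after unramified base change every $\GL$-type factor (types (AL), (AU)) has maximal, indeed hyperspecial, parahoric level, and is therefore already smooth. This fails in two ways.

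\textbf{Maximality is not preserved.} Unramified base change does not preserve maximality of parahorics. For an (AU) factor, a maximal parahoric is the stabilizer of a vertex lattice $\Lambda$ with $\pi\Lambda^\vee\subset\Lambda\subset\Lambda^\vee$. After splitting $F_i/F_i^+$, it becomes the stabilizer in $\GL_n$ of the two-lattice chain $\Lambda_\tau\subset(\Lambda^\vee)_\tau$. This is hyperspecial only if $\Lambda$ is self-dual or $\pi$-modular. For the other vertices the local model is not smooth, and in general not even semi-stable.

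For instance, take $F_i^+=\BQ_p$, $n=4$, signature $(2,2)$, and $\Lambda^\vee/\Lambda$ of length $2$. A chart at the worst point is $\Spec\CO[A,B]/(AB-p\,I_2,\ BA-p\,I_2)$, with $A,B$ generic $2\times2$ matrices and $\CO$ unramified over $\BZ_p$. It has dimension $5$. Its special fiber contains both $\{A=0\}$ and $\{B=0\}$ through the origin, so the Zariski tangent space there is $8$-dimensional and the point is not regular. Such factors need their own equivariant semi-stable resolution, presumably by blowing up rank strata in the spirit of Parts~\ref{part-orth}--\ref{part-symp}. Your proposal does not supply one.

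\textbf{Ramified factors.} Even at hyperspecial level, if $F_i/\BQ_p$ is ramified, the local model of $\Res_{F_i/\BQ_p}\GL_n$ is not smooth. This case is easily repaired: the Pappas--Rapoport splitting model \cite{PR2} is smooth over $\CO_{L_i}$ and is a $\CG$-equivariant modification. You must then also adjoin $L_i$ to $E'$, as in \eqref{eq-E'}.

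The paper's own proof makes the same one-line claim (``for types (AL) or (AU), the corresponding local models are smooth''), so on this point you have followed it faithfully. But the claim is only correct in general for factors unramified over $\BQ_p$ at hyperspecial level.

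There is also a smaller issue in Step~2. You base change the factor models, each semi-stable over its own $\CO_K$, to the compositum $\CO_{E'}$, and then invoke Proposition~\ref{introprop3}. If $E'/K$ has ramification index $e\ge 2$, the base change is no longer semi-stable. Locally it is $x_1\cdots x_s=u\varpi_{E'}^{e}$ with $u$ a unit, which is not regular where two components meet. So Hartl's product statement does not apply as it stands. You must first make these base changes semi-stable by further equivariant blow-ups (cf.\ \cite{Ha}).
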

\begin{proof}
By discussion in \S \ref{sec-splitting}, under the assumption $(\dagger)$, factors of $\bB_{\BQ_p}$ fall into types (AL), (AU), (C) or (D). For types (C) or (D), by results in Part \ref{part-orth} and Part \ref{part-symp}, there exists a finite extension $E'$ of $E$ and a $\CG_{\CO_E'}$-equivariant projective morphism $\RM^{\rm ss}\ra \RM^\loc_{\CG,\mu}\otimes_{\CO_E}\CO_E'$, which is an isomorphism on the generic fiber, and $\RM^{\rm ss}$ has semi-stable reduction over $\CO_{E'}$. For types (AL) or (AU), the corresponding local models are smooth over $\CO_E$, and hence we may take $\RM^{\rm ss}=\RM^\loc_{\CG,\mu}$. Form the cartesian diagram
\[
\begin{matrix}
\sS_\rK^{\rm ss} & \longrightarrow & [\RM^{\rm ss}/\calG_{\CO_{E'}}] \\
\Big\downarrow && \Big\downarrow \\
\sS_{\rK,\CO_E'} & \longrightarrow & [\RM^\loc_{\CG,\mu}\otimes_{\CO_E}\CO_E'/\calG_{\CO_{E'}}].
\end{matrix}
\]
Then $\sS_\rK^{\rm ss}\longrightarrow \sS_{\rK,\CO_E'}$ is a linear modification in the sense of \cite[\S 2]{P}. Also, from the above cartesian diagram we deduce that there is a local model diagram for $\sS_\rK^{\rm ss}$ similar to (\ref{LMDapplication}) but with $\RM^\loc_{\CG,\mu}$ replaced by $\RM^{\rm ss}$; thus $\sS_\rK^{\rm ss}$ is semi-stable over $\CO_{E'}$.
\end{proof}
\begin{remark}\label{rem:KR-strata-centers}
Suppose that $\bG_{\BQ_p}= \GSp_{2g},$ or $ \GO(V,\psi)$ where $(V,\psi)$ is a non-degenerate symmetric space of $\BQ_p$-dimension $2g$. Let
\[
        \lambda:
        \sS_{\rK,\CO_{E'}}
        \longrightarrow
        [\RM^\loc_{\CG,\mu,\CO_{E'}}/\calG]
\]
be the morphism induced by the local model diagram, and let
\[
        \lambda_k:
        \sS_{\rK,k}
        \longrightarrow
        [\RM^\loc_{\CG,\mu,k}/\calG_k]
\]
be its restriction to the special fiber. Let
\[
        S_0\subset S_1\subset\cdots\subset S_{d-1}
        \subset \RM^\loc_{\CG,\mu,k}
\]
be the chain of Schubert varieties used as centers in the blow-up construction (see Theorem \ref{introthm-sstable}), and
let $C_j\subset S_j$ denote the corresponding Schubert cell. Following the
standard definition of the Kottwitz--Rapoport stratification via the local model diagram
\cite[\S3.2]{HeR}, we define
\[
        \sS_{\rK}(j)
        :=
        \lambda_k^{-1}\bigl([C_j/\calG_k]\bigr)
        \subset \sS_{\rK,k},
        \qquad 0\leq j\leq d-1.
\]
These are the Kottwitz--Rapoport strata corresponding to the Schubert cells
$C_j$. Since $S_j=\overline{C_j}$, the Schubert variety $S_j$ corresponds,
under the local model diagram, to the closure
\[
        \overline{\sS_{\rK}(j)}
        =
        \lambda_k^{-1}\bigl([S_j/\calG_k]\bigr)
        \subset \sS_{\rK,k}.
\]
Consequently, since blowing up commutes with flat base change, the morphism
\[
        \sS_{\rK}^{\rm ss}
        \longrightarrow
        \sS_{\rK,\CO_{E'}}
\]
is obtained by successively blowing up the closures of these Kottwitz--Rapoport
strata. More precisely, if
\[
        \sS_{\rK,\CO_{E'}}=Y_0
        \xleftarrow{c_1}
        Y_1
        \xleftarrow{c_2}
        \cdots
        \xleftarrow{c_d}
        Y_d
\]
is the sequence in which $c_1$ is the blow-up of $Y_0$ along
$\overline{\sS_{\rK}(0)}$, and, for $2\leq j\leq d$, $c_j:Y_j\longrightarrow Y_{j-1}$ is the blow-up along the strict transform of
$\overline{\sS_{\rK}(j-1)}$, then $\sS_{\rK}^{\rm ss}=Y_d$.
\end{remark}

\begin{corollary}\label{coro-modiconj}
    Assume $(\dagger)$ and suppose that $\mathcal G$ is maximal parahoric. Then Conjecture \ref{conj-modif} holds after base change to $\CO_{E'}$.
\end{corollary}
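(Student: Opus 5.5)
The corollary says that the modification $\RM^{\rm ss}\to\RM^\loc_{\CG,\mu}\otimes_{\CO_E}\CO_{E'}$ built in the proof of Theorem \ref{applications-semistable} meets every requirement of Conjecture \ref{conj-modif} over $\CO_{E'}$. So the plan is to take that modification and check the requirements one at a time.

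\textbf{Step 1 (reduce to the factors).} Using $(\dagger)$ together with Proposition \ref{prop-mloc}, parts (1)--(4), I will replace $\RM^\loc_{\CG,\mu}$ after unramified base change by a product of local models attached to the factors of $\bB_{\BQ_p}$. The central extension step, passing between $G$ and the products of $\Res_{F_1/F_0}H$ with similitude conditions, is handled by Lemmas \ref{lm-weilres-gpG-ortho} and \ref{lm-weilres-gpG-symp}.
\begin{itemize}
\item Factors of type (AL) or (AU) with maximal parahoric level are hyperspecial after unramified base change, so their local models are smooth and are left unchanged.
\item Factors of type (C) or (D) are covered by Theorems \ref{thm ss Weil symplec} and \ref{thm ss Weil ortho}, which give $\CG$-equivariant projective modifications over $\CO_K$ with semi-stable source.
\end{itemize}
I take $E'$ to be the compositum of the fields $K$ from \eqref{eq-KK'} with $E$. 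I then combine the factors by a further application of Hartl's Proposition \ref{introprop3}. Its centers are products of irreducible components of special fibers. These components are Schubert varieties, hence stable under the group, so the combined modification stays equivariant.

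\textbf{Step 2 (check the conditions of Conjecture \ref{conj-modif}).} The properties to verify are these.
\begin{itemize}
\item Properness and projectivity: the modification is a composite of blow-ups, of linear modifications in the sense of \cite[\S 2]{P}, and of the projective forgetful map $\RM^{\rm spl}_\CL\to\RM^\loc\otimes\CO_L$.
\item Birationality: every step is an isomorphism on generic fibers, because the centers lie in the special fiber and the splitting map is a generic isomorphism.
\item Equivariance for $\CG_{\CO_{E'}}$: this was already recorded in the theorems above.
\item Regularity and the divisor condition: a semi-stable scheme is regular, since étale locally it is smooth over $\CO[x_1,\dots,x_s]/(x_1\cdots x_s-\pi)$, which is regular. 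Its special fiber is the divisor $V(\pi)$, which is reduced with normal crossings.
\end{itemize}

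\textbf{Main obstacle: the word ``simple'' in ``simple normal crossings''.} The definition of semi-stability from \cite{Ha} used in the paper is the strict version, so components through a point are not glued together étale locally. Hence at each closed point $y$ one has $\CO(-D)_y=(f_1\cdots f_m)$ with $f_1,\dots,f_m$ part of a regular system of parameters, which is exactly the definition of simple normal crossings given in the paper. If strictness fails anywhere, for instance after Hartl's construction or after linear modification, I would fall back on the étale-local version of normal crossings. I would then observe that the conjecture allows possibly non-reduced divisors, so that in the worst case a further blow-up of self-intersecting components gives simple normal crossings.
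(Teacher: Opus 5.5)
Your overall route is the paper's. The corollary is read off from the equivariant semi-stable modification built in the proof of Theorem \ref{applications-semistable}. Your Step 2 spells out what the paper's short proof leaves implicit: projectivity, isomorphism on generic fibers, equivariance, and the fact that semi-stability in the strict, Zariski-local sense of \cite{Ha} gives a regular scheme whose special fiber has simple normal crossings at every closed point.

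The genuine gap is the bullet in Step 1 claiming that factors of type (AL) or (AU) at maximal parahoric level become hyperspecial after unramified base change and can be left unchanged. This fails as soon as $F_i$ (for (AU): $F_i^+$) is ramified over $\BQ_p$. The factor is then a Weil restriction along a ramified extension, unramified base change does not split it, the group has no hyperspecial parahoric, and $\RM^\loc$ is singular in general. Already $\Res_{F/\BQ_p}\GL_2$ with $F/\BQ_p$ ramified quadratic and $\mu$ nontrivial at both embeddings has the Deligne--Pappas singularity. What is needed for these factors is the splitting model of Proposition \ref{prop-splitting}. For type (AL), and for (AU) at hyperspecial vertices, it is \'etale locally a product of unramified $\GL$-type local models at hyperspecial level, hence smooth over $\CO_{L_i}$. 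It maps $\CG$-equivariantly, properly and birationally onto $\RM^\loc\otimes\CO_{L_i}$. This is why \eqref{eq-E'} adjoins $L_i$ for factors of type (AL) and (AU). Your $E'$, the compositum of $E$ with the fields $K$ of \eqref{eq-KK'}, omits these $L_i$, so with your choices those factors remain singular.

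Even after this repair, the (AU) part of your claim fails at non-hyperspecial vertices. Suppose, say, $F_i^+=\BQ_p$ and $\rK_p$ stabilizes a vertex lattice $\Lambda$ with $\Lambda\subsetneq\Lambda^\vee\subsetneq p^{-1}\Lambda$; such vertices exist for unramified unitary groups in $n\geq 3$ variables. After base change to $F_i$, the parahoric becomes the stabilizer in $\GL_n$ of a two-step lattice chain, which is not maximal. The resulting local model is not smooth whenever the signature $(r,n-r)$ has $0<r<n$: its special fiber has at least two irreducible components, e.g.\ the Drinfeld-type model for $r=1$. The paper's proof of Theorem \ref{applications-semistable} asserts the same smoothness for (AU), so this case is not settled by the paper's argument either. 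You would have to resolve these $\GL_n$ local models for two-step chains separately, or exclude such levels.

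By contrast, the strictness issue you flagged as the main obstacle needs no extra blow-up. Hartl's construction outputs strictly semi-stable schemes. Strictness also survives the linear modification, because the $\CH$-torsor $\alpha_1$ is Zariski-locally trivial and $\CH$ has connected fibers. Hence each component of the special fiber of $\RM^{\rm ss}$ pulls back to a single regular component upstairs, and regularity and transversality descend along the smooth surjection.
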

\begin{proof}
    By the assumption $(\dagger)$ and the proof of Theorem \ref{applications-semistable}, we obtain Theorem \ref{IntroductionMainResultMloc}. In particular, after base change to $E'$, $\RM^\loc_{\CG,\mu}$ is resolved $\CG_{\CO_{E'}}$-equivariantly by a semi-stable model. 
\end{proof}
\quash{
\begin{proof}
    By the proof of Theorem \ref{applications-semistable}, under $(\dagger)$, there exists a finite extension $E'$ of $E$ and a $\CG_{\CO_E'}$-equivariant projective morphism $\RM^{\rm ss}\ra \RM^\loc_{\CG,\mu}\otimes_{\CO_E}\CO_E'$, which is an isomorphism on the generic fiber, and $\RM^{\rm ss}$ has semi-stable reduction over $\CO_{E'}$. We claim that $$\RM^{\rm ss}\ra \Spec\CO_{E'}\ra \Spec \CO_E$$ is the regular scheme satisfying Conjecture \ref{conj-modif}. Since $\CG_{\CO_E}\times_{\CO_E}N\simeq \CG_{\CO_{E'}}\times_{\CO_{E'}}N$, the natural action $$\CG_{\CO_{E'}}\times_{\CO_{E'}}N\ra N$$ induces a $\CG_{\CO_E}$-action $\CG_{\CO_E}\times_{\CO_E}N\ra N$ on  the scheme $N\ra \Spec\CO_E$, and the composite $$N\ra \RM^\loc_{\CG,\mu}\otimes_{\CO_E}\CO_{E'}\ra \RM^\loc_{\CG,\mu}$$ is a $\CG_{\CO_E}$-equivariant morphism with isomorphic generic fibers. Note that $N$ is (Zariski) covered by open affine subschemes of the form $$\Spec \CO_{E'}[x_1,\ldots,x_s]/(x_1\cdots x_s-\pi_{E'}),$$ where $\pi_{E'}$ is a uniformizer of $E'$ such that $\pi_{E'}^e=\pi_E$ for some integer $e$. Then the special fiber $N\ra \Spec\CO_E$ is covered by \begin{flalign*}
        \CO_{E'}[x_1,\ldots,x_s]/(x_1\cdots x_s-\pi_{E'},\pi_{E'}^e),
    \end{flalign*}
    which is a Cartier divisor of $N$ with simple normal crossings. 
\end{proof}}

\subsection{Nearby cycles and \etale cohomologies}\label{NcyclesEcohomology}
Recall that there exists a decomposition in \eqref{decompBi} given by \[B=\bB_{\BQ_p}\simeq \prod_{[i]\in I_1/\sim_*}B_{[i]}, \]
where the factor $B_{[i]}$ is of type (AL), (AU), (C) or (D), under the assumption ($\dagger$). Following the notation in \S \ref{sec-splitting} (for $F_0=\BQ_p$), let $F_i$ denote the center of $B_{[i]}$. Let $L_i$ be the Galois closure of $F_i$ in $\ol{\BQ}_p$. Let $L_i^0$ be the (unique) unramified quadratic extension of $L_i$.

For each $[i]$, let $E'_i$ be the field extension of $F_i$ defined as follows.
\begin{altenumerate}
    \item If $B_{[i]}$ is of type (AL) or (AU), then $E'_{[i]}\coloneqq L_i$;
    \item Suppose $B_{[i]}$ is of type (C). Let $\pi_i$ be a uniformizer of $F_i$. If $F_i=\BQ_p$, then set $E'_i\coloneqq L_i(\sqrt{-\pi_i})$. If $F_i\neq \BQ_p$, then set $E_i'\coloneqq L_i^0(\sqrt{-\pi_i})$.
    \item Suppose $B_{[i]}$ is of type (D). 
    Let $\pi_i$ be a uniformizer of $F_i$. If $F_i=\BQ_p$, then set $E_i'\coloneqq L_i(\sqrt{-\pi_i})$; and if $F_i\neq \BQ_p$, then set $E_i'\coloneqq L_i^0(\sqrt{-\pi_i})$.
\end{altenumerate}

Recall that $E$ is the (local) reflex field. Set \begin{flalign}
    E'\coloneqq \text{the composite of $E$ and $E_i'$ for all $[i]$}.   \label{eq-E'}
\end{flalign}
By Lemma \ref{lem-KL0} and the proof of Theorem \ref{applications-semistable}, the finite extension of $E$ used in Theorem \ref{applications-semistable} may be taken to be $E'$ in \eqref{eq-E'}. In particular, we obtain a semi-stable integral model $\sS^{\rm ss}_\rK$ over $\CO_{E'}$.

For the scheme $S\coloneqq \Spec \CO_{E'}$, we denote by $s$ (resp. $\eta$) the special (resp. generic) point of $S$. We choose a separable closure $\ol{\eta}$ of $\eta$. Then $\Gal(\ol{\eta}/\eta)\simeq\Gal(\ol{\BQ}_p/E')$. Denote by $\ol{S}$ the normalization of $S$ in $\ol{\eta}$. Write $\ol{s}$ for the special point of $\ol{S}$. For any scheme $X$ separated and of finite type over $S$, denote \begin{flalign*}
    \bari\colon X_{\ol{s}}\hookrightarrow X_{\ol{S}} \ \ resp.\ \barj\colon X_{\ol{\eta}}\hookrightarrow X_{\ol{S}}
\end{flalign*}
the natural closed (resp. open) immersions of the geometric special (resp. generic) fibers of $X/S$. Let $\ell\neq p$ be a rational prime. For any bounded constructible $\ell$-adic complex $\CF$ in $D_c^b(X_\eta,\ol{\BQ}_\ell)$, define the \dfn{nearby cycles sheaf} to be \begin{flalign*}
    R\Psi^X(\CF)\coloneqq \bari^*R\,\barj_*(\CF_{\ol{\eta}}),
\end{flalign*}
which is an $\ell$-adic complex in $D^b_c(X_{\ol{s}},\ol{\BQ}_\ell)$ together with a continuous action of $\Gal(\ol{\eta}/\eta)$. Sometimes we simply write $R\Psi(\CF)$ for $R\Psi^X(\CF)$. We refer to \cite[\S 10]{HainesBadReduction} (see also \cite[XIII]{deligne-katz1973sga7ii}) for more details. 

Let $$I_p\coloneqq \ker(\Gal(\ol{\eta}/\eta)\ra \Gal(\ol{s}/s))$$ denote the inertia group of $\Gal(\ol{\BQ}_p/E')$. 
For any scheme $X$ separated of finite type over $S$, denote by $$D^b_c(X\times_s\eta^{nr},\ol{\BQ}_\ell)$$ the category of bounded constructible $\ell$-adic complexes in $D^b_c(X_{\ol{s}},\ol{\BQ}_\ell)$ equipped with continuous actions of $I_p$ compactible with its action on $X_{\ol{s}}$; see \cite[page 185]{GortzHaines} for more details. Following \cite[Definition 5.4]{GortzHaines}, we say that an object $\CF\in D^b_c(X\times_s\eta^{nr},\ol{\BQ}_\ell)$ is \dfn{unipotent} if for any geometric point $y\in X_{\ol{s}}(\ol{\BF}_p)$ and any integer $r\geq 0$, the inertia group $I_p$ acts unipotently on the stalk of the cohomology sheaf $\CH^r\CF$ at $y$.

\begin{lemma}\label{lem-f*f!}
    Let $f\colon X\ra Y$ be a morphism between separated finite type $S$-schemes. Then $f$ induces two functors \begin{flalign*}
        Rf_{\ol{s},!},\ Rf_{\ol{s},*}\colon D^b_c(X\times_s\eta^{nr},\ol{\BQ}_\ell) \ra D^b_c(Y\times_s\eta^{nr},\ol{\BQ}_\ell).
    \end{flalign*}
    If $\CF\in D^b_c(X\times_s\eta^{nr},\ol{\BQ}_\ell)$ is unipotent, then $Rf_{\ol{s},!}\CF$ and $Rf_{\ol{s},*}\CF$ are also unipotent.
\end{lemma}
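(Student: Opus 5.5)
The plan is to work in the setting of \cite[\S 5]{GortzHaines}, where $D^b_c(X\times_s\eta^{nr},\ol{\BQ}_\ell)$ is the category of complexes on $X_{\ol{s}}$ with a continuous $I_p$-action compatible with the trivial action of $I_p$ on $X_{\ol{s}}$ (the inertia acts trivially on $\ol{s}$). First we construct the two functors. Since $f_{\ol{s}}\colon X_{\ol{s}}\ra Y_{\ol{s}}$ is the base change of $f$ along $\ol{s}\ra S$, the inertia acts compatibly on source and target. Transport of structure through $Rf_{\ol{s},!}$ and $Rf_{\ol{s},*}$ therefore equips $Rf_{\ol{s},!}\CF$ and $Rf_{\ol{s},*}\CF$ with $I_p$-actions. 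Continuity is checked on a finite level, as in \cite[XIII]{deligne-katz1973sga7ii}. Constructibility and boundedness hold by the finiteness theorems for $f_{\ol{s}}$, which is separated of finite type over a field.

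For unipotence we fix a geometric point $y\in Y_{\ol{s}}(\ol{\BF}_p)$. The proper base change theorem gives $(Rf_{\ol{s},!}\CF)_y\simeq R\Gamma_c(X_y,\CF|_{X_y})$, equivariantly for $I_p$. Since $\CF$ is unipotent and $I_p$ is profinite, the continuous action factors through the pro-$\ell$ part of the tame quotient, which is topologically procyclic; fix a generator $\sigma$. The key reduction is a d\'evissage: choose a stratification of $X_y$ into locally closed pieces on which every $\CH^r\CF$ is lisse. Using the excision triangles for $R\Gamma_c$, unipotence of the abutment reduces to each piece, because unipotent $\sigma$-modules are closed under extensions, subquotients and long exact sequences. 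On a piece, each $\CH^r\CF$ is a lisse sheaf on which $\sigma$ acts fiberwise unipotently. Constructibility gives a uniform bound $N$ with $(\sigma-1)^N=0$ on each of the finitely many $\CH^r\CF$, so $(\sigma-1)^N$ kills the sheaf. By functoriality, $(\sigma-1)^N$ then kills $H^\bullet_c$ of the piece. The spectral sequence $H^a_c(\CH^b\CF)\Rightarrow H^{a+b}_c$ has finitely many terms, so the abutment is unipotent.

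For $Rf_{\ol{s},*}$ we cannot use base change to fibers, and there are two ways to proceed. The preferred one is Nagata compactification: factor $f=\bar f\circ j$ with $j$ an open immersion and $\bar f$ proper, so $Rf_*=R\bar f_!\circ Rj_*$. This reduces the problem to showing that $Rj_*$ preserves unipotence. That follows from the same filtration argument, since $(\sigma-1)^N$ killing $\CH^r\CF$ implies it kills $R^aj_*\CH^r\CF$, and then the spectral sequence finishes as before. The alternative is to apply Verdier duality $Rf_*=D\,Rf_!\,D$ and check that $D$ preserves unipotence, which holds by the same reasoning because the dualizing complex carries the trivial $I_p$-action.

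The main obstacle is making the uniform nilpotence bound rigorous. Stalkwise unipotence must be upgraded to the statement that $(\sigma-1)^N$ is zero on the sheaf itself, and then passed through the derived functors while the $I_p$-action stays compatible. To handle this, we will express everything in terms of the endomorphism $\sigma-1$ of the constructible sheaves.
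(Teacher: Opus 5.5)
Your argument is correct in substance, but it follows a different route from the paper. The paper's proof is only a citation of \cite[Lemma 5.6(1)]{GortzHaines} for $Rf_{\overline{s},!}$, plus the remark that the same proof transfers ``verbatim'' to $Rf_{\overline{s},*}$. For $Rf_{\overline{s},*}$ you instead go through a Nagata compactification $Rf_*=R\bar f_!\circ Rj_*$, or through Verdier duality.

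The detour is sound but buys nothing, because your argument for $Rj_*$ never uses that $j$ is an open immersion:
\begin{itemize}
\item First, $(h-1)^N$ vanishes on each sheaf $\mathcal{H}^b\mathcal{F}$ on $X_{\overline{s}}$. This is immediate: a morphism of constructible sheaves that is zero on all geometric stalks is zero, and $N$ can be taken to be the maximal stalk rank. So the ``main obstacle'' you flag is not really one.
\item Functoriality then kills $(h-1)^N$ on $R^af_*\mathcal{H}^b\mathcal{F}$ and on $R^af_!\mathcal{H}^b\mathcal{F}$, for an arbitrary $f$.
\item The finite spectral sequences $R^af_*\mathcal{H}^b\mathcal{F}\Rightarrow R^{a+b}f_*\mathcal{F}$ (and likewise with $!$) finish the argument.
\end{itemize}
This is exactly the ``verbatim'' transfer the paper asserts. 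It also makes your equivariant proper base change to fibres and the stratification of $X_y$ unnecessary, even in the $Rf_{\overline{s},!}$ case.

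One step needs tightening. You fix a single generator $\sigma$ after asserting that the $I_p$-action factors through the $\ell$-part of tame inertia. The hypothesis gives this only on stalks of the cohomology sheaves, not for the action on the complex $\mathcal{F}$ itself. An automorphism of a complex can be trivial on every $\mathcal{H}^r$ and still act nontrivially on $R\Gamma$. So showing that $\sigma-1$ is nilpotent on the output does not by itself give unipotence of every element of $I_p$ there.

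The fix costs nothing: run the argument for each $h\in I_p$ separately. The same bound $N$ works for all $h$ at once, since a unipotent endomorphism of a $d$-dimensional space satisfies $(h-1)^d=0$.
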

\begin{proof}
    The statement for $Rf_{\ol{s},!}\CF$ follows from \cite[Lemma 5.6 (1)]{GortzHaines}. Moreover, the proof in \loccit\  carries over verbatim to $Rf_{\ol{s},*}\CF$ after replacing the functor $f_!$ by $f_*$, showing that $Rf_{\ol{s},*}\CF$ is unipotent.

\end{proof}

\begin{thm}\label{prop:inertia-action}
Denote by $$f\colon \sS^{\rm ss}\ra \sS$$ the projective morphism in Theorem \ref{applications-semistable}. 
Write $R\Psi\coloneqq R\Psi^\sS\text{\ and\ } R\Psi^{\rm ss}\coloneqq R\Psi^{\sS^{\rm ss}}$. 

\begin{altenumerate}
    \item For every prime
$\ell\neq p$ and any integer $r\geq 0$, the inertia group $I_p$ acts trivially on $R^r\Psi^{\rm ss}(\ol{\BQ}_\ell)$, and $I_p$ acts unipotently on $R^r\Psi(\ol{\BQ}_\ell)$.
\item We have a $\Gal(\ol{\BQ}_p/E')$-equivariant isomorphism $Rf_{\ol{s},*} R\Psi^{{\rm ss}}(\ol{\mathbb Q}_\ell)\simeq R\Psi(\ol{\mathbb Q}_\ell)$. In particular, we have \begin{flalign*}
    &H^r_c(\sS^{\rm ss}_{\ol{s}},R\Psi^{\rm ss}(\ol{\BQ}_\ell))\simeq H^r_c(\sS_{\ol{s}},R\Psi(\ol{\BQ}_\ell)) \text{\ and\ } \\  &H^r(\sS^{\rm ss}_{\ol{s}},R\Psi^{\rm ss}(\ol{\BQ}_\ell))\simeq H^r(\sS_{\ol{s}},R\Psi(\ol{\BQ}_\ell))
\end{flalign*} as $\Gal(\ol{\BQ}_p/E')$-modules. Moreover, $I_p$ acts unipotently on the above \etale cohomologies.
   \item For each integer $r\geq 0$, we have \begin{flalign*}
    &H^r_c(\sS_{\ol{\eta}},\ol{\BQ}_\ell)\simeq H^r_c(\sS_{\ol{s}},R\Psi(\ol{\BQ}_\ell))\ \text{and\ }
    H^r(\sS_{\ol{\eta}},\ol{\BQ}_\ell)\simeq H^r(\sS_{\ol{s}},R\Psi(\ol{\BQ}_\ell))
\end{flalign*} as $\Gal(\ol{\BQ}_p/E')$-modules. In particular, $I_p$ acts unipotently on
\[
   H^r\bigl(
      \operatorname{Sh}_{\rK}(\bG,X)_{\overline E},
      \ol{\mathbb Q}_\ell
   \bigr) \text{\ and\ } H^r_c\bigl(
      \operatorname{Sh}_{\rK}(\bG,X)_{\overline E},
      \ol{\mathbb Q}_\ell
   \bigr)
\]
for every $r\geq0$.
\end{altenumerate}
\end{thm}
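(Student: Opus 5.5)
We will derive all three parts from the semi-stability of $\sS^{\rm ss}$ (Theorem \ref{applications-semistable}), proper base change for the projective morphism $f$, and Lemma \ref{lem-f*f!}.

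\emph{Part (1).} Since $\sS^{\rm ss}$ is semi-stable over $\CO_{E'}$, étale locally it is smooth over $\Spec \CO_{E'}[x_1,\ldots,x_s]/(x_1\cdots x_s-\pi_{E'})$, and nearby cycles commute with smooth base change. We therefore reduce to the standard strictly semi-stable model. For that model, the classical computation (SGA 7 I, Exp.~I, Thm.~3.3; Rapoport--Zink; Illusie) gives $R^0\Psi=\ol{\BQ}_\ell$ and $R^r\Psi\simeq \wedge^r\bigl(\ol{\BQ}_\ell(-1)^{m-1}\bigr)$ at a point lying on exactly $m$ components. These identifications are canonical, with the inertia acting through the tame character only via the $\ell$-th roots of $\pi_{E'}$. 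Because the model is already semi-stable over $\CO_{E'}$ (no ramified base change is needed), $I_p$ acts trivially on each $R^r\Psi^{\rm ss}$. We expect the main care here to be checking that the cited computation gives trivial inertia action, and not merely tame action, when the reduced special fiber has multiplicity one; this is the standard statement for strictly semi-stable schemes. In particular $R\Psi^{\rm ss}$ is unipotent in the sense of \cite[Definition 5.4]{GortzHaines}.

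\emph{Part (2).} Since $f$ is proper, proper base change gives a $\Gal(\ol{\BQ}_p/E')$-equivariant isomorphism
\[
Rf_{\ol{s},*}R\Psi^{\rm ss}(\ol{\BQ}_\ell)\simeq R\Psi\bigl(Rf_{\eta,*}\ol{\BQ}_\ell\bigr).
\]
Because $f_\eta$ is an isomorphism, $Rf_{\eta,*}\ol{\BQ}_\ell=\ol{\BQ}_\ell$, which yields the claimed identification $Rf_{\ol{s},*}R\Psi^{\rm ss}\simeq R\Psi$. Applying Lemma \ref{lem-f*f!} to the unipotent object $R\Psi^{\rm ss}$ shows that $R\Psi$ is unipotent, which completes the second half of (1). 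The cohomology isomorphisms follow by taking $R\Gamma$ and $R\Gamma_c$ of both sides; we use $Rf_!=Rf_*$, which holds since $f$ is proper. Unipotence on the cohomology then follows from Lemma \ref{lem-f*f!} applied to the structure map to $\ol{s}$.

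\emph{Part (3).} For proper $\sS$ the isomorphism $H^r(\sS_{\ol{\eta}})\simeq H^r(\sS_{\ol{s}},R\Psi)$ is proper base change. In general $\sS$ is not proper, so we would invoke the result of Lan--Stroh (or Huber/Fujiwara) that nearby cycles commute with cohomology and compactly supported cohomology for these integral models. An alternative route is to use the local model diagram together with toroidal and minimal compactifications, which give relative compatibility of nearby cycles along the boundary. This is the step we expect to be the genuine obstacle, since the paper has not established compactifications of $\sS^{\rm ss}$; we would cite the known results for PEL-type canonical models at parahoric level. Finally, $\sS_{\eta}\otimes_{E'}\ol{E}\simeq\Sh_\rK(\bG,X)_{\ol{E}}$, so unipotence on the cohomology of the Shimura variety follows from (2).
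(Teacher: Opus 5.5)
Your proposal is correct and follows essentially the same route as the paper: trivial inertia on $R^r\Psi^{\rm ss}$ from semi-stability (the paper cites \cite[Theorem~4.1]{GortzNearby}, which packages the SGA~7 computation you describe), then compatibility of nearby cycles with the proper push-forward along $f$, the fact that $f_\eta$ is an isomorphism, and Lemma~\ref{lem-f*f!} to get (2) and the unipotence of $R\Psi$. For (3) the paper also just cites a known result, namely the (DL) case of \cite[Proposition 5.21]{Wu} (cf.\ Lan--Stroh), after identifying $\sS$ with Wu's model via \cite[Theorem D]{MW26}; you should record that identification too, and you do not need any compactification of $\sS^{\rm ss}$, since (3) only concerns $\sS$.
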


\begin{proof}
By \cite[Theorem~4.1]{GortzNearby}, $I_p$ acts trivially
on 
\[
   R^r\Psi^{{\rm ss}}(\ol{\mathbb Q}_\ell)
\] for each $r\geq 0$.
Hence, $R\Psi^{{\rm ss}}(\ol{\BQ}_\ell)$ is unipotent in the sense of
\cite[Definition~5.4]{GortzHaines}.  Since $f_{\ol{s}}$ is proper, $Rf_{\ol{s},*} R\Psi^{{\rm ss}}(\ol{\mathbb Q}_\ell)$ is unipotent by Lemma \ref{lem-f*f!}. 
Since $f_{\ol{\eta}}$ is an isomorphism and nearby cycles commute with proper push-forward (see e.g., \cite[Theorem~4.2]{GortzNearby}),
we have 
\begin{flalign}
    Rf_{\ol{s},*} R\Psi^{{\rm ss}}(\ol{\mathbb Q}_\ell)=Rf_{\ol{s},!} R\Psi^{{\rm ss}}(\ol{\mathbb Q}_\ell)\simeq R\Psi(\ol{\mathbb Q}_\ell).   \label{eqrfiso}
\end{flalign}
It follows that
$R\Psi(\ol{\mathbb Q}_\ell)$ is unipotent, and hence, $I_p$ acts unipotently on $R^r\Psi(\ol{\BQ}_\ell)$. Note that, for each $r\geq 0$, we have \begin{flalign*}
    H^r_c(\sS^{\rm ss}_{\ol{s}}, R\Psi^{\rm ss}(\ol{\BQ}_\ell)\simeq H^r_c(\sS_{\ol{s}}, Rf_{\ol{s},!}R\Psi^{\rm ss}(\ol{\BQ}_\ell)).
\end{flalign*} 
By \eqref{eqrfiso}, we obtain \begin{flalign*}
    H^r_c(\sS^{\rm ss}_{\ol{s}},R\Psi^{\rm ss}(\ol{\BQ}_\ell))\simeq H^r_c(\sS_{\ol{s}},R\Psi(\ol{\BQ}_\ell)).
\end{flalign*}
A similar result holds for the usual cohomology $H^r$.
By applying Lemma \ref{lem-f*f!} to the morphisms $\sS\ra S$ and $\sS^{\rm ss}\ra S$, the unipotence of $R\Psi(\ol{\BQ}_\ell)$ implies that $I_p$ acts unipotently on $H^r_c(\sS_{\ol{s}},R\Psi(\ol{\BQ}_\ell))$ and $H^r(\sS_{\ol{s}},R\Psi(\ol{\BQ}_\ell))$. This proves (1) and (2) of Theorem \ref{prop:inertia-action}.

Since $\sS$ has parahoric level at $p$ and the coefficient $\ol{\BQ}_\ell$ is constant, the isomorphisms in (3) follows from the (DL) case of \cite[Proposition 5.21]{Wu} (cf. \cite[Corollary 4.6]{lan2018nearbyII}). Here, we use the fact that the canonical integral model $\sS$ coincides with the construction in \cite{Wu} by the proof of \cite[Theorem D]{MW26}.
Therefore, by (2), $I_p$ acts unipotently on $H^r(\sS_{\ol{\eta}},\ol{\BQ}_\ell)$ and $H^r_c(\sS_{\ol{\eta}},\ol{\BQ}_\ell)$ for any $r\geq 0$. This proves (3).
\end{proof}

\Addresses
\end{document}